\numberwithin{equation}{section}
\theoremstyle{plain}
\newtheorem{theorem}{Theorem}[section]
\newtheorem{lemma}[theorem]{Lemma}
\newtheorem{corollary}[theorem]{Corollary}
\newtheorem{proposition}[theorem]{Proposition}
\theoremstyle{definition}
\newtheorem{definition}[theorem]{Definition}
\newtheorem{example}[theorem]{Example}
\theoremstyle{remark}
\newtheorem{remark}[theorem]{Remark}
\newcommand\nc{\newcommand}
\nc\rnc{\renewcommand}
\nc{\GMb}{\color[rgb]{.7,0,.7}}
\nc{\KHb}{\color[rgb]{0,0,.7}}
\nc{\KHe}{\normalcolor{}}
\nc{\GMe}{\normalcolor{}}
\nc\GMnote[1]{\marginpar{\GMb \tiny #1 \GMe}}
\nc\KHnote[1]{\marginpar{\KHb \tiny #1 \KHe}}
\nc\KHn[1]{\marginpar{\KHb \tiny #1 \KHe}}
\nc{\GMcut}[1]{\marginpar{\GMb \textbf{\footnotesize CUT}: \Tiny #1  \GMe}}
\nc{\KHcut}[1]{\marginpar{\KHb \textbf{\footnotesize CUT}: \Tiny #1  \KHe}}
\nc\KH[1]{{\KHb #1}}
\nc\GM[1]{{\GMb #1}}
\nc\blue[1]{{\color{blue}#1}}
\nc\darkblue[1]{{\textcolor[rgb]{0,0,.5}{#1}}}
\nc\id{\operatorname{id}}
\nc\ad{{\operatorname{ad}}}
\nc\Ad{{\operatorname{Ad}}}
\nc\ab{{\operatorname{ab}}}
\nc\End{\operatorname{End}}
\nc\Aut{\operatorname{Aut}}
\nc\Out{\operatorname{Out}}
\nc\IAut{\operatorname{IAut}}
\nc\opn{\operatorname}
\nc\Hom{\operatorname{Hom}}
\nc\inn{\operatorname{inn}}
\nc\Ker{\operatorname{Ker}}
\nc\word{\operatorname{word}}
\nc\col{\operatorname{col}}
\nc\Br{\mathrm{Br}}
\nc\Lie{\mathrm{Lie}}
\nc\Der{\mathrm{Der}}
\nc\rank{\opn{rank}}
\nc\Mat{\opn{Mat}}
\nc\GL{\opn{GL}}
\nc\GE{\opn{GE}}
\nc\Mag{\opn{Mag}}
\nc\Wh{\opn{Wh}}
\nc\proj{\mathrm{proj}}
\nc\gr{\mathrm{gr}}
\nc\grb{\gr_\bu}
\nc\plim{\varprojlim}
\nc\ilim{\varinjlim}
\nc\modN {{\mathbb N}}
\nc\modQ {{\mathbb Q}}
\nc\R{{\mathbb R}}
\nc\modZ {{\mathbb Z}}
\nc\Q{\mathbb{Q}}
\nc\Vect{\mathbf{Vect}}
\nc\Grp{\mathbf{Grp}}
\nc\NS{\mathbf{NS}}
\nc\eNs{\mathbf{eNs}}
\nc\egL{\mathbf{egL}}
\nc\Z{\modZ }
\nc\ZF{\modZ [F]}
\nc\ZH{\modZ [H]}
\nc\modR {{\mathcal R}}
\nc\modL {{\mathcal L}}
\nc\modC {{\mathcal C}}
\nc\modE {{\mathcal E}}
\nc\modV {{\mathcal V}}
\nc\modT {\mathcal{T}}
\nc\modH {\mathcal{H}}
\nc\modM {\mathcal{M}}
\nc\modI {\mathcal{I}}
\nc\modB {\mathcal{B}}
\nc\modD {\mathcal{D}}
\nc\bC{\bar{C}}
\nc\modG {\mathcal{G}}
\nc\tH{\tilde{\modH }}
\nc\modF {\mathcal{F}}
\nc\modP {\mathcal{P}}
\nc\modS {\mathcal{S}}
\nc\G{\modG}
\nc\T{\modT}
\rnc\H{\modH}
\nc\g{{\mathfrak g}}
\nc\xto[1]{{\overset{#1}{\longrightarrow}}}
\nc\yto[1]{{\underset{#1}{\longrightarrow}}}
\nc\xyto[2]{{\overset{#1}{\underset{#2}{\longrightarrow}}}}
\nc\simeqto{\overset{\simeq}{\rightarrow }}
\nc\trl{\triangleleft}
\nc\trll{\,\triangleleft\,}
\nc\trr{\triangleright}
\nc\trrr{\,\triangleright\,}
\nc\ct{\overset{\cong}{\to}}
\nc\projto{\underset{\text{proj}}{\longrightarrow}}
\nc\ho{\mathbin{\hat\otimes}}
\nc\la{\langle}
\nc\ra{\rangle}
\nc\lala{\langle\!\langle}
\nc\rara{\rangle\!\rangle}
\nc\mt{\mapsto}
\nc\rt{\rtimes}
\nc\bu{\bullet} \nc\Lbu{L_\bu}
\nc\fig[1]{Figure~\ref{#1}}
\nc\FI[2]{\begin{figure}  \begin{center}\input{#1.pstex_t}\end{center} \caption{#2}  \label{#1}  \end{figure}}
\nc\FIGURE[2]{\begin{figure}  \boxed{\tt fig: #1}  \caption{#2}  \label{fig:#1}  \end{figure}}
\nc\ul{\underline}
\nc\bA{\bar{\A}}	\nc\bAk{\bA_k}
\nc\bK{\bar K}		\nc\bKb{\bK_\bu}	\nc\bKbu{\bK_\bu}
\nc\bG{\bar G}		\nc\bGb{\bG_\bu}
\nc\ti{\tilde}
\nc\wh{\widehat}
\nc\wt{\widetilde}
\nc\ol{\overline}
\nc\ch{\check}
\nc\np{\newpage}
\nc\xym{\xymatrix}
\nc\lto{\longrightarrow}
\nc\Alpha{\mathsf{A}}
\nc\A{\Alpha}
\nc\AAlpha{\mathbb{A}}
\rnc\AA{\AAlpha}
\nc\al{\alpha}
\nc\be{\beta}
\nc{\laitrap}{\textrm{\reflectbox{$\partial$}}}
\newcommand{\double}[2]{\left\{\!\!\left\{#1,#2\right\}\!\!\right\}}
\nc\ot{\otimes}
\begin{document}

\title[The Johnson--Morita theory for the handlebody group]{The Johnson--Morita theory \\ for the handlebody group}

\author{Kazuo Habiro}
\address{Graduate School of Mathematical Sciences, University of Tokyo, 
3-8-1 Komaba, Meguro-ku, Tokyo 153-8914, Japan}
\email{habiro@ms.u-tokyo.ac.jp}

\author{Gw\'ena\"el Massuyeau}
\address{Université Bourgogne Europe, CNRS, 
IMB (UMR 5584), 21000 Dijon, France}
\email{gwenael.massuyeau@ube.fr}

\date{\today}

%%%%%%%%%%%%%%%%%%%%%%%%%%%%%%%%%%%%%%%%%%%%%%%%%%%%%%%%%%%%%%%%

\begin{abstract}
The Johnson--Morita theory is an algebraic approach 
to the mapping class group of a surface, 
in which one considers its action on the successive nilpotent quotients 
of the fundamental group of the surface. 
In this paper, 
we develop an analogue of this theory 
for the handlebody group, 
i.e$.$ the mapping class group of a 
$3$-dimensional handlebody.
Thus, we obtain a  filtration on the handlebody group,
% which we call the ``Johnson filtration'' for the handlebody group,
 prove that its associated graded 
embeds  into a Lie algebra of ``special derivations'',
and give an explicit diagrammatic description 
of this graded Lie algebra 
in terms of ``oriented trees with beads''.
Our new diagrammatic method %representations %of
%(subgroups of) the handlebody group 
%that result from our study
reveals part of the richness of the algebraic structure of the handlebody group, 
which lies mainly in the subgroup generated by Dehn twists along meridians: the so-called ``twist group''.
As an application, we obtain that each term 
of the associated graded of the lower central series 
of the twist group is infinitely generated.
\end{abstract}

\maketitle

\setcounter{tocdepth}{1}
\tableofcontents

\section{Introduction}

The \emph{Johnson--Morita theory} studies the mapping class group 
of a surface
by considering its action 
on the lower central series of the fundamental group of the surface.
In this paper, we introduce an analogue of this theory for the handlebody group.

\subsection{The Johnson--Morita theory for the mapping class group}
\label{subsec:usual_Johnson}

We first briefly outline the original Johnson--Morita theory.
Let $\Sigma$ be a compact, connected, oriented surface with one boundary component.
Let $\modM := \modM(\Sigma,\partial \Sigma)$ 
be the mapping class group of $\Sigma$ relative to $\partial \Sigma$.

By a classical result of Dehn and Nielsen, the canonical action of 
$\modM$ on $\pi:=\pi_1(\Sigma,\star)$,
with base point $\star \in \partial \Sigma$, is faithful.
Thus, $\modM$ embeds into $\Aut(\pi)$.
Let
$$
\pi=\Gamma_1 \pi \geq \Gamma_2 \pi \geq  \cdots \geq  
\Gamma_k \pi \geq  \cdots
$$
be the lower central series  of $\pi$.
Its associated graded Lie algebra is isomorphic to
 the free Lie algebra  $\Lie(H)$ on $H:= H_1(\Sigma;\Z)$.

The \emph{Johnson filtration} of $\mathcal M$
is the decreasing sequence of subgroups
$$
\modM=\modM_0 \geq \modM_1 
\geq  \cdots \geq   \modM_k \geq  \cdots
$$
defined by
\begin{eqnarray}
\label{eq:JJ} \modM_k &:=&
\ker \big( \modM \longrightarrow \Aut(\pi/\Gamma_{k+1} \pi)\big) \\
 \label{eq:JJJ}  &=& \big\{ f \in \modM\mid  f(x)x^{-1} \in \Gamma_{k+i} \pi 
 \text{ for all $x\in\Gamma_i\pi, i\geq 1$}\big\}.
\end{eqnarray}
Johnson studied its first few terms
(see \cite{Johnson_survey}).
The first term $\modI := \modM_1$, known 
as the \emph{Torelli group}, is the subgroup of $\modM$
acting trivially on $H$.
Morita \cite{Morita_abelian} studied the Johnson filtration systematically.
It has trivial intersection
\begin{equation} \label{eq:intersection}
\bigcap_{k\geq 1} \modM_k = \{1\},
\end{equation} 
and it is an \emph{N-series} 
or is \emph{strongly central}, i.e., 
%it is strongly central in the sense that
\begin{equation} \label{eq:strongly_central}
\big[\modM_k,\modM_l\big] \subset \modM_{k+l} \quad \hbox{for all $k,l\geq 1$}.
\end{equation}
An important problem is to compute the associated graded Lie algebra
$$
\overline{\modM}_+ := \bigoplus_{k\geq 1} \modM_k/ \modM_{k+1}.
$$
The conjugation of $\modM$ on
$\modI$ induces an action
of the symplectic group $\hbox{Sp}(H) \simeq \modM/\modI$ on $\overline{\modM}_+$.
Here the symplectic form $\omega:H \times H \to \Z$ is the homology intersection form of~$\Sigma$.

By works of Johnson, Morita and others, 
the structure of $\overline{\modM}_k=\modM_k/\modM_{k+1}$ is well understood for $k=1,2$,
and so is its rationalization $\overline{\modM}_k\otimes \Q$ for some higher values of $k$ 
in relation with Hain's computation \cite{Hain_Torelli}
of the Malcev Lie algebra of $\modI$
(see \cite{Morita_survey}). 
The general procedure to determine 
the abelian group $\overline{\modM}_k$ for $k\ge1$ is as follows.
For any $f\in \modM_k$ define a map $\tau_k(f): \Lie(H) \to \Lie(H)$ 
by
\begin{equation} \label{eq:formula}
\tau_k(f)([x]_i)= \big[f(x) x^{-1}\big]_{i+k}
\quad(x\in \Gamma_i\pi, i\ge1).
\end{equation}
(Here, 
$[y]_j\in \Gamma_j\pi/\Gamma_{j+1}\pi=\Lie_j(H)$ denotes
the class  of $y\in \Gamma_j\pi$.)
The map $\tau_k(f)$ vanishes if and only if $f\in \modM_{k+1}$.
Furthermore, $\tau_k(f)$ is a derivation of degree $k$
and, since $f$ fixes $\zeta:=[\partial \Sigma] \in \pi$,
the map $\tau_k(f)$ vanishes on $[\zeta]_2\in\Lie_2(H)\simeq\Lambda^2H$,
which is dual to the intersection form $\omega$ on $H$.
Thus, for every $k\geq 1$,
we get an injective homomorphism
$$
\overline{\tau}_k: \overline{\modM}_k \longrightarrow 
\Der_k^\omega\big(\Lie(H)\big)
:=\big\{d\in\Der_k\big(\Lie(H)\big):
d([\zeta]_2)=0\big\},$$
which is called the \emph{$k$-th Johnson homomorphism}. 
All these homomorphisms $\tau_k$, for $k\ge1$, form
an injective $\hbox{Sp}(H)$-equivariant Lie algebra homomorphism
\begin{equation} \label{eq:tau_+}
\overline{\tau}_+:=(\overline{\tau}_k)_{k\geq 1}: \overline{\modM}_+ \longrightarrow \Der_+^\omega\big(\Lie(H)\big)
:=\bigoplus_{k\ge1}\Der_k^\omega\big(\Lie(H)\big)
\end{equation} 
of $\overline{\modM}_+$
into the Lie algebra of \emph{symplectic derivations} of positive degree.\\

\noindent
\begin{minipage}{0.7\textwidth}
\begin{example}
Consider the first Johnson homomorphism 
$$\tau_1:\modM_1=\modI\longrightarrow \Der_1^\omega\big(\Lie(H)\big)\simeq\Lambda^3H.$$
The group $\modI$ is generated by opposite Dehn twists $T_c T_d^{-1}$
along pairs of simple closed curves $c$ and $d$ cobounding a subsurface $S$ of $\Sigma$ \cite{Johnson_first}.
For such a pair $(c,d)$, we have 
\begin{equation} \label{eq:tau_1}
\tau_1\big(T_c T_d^{-1}\big) = \pm\,  \omega_S \wedge [c] \in \Lambda^3 H,
\end{equation}
where $\omega_S \in \Lambda^2 H$ is dual to the intersection form of $S$
(in the example on the right of genus $1$, $\omega_S = a \wedge b$).
This formula is crucial in the proof of the surjectivity of $\tau_1$ \cite{Johnson_abelian},
which implies that $\overline{\modM}_1 \simeq \Lambda^3H$.\\
\end{example}
\end{minipage}
\begin{minipage}{0.3\textwidth}
\labellist
\small\hair 2pt
 \pinlabel {\textcolor{blue}{$c$}} [t] at 42 285
 \pinlabel {\textcolor{blue}{$d$}} [t] at 180 286
 \pinlabel {$\vdots$} at 111 217
 \pinlabel {\textcolor{brown}{$b$}} [r] at 52 401
 \pinlabel {\textcolor{brown}{$a$}} [r] at 80 473
\endlabellist
\centering
\includegraphics[scale=0.3]{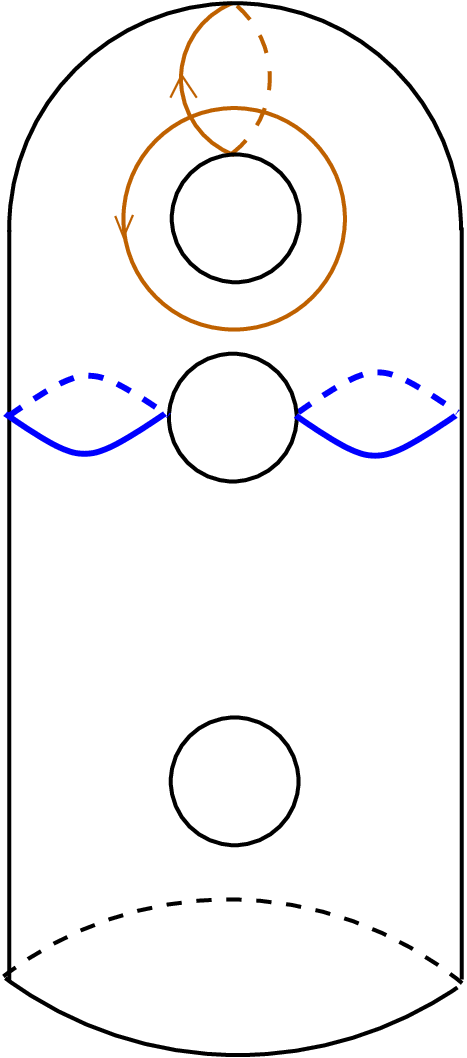}
\end{minipage}

The computation of $\overline{\modM}_2$ was carried out by Morita \cite{Morita_Casson}.
Yet, in degree $k\geq 2$, it is much easier to work with rational coefficients. 
Since $\Der_+^\omega\big(\Lie(H)\big)$ is torsion-free,
it embeds into  
$\Der_+^\omega\big(\Lie(H)\big)\otimes \Q =\Der_+^\omega\big(\Lie(H^\Q)\big)$,
the Lie $\Q$-algebra of positive-degree symplectic derivations
of the free Lie $\Q$-algebra $\Lie(H^\Q)$ on $H^\Q:=H_1(\Sigma;\Q)$.
The latter has a diagrammatic description, 
which is implicit in \cite{Kontsevich} 
and appears e.g$.$ in \cite{HP} and \cite{GL05}:
there is an isomorphism of graded Lie $\Q$-algebras
\begin{equation} \label{eq:Kontsevich}
\Der_+^\omega\big(\Lie(H^\Q)\big) \simeq \mathcal{D}(H^\Q)   , 
\end{equation}
where $\mathcal{D}(H^\Q)$ is the $\Q$-vector space 
generated by ``oriented trivalent trees'' 
with  leaves  colored by  $H^\Q$.
The Lie bracket of $\mathcal D(H^\Q)$ is defined by  gluing leaves to leaves 
using the pairing~$\omega$. For instance, the generators of $\mathcal{D}(H^\Q)$ are\\[0.1cm]
$$
\labellist
\small\hair 2pt
 \pinlabel {$\mathsf{Y}_{a,b,c}:=$} [r] at -20 114
  \pinlabel {$\mathsf{H}_{d,e,f,g}:=$} [r] at 810 114
 \pinlabel {$a$} [tr] at 2 25
 \pinlabel {$c$} [br] at 2 221
 \pinlabel {$b,$} [l] at 260 114
 \pinlabel {$e$} [tr] at 830 25
 \pinlabel {$d$} [br] at 830 223
  \pinlabel {$g$} [lb] at 1210 223
 \pinlabel {$f$} [tl] at 1210 30
  \pinlabel {(with $a,b,\dots,g \in H^\Q$)} [l] at 1410 5
\endlabellist
\centering
\includegraphics[scale=0.12]{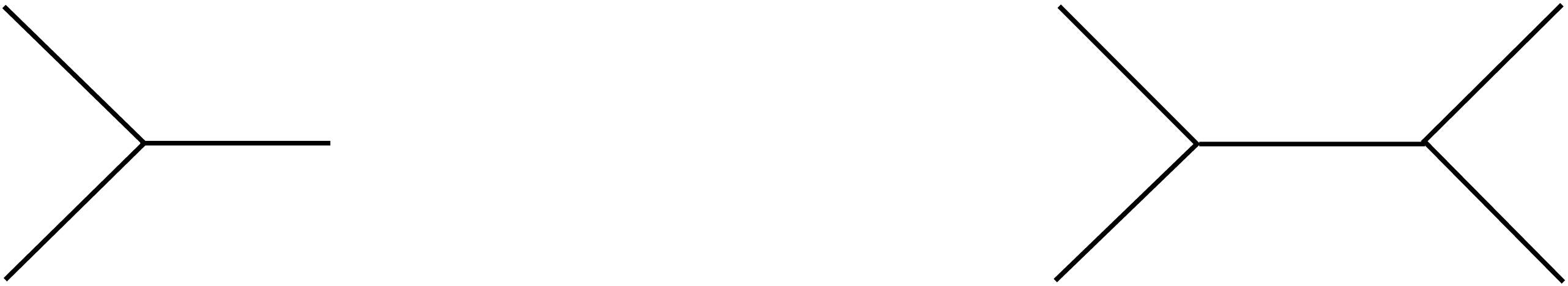} 
\qquad \qquad \qquad 
$$
in degrees $1$ and $2$, respectively,
and the Lie bracket in degree $1+1$ is given by
\begin{eqnarray}
\label{eq:trees_bracket} \quad \big[ \mathsf{Y}_{a,b,c}, \mathsf{Y}_{u,v,w} \big] &=&
\phantom{+}\omega(a,u)\, \mathsf{H}_{b,c,v,w} 
+ \omega(b,u)\, \mathsf{H}_{c,a,v,w} + \omega(c,u)\, \mathsf{H}_{a,b,v,w}\\
\notag && +\omega(a,v)\, \mathsf{H}_{b,c,w,u} 
+ \omega(b,v)\, \mathsf{H}_{c,a,w,u} + \omega(c,v)\, \mathsf{H}_{a,b,w,u}\\
\notag && +\omega(a,w)\, \mathsf{H}_{b,c,u,v} 
+ \omega(b,w)\, \mathsf{H}_{c,a,u,v} + \omega(c,w)\, \mathsf{H}_{a,b,u,v}.
\end{eqnarray}
This Lie bracket  in $\mathcal{D}(H^\Q)$ and
formula  \eqref{eq:tau_+} in degree $1$ allow
for explicit computations of $\tau_k$ in every degree $k>1$
since,  according to Hain \cite{Hain_Torelli}, the space
$(\Gamma_k \modI/ \Gamma_{k+1} \modI)\otimes \Q$
surjects  onto $\overline{\modM}_k\otimes \Q$.

To conclude this quick overview of the Johnson--Morita theory 
for the mapping class group, 
let us recall
that all the Johnson homomorphisms~$\tau_k$
for  $k\geq 1$ unify into a single map
\begin{equation}
\label{eq:varrho_theta}
\varrho^\theta: \modI \longrightarrow \widehat{\Der}_+^\omega\big(\Lie(H^\Q)\big),
\end{equation}
where the hat $\,\widehat{\quad}\,$ denotes the degree-completion.
The map $\varrho^\theta$ induces  the rationalization of \eqref{eq:tau_+} at the graded level
and we can regard $\varrho^\theta$ as an ``infinitesimal'' version 
of the canonical action of $\modI$ on $\pi$.
To define  $\varrho^\theta$  
we need a ``symplectic expansion''~$\theta$ of~$\pi$,
which manifests formality
of the free group~$\pi$.
Indeed $\theta$ identifies
the Malcev Lie algebra of $\pi$
with the degree-completed free Lie algebra on $H^\Q$. Although $\varrho^\theta$ heavily depends on the choice of $\theta$,
it enjoys several properties.
It is a group embedding
if the target is endowed with the Baker--Campbell--Hausdorff (BCH) product
associated to the Lie bracket and,
besides the property of determining all the Johnson homomorphisms,
the map $\varrho^\theta$ gives
(for an appropriate choice of $\theta$)
the tree-level of the representation of the Torelli group
that is induced by the universal finite-type invariant of $3$-manifolds~\cite{Mas12}.

\subsection{Johnson--Morita theory for extended N-series}

In this paper, we develop an analogue of the Johnson--Morita
theory for the \emph{handlebody group}, i.e$.$
the mapping class group of a $3$-dimensional handlebody $V$.

The possibility of such a theory has been mentioned in
 \cite[Ex$.$ 10.9]{HM18} as an instance of a general framework of extended N-series and extended graded Lie algebras, see Section \ref{sec:general_theory}.
An \emph{extended N-series} 
$K_*=(K_m)_{m\geq 0}$
is a descending filtration
$$
K_0 \geq K_1 \geq \cdots \geq K_m \geq \cdots
$$
of a group $K_0$ satisfying
$[K_m, K_n] \subset K_{m+n}$ for all $m,n \geq 0$.
An \emph{action} of a group $\modG$ on the extended N-series $K_*$ is an action of $\G$ on $K_0$ such that $g(K_m)=K_m$ for all $g\in \G$ and $m\ge0$.
Similarly to \eqref{eq:JJJ}, 
the \emph{Johnson filtration} 
$$\G=\G_0\ge \G_1\ge\G_2\ge\cdots$$
is defined by
$$
\modG_m := \bigcap_{j\geq 0} \modG_m^j ,
\quad \hbox{where } \modG_m^j := \big\{g \in \modG :  g(x)x^{-1} \in K_{m+j}
 \text{ for all }x \in K_j\big\}
$$
but, in contrast with \eqref{eq:JJ}, 
we do not necessarily have  $\modG_m = \modG_m^1$.
As in the case of the mapping class group, 
this generalized Johnson  filtration is an N-series 
and so has an associated graded $\overline{\modG}_+$.

To the extended N-series $K_*=(K_m)_{m\geq 0}$ is associated the 
\emph{extended graded Lie algebra} $\overline{K}_\bullet$,
which is the pair of the graded Lie algebra
$\overline{K}_+=\bigoplus_{m\geq 1} K_m/K_{m+1 }$ in positive degrees
and the group $\overline{K}_0 = K_0/K_1$ in degree $0$,
the latter acting on the former by graded Lie algebra automorphisms.
For $k\geq 1$,
a \emph{derivation} $d=(d_0,d_+)$ of  $\overline{K}_\bullet$ 
of degree $k$
is a pair $(d_0,d_+)$ of a degree $k$ derivation $d_+$ of the graded Lie algebra $\overline{K}_+$,
and of a $1$-cocycle
$d_0:\overline{K}_0  \to \overline{K}_k$ 
which measures
the defect of $\overline{K}_0$-equivariance of $d_+$.
Let $\Der_k(\overline{K}_\bullet)$ denote the $\Z$-module of degree $k$ derivations of~$\overline{K}_\bullet$.
Then, for every $k\geq 1$, the \emph{$k$-th Johnson homomorphism}
$$
\tau_k : \modG_k  \lto \Der_k(\overline{K}_\bullet)
$$
is defined similarly to \eqref{eq:formula}.
Specifically, for $f\in \modG_k$, the formula
$$\tau_k(f)([x]_i)= \big[f(x) x^{-1}\big]_{i+k}\quad (x\in K_i, i\geq 0),$$
defines a derivation $\tau_k^+(f)$ of $\overline{K}_+$ for $i>0$
and a $1$-cocycle 
$\tau_k^0(f):\overline{K}_0  \to \overline{K}_k$ for $i=0$.
Thus, the $\tau_k$ for $k\geq1$ form an embedding 
\begin{equation}\label{eq:tau_+_bis} 
\overline{\tau}_+: \overline{\modG}_+ 
\longrightarrow \Der_+ ( \overline{K}_\bu) 
\end{equation} 
of graded Lie algebras.
Under a certain formality assumption on the extended N-series $K_*$,
and similarly to \eqref{eq:varrho_theta},
we also obtain a group embedding
\begin{equation} \label{eq:varrho_theta_bis}
\varrho^\theta: \modG_1 \longrightarrow \widehat{\Der}_+(\overline K_\bu^\Q),
\end{equation}
which induces  the rationalization of \eqref{eq:tau_+_bis} at the graded level.

\subsection{Johnson--Morita theory for free pairs}

In Section \ref{sec:free_pairs},
we apply the above constructions  
to a \emph{free pair} $(\pi,\Alpha)$, 
a pair of a free group $\pi$ of finite rank
and a non-abelian normal subgroup $\Alpha \leq \pi$ 
with $F:=\pi/\Alpha$ free of finite rank.
A free pair $(\pi,\Alpha)$ yields an extended N-series $\Alpha_*$:
$$
\Alpha_0:=\pi \quad \hbox{and} \quad 
\Alpha_i:= \Gamma_i \Alpha \ \hbox{for $i\geq 1$}.
$$
The associated extended graded Lie algebra $\overline \Alpha_\bu$ of $\A_*$
consists of the group $\overline{\A}_0=F$ 
and the free Lie algebra $\overline{\A}_+=\Lie(\AAlpha)$ on
the abelianization $\AAlpha:= \Alpha_{\operatorname{ab}} = \Alpha/\Gamma_2 \Alpha$.
So, with a slight abuse of notation, we write
$$
\overline \Alpha_\bu =F \ltimes \Lie(\AAlpha),
$$
where the action of $F$ on $\AAlpha$
makes $\AAlpha$ 
into a free $\Z[F]$-module of finite rank. 

Let
$$\modG:= \Aut(\pi,\Alpha)=\big\{f\in\Aut(F): f(\A)=\A\big\},$$
which
acts on the extended N-series $\Alpha_*$.
Hence, there is a Johnson filtration $\modG_*$ 
and a family of Johnson homomorphism $(\tau_k)_{k\geq 1}$.
By \cite[\S 10.1]{HM18} we have
\begin{equation}\label{GkGk1}
\modG_k= \modG_k^1\quad \text{for }k\geq 1,
\end{equation}
see \eqref{GmGm1}.
Similarly, for $f \in \modG_k$, the derivation $\tau_k^+(f)$
determines the 1-cocycle $\tau_k^0(f)$, but the converse may not be true.
We also observe   that
the extended N-series $\Alpha_*$ is formal.
Hence we get in Theorem \ref{thm:varrho} an embedding \eqref{eq:varrho_theta_bis}
of $\modG_1$ into $\widehat{\Der}_+\big(F\!\ltimes\!\Lie(\AAlpha^\Q)\big)$.
 
In Section \ref{sec:first_quotients}, we 
construct a map
\begin{equation}\label{mag}
J^F: \mathcal G\longrightarrow \GL(p+q;\Z[F]),
\end{equation}
where $q=\rank F$ and $p= \rank\pi-\rank F$,
by reducing the coefficients of the Magnus representation of $\modG= \Aut(\pi,\Alpha)$ to $\Z[F]$.
The map $J^F$ factors through $\G/\G_2$
and is our main tool 
to determine the first terms of the Johnson filtration 
$$
   \modG=\modG_0 \geq \modG_1^0 \geq \modG_1 \geq \modG_2^0 \geq \cdots.
$$
In particular, $\modG_1^0/\modG_1$ is non-trivial, see Proposition \ref{r59}.

\subsection{Johnson--Morita theory for the handlebody group}

Section \ref{sec:Johnson_handlebody} gets to the heart of the subject,
namely the \emph{handlebody group} $\modH$, which is the mapping class group  of the handlebody $V$ 
relative to a disk  $D \subset \partial V$.
The comparison between the Johnson--Morita theory of the mapping class group 
and our approach for the handlebody group 
is sketched in Table \ref{tab:comparison}.

\begin{table}[]
    \centering \small 
\begin{tabular}{|c|c|c|} \hline 
    Group  &  mapping class group $\modM $  &  handlebody group $\modH$ \\ \hline
    Action  &  on the group $\pi$ & on  the pair $(\pi, \Alpha)$\\ \hline
    Filtration & $(\modM_k)_{k\geq 0}$ &  $(\modH_k)_{k\geq 0}$ \\ \hline
    0-th graded quotient & ${\modM_0}/{\modM_1}\simeq \hbox{Sp}(H)$ &
    ${\modH_0}/{\modH_1} \simeq \hbox{Aut}(F)$ \\ \hline
    1-st subgroup & $\modM_1=\modI$, the Torelli group 
    & $\modH_1=\modT$, the twist group \\ \hline
    Johnson homomorphisms & $\tau_k:\modM_k \to \Der_k^\omega\big(\Lie(H)\big) $ 
    &  $\tau_k:\modH_k \to \Der_k^\zeta\big( F\!\ltimes\!\Lie(\AAlpha) \big) $ \\ \hline
    ``Infinitesimal'' action & 
    ${\varrho^\theta: \modI \to\widehat{\Der}_+^\omega\big(\Lie(H^\Q)\big)}$
    & $\varrho^\theta: \modT  \to 
    \widehat{\Der}_+^\zeta\big(F\!\ltimes\!\Lie(\AAlpha^\Q)\big)$
    \\ \hline
    Diagram space & 
     $\mathcal{D}(H^\Q)$   &  $\modD(\AAlpha^\Q, \Q[F])$ \\ \hline
\end{tabular}\\[0.3cm]
    \caption{Comparison between the Johnson--Morita theories for the mapping class group
    and the handlebody group.}
    \label{tab:comparison}
\end{table}

Let $\Sigma:= \partial V \setminus \hbox{int}(D)$,
which is a compact, oriented surface with 
$\partial\Sigma\cong S^1$ and with genus $g\geq 1$.
The inclusion $\iota: \Sigma \to V$
induces an embedding of $\modH$ into  $\modM=\modM(\Sigma,\partial \Sigma)$,
which allows to view the former  group as a subgroup of the latter.
Let $F:=\pi_1(V,\star)$, 
and consider the free pair $(\pi,\Alpha)$ where
$$
\pi:=\pi_1(\Sigma,\star) \quad \hbox{and} 
\quad \Alpha:=\ker (\iota_*:\pi \lto F ).
$$
In the following, 
we apply the previous constructions 
to the automorphism group
$\modG=\Aut(\pi,\Alpha)$ of this free pair.
Indeed, according to Griffiths \cite{Griffiths} 
(see Theorem~\ref{thm:Griffiths}), we have 
\begin{equation} \label{eq:Griffiths}
\modH=\modM\cap\modG 
\quad \hbox{(as subgroups of  $\Aut(\pi)$)};
\end{equation}
in other words, a mapping class $g$ of 
$\Sigma \hbox{ rel }  \partial \Sigma$  
extends to a mapping class of $V$ rel $D$ 
if and only if $g$ fixes $\A$ setwise.
Thus,  the filtrations
$(\modG_k)_{k\geq 0}$ and $(\modG_k^0)_{k\geq 0}$ 
of $\modG$ restrict to filtrations
$(\modH_k)_{k\geq 0}$ and $(\modH_k^0)_{k\geq 0}$  of $\modH$, respectively.

% \begin{theorem}[Griffiths \cite{Griffiths}, 
% see Theorem  \ref{thm:Griffiths}]
%   \label{intro-Griffiths}
%    We have $\modH=\modM\cap\modG$.
%  
% In other words, a mapping class $g$ of 
% $\Sigma$ rel  \partial \Sigma$  
% extends to a mapping class of $V$ rel $D$ 
% if and only if $g$ fixes $\A$ setwise.
% \end{theorem}

By \eqref{GkGk1}, 
we have $\modH_k=\modH_k^1\subset \modH_k^0$ for $k\geq 1$.  
Using the identity $f(\zeta)=\zeta\in \pi$ 
for $f\in \modH$, we obtain the following,
which was announced in  \cite[Ex$.$ 10.9]{HM18}.

\begin{theorem}[see Theorem \ref{thm:=}]
    \label{main1}
    We have $\modH_k=\modH_k^0$ for $k\ge1$.
\end{theorem}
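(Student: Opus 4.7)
Plan: Since $\modH_k \subseteq \modH_k^0$ is immediate from $\modH_k = \bigcap_j \modH_k^j$, only the reverse inclusion requires work. Intersecting \eqref{GkGk1} with $\modH$ gives $\modH_k = \modH_k^1$, so the statement is equivalent to $\modH_k^0 \subseteq \modH_k^1$: for any $f \in \modH$ with $f(x)x^{-1}\in \A_k$ for all $x\in\pi$, one must show $f(x)x^{-1}\in\A_{k+1}$ for all $x\in\A$.

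To extract this one-step improvement, I would associate to such an $f$ the truncated Johnson cocycle $\bar c \colon \pi \to \A_k/\A_{k+1} = \Lie_k(\AA)$ defined by $\bar c(x) := f(x)x^{-1} \bmod \A_{k+1}$, which is a $1$-cocycle for the $\pi$-action on $\Lie_k(\AA)$ by conjugation. Since $\A_*$ is an N-series, $[\A,\A_k]\subseteq\A_{k+1}$, so the action factors through $F$, and a short manipulation of the cocycle identity shows that $\bar c|_\A$ descends to a $\Z[F]$-linear map $\AA \to \Lie_k(\AA)$. Because $\AA$ is freely generated over $\Z[F]$ by the classes of the meridians $b_1,\dots,b_g$, the desired conclusion $\bar c|_\A = 0$ reduces to showing $\bar c(b_j) = 0$ in $\Lie_k(\AA)$ for each $j$.

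Using the standard generating set $a_1,b_1,\dots,a_g,b_g$ of $\pi$ with the surface relation $\zeta = \prod_i [a_i, b_i]$, a direct cocycle computation (exploiting the triviality of the $\A$-action on the target and the identity $\bar c(y^{-1}) = -\bar c(y)$ for $y\in\A$) yields $\bar c([a_i,b_i]) = (\alpha_i - 1)\bar c(b_i)$ where $\alpha_i := [a_i] \in F$, and therefore
\[
\bar c(\zeta) \;=\; \sum_{i=1}^g (\alpha_i - 1)\,\bar c(b_i) \quad \in \Lie_k(\AA).
\]
Since $f \in \modH$ fixes $\zeta$, the left-hand side vanishes.

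The main obstacle is to conclude from this single relation that each $\bar c(b_i)$ is zero. I would do so by verifying that $\Lie_k(\AA)$ is a \emph{free} $\Z[F]$-module: starting from the natural $\Z$-basis $\{\alpha\cdot b_j : \alpha \in F,\ j=1,\dots,g\}$ of $\AA$, a Hall basis of $\Lie_k(\AA)$ consists of nested commutators in these elements, and $F$ permutes this basis freely because the stabilizer of any such commutator must fix each of its labels, hence be trivial (as $F$ is torsion-free). Freeness gives $H_1(F,\Lie_k(\AA)) = 0$, which is precisely the injectivity of the map $M^g \to M$, $(v_i) \mapsto \sum_i(\alpha_i - 1)v_i$; equivalently, it follows from the right Fox derivatives $\partial'_j$ characterized by $\partial'_j((\alpha_i-1)r) = \delta_{ij}r$, which witness that $\{\alpha_i - 1\}$ is a free right $\Z[F]$-basis of the augmentation ideal. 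This gives $\bar c(b_i) = 0$ for each $i$ and completes the proof.
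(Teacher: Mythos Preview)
Your argument follows essentially the same route as the paper's proof: reduce to $\modH_k^0\subseteq\modH_k^1$, encode $f(x)x^{-1}$ as a cocycle into $\Lie_k(\AA)$, use the boundary relation together with $f(\zeta)=\zeta$ to obtain a single linear relation $\sum_i(\alpha_i-1)\,\bar c(b_i)=0$, and conclude from freeness of $\Lie_k(\AA)$ over $\Z[F]$ plus freeness of the augmentation ideal. (Your notation swaps the roles of the $\alpha$'s and $\beta$'s relative to the paper, but that is harmless.)

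There is one point where your sketch is incomplete. You write that ``a Hall basis of $\Lie_k(\AA)$ consists of nested commutators in these elements, and $F$ permutes this basis freely.'' The freeness of the action on each orbit is fine, but the claim that $F$ \emph{permutes} the Hall basis is not automatic: a Hall set depends on a choice of total order on the alphabet $F\cdot\{b_1,\dots,b_g\}$, and for an arbitrary order the $F$-translate of a Hall word need not be Hall. The paper handles this by first choosing an $F$-invariant total order on the alphabet (this is where left-orderability of the free group $F$ enters) and then checking that the standard inductive construction of a Hall set is compatible with the $F$-action, yielding an $F$-stable Hall set (Lemmas~\ref{lem:equiv-Hall} and~\ref{lem:free_Lie}). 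Once you supply this step, your argument is complete and coincides with the paper's.
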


\noindent
Thus, we may redefine the Johnson filtration of $\H$ as follows.

\begin{corollary}
    We have an extended N-series
    $\H=\H_0\geq \H_1 \geq \H_2\geq \cdots$
    with trivial intersection, satisfying for any $k\geq 1$
    $$
        \H_k
        =\ker\big(\H\lto \Aut(\pi/\Gamma_k\A)\big)
        =\ker\big(\H\lto \Aut(\A/\Gamma_{k+1}\A)\big).
    $$
\end{corollary}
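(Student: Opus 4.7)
The plan is to assemble the corollary from Theorem \ref{main1}, the identity \eqref{GkGk1}, and the N-series property of $\G_*$ recorded in Section \ref{sec:general_theory}. Setting $\H_k^j := \G_k^j\cap\H$ and unpacking $\G_k^j$ for the free pair $(\pi,\A)$ with $\A_0=\pi$ and $\A_i=\Gamma_i\A$ for $i\ge1$, one immediately recognises
\[
\H_k^0 = \ker\big(\H \lto \Aut(\pi/\Gamma_k\A)\big), \qquad
\H_k^1 = \ker\big(\H \lto \Aut(\A/\Gamma_{k+1}\A)\big).
\]
For $k\ge1$, the first kernel equality is thus exactly Theorem \ref{main1} (which states $\H_k=\H_k^0$), while the second follows by intersecting the identity \eqref{GkGk1} ($\G_k=\G_k^1$) with $\H$. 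In particular, both kernels coincide with $\H_k$.

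For the extended N-series property, the general framework of Section \ref{sec:general_theory} guarantees that $(\G_m)_{m\ge0}$ is already an extended N-series, i.e.\ $[\G_m,\G_n]\subset \G_{m+n}$ for all $m,n\ge0$. Restricting to the subgroup $\H\le\G$ yields
\[
[\H_m,\H_n]\subset [\G_m,\G_n]\cap\H\subset \G_{m+n}\cap\H=\H_{m+n},
\]
so $(\H_m)_{m\ge0}$ inherits the same property.

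Triviality of the intersection follows from a comparison with the Johnson filtration of the surface mapping class group, using $\H\subset\modM$ from the Griffiths identity \eqref{eq:Griffiths}. From the first kernel description above and the obvious inclusion $\Gamma_k\A\subset \Gamma_k\pi$, any $f\in\H_k$ satisfies $f(x)x^{-1}\in\Gamma_k\pi$ for every $x\in\pi$, hence $f\in\modM_{k-1}$; therefore $\bigcap_{k\ge1}\H_k\subset \bigcap_{k\ge1}\modM_{k-1}=\{1\}$ by \eqref{eq:intersection}.

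No substantive obstacle is expected here, since the real work has been done in establishing Theorem \ref{main1} together with \eqref{GkGk1}: the present corollary is only a clean repackaging of these statements together with the previously known N-series property of $\G_*$ and the classical triviality of $\bigcap_k\modM_k$.
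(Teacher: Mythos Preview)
Your argument is correct and matches the paper's (implicit) reasoning for the two kernel descriptions and for the extended N-series property: these are exactly Theorem~\ref{main1}, the identity \eqref{GkGk1} intersected with $\H$, and the inherited N-series property of $\G_*$.

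The only point where you deviate is the triviality of the intersection. The paper already records this as \eqref{eq:completeness_HH}, obtained by restricting \eqref{eq:completeness} to $\H$; the latter follows directly from residual nilpotence of the free group $\A$ (i.e.\ $\bigcap_i\Gamma_i\A=\{1\}$), so that any $f\in\bigcap_k\H_k^0$ fixes every $x\in\pi$. Your detour through $\H_k\subset\modM_{k-1}$ and \eqref{eq:intersection} is valid but unnecessary: it invokes the residual nilpotence of $\pi$ (and the Dehn--Nielsen embedding) where the residual nilpotence of $\A$ already suffices via the kernel description $\H_k=\H_k^0$ you just established.
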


The first subgroup of the Johnson filtration
of the handlebody group
$$\T:=\H_1=\ker(\H\lto\Aut(F))$$
is known as the \emph{twist group} or the \emph{Luft subgroup} of $\H$.
Indeed, Luft \cite{Luft} (see Theorem \ref{thm:Luft}) showed that $\T$ is generated by Dehn twists along \emph{meridians} of $V$,
i.e$.$
simple closed curves in $\Sigma$ 
bounding properly embedded disks in $V$.
In the comparison between the handlebody
group and the mapping class group (Table~\ref{tab:comparison}),
the twist group is an analogue of the Torelli group.
The latter is known to be 
residually  torsion-free nilpotent \cite{Hain_Torelli,BP}. 
Similarly,  as a direct application of the Johnson filtration
for the handlebody group, we obtain the following.

\begin{theorem}[see Corollary \ref{cor:residual}]
The group $\modT$ is residually 
torsion-free nilpotent.
\end{theorem}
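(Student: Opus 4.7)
The plan is to derive the residual torsion-free nilpotency of $\T=\H_1$ from three facts about the Johnson filtration of $\H$ already stated in this introduction: it is an N-series, it has trivial intersection $\bigcap_{k\ge1}\H_k=\{1\}$, and its graded pieces embed via the Johnson homomorphisms
$\overline\tau_k\colon\overline\H_k\hookrightarrow\Der_k^\zeta\big(F\!\ltimes\!\Lie(\AAlpha)\big).$
The strategy reduces to showing that $\T/\H_k$ is torsion-free nilpotent for every $k\ge1$: any non-trivial $g\in\T$ lies outside some $\H_k$, and $\T/\H_k$ then witnesses the residual property.

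Nilpotency of $\T/\H_k$ is immediate, since $[\H_i,\H_j]\subset\H_{i+j}$ implies that the images of $\H_1\ge\H_2\ge\cdots\ge\H_k$ form a central series in $\T/\H_k$ of length at most $k-1$. Torsion-freeness is proved by induction on $k$ using the short exact sequences
$$1\lto \H_{k-1}/\H_k \lto \T/\H_k \lto \T/\H_{k-1}\lto 1$$
and the elementary fact that an extension of a torsion-free group by a torsion-free group is again torsion-free. It therefore suffices to show that each graded quotient $\overline\H_{k-1}$ is torsion-free, for which I would invoke the injectivity of $\overline\tau_{k-1}$ and reduce to torsion-freeness of its target.

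The one remaining technical point --- the main obstacle, such as it is --- is torsion-freeness of $\Der_k^\zeta(F\!\ltimes\!\Lie(\AAlpha))$. Since $\A$ is a subgroup of the free group $\pi$, it is itself free, so $\AAlpha$ is a free abelian group; hence each homogeneous piece $\Lie_n(\AAlpha)$ of the free Lie algebra is free abelian, and both the group of degree-$k$ derivations of $F\!\ltimes\!\Lie(\AAlpha)$ and the group of $1$-cocycles $F\to\Lie_k(\AAlpha)$ are torsion-free. The subgroup $\Der_k^\zeta$ defined by the vanishing condition at $\zeta$ inherits this property, closing the induction and hence completing the proof.
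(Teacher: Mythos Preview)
Your proof is correct and follows essentially the same approach as the paper: both arguments deduce residual torsion-free nilpotency from the three facts that the Johnson filtration is an N-series on $\modT$, has trivial intersection, and has torsion-free graded quotients (the latter because they embed via $\overline\tau_k$ into torsion-free abelian groups built from $\Lie(\AAlpha)$). The paper phrases the conclusion via the rational lower central series (showing $\Gamma_k^\Q\modT\subset\modH_k$), while you phrase it via torsion-free nilpotency of each $\modT/\modH_k$; these are standard equivalent formulations of the same argument.
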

\noindent

Section \ref{sec:first_quotient_handlebody} 
considers the first few terms
of the Johnson filtration $(\modH_k)_{k\geq 1}$.
Since the canonical homomorphism $\H\to\Aut(F)$ is surjective, 
we have an isomorphism
$$\H/\T=\H_0/\H_1\simeq \Aut(F).$$
Furthermore, 
a homomorphism $\Mag:\T\to\Mat(g\times g;\Z[F])$ 
extracted from \eqref{mag} embeds
$\modH_1/\modH_2$ into a space of hermitian matrices,
see Proposition \ref{prop::hermitian}.

In Section \ref{sec:Johnson_homomorphisms_handlebody},
we consider the Johnson homomorphisms for the handlebody group.
While $\tau_1$ is equivalent to 
the above representation $\Mag: \modT \to \hbox{Mat}(g\times g;\Z[F])$,
we need a more general approach to study the Johnson homomorphisms 
in arbitrary degrees.
They form a morphism of  graded Lie algebras
\begin{equation} \label{eq:tau_x}
\overline{\tau}_+:
\overline{\modH}_+ \longrightarrow
\Der_+^\zeta\big(F\!\ltimes\!\Lie(\AAlpha)\big)    ,
\end{equation}
whose source is the associated graded of the Johnson filtration,
and whose target consists of positive-degree derivations of 
$\overline{\Alpha}_\bu =F \ltimes \Lie(\AAlpha)$ 
that vanish on the boundary element $[\zeta]_1 \in \overline{\Alpha}_1=\AAlpha$.
We call $\Der_+^\zeta\big(F\!\ltimes\!\Lie(\AAlpha)\big)$ the \emph{Lie algebra of special derivations}, which
 is the analogue of the Lie algebra of symplectic derivations
in the handlebody case.
We also refine formality of the free pair $(\pi,\Alpha)$
with the boundary condition (see Lemma~\ref{lem:special}):
this leads to \emph{special expansions} of $(\pi,\Alpha)$, 
which should be compared to symplectic expansions of $\pi$.
Thus, we obtain the following analogue of \eqref{eq:varrho_theta} 
for the handlebody group.

\begin{theorem}[see Theorem \ref{thm:varrho_HH}]
Let  $\theta$ be a special expansion of $(\pi,\Alpha)$.
There is an embedding\\[-0.6cm]
\begin{equation}\label{eq:varrho_theta_ter}
\varrho^\theta: \modT  \lto 
\widehat{\Der}_+^\zeta\big(F\!\ltimes\!\Lie(\AAlpha^\Q)\big)    
\end{equation}  
of the twist group into the degree-completed Lie algebra of special derivations 
(equip\-ped with the BCH product),
which induces \eqref{eq:tau_x} 
on the associated graded (with rational coefficients)
\end{theorem}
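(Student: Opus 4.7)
The plan is to deduce the theorem from the general construction~\eqref{eq:varrho_theta_bis} applied to the free pair $(\pi,\A)$, and then to extract the boundary condition from the specialness of $\theta$.

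First I would invoke the general machinery: the extended N-series $\A_*$ is formal (Section~\ref{sec:free_pairs}), so for any expansion $\theta$ and $\modG=\Aut(\pi,\A)$, formula~\eqref{eq:varrho_theta_bis} gives an embedding $\varrho^\theta:\modG_1\to\widehat{\Der}_+\bigl(F\!\ltimes\!\Lie(\AAlpha^\Q)\bigr)$ which induces the rationalization of~\eqref{eq:tau_+_bis} on the graded. By Griffiths's identification~\eqref{eq:Griffiths}, $\modH\leq\modG$, so $\modT=\modH_1\leq\modG_1$ and restricting $\varrho^\theta$ to $\modT$ yields a group embedding into $\widehat{\Der}_+\bigl(F\!\ltimes\!\Lie(\AAlpha^\Q)\bigr)$ which, by the same restriction, induces~\eqref{eq:tau_x} on the graded after tensoring with $\Q$. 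The content specific to the handlebody case is thus to show that the image of $\modT$ lies in the subspace $\widehat{\Der}_+^\zeta$ of special derivations when $\theta$ is a special expansion.

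For this, I fix $f\in\modT$ and analyze the Lie-derivation component $d_+$ of $\varrho^\theta(f)=(d_0,d_+)$. By construction, $d_+$ is the logarithm, in the group of filtered Hopf algebra automorphisms of $\widehat{T}(\AAlpha^\Q)$, of $\theta\circ\hat f\circ\theta^{-1}$, where $\hat f$ is the completion of $f|_\A$; this logarithm is well-defined because $\modT=\modH_1\subset\modH_1^1$ (a consequence of Theorem~\ref{main1}), so $f$ acts trivially on $\AAlpha$ and the transported automorphism is tangent to the identity. The specialness of $\theta$ (Lemma~\ref{lem:special}) normalizes $\theta(\zeta)$ to the group-like element $\exp([\zeta]_1)$. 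Since any $f\in\modH$ fixes $\partial\Sigma$ pointwise, $f(\zeta)=\zeta$, and so
$$
\exp(d_+)\bigl(\exp([\zeta]_1)\bigr)=\theta\bigl(f(\zeta)\bigr)=\theta(\zeta)=\exp([\zeta]_1).
$$
Translating via the bijection between group-like and primitive elements, $\exp(d_+)([\zeta]_1)=[\zeta]_1$; expanding the exponential in powers of $d_+$ and separating by degree, each graded contribution of $d_+([\zeta]_1),\,d_+^2([\zeta]_1)/2,\ldots$ must vanish independently since $d_+$ has strictly positive degree. Hence $d_+([\zeta]_1)=0$, which is exactly the special-derivation condition.

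Injectivity of $\varrho^\theta|_{\modT}$ follows from the injectivity of $\overline\tau_+$ on the graded together with the triviality $\bigcap_{k\geq 1}\modH_k=\{1\}$ from the corollary after Theorem~\ref{main1}, while the compatibility with~\eqref{eq:tau_x} on the graded is built into~\eqref{eq:varrho_theta_bis} via~\eqref{eq:formula}. The main obstacle is thus the boundary-condition argument above, whose success hinges on Lemma~\ref{lem:special} being formulated so that $\theta(\zeta)=\exp([\zeta]_1)$ \emph{exactly} (rather than only modulo higher-order terms); with that rigid normalization secured, the rest of the proof is a routine restriction within the general extended-N-series framework of Section~\ref{sec:general_theory}.
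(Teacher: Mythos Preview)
Your proposal is correct and follows essentially the same approach as the paper: restrict the general embedding $\varrho^\theta:\modG_1\to\widehat{\Der}_+(\overline{\Alpha}_\bu^\Q)$ from Theorem~\ref{thm:varrho} to $\modT=\modH_1\subset\modG_1$, and use $f(\zeta)=\zeta$ together with the special normalization $\theta(\zeta)=\exp([\zeta]_1)$ to conclude that $\log(\rho^\theta(f))$ vanishes on $[\zeta]_1$. The paper's proof is simply terser; your degree-separation argument for $d_+([\zeta]_1)=0$ is a valid elaboration of what the paper states in one line.
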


In Section \ref{sec:special_derivations}, 
we consider the canonical maps 
$\Der_+(F\!\ltimes\!\Lie(\AAlpha)) \to Z^1(F,\Lie(\AAlpha))$
and $\Der_+(F\!\ltimes\!\Lie(\AAlpha)) \to \Hom(\AAlpha,\Lie(\AAlpha))$
which associate to any derivation $d=(d_0,d_+)$ the corresponding $1$-cocycle $d_0$
and the restriction $d_+\vert_{\AAlpha}$ of the corresponding derivation, respectively. 
Let 
$$
D^0_+ \subset Z^1(F,\Lie(\AAlpha)) \quad \hbox{and}  \quad
D^1_+ \subset \Hom(\AAlpha,\Lie(\AAlpha))
$$
be the subgroups defined by the boundary condition $d([\zeta]_1)=0$, which is
satisfied by any $d\in \Der_+^\zeta(F\!\ltimes\!\Lie(\AAlpha))$.
It turns out that the corresponding homomorphisms 
 $\Der_+^\zeta(F\!\ltimes\!\Lie(\AAlpha)) \to D^0_+$
and $\Der_+^\zeta(F\!\ltimes\!\Lie(\AAlpha)) \to D^1_+$ are isomorphisms.
Thus, we obtain two  descriptions $D^0_+$ and $D^1_+$ 
of the Lie algebra $\Der_+^\zeta(F\!\ltimes\!\Lie(\AAlpha))$, 
and we are free to work with either the $1$-cocycles 
$\tau_k^0(f)\in D^0_k$ % F\to \Lie_k(\AAlpha)$ 
or the homomorphisms $\tau_k^1(f) \in D^1_k$
% : \AAlpha \to \Lie_{k+1}(\AAlpha)$ 
for $f\in \modH_k$.

We give in  Section \ref {sec:diagrams} a diagrammatic description 
of the graded Lie algebra $\Der_+^\zeta(F\!\ltimes\!\Lie(\AAlpha))$ 
with rational coefficients. 
Specifically, we consider  
a $\Q$-vector space $\modD(\AAlpha^\Q, \Q[F])$
that is generated by oriented trivalent trees
with leaves colored by $\AAlpha^\Q:= \AAlpha \otimes \Q$
and edges colored by $\Q[F]$. For example, here is a generator in degree $3$:
$$
\labellist
\small\hair 2pt
\pinlabel {${a}$} [r] at 2 150
 \pinlabel {${d}$} [r] at 2 5
\pinlabel {${b}$} [l] at 293 149
\pinlabel {${c}$} [l] at 294 2
\pinlabel {$x$} [b] at 53 111
\pinlabel {$y$} [b] at 153 85
\pinlabel {$z$} [tr] at 255 48
\pinlabel {\quad (with $a,b,c,d\in \AAlpha^\Q$ and $x,y,z\in \Q[F]$)} [l] at 340 2
\endlabellist
\centering
\includegraphics[scale=0.24]{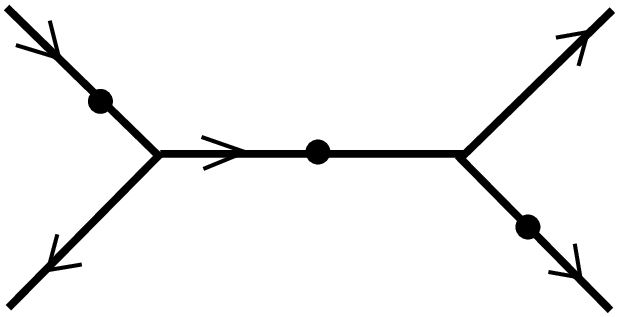} \qquad \qquad \qquad \qquad 
$$
In comparison with the space $\modD(H^\Q)$ which serves for the mapping class group of~$\Sigma$,
the definition of $\modD(\AAlpha^\Q, \Q[F])$ involves the cocommutative Hopf algebra $\Q[F]$
and the $\Q[F]$-module $\AAlpha^\Q$.
We obtain the following analogue of \eqref{eq:Kontsevich} for $\modH$.%the handlebody group.

\begin{theorem}[See Theorem \ref{th:tree_bracket}]
There is  a graded Lie $\Q$-algebra isomorphism
$$
{\Der}_+^\zeta\big(F\!\ltimes\!\Lie(\AAlpha^\Q)\big)    
\simeq \modD\big(\AAlpha^\Q, \Q[F]\big),
$$    
where the Lie bracket of trees
is defined by ``grafting leaves-to-beads'' and ``branching leaves-to-leaves'' using
intersection operations 
$\Theta: \Z[F] \times \AAlpha \to \Z[F] \otimes \Z[F]$
and $\Psi: \AAlpha \times \AAlpha \to \Z[F] \otimes \AAlpha$, respectively.
\end{theorem}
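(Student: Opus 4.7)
The plan is to adapt the classical Kontsevich-style isomorphism \eqref{eq:Kontsevich} to the extended Lie algebra $F\!\ltimes\!\Lie(\AAlpha^\Q)$, taking the $\Q[F]$-module structure on $\AAlpha^\Q$ into account. First I would invoke Section \ref{sec:special_derivations} to identify a special derivation $d$ of degree $k$ with its restriction $\phi := d_+\vert_{\AAlpha^\Q} : \AAlpha^\Q \to \Lie_{k+1}(\AAlpha^\Q)$, subject to the boundary condition $\phi([\zeta]_1)=0$. The task then becomes to describe such $\phi$'s (and their induced bracket) diagrammatically in terms of trees with $\AAlpha^\Q$-colored leaves and $\Q[F]$-colored edges.

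Next I would recall the standard tree model for the free Lie $\Q$-algebra: $\Lie_{k+1}(\AAlpha^\Q)$ is spanned by rooted binary trees with $k+1$ leaves colored by $\AAlpha^\Q$, modulo antisymmetry and IHX at each internal vertex. Since $\AAlpha^\Q$ is a $\Q[F]$-module, the $F$-action extends by derivations to $\Lie(\AAlpha^\Q)$, and the Leibniz rule allows one to push any internal $F$-action outward along edges as a sum of $\Q[F]$-labelled beads; this yields a spanning set of $\Lie_{k+1}(\AAlpha^\Q)$ indexed by rooted trees whose edges carry $\Q[F]$-beads. A linear map $\phi \in \Hom_\Q(\AAlpha^\Q, \Lie_{k+1}(\AAlpha^\Q))$ is then equivalent to a rooted tree with $k+2$ leaves, one of which is distinguished as the ``input''. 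Summing over all choices of distinguished leaf, the boundary condition $\phi([\zeta]_1)=0$ forces the output to be invariant under re-rooting, so we obtain an unrooted trivalent tree in $\modD(\AAlpha^\Q,\Q[F])$. This construction yields a $\Q$-linear map $\Phi: \modD(\AAlpha^\Q,\Q[F]) \to \Der_+^\zeta\big(F\!\ltimes\!\Lie(\AAlpha^\Q)\big)$; well-definedness modulo AS, IHX and multi-linearity follows from the corresponding identities in $\Lie(\AAlpha^\Q)$, and bijectivity follows by reversing the recipe degree by degree.

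The crux of the theorem, and the main obstacle, is verifying that $\Phi$ intertwines the two Lie brackets. Given $T, T' \in \modD$ with $\Phi(T) = d$ and $\Phi(T') = d'$, one computes $[d,d'](c)$ for $c \in \AAlpha^\Q$ by applying the derivation Leibniz rule to the tree-shaped element $d'(c) \in \Lie(\AAlpha^\Q)$. Acting by $d$ on this element produces two distinct types of contributions: at each $\AAlpha^\Q$-colored leaf $a_j$ of the tree representing $d'(c)$, substituting the value $d(a_j)$ glues a leaf of $T$ to a leaf of $T'$, producing exactly the \emph{leaves-to-leaves branching} governed by $\Psi:\AAlpha\times\AAlpha \to \Z[F]\otimes\AAlpha$; at each $\Q[F]$-bead $x$ on an edge of the tree for $d'(c)$, substituting the 1-cocycle value $d_0(x) \in \Lie_k(\AAlpha^\Q)$ grafts a leaf of $T$ to the bead, producing the \emph{leaves-to-beads grafting} governed by $\Theta:\Z[F]\times\AAlpha \to \Z[F]\otimes\Z[F]$. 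The delicate point is that $d_0$ is not independent data: by Section \ref{sec:special_derivations} it is determined by $d_+$ via the boundary condition. One must therefore compute the induced formula for $d_0$ in terms of $T$ and verify that the substitution above gives precisely $\Theta$; this is the most technical part, where signs, $F$-equivariance conventions, and the normalizations of $\Psi$ and $\Theta$ coming from the intersection pairings in Section \ref{sec:special_derivations} must all be handled carefully. Matching antisymmetry $[d,d']=-[d',d]$ with the AS relation on trees then completes the verification.
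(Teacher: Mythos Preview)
Your overall strategy---realize the isomorphism as a linear map and then check the brackets via the Leibniz rule---is the right shape, but the vector-space step contains a genuine gap that would cause the argument to fail as written.

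You identify a special derivation with $\phi=d_+\vert_{\AAlpha^\Q}\in\Hom_\Q(\AAlpha^\Q,\Lie_{k+1}(\AAlpha^\Q))$ and then assert that such a $\phi$ ``is equivalent to a rooted tree with $k+2$ leaves, one of which is distinguished as the input.'' That step silently imports the self-duality $H^\Q\simeq (H^\Q)^*$ from the classical case \eqref{eq:Kontsevich}, which is what lets a tree with an extra leaf encode a linear functional. Here there is no $\Q$-valued pairing on $\AAlpha^\Q$; the module $\AAlpha^\Q$ is infinite-dimensional over $\Q$, and its relevant dual is $I_F$, via $\langle-,-\rangle$ (Proposition~\ref{prop:intersection}). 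So an element of $D^1_+$ has no direct tree model, and the claimed inverse (``reversing the recipe degree by degree'') is not available.

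The paper circumvents this by working through $D^0_+$ rather than $D^1_+$. By Lemma~\ref{lem:vartheta} one has $Z^1(F,\Lie(\AAlpha^\Q))\simeq \AAlpha^{\Q,r}\otimes_{\Q[F]}\Lie(\AAlpha^\Q)$, and the condition defining $D^0_+$ is exactly membership in $\ker\beta$. This is the target of the classical ``sum over leaves'' map $\eta$ of \eqref{eq:eta_iso}, and Proposition~\ref{prop:eta} (essentially the $\Q[F]$-coinvariant version of the standard argument) gives the linear isomorphism $\eta:\mathcal D\xrightarrow{\ \simeq\ }D^0_+\otimes\Q$. Only \emph{after} this does one pass back to $D^1_+$ via the inversion formula of Proposition~\ref{prop:inversion}, which is where $\Psi$ first enters: the derivation attached to a tree is given by Proposition~\ref{prop:action}, and it is not at all the naive ``evaluate at a leaf'' map.

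Your sketch of the bracket computation is closer to the mark, but once $d(a)$ has the form \eqref{eq:d(a)} the Leibniz expansion of $d_+(e(a))-e_+(d(a))$ produces many cross-terms beyond the simple ``glue a leaf of $T$ to a leaf of $T'$'' picture. The paper reduces to bead-free trees and then cancels these cross-terms using the pseudo-Jacobi identities \eqref{eq:pseudo-Jacobi}--\eqref{eq:pseudo-Jacobi_bis} for $\Psi$ (themselves derived from the quasi-Jacobi identity \eqref{eq:quasi-Jacobi} for the intersection double bracket). This is where the real work lies; it is not just a matter of tracking signs and normalizations.
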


\noindent 
Thus, the Lie bracket  in $\modD(\AAlpha^\Q, \Q[F])$
is an analogue of \eqref{eq:trees_bracket}, 
where the operations $\Theta$ and $\Psi$ play the role of the pairing $\omega$.
These operations, 
 presented in Appendix~\ref{sec:intersection}, are derived 
from the ``homotopy intersection form''
$\eta$ of $\Sigma$ \cite{Turaev}.
The properties of $\Theta$ and $\Psi$ necessary for the proof of Theorem \ref{th:tree_bracket}
are derived from the axioms of a ``quasi-Poisson double bracket'', which is produced from $\eta$ \cite{MT14}.

Section \ref{sec:formulas_examples} is devoted to explicit computations 
of the Johnson homomorphisms, starting with the degree $1$ case:

\noindent
\begin{minipage}{0.75\textwidth}
\begin{example}
Consider the first Johnson homomorphism $\tau_1^0: \modH_1 \to D_1^0$,  defined  on the twist group~$\modH_1= \modT$.
We can identify the target $ D^0_1$
with $\mathrm{Sym}^2(\AAlpha)_{\Z[F]}$,
the $\Z[F]$-coinvariants
of the symmetric tensors $\mathrm{Sym}^2(\AAlpha)$. 
For any properly embedded disk $U\subset V$,
we have 
\begin{equation}  \label{eq:tau_1^0}
\tau_1^0\big(T_{\partial U}\big) = - [u] \otimes [u] \in D_1^0    ,
\end{equation}
where $u\in \Alpha$ is  obtained from the closed curve $\partial U$
by orienting it and basing it at $\star$ in an arbitrary way
(see Proposition \ref{prop:tau_1(disk_twist)}).

\end{example}
\end{minipage}
\begin{minipage}{0.23\textwidth}
\labellist
\small\hair 2pt
 \pinlabel {\textcolor{blue}{$U$}} [t] at 42 285
 \pinlabel {$\vdots$} at 111 217
\endlabellist
\centering
\includegraphics[scale=0.23]{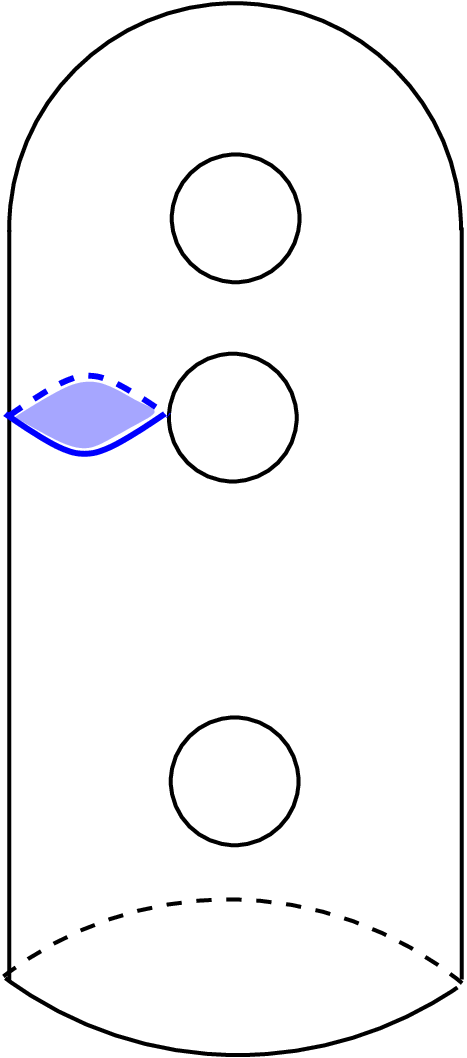}
\end{minipage}

Using the diagrammatic formula for the Lie bracket in
${\Der}_+^\zeta\big(F\!\ltimes\!\Lie(\AAlpha^\Q)\big) \simeq \modD(\AAlpha^\Q, \Q[F])$,
one can use \eqref{eq:tau_1^0} to compute $\tau_k$ on $\Gamma_k \modT \subset \modH_k$.
This is the key to prove the following:

\begin{theorem}[see Theorem \ref{th:infiniteness}]
Let $g\geq 3$.
There exists a subgroup $\modL$ of $\modT$
which is free of countably-infinite rank and 
such that $\overline{\modL}_+$
(the associated graded of $\modL$ with respect to its lower central series)
embeds both into $\overline{\modT}_+$ 
(the associated graded of $\modT$ with respect to its lower central series)
and $\overline{\modH}_+$ 
(the associated graded of $\modH$ with respect to the Johnson filtration).
\end{theorem}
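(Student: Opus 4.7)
The plan is to exhibit an explicit countable family of Dehn twists $\ell_1,\ell_2,\ldots\in\modT$ along meridians whose classes $\tau_1^0(\ell_n)$, under the diagrammatic identification of Theorem \ref{th:tree_bracket}, generate a free Lie $\Q$-subalgebra of $\modD(\AAlpha^\Q,\Q[F])$ of countably-infinite rank. Residual torsion-free nilpotence of $\modT$ (Corollary \ref{cor:residual}) will then promote this graded freeness to freeness of the group $\modL:=\langle\ell_1,\ell_2,\ldots\rangle\leq\modT$. Concretely, using $g\geq 3$, fix a standard meridian system $D_1,\ldots,D_g$ with dual basis $x_1,\ldots,x_g\in F$ and corresponding $\Z[F]$-basis $a_1,\ldots,a_g$ of $\AAlpha$. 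For each $n\geq 1$, let $U_n\subset V$ be the meridian disk obtained by dragging $D_1$ along the loop $x_2^n x_3\in F$, and set $\ell_n:=T_{\partial U_n}$; a suitable based representative yields $[u_n]=x_2^n x_3\cdot a_1$ in $\AAlpha$. By \eqref{eq:tau_1^0},
$$\tau_1^0(\ell_n)=-[u_n]\otimes[u_n]\in\mathrm{Sym}^2(\AAlpha)_{\Z[F]},$$
and these are $\Q$-linearly independent since the words $x_2^n x_3$ lie in pairwise distinct $F$-conjugacy classes.

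Under the isomorphism $\Der_+^\zeta(F\!\ltimes\!\Lie(\AAlpha^\Q))\simeq\modD(\AAlpha^\Q,\Q[F])$, each $\tau_1^0(\ell_n)$ is represented by a Y-tree whose two leaves are both colored by $[u_n]$. Iterated Lie brackets are computed by alternately grafting leaves to beads (via $\Theta$) and branching leaves to leaves (via $\Psi$), yielding decorated trees whose beads and leaf-colors encode the full bracketing history. The main technical obstacle is to verify that, for the specific choice $[u_n]=x_2^n x_3\cdot a_1$, the trees indexed by distinct Hall-basis elements of the free Lie algebra on $\{\tau_1^0(\ell_n)\}$ remain pairwise distinct modulo the relations of $\modD(\AAlpha^\Q,\Q[F])$. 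This requires inductively tracking iterated $\Theta$- and $\Psi$-operations in the group ring of the free group $F$, and the hypothesis $g\geq 3$ is used precisely because three distinct free generators $x_1,x_2,x_3$ suffice to keep the decorations in ``general position'' and avoid collapses under antisymmetry or the $F$-action. Granted this linear-independence, the Lie subalgebra of $\modD(\AAlpha^\Q,\Q[F])$ generated by $\{\tau_1^0(\ell_n)\}_n$ is indeed the free Lie $\Q$-algebra on those generators.

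Consequently the composition
$$\overline{\modL}_+\otimes\Q\;\lto\;\overline{\modT}_+\otimes\Q\;\lto\;\overline{\modH}_+\otimes\Q\;\xrightarrow{\overline{\tau}_+\otimes\Q}\;\modD(\AAlpha^\Q,\Q[F])$$
sends $[\ell_n]_1\mapsto\tau_1^0(\ell_n)$ and is injective on the Lie subalgebra generated by the $[\ell_n]_1$; hence both $\overline{\modL}_+\hookrightarrow\overline{\modT}_+$ and $\overline{\modL}_+\hookrightarrow\overline{\modH}_+$ are embeddings, and $\overline{\modL}_+\otimes\Q$ is free Lie on the $[\ell_n]_1$. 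To deduce that $\modL$ itself is free of countably-infinite rank, consider the canonical surjection $\Phi$ from the free group on countably many symbols $e_1,e_2,\ldots$ onto $\modL$ sending $e_n\mapsto\ell_n$. The induced surjection on associated graded Lie $\Q$-algebras for the respective lower central series sends the free Lie generators of the source to the free Lie generators of the target, hence is an isomorphism in every degree; residual torsion-free nilpotence of $\modL\leq\modT$ (Corollary \ref{cor:residual}) then forces $\Phi$ itself to be an isomorphism, completing the proof.
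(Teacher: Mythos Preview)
Your proposal has a fatal error at the very first step. You claim that the elements $\tau_1^0(\ell_n)=-[u_n]\otimes[u_n]$ with $[u_n]=x_2^n x_3\cdot a_1$ are $\Q$-linearly independent in $\mathrm{Sym}^2(\AAlpha)_{\Z[F]}$, but in fact they are all \emph{equal}. The space $D^0_1\simeq\AAlpha^r\otimes_{\Z[F]}\AAlpha$ is the quotient of $\AAlpha\otimes_\Z\AAlpha$ by the diagonal $F$-action, and for any $f\in F$ one has $(f\cdot a_1)\otimes(f\cdot a_1)=a_1\otimes a_1$ in these coinvariants. Diagrammatically, the two beads $f$ cancel via the bead-out and Hopf relations, leaving the single tree $a_1\textsf{---}a_1$ independently of $n$. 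Your appeal to ``pairwise distinct $F$-conjugacy classes'' is irrelevant: what matters is $F$-coinvariants, not conjugacy. Consequently the Lie subalgebra generated by your $\tau_1^0(\ell_n)$ is one-dimensional and abelian, and nothing like a free Lie algebra on infinitely many generators can arise.

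The paper circumvents this by a more careful choice: each $\ell_n$ is a \emph{product} of three Dehn twists, arranged so that $\tau_1^d(\ell_n)$ is the degree-$1$ tree with leaves colored by the \emph{distinct} basis elements $a_1$ and $a_2$, joined by a bead $x_3^{-n}$. With two different leaf colors the bead cannot be absorbed, and the elements are genuinely independent. The paper then makes the free-Lie-algebra verification concrete (rather than leaving it as a ``main technical obstacle''): it introduces the quotient $\modD/\modR$, where $\modR$ is spanned by trees with at least two $a_1$-colored leaves, and observes that modulo $\modR$ the bracket computation collapses to the bracket in $\Lie(S)$ with $S=\Z[x_3^{\pm}]\cdot a_2$, since $\Psi(a_i,a_j)=\delta_{ij}\otimes a_i$ and $\langle x_3^{\pm n},a_1\rangle=\langle x_3^{\pm n},a_2\rangle=0$. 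This gives a clean commutative diagram identifying $\overline{L}_+$ with $\Lie(S)$, from which both embeddings follow.
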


\noindent
It follows that, for every $k\geq 1$, 
the quotient groups $\Gamma_k \modT/\Gamma_{k+1} \modT$
and $\modH_k/\modH_{k+1}$ are not finitely generated.
For $k=1$, we recover McCullough's result \cite{McCullough}
that the (abelianization of the) twist group $\modT$ is not finitely generated 
(see Theorem~\ref{th:McCullough} for a more precise statement).
Furthermore, as new results, we obtain that each term
of the Johnson filtration (resp$.$ of the lower central series)
of $\modT$ is also not finitely generated.
This is in sharp contrast with what is known 
for the Torelli group \cite{Johnson_finiteness} 
and its corresponding filtrations \cite{CEP}.

Next, as a strong generalization of  \eqref{eq:tau_1^0},
we compute the representation \eqref{eq:varrho_theta_ter} on 
a Dehn twist $T_{\partial U}$ along an arbitrary meridian $\partial U$,
and for any special expansion $\theta$ of the free pair $(\pi,\Alpha)$ (Theorem \ref{th:KK_analogue}).
This is an analogue
of a formula of Kawazumi and Kuno \cite{KK14}, who computed \eqref{eq:varrho_theta} for an arbitrary Dehn twist.

Finally, we consider embeddings of 
the pure braid group $PB_g$ into the twist group $\modT$.
% the handlebody group $\modH$.
Theorem \ref{th:Milnor_to_Johnson} relates the lower central series of $PB_g$ 
to the Johnson filtration of $\modH$ and, consequently, 
Milnor invariants to Johnson homomorphisms. 
(An analogous result for the surface mapping class groups was given by
Gervais and Habegger \cite{GH02}.)
This is another
evidence of
the richness of the Johnson--Morita theory 
for $\modH$.\\

\noindent
\textbf{Acknowledgments.} 
The authors would like to thank J.-B. Meilhan 
for his helpful comments on the first stages of this manuscript, 
J. Darné for pointing out a missing hypothesis
in \cite[Theorem 10.2]{HM18} (see the footnote of page \pageref{page:Darne}),
and Mai Katada for her helpful comments.
The second author is also grateful to R. Hain 
for explaining him the main constructions of \cite{Hain_handlebody}.

The work of the first author was partly funded by JSPS KAKENHI Grant Number 18H01119 and 22K03311.
The work of the second author was partly funded by the project “AlMaRe” (ANR-19-CE40- 0001-01);
the IMB receives support from the EIPHI Graduate School (ANR-17-EURE-0002).\\

\noindent
\textbf{Conventions.} Unless otherwise stated, all group actions are \emph{left} actions
and modules (over any ring) are \emph{left} modules.
If a group $G$ acts on an abelian group $A$, then the action of $g\in G$
on $a\in A$ is denoted by ${}^ga$ or $g\cdot a$, or even $g a$
if there is no risk of confusion.
If not specified, the ground ring for linear algebra is $\Z$.

If $S$ is a subset of a group $G$, let $\la S \ra$ denote the subgroup of $G$ generated by~$S$, and $\lala S \rara$ the subgroup normally generated by $S$.
For all $x,y \in G$, we set ${}^xy=xyx^{-1}$, $y^x=x^{-1}yx$
and  $[x,y]=xyx^{-1}y^{-1}$. For any two subgroups $K,H$ of $G$, let $[K,H]$ denote the subgroup of $G$ generated by the commutators
$[k,h]$ for $k\in K,h\in H$.

For any two $\Z$-modules $U$ and $V$, an element 
$z\in U\otimes V$ is sometimes 
denoted by $z^\ell \otimes z^r$ to suggest its  expansion 
$z=\sum_i u_i \otimes v_i$ in terms of finitely many 
elements $u_i\in U$ and $v_i \in V$.

\section{Johnson homomorphisms for an extended N-series}
\label{sec:general_theory}

In this section, we summarize parts of the theory of generalized Johnson homomorphisms 
as developed in \cite{HM18} for extended N-series.

\subsection{Extended N-series and extended graded Lie algebras} \label{subsec:eN-series}

An \emph{extended N-series} $K_*=(K_m)_{m\ge 0}$  is a  descending  series of subgroups
\begin{gather*}
K_0\ge K_1\ge  \cdots \ge K_k \geq \cdots
\end{gather*}
 such that $ [K_m,K_n]  \le  K_{m+n}$ for all $m,n\ge 0$.
In particular, $K_m$ is normal in $K_0$ for any $m\geq 0$.
An \emph{automorphism} of $K_*$ is an automorphism $f$ of $K_0$ with
$f(K_m)=K_m$ for all $m\ge0$.  Let $\Aut(K_*)$ denote the group of all automorphisms of $K_*$.

An \emph{extended graded Lie algebra}
$L_\bu=(L_m)_{m\ge 0}$ consists of a group $L_0$,
 a graded Lie algebra $L_+  = (L_m)_{m\ge 1}$ and
 an action {$(z,x)\mapsto {}^z x$} of $L_0$ on $L_+$ by graded Lie algebra automorphisms.
Any {extended N-series} $K_*$ has an \emph{associated} extended graded Lie algebra $\overline{K}_\bullet$
defined by $\overline{K}_m=K_m/K_{m+1}$ for all $m\geq 0$, where the Lie bracket in $\overline{K}_\bullet$ is induced
by the commutator operation $(x,y)\mapsto [x,y]$,
and $\overline{K}_0$ acts on $\overline{K}_+$ by conjugation $(x,y) \mapsto {}^xy$.

We will sometimes need the rational version $\overline{K}_\bullet^\Q$ defined by $\overline K_0^\Q=K_0/K_1$
and the graded Lie $\Q$-algebra 
$\overline{K}_+^\Q= \big((K_m/K_{m+1})\otimes \Q\big)_{m\geq 1}$.

A \emph{morphism} $f_\bullet=(f_m)_{m\ge 0}: L_\bu\rightarrow L'_\bu$ of extended graded Lie algebras
consists of  a group homomorphism $f_0: L_0\rightarrow L'_0$,
and  a graded Lie algebra homomorphism  $f_+  = (f_m)_{m\ge 1}: L_+\rightarrow L'_+$
which is equivariant over $f_0$:
$$
f_m({}^xy)={}^{f_0(x)} (f_m(y) ) \quad \hbox{for all $x\in L_0$, $y\in L_m$, $m\geq 1$.}
$$

A \emph{derivation} of \emph{degree} $m\ge1$ of an extended graded Lie algebra $L_\bullet$ 
 is a family  $d=(d_i)_{i\ge 0}$ of maps
  $d_i: L_i\rightarrow L_{m+i}$ satisfying the following  conditions.
\begin{enumerate}
\item $d_+ = (d_i)_{i\ge 1}$ is a derivation of the graded Lie
  algebra $L_+$, i.e. 
$$
  d_{i+j}([a,b])=[d_i(a),b]+[a,d_j(b)] \quad \hbox{for all $a\in L_i$, $b\in L_j$, $i,j\ge 1$.}
$$
\item $d_0: L_0\rightarrow L_m$ is a $1$-cocycle, i.e. 
$$
  d_0(ab)=d_0(a)+{}^a(d_0(b)) \quad \hbox{for all $a,b\in L_0$}.
$$
\item $d_0$ controls the defect of $L_0$-equivariance of $d_+$, i.e.
$$
    d_i({}^ab)={}^a(d_i(b)) + [d_0(a),{}^ab] \quad \hbox{for  all $a\in L_0$, $b\in L_i$, $i\ge 1$}.
$$
\end{enumerate}

Let $\Der_m(L_\bu)$ be the set  of derivations of $L_\bullet$ of degree $m$. Then $\Der_+(L_\bu) := (\Der_m(L_\bu))_{m\ge 1}$ is a graded Lie algebra whose Lie bracket
generalizes the usual Lie bracket for derivations of $L_+$  \cite[Theorem 5.2]{HM18}.
Furthermore, $\Der_0(L_\bu) := \Aut(L_\bu)$ acts on $\Der_+(L_\bu)$ by conjugation 
and we get an extended graded Lie algebra  $\Der_\bu(L_\bu)$ \cite[Theorem 5.3]{HM18}.
The necessary definitions will appear 
in Section \ref{sec:free_pairs}.

Let $K_*$ be an extended N-series. An \emph{action} of a group $\modG$ on $K_*$
is an action of $\modG$ on $K_0$ such that $g(K_m)=K_{m}$ for all $g\in \modG,m\geq 0$. In other words, it is a homomorphism $\modG\to\Aut(K_*)$.
In this case, $K_*$ induces an extended N-series $\modG_*$, called the \emph{Johnson filtration} and defined by 
\begin{equation} \label{eq:J_filtration}
\modG_m := \bigcap_{j\geq 0} \modG_m^j ,
\quad \hbox{where } \ 
\modG_m^j := \big\{g \in \modG \mid   g(x)x^{-1} \in K_{m+j} \text{ for all }x \in K_j\big\}
\end{equation}
for $m\geq 0$. Note that $\modG_0=\modG$.
There is an injective extended graded Lie algebra morphism 
\begin{equation}   \label{eq:Johnson_global}
\overline{\tau}_\bu: \overline{\modG}_\bu \longrightarrow \Der_\bu ( \overline{K}_\bu),
\end{equation}
called the \emph{Johnson morphism}. 
For the definition of~$\bar \tau_\bu$,
see \cite[\S 6]{HM18} and also Section \ref{sec:free_pairs}.

\begin{remark}
The study of extended N-series includes that of \emph{N-series}, which were considered in full generality by Darn\'e, too~\cite{Darne}.
(Indeed, an N-series is an extended N-series $K_*$ 
for which the acting group $K_0/K_1$ is trivial.)
\end{remark}

\subsection{Formal extended N-series} \label{subsec:formal_eN-series}

For now, we work over $\Q$. If $K_*$ is formal in some sense, then the graded Lie algebra homomorphism $\bar \tau_+$ in \eqref{eq:Johnson_global} 
is the associated graded of an ``infinitesimal'' action of $\modG$ on $K_*$.
We review below such a  situation,
and refer the reader to \cite{SW19,SW20}
for a general introduction to formality of groups.

The extended N-series $K_*$ induces a filtration $J_*^\Q(K_*)$ of the algebra $\Q[K_0]$,
where $J_m^\Q(K_*)$ is the ideal generated by the elements  of the form  ${(x_1-1)\cdots (x_p-1)}$
for all  $x_1 \in K_{m_1}, \dots, x_p \in K_{m_p}$, $m_1+\dots +m_p \geq m$, $m_1,\ldots ,m_p\ge 1,\ p\ge 1$.
Then the Hopf algebra  structure of $\Q[K_0]$ is compatible with the filtration $J_*^\Q(K_*)$. Therefore we have a graded Hopf $\Q$-algebra 
$$
\gr_\bullet\big( J_*^\Q(K_*)\big) 
= \bigoplus_{i\geq 0} {J_i^\Q(K_*)}/{J_{i+1}^\Q(K_*)}
$$ 
and a complete Hopf $\Q$-algebra %the completion
$$
\widehat{\Q [K_*]} = \plim_{k}\  \Q[K_0]\, / J_k^\Q(K_*).
$$
The extended N-series $K_*$ is said to be \emph{formal}
if $\widehat{\Q [K_*]}$ is isomorphic to 
the degree-completion of $\gr_\bullet\big( J_*^\Q(K_*)\big)$ through an isomorphism whose associated graded is the identity.

We can characterize the formality as follows.
First of all,
we have the following  generalization for extended N-series \cite[Theorem 11.2]{HM18}  of a classical result of Quillen for the lower central series of groups \cite{Quillen}:
\begin{equation} \label{eq:Quillen}
\gr_\bullet\big( J_*^\Q(K_*)\big) 
\simeq U(\overline K_\bu^\Q).    
\end{equation}
Here  $ U(\overline K_\bu^\Q)$ 
is the \emph{universal enveloping algebra} of the extended graded Lie $\Q$-algebra~$\overline K_\bu^\Q$.
As a $\Q$-coalgebra, $U(\overline K_\bu^\Q)$ is the tensor product 
$U(\overline K_+^\Q)\otimes_\Q\Q[K_0/K_1]$ of the usual
universal enveloping algebra of the Lie $\Q$-algebra $\overline K_+^\Q$
and the group algebra of $K_0/K_1$. The multiplication is defined by 
\begin{equation} \label{eq:mult_law}
(v\otimes y) \cdot (v' \otimes y') = v\,{}^y\!v' \otimes yy'
\quad \hbox{for all $v,v'\in U(\overline K_+^\Q)$
and $y,y' \in K_0/K_1$}.    
\end{equation}
We regard $U(\overline K_+^\Q)$ and $\Q[K_0/K_1]$
as subalgebras of $U(\overline K_\bu^\Q)$ in the obvious way,
 and any element $ v\otimes y\in  U(\overline K_+^\Q) $ is written as a product $v \cdot y$,
see  \cite[\S 11]{HM18} for details.
The extended N-series $K_*$ is formal if and only if there is a monoid {homomorphism}
$$
\theta: K_0 \longrightarrow \hat U(\overline K_\bu^\Q)
$$
which takes values in the degree-completion $\hat U(\overline K_\bu^\Q)$ of $U(\overline K_\bu^\Q)$ and  
maps any $x\in K_i$ ($i \geq 0$)  to a group-like element of the form
\begin{equation} \label{theta}
\theta(x) = \left\{\begin{array}{ll}
1+ [x]_{i} + (\deg >i) & \hbox{if } i>0,\\
 {[x]_0} + (\deg>0) & \hbox{if } i=0,
\end{array}\right.
\end{equation}
where $[x]_i$ denotes
the class of $x$ modulo $K_{i+1}$.

In such a case, $\theta$ is called an \emph{expansion} of the extended N-series $K_*$ and it allows for the following constructions.
Consider the homomorphism 
$$
\rho^\theta : \modG \longrightarrow 
\Aut\big( \hat U(\overline K_\bu^\Q)\big),
\quad g \longmapsto \hat \theta \circ \widehat{\Q[g]} \circ \hat \theta^{-1},
$$
where $\widehat{\Q[g]}$ denotes the automorphism of $\widehat{\Q [K_*]}$ 
induced by the action of $g$ on $K_0$
and the isomorphism $\hat \theta: \widehat{\Q [K_*]} \to \hat U(\overline K_\bu^\Q)$ 
is the extension of $\theta$.
Note that $\rho^\theta $ takes values in 
the group $\operatorname{IAut}\big( \hat U(\overline K_\bu^\Q)\big)$
of complete Hopf algebra automorphisms of 
$\hat U(\overline K_\bu^\Q)$
that induce the identity on the associated graded.
Let also $$\Der_+\big( \hat U(\overline K_\bu^\Q)\big)$$ 
be the space of derivations of the  algebra $\hat U(\overline K_\bu^\Q)$
that map every  $x\in \overline K_0^\Q=K_0/K_1$ 
to $\widehat{\overline K}_+^\Q\, x$
and that maps $\widehat{\overline K}_{\geq m}^\Q$ to  
$\widehat{\overline K}_{\geq m+1}^\Q$ for every $m\geq 1$.
According to the following  lemma, 
which is implicit in \cite[\S 12]{HM18},
there is a one-to-one correspondence between
positive-degree derivations 
of the extended graded Lie $\Q$-algebra $\overline K_\bu^\Q$ 
and derivations of its universal enveloping algebra 
of the previous type.

\begin{lemma} \label{lem:canonical_isos}
There are canonical $\Q$-linear isomorphisms
$$
\xymatrix{
\operatorname{IAut}\big( \hat U(\overline K_\bu^\Q)\big)
\ar[r]_-{\simeq}^-\log & 
\Der_+\big( \hat U(\overline K_\bu^\Q)\big)
\ar[r]_-\simeq^-{\operatorname{res}} & \widehat{\Der}_+(\overline K_\bu^\Q).
}
$$
Here $\,\log$ is  the formal logarithm series, 
i.e$.$ it maps any automorphism $a$ to 
$$
\log(a) := \sum_{n\geq 1} \frac{(-1)^{n+1}}{n!}  (a -\id)^n,
$$
and $\mathrm{res}$ is the restriction map, which maps any derivation $d$ to
$$
\operatorname{res}(d):= (d_0,d_+)
\ \hbox{ where } \ d_0(x) = d( x) x^{-1},\, x\in \overline{K}_0
\ \hbox{ and } \ d_+(u) =d(u),\, u \in \overline K_+^\Q.
$$
\end{lemma}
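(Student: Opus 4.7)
The plan is to establish each of the two maps as a bijection separately, using the standard machinery of complete filtered Hopf algebras.

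For the first arrow $\log$: the hypothesis that $a \in \operatorname{IAut}\big(\hat U(\overline K_\bu^\Q)\big)$ acts as the identity on the associated graded means that $a - \id$ strictly raises filtration degree, so the series $\log(a) = \sum_{n\ge 1} (-1)^{n+1}(a-\id)^n/n$ converges in the completion topology. Its formal inverse $\exp(d) = \sum_{n\ge 0} d^n/n!$ converges for any $d \in \Der_+\big(\hat U(\overline K_\bu^\Q)\big)$, since $d$ strictly raises the filtration both on $\widehat{\overline K}_+^\Q$ (by the second prescribed condition) and on the group-like factor (because $d(x) \in \widehat{\overline K}_+^\Q \cdot x$ with the left factor of positive degree). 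The usual formal identities $\log \circ \exp = \id$ and $\exp \circ \log = \id$ give bijectivity. To show that $\log(a)$ is indeed an algebra derivation of the specified type, one uses the standard fact that the exponential of a derivation of a complete filtered algebra is an algebra automorphism (and conversely), together with the observation that if $a$ is a Hopf algebra automorphism then $a(x)$ is group-like for every group-like $x$; upon taking logarithms this yields $\log(a)(x) \in \widehat{\overline K}_+^\Q \cdot x$, matching the prescribed form.

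For the second arrow $\mathrm{res}$: the key point is that $\hat U(\overline K_\bu^\Q)$ is topologically generated as an algebra by the primitive elements in $\widehat{\overline K}_+^\Q$ together with the group-like elements indexed by $\overline K_0^\Q$. Injectivity is then immediate from the Leibniz rule. For surjectivity, given $(d_0, d_+) \in \widehat{\Der}_+(\overline K_\bu^\Q)$, define $d$ on $\hat U(\overline K_\bu^\Q)$ by setting $d(u) = d_+(u)$ for $u \in \widehat{\overline K}_+^\Q$ and $d(x) = d_0(x) \cdot x$ for $x \in \overline K_0^\Q$, and then extend to the whole algebra by $\Q$-linearity and the Leibniz rule.

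The principal technical content, and the main obstacle, is verifying that this extension is well-defined, i.e.\ compatible with the defining relations of $\hat U(\overline K_\bu^\Q)$. Each of the three axioms of a derivation of an extended graded Lie algebra plays a role. Axiom~(1), that $d_+$ is a Lie derivation of $\overline K_+^\Q$, ensures consistency on the sub-Hopf-algebra generated by the primitives. Axiom~(2), the cocycle identity $d_0(xy) = d_0(x) + {}^x d_0(y)$, translates after writing $d(xy) = d_0(xy)\cdot xy$ and $d(xy) = d_0(x)\cdot x y + x\cdot d_0(y)\cdot y$ into exactly the required compatibility on the group-like subalgebra $\Q[\overline K_0^\Q]$. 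The most delicate verification concerns the crossed multiplication \eqref{eq:mult_law} $(v\cdot y)(v'\cdot y') = v\cdot {}^y v' \cdot yy'$: computing $d$ of each side via Leibniz and equating yields precisely the third axiom $d_i({}^a b) = {}^a(d_i(b)) + [d_0(a), {}^a b]$, where the commutator term arises from conjugating through the exponential expansion of the group-like factor. Once these compatibilities are matched, the resulting $d$ lies in $\Der_+\big(\hat U(\overline K_\bu^\Q)\big)$ and manifestly restricts to $(d_0, d_+)$, yielding surjectivity and completing the proof.
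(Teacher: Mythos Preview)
Your proposal is correct and follows essentially the same route as the paper: the $\exp/\log$ correspondence for the first isomorphism, and generation by $\overline K_\bu^\Q$ together with a Leibniz extension for the second. The only stylistic difference is that for surjectivity of $\operatorname{res}$ the paper defines the candidate derivation directly on the tensor decomposition $U(\overline K_+^\Q)\otimes_\Q\Q[K_0/K_1]$ via $\tilde d(v\cdot y)=\tilde d_+(v)\,y+v\,d_0(y)\,y$ and then verifies the Leibniz rule on a product $(v\cdot y)(v'\cdot y')$ by an explicit computation invoking axiom~(3), whereas you phrase this as ``extend by Leibniz and check compatibility with the crossed multiplication relation''; these amount to the same calculation.
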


\begin{proof}
The map $\log : \operatorname{IAut}\big( \hat U(\overline K_\bu^\Q)\big) 
\to \Der_+\big( \hat U(\overline K_\bu^\Q)\big)$ is proved to be well-defined
by following the first four paragraphs of the proof of \cite[Lemma 12.5]{HM18}.

To construct the inverse to $\mathrm{log}$, observe that any 
$d \in \Der_+\big( \hat U(\overline K_\bu^\Q)\big)$ increases degrees, 
hence the formal power series 
$$
\exp(d) := \sum_{n\geq 1} \frac{d^n}{n!}
 $$
 converges and induces the identity on the associated graded. 
 Since $d$ is an algebra derivation, $\exp(d)$ is an algebra automorphism.
 Besides, $d$ is a coderivation, i.e$.$ we have the identity 
 $\Delta d= (d\hat \otimes \id + \id \hat \otimes d) \Delta$, 
 as can be checked on the elements of $\overline K_\bu^\Q$ 
 (which generate the algebra $U(\overline K_\bu^\Q)$). Therefore $\exp(d)$ is a coalgebra map.
 Thus, we obtain a  map  
 $\exp: \Der_+\big( \hat U(\overline K_\bu^\Q)\big) 
 \to \operatorname{IAut}\big( \hat U(\overline K_\bu^\Q)\big)  $,
 which is inverse to $\log$.

 The map $\operatorname{res} : \Der_+\big( \hat U(\overline K_\bu^\Q)\big)
 \to \widehat{\Der}_+(\overline K_\bu^\Q)$ is proved to be well-defined
by following the last paragraph of the proof of \cite[Lemma 12.5]{HM18}.
Since $\overline K_\bu^\Q$  generate the algebra $U(\overline K_\bu^\Q)$, this map is injective.

To show that $\operatorname{res}$ is also surjective, 
let $(d_0,d_+) \in \widehat{\Der}_+(\overline K_\bu^\Q)$.
The Lie algebra derivation $d_+$ of $\overline K_+^\Q$
extends to a unique algebra derivation $\tilde{d}_+$ of its universal enveloping algebra.
Besides, let $\tilde{d}_0:\Q[K_0/ K_1] \to \hat U(\overline K_\bu^\Q)$ 
be the $\Q$-linear map defined by $\tilde d_0(x)=d_0(x)x$ for any $x\in K_0/K_1$.
Then, there is a unique $\Q$-linear map 
$\tilde d: U(\overline K_\bu^\Q) \to \hat U(\overline K_\bu^\Q)$
defined by 
$\tilde{d}(vy)  = \tilde{d_+}(v)\,y + v\, \tilde{d}_0(y)$ 
for any $v\in U(\overline K_+^\Q)$ 
and $y \in \Q[K_0/K_1]$.
Since $\tilde d$ increases degrees, it extends to 
$\tilde d: \hat U(\overline K_\bu^\Q) \to \hat U(\overline K_\bu^\Q)  $ by continuity.
It remains to check that $\tilde d$ is an algebra derivation.
For any $v,v'\in U(\overline K_+^\Q) $ and $y,y' \in K_0/K_1$, we have
\begin{eqnarray*}
\tilde d\big((vy)(v'y')\big) & = & \tilde d\big((v\,{}^y\!v')(yy')\big) \\
&=& \tilde d_+\big(v\,{}^y\!v'\big)\, (yy')+ (v\,{}^y\!v')\, \tilde d_0(yy') \\
&=& 
 \big( \tilde d_+(v)\, {}^y\!v' + v\, \tilde d_+({}^y\!v')
 + v\,{}^y\!v'\,  d_0(y)+ v\,{}^y\!v'\, {}^y \!d_0(y')\big)yy'
\end{eqnarray*}
and, since the defect of $\Q[K_0/K_1]$-equivariance of $d_+$
(and, so, $\tilde d_+$) is controlled by the 1-cocycle $d_0$, we get
\begin{eqnarray*}
 \tilde d\big((vy)(v'y')\big) 
&=&  \big( \tilde d_+(v)\, {}^y\!v' + v\, {}^y\!\tilde d_+(v')
+ v\, d_0(y)\, {}^y\!v' + v\,{}^y\!v'\, {}^y \!d_0(y') \big)\, yy'.
\end{eqnarray*}
On the other hand, we have
\begin{eqnarray*}
(vy)\, \tilde d(v'y'\big) &=& (vy)\, \big( \tilde{d_+}(v')\,y' 
+  v'\, \tilde{d}_0(y')\big) 
 \ = \ v\,y\,\tilde{d_+}(v')\, y' +v\,y\,v'd_0(y')y'
\end{eqnarray*}
and
\begin{eqnarray*}
\tilde d(vy)\, (v'y') &=& 
\big(\tilde{d_+}(v)\,y + v\, \tilde{d}_0(y) \big)\, (v'y')
\ = \ \tilde{d_+}(v)\,y\,v'\,y'+ v\,  d_0(y)\, y\, v'\, y'.
\end{eqnarray*}
We conclude that 
$\tilde d\big((vy)(v'y')\big) 
= (vy)\, \tilde d(v'y'\big) + \tilde d(vy)\, (v'y')$.
\end{proof}

The previous lemma shows that, for any $m\geq 1$ and $g\in \modG_m$, 
the automorphism $\rho^\theta(g)$ induces an element 
{$\varrho^\theta(g) \in \widehat{\Der}_+(\overline K_\bu^\Q)$} of  degree $\geq m$.
The complete Lie algebra $\widehat{\Der}_+(\overline K_\bu^\Q)$ can be regarded as a filtered group 
whose multiplication law is defined 
by the BCH formula.
Then we obtain a filtered group homomorphism
\begin{equation} \label{eq:varrho}
\varrho^\theta: \modG_1 \longrightarrow \widehat{\Der}_+(\overline K_\bu^\Q)
\end{equation}
inducing the rational version $\bar\tau_+^\Q$ of $\bar \tau_+$ on the associated graded,
see \cite[Theorem 12.6]{HM18}.
We view \eqref{eq:varrho} as an  infinitesimal version
of the action of the group~$\modG_1$ 
on the extended N-series $K_*$.

\section{Johnson homomorphisms for a free pair}
\label{sec:free_pairs}

We continue the review of generalized Johnson homomorphisms 
by focusing on the automorphism group of a free pair, as in \cite[\S 10.1]{HM18}.
We also establish the formality in this case.

\subsection{Free pairs and their automorphism groups} \label{subsec_free_pairs}

A \emph{free pair} is a pair $(\pi,\Alpha)$ of a free group $\pi$ and
a  non-abelian normal  subgroup $\Alpha$ with $F:=\pi/\Alpha$ free. 
Let $\varpi:\pi \to F$ denote the canonical projection.
In the sequel, for simplicity, 
we assume that the free groups $\pi$ and $F$ have finite rank.

\begin{remark}
By \cite[Theorem 6.4]{FJ50}, $\pi$ is the free product $A*B$ of subgroups $A,B\le\pi$ such that
  $A\subset\A$ and $\varpi$ maps $B$ isomorphically onto~$F$.
Thus, there is a basis of $\pi$
\begin{equation} \label{eq:free_basis}
   \{\al_i\}_{i\in I}\sqcup\{\be_j\}_{j\in J} 
\end{equation}
(with $I$ and $J$ finite)
such that $A=\la\al_i \, \vert \, i \ra$, 
$B=\la\be_j \, \vert \, j  \ra$ 
and $\A=\lala\al_i \, \vert \, i \rara$.
Although the constructions in this section are independent
of the choices of $A,B$ and their bases,
we sometimes use these bases.
\end{remark}

An \emph{automorphism} of $(\pi,\Alpha)$ is an automorphism $g$ of $\pi$ with $g(\Alpha) = \Alpha$.
Let $\Aut(\pi,\Alpha)$ denote the group of automorphisms of $(\pi,\Alpha)$.
Following \cite[\S 10.1]{HM18}, let us review how the theory of generalized Johnson homomorphisms applies to  $\Aut(\pi,\Alpha)$.
Let
$$
\Alpha = \Gamma_1 \Alpha \ge \Gamma_2 \Alpha \ge  \Gamma_3 \Alpha \ge  \cdots
$$
be the \emph{lower central series} of $\Alpha$, defined inductively by $\Gamma_1 \Alpha:= \Alpha$ 
and $\Gamma_{i+1} \Alpha:= [\Gamma_i\Alpha,\Alpha]$ ($i\ge1$). 
Setting $\Alpha_0:= \pi$ and $\Alpha_m:= \Gamma_m \Alpha$ for $m\geq 1$, we get an extended N-series
$$
\Alpha_* := (\Alpha_i)_{i\geq 0},
$$
 and consequently an extended graded Lie algebra
$$
\overline{\Alpha}_\bullet := (\Alpha_m/\Alpha_{m+1})_{m\geq 0}.
$$

\begin{lemma} \label{lem:freeness}
The group  $\AAlpha := \overline{\Alpha}_1 = \Alpha/[\Alpha,\Alpha]$ is free as a $\Z[F]$-module,
where the action of $F$ on $\AAlpha$ is induced by the conjugation of $\pi$ on $\Alpha$.
\end{lemma}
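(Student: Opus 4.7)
The plan is to exploit the splitting of the short exact sequence $1\to \A\to\pi\to F\to 1$ coming from the freeness of $F$, and then apply the Schreier method to describe $\A$ and its abelianization explicitly.

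First I would use the remark preceding the lemma: there is a basis of $\pi$ of the form $\{\al_i\}_{i\in I}\sqcup\{\be_j\}_{j\in J}$ with $A=\la \al_i\ra$, $B=\la\be_j\ra$, and $\varpi$ restricting to an isomorphism $B\ct F$. In particular, $B$ is a Schreier transversal for $\A$ in $\pi$: every $w\in\pi$ can be uniquely written as $w=a\cdot b$ with $a\in\A$, $b\in B$. I will then apply the Reidemeister--Schreier rewriting: for a transversal element $t\in B$ and a generator $x$ of $\pi$, the Schreier generator $t\,x\,\overline{tx}^{-1}$ of $\A$ is trivial when $x=\be_j$ (since $tx\in B$) and equal to $t\al_i t^{-1}$ when $x=\al_i$ (since $tx\A=t\A$). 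Hence $\A$ is free on the set
\[
   S\ :=\ \bigl\{\,t\al_i t^{-1}\ \bigl|\ i\in I,\ t\in B\,\bigr\}.
\]

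Passing to the abelianization, $\AAlpha$ is the free abelian group on the images $a_i^t:=[t\al_i t^{-1}]\in\AAlpha$ for $(i,t)\in I\times B$. It remains to identify the $\Z[F]$-module structure. The $F$-action on $\AAlpha$ is induced by conjugation in $\pi$, and it factors through $F$ because $[\A,\A]$ is normal in $\pi$; so to compute $f\cdot a_i^t$ it suffices to use the lift $b_f\in B$ of $f\in F$ provided by the isomorphism $B\ct F$. Then
\[
   f\cdot a_i^t\ =\ [b_f t\,\al_i\,(b_f t)^{-1}]\ =\ a_i^{\,b_f t}.
\]
Since $B$ is closed under multiplication and $B\ct F$, the action of $F$ on the basis $\{a_i^t\}$ permutes, within each fixed $i\in I$, the elements $\{a_i^t:t\in B\}$ freely and transitively through the regular action of $F$ on itself. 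Therefore $\AAlpha\cong \bigoplus_{i\in I}\Z[F]\cdot a_i^1$ as a $\Z[F]$-module, which is free of rank $|I|$.

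There is no real obstacle here: the only point that must be checked carefully is that the conjugation action of $\pi$ on $\AAlpha$ descends to an action of $F$ and that one is free to represent $f\in F$ by its canonical lift $b_f\in B$; both facts follow immediately from the semidirect-product decomposition $\pi\cong\A\rtimes B$ and the normality of $[\A,\A]$ in $\pi$. Alternatively, one could phrase the same argument homologically by noting that $H_1(\A;\Z)$ is the $\Z[F]$-module obtained by inducing the trivial action from the augmentation ideal computation of a free cover, but the Schreier basis above makes the freeness entirely transparent.
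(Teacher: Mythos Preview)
Your proof is correct and follows essentially the same approach as the paper: both identify $\{[\alpha_i]\}_{i\in I}$ as a free $\Z[F]$-basis of $\AAlpha$. The paper's proof is a one-line assertion of this fact, while you supply the underlying Reidemeister--Schreier computation that justifies it; your argument is exactly the natural way to fill in what the paper leaves implicit.
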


\begin{proof}
Consider a basis of $\pi$ of type \eqref{eq:free_basis},
and let $[\alpha_i]\in \Alpha/[\Alpha,\Alpha]$ denote
the class of $\alpha_i \in \Alpha$. 
Then the $\Z[F]$-module $\AAlpha$ is free on the subset~$\big\{
[\alpha_i] \big\}_{i\in I}$.
\end{proof}

Since $\Alpha$ is a free group, 
its associated graded $\overline{\Alpha}_+$ is the free Lie algebra $\Lie(\AA)$ on $\AA$.
For any $x\in \pi$ and $m\geq 0$,
let $[x]_m$ (or sometimes $[x]$) denote the class of $x$ modulo $\Alpha_{m+1}$.
Note that if $x\in \Alpha_m$, then  $[x]_m \in \overline{\Alpha}_{m}$.

The group $\modG:= \Aut(\pi,\Alpha)$ acts on the extended N-series $\Alpha_*$. 
Let  $\modG_* = (\modG_i)_{i\geq 0}$ be the corresponding {Johnson filtration}, as defined by \eqref{eq:J_filtration}.
It follows from \cite[Proposition 10.1]{HM18} that
$$
\modG_m= \modG_m^0 \cap \modG_m^1 \quad \text{for all $m\geq 0$},
$$
where 
$$
\modG_m^0=\ker\big(\modG \lto \Aut(\pi/\Alpha_m)\big)  
\quad \hbox{and} \quad 
\modG_m^1=\ker\big(\modG \lto \Aut(\Alpha/\Alpha_{m+1})\big).
$$\label{page:Darne}
Furthermore, it follows from \cite[Theorem 10.2]{HM18}\footnote{The result \cite[Theorem 10.2]{HM18} applies to an extended N-series $K_*=(K_m)_{m\geq 0}$ such that $K_m=\Gamma_m K_1$ and $K_1$ is a
non-abelian free group. Its proof is by induction on $m\geq 0$, which is achieved thanks to \cite[Lemma 10.3]{HM18}. But, 
this lemma does not apply for $m=1$.
This gap in \cite[Theorem 10.2]{HM18} is fixed by assuming that  $\overline{K}_1$ is torsion-free as a $\Z[\overline{K}_0]$-module.} 
and Lemma \ref{lem:freeness} that
\begin{equation}\label{GmGm1}
\modG_m = \modG_m^1 \quad \text{for all $m\geq 0$}.
\end{equation}
 
Hence we have two mutually nested filtrations $\modG_*$ and~$\modG^0_*$:
\begin{equation} \label{nested}
\modG=\modG^0_0=\modG_0\ge \modG^0_1\ge \modG_1\ge \cdots \ge \modG_{m-1} \ge \modG^0_m\ge \modG_m\ge \cdots.
\end{equation}
Since $\Alpha$ is free, we have $\bigcap_{i}\Gamma_i\A=\{1\}$. 
Hence
\begin{equation} \label{eq:completeness}
\bigcap_{m\geq 0} \modG_m^0= \bigcap_{m\geq 0} \modG_m= \{1\}   . 
\end{equation} 

\subsection{Truncations of the Johnson morphism for a free pair} 

Since the graded Lie algebra $\overline{\Alpha}_+=\Lie(\AAlpha)$ 
is free on its degree $1$ part $\AAlpha$,
any derivation of the extended graded Lie algebra $\overline{\Alpha}_\bu$ is equivalent to its truncations 
to the degree~$0$ and~$1$ parts of $\overline{\Alpha}_\bu$. Specifically, there is an isomorphism
\begin{equation} \label{eq:DD}
\Der_m(\overline{\Alpha}_\bu)  \longrightarrow D_m(\overline{\Alpha}_\bu),\quad (d_i)_{i\ge 0} \longmapsto (d_0,d_1),
\end{equation}
where
\begin{eqnarray}
D_0(\overline{\Alpha}_\bu)
\notag &= & \big\{(d_0,d_1)\in \Aut(F)\times \Aut(\AAlpha)\\
\label{eq:equiv0} & & \qquad \big\vert \  d_1({}^fa)={}^{d_0(f)}(d_1(a))\hbox{ for $f\in F$, $a\in \AAlpha$}\big\}
\end{eqnarray}
and, for every $m\geq 1$, 
\begin{eqnarray}
\notag    D_m(\overline{\Alpha}_\bu)&= &     \big\{(d_0,d_1)\in Z^1(F,\overline{\Alpha}_m)\times \Hom(\AAlpha,\overline{\Alpha}_{m+1})\\
\label{eq:equiv1}    && \qquad  \big\vert \ d_1({}^fa)=[d_0(f),{}^fa]+{}^f(d_1(a))\  \hbox{for } f\in F,a\in \AAlpha    \big\}.
\end{eqnarray}
According to \cite[Proposition 7.4]{HM18},
the extended graded Lie algebra structure on $\Der_\bu(\overline{\Alpha}_\bu)$
corresponds through~\eqref{eq:DD} to the following  extended graded Lie algebra structure on  $D_\bu(\overline{\Alpha}_\bu)$:

\begin{itemize}
  \item {the} Lie bracket $[d,e]\in D_{m+n}(\overline{\Alpha}_\bu)$ of $d=(d_0,d_1)\in D_m(\overline{\Alpha}_\bu)$ 
  and $e=(e_0,e_1)\in D_n(\overline{\Alpha}_\bu)$
  with  $m,n\ge 1$ is defined by
    \begin{eqnarray}
     \label{eq:b0} &&[d,e]_0(f)  =  d_n(e_0(f))-e_m(d_0(f))-[d_0(f),e_0(f)] \quad  \hbox{for } f\in F,\\
      \label{eq:b1} &&{[d,e]_1(a)}  = d_{n+1}(e_1(a))-e_{m+1}(d_1(a))\quad\quad
      \quad \quad \quad \quad   \hbox{for }  a\in \AAlpha,
    \end{eqnarray}
    where $d_+=(d_i)_{i\geq 1}$ and $e_+=(e_j)_{j\geq1}$
    {are the derivations of $\overline{\Alpha}_+$ extending $d_1$ and $e_1$, respectively;}
  \item {the} action  ${}^e d\in D_m(\overline{\Alpha}_\bu)$ of $e=(e_0,e_1)\in D_0(\overline{\Alpha}_\bu)$ on $d=(d_0,d_1)\in D_m(\overline{\Alpha}_\bu)$
  with $m\ge 1$ is defined by
    \begin{eqnarray}
	\label{eq:e0} ({}^ed)_0(f)&=&e_m d_0 e_0^{-1}(f)\quad \hbox{for } f\in F,\\
	\label{eq:e1} ({}^e d)_1(a)&=&e_{m+1}d_1e _1^{-1}(a)\quad  \hbox{for } a\in \AAlpha,
    \end{eqnarray}
    where $e_+=(e_i)_{i\geq 1}$ is the automorphism of ${\overline{\Alpha}_+}$ {extending}~$e_1$.
\end{itemize}

We now apply the truncation isomorphism \eqref{eq:DD} to the  Johnson morphism~$\overline{\tau}_\bu$
mentioned in  \eqref{eq:Johnson_global}.
For every $m\geq 0$, 
let $\tau_m^0$ and $\tau_m^1$ denote the two components of 
the image of  $\bar \tau_m$ in $\Der_m(\overline{\Alpha}_\bu)$ by the map \eqref{eq:DD}.
Hence, for $m=0$, we get two homomorphisms
\begin{equation}\label{eq:tau_0_0_1}
\tau _0^{{0}} : {\modG_0}  \longrightarrow  \Aut(F) \quad \hbox{and} \quad
 \tau _0^{{1}}: {\modG_0}  \longrightarrow  \Aut({\AAlpha})
\end{equation}
giving the canonical actions of $\modG$ on $F$ and $\AAlpha$, respectively.
Besides, for any $m\geq 1$,  we  obtain two homomorphisms
\begin{equation}\label{eq:tau_m_0_1}
\tau _m^{{0}} : {\modG_m}  \longrightarrow  Z^1\big(F, \overline{\Alpha}_m \big)
\quad \hbox{and} \quad  \tau _m^{{1}} : {\modG_m}  \longrightarrow  \Hom \big({\AAlpha}, \overline{\Alpha}_{m+1} \big),
\end{equation}
which maps any $g\in \modG_m$ to 
$$
\big([x]_0 \longmapsto [g(x)x^{-1}]_{m}\big) \quad \hbox{and} \quad \big([a]_1 \mapsto [g(a)a^{-1}]_{m+1}\big),
$$
respectively.
Note that, for all $m\geq 0$, we have
\begin{equation} \label{eq:ker_tau_m_0_1}
\ker \tau _m^{0} =  {\modG^0_{m+1}} \quad  \hbox{and} \quad \ker \tau _m^{1} = {\modG^1_{m+1}}  = {\modG_{m+1}}.    
\end{equation}

In the sequel, the homomorphisms $\tau^0_i$ and 
$\tau^1_i$ (for $i\geq 0$) 
are called the \emph{Johnson homomorphisms}
for the automorphism group of 
the  free pair $(\pi,\Alpha)$.

\begin{remark} \label{rem:00}
The two sequences  of homomorphisms, 
$(\tau^0_i)_{i\geq 0}$ and $(\tau^1_i)_{i\geq 0}$,
are defined on the successive terms of the filtration $\modG_*$, rather than on $\modG^0_*$. Nonetheless,
\begin{enumerate} 
\item if  restricted to $\modG^0_{m+1} \subset \modG_m$, the homomorphism $\tau^1_{m}$ takes values in 
$\Aut_{\Z[F]}(\AAlpha)$ \big(resp$.$ in $\Hom_{\Z[F]}  \big({\AAlpha}, \overline{\Alpha}_{m+1} \big) $\big)
for $m=0$ (resp$.$ for $m\geq 1$),
\item and the homomorphism $\tau_m^0$ extends on $\modG_m^0\supset \modG_m$ to a homomorphism (resp$.$ to a $1$-cocycle)  for  $m\geq 2$ (resp$.$ for $m=1$):
$$
\xymatrix{
\modG^0_{m} \ar@{-->}[rr]^-{\ti\tau_m^0}  &&  Z^1 \big(\pi, \overline{\Alpha}_m \big)  \\
{\modG_{m}} \ar[rr]^-{\tau_m^0}  \ar@{^{(}->}[u]&&  Z^1 \big(F, \overline{\Alpha}_m  \big). \ar[u]
}
$$
\end{enumerate}
See \cite[Proposition 10.5]{HM18} and \cite[Proposition 10.6]{HM18}.
\end{remark}

\begin{remark}
When $\Alpha=\pi$, 
we have $\modG^0_{m+1} =\modG_m$ and $\tau^0_m$ is trivial for all ${m\geq 0}$.
Then, we get the usual theory of Johnson homomorphisms for the automorphism group $\Aut(\pi)$ of the free group $\pi$.
See \cite{Satoh} for a survey.
\end{remark}

For the reader's convenience, we now write down the various properties of the 
Johnson homomorphisms   $(\tau^0_i)_{i\geq 0}$ 
and $(\tau^1_i)_{i\geq 0}$ that are directly inherited 
from the properties of the Johnson morphism $\overline{\tau}_\bu$:
\begin{itemize}
\item For any $g\in \modG_0$,
the two components $\tau^0_0(g),\tau^1_0(g)$ of $\overline{\tau}_0(g)$ are related  by 
 \begin{equation}
  \label{eq:equiv}
\tau^1_0(g)({}^xa) = {}^{\tau^0_0(g)(x)}\big(\tau^1_0(g)(a)\big)
\quad \hbox{for all $x\in F$, $a \in \AAlpha$}
\end{equation}
and, for any $g\in \modG_m$ and $m\geq 1$, the two components $\tau^0_m(g),\tau^1_m(g)$  of $\overline{\tau}_m(g)$ are related by
\begin{equation}
\label{eq:non-equiv}
\tau^1_m(g)({}^xa) = {}^x\big(\tau^1_m(g)(a )\big)+ \big[\tau^0_m(g)(x),{}^xa\big]
\quad \hbox{for all $x\in F$, $a \in \AAlpha$}.
\end{equation}
(Identity \eqref{eq:equiv} corresponds 
to the equivariance of  the automorphism  $\overline{\tau}_0(g)$ of the extended graded Lie algebra  $\overline{\Alpha}_\bu$,
and identity \eqref{eq:non-equiv} is the defect of $F$-equivariance of the degree $m$ derivation $\overline{\tau}_m(g)$ of the extended graded Lie algebra  $\overline{\Alpha}_\bu$.)
\item For any $g\in \modG_m\ (m\geq 1)$ and $h \in \modG_n\ (n\geq 1)$,  we can compute $t_0:=\tau^0_{m+n}([g,h])$ 
as well as $t_1:=\tau^1_{m+n}([g,h])$  from  $d_i:=\tau^i_{m}(g),i\in\{0,1\}$ and $e_j:=\tau^j_{n}(h), j\in \{0,1\}$ by 
\begin{eqnarray}
\label{eq:bracket0}
t_0(x)&=&d_n(e_0(x))-e_m(d_0(x)) - [d_0(x),e_0(x)] \quad \hbox{for all } x\in F,\\
t_1(a) &=& d_{n+1}(e_1(a))- e_{m+1}(d_1(a)) \quad \hbox{for all } a\in \AAlpha,
\label{eq:bracket1}
\end{eqnarray}
where $d_+=(d_i)_{i\geq 1}$ and  $e_+=(e_i)_{i\geq 1}$ 
denote the derivations of $\Lie({\AAlpha})= \overline{\Alpha}_+$
that extend $d_1$ and $e_1$, respectively. 
(These identities correspond to the fact that $\overline{\tau}_+$ is a homomorphism of graded Lie algebras.)
\item For all $g\in \modG$ and  $h\in \modG_m \ ( m\geq 1)$, we have
\begin{eqnarray}
\label{eq:equiv0_bis}
  \tau_m^0({}^g h) &=& \Lie_m(\tau_0^1(g)) \circ \tau_m^0( h) \circ \tau_0^0(g)^{-1}, \\
  \label{eq:equiv1_bis}
\tau_m^1({}^g h) &=& \Lie_{m+1}(\tau_0^1(g)) \circ \tau_m^1(h) \circ \tau_0^1(g)^{-1},    
\end{eqnarray} 
where $\Lie(\tau_0^1(g))$ is the automorphism of $\Lie(\AAlpha)= \overline{\Alpha}_+$
induced by $\tau_0^1(g) \in \Aut(\AAlpha)$.
(These identities  correspond to the equivariance of  $\overline{\tau}_+$ over $\overline{\tau}_0$.)
\end{itemize}

As a first application of the Johnson homomorphisms 
for the automorphism group
$\modG= \Aut(\pi, \Alpha)$
of the free pair $(\pi, \Alpha)$,
we mention the following property of the group 
$
\modG_1=\ker \big(\modG \to \Aut(\AAlpha) \big).
$

\begin{theorem} \label{th:residual}
The group $\modG_1$
is residually torsion-free nilpotent.
\end{theorem}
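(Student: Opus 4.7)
The plan is to establish residual torsion-free nilpotency of $\modG_1$ by taking the Johnson filtration $\modG_1\ge \modG_2\ge \cdots$ itself as the separating filtration. The strategy rests on three ingredients already recorded in the excerpt: (i) the restricted Johnson filtration is an N-series, so $[\modG_k,\modG_l]\subseteq \modG_{k+l}$ for all $k,l\ge 1$, and hence each $\modG_i/\modG_{i+1}$ is central in $\modG_1/\modG_{i+1}$; (ii) the intersection is trivial, $\bigcap_{k\ge 1}\modG_k=\{1\}$, by~\eqref{eq:completeness}; and (iii) the truncated Johnson map~\eqref{eq:DD} injects each graded quotient $\modG_k/\modG_{k+1}$ into the abelian group $Z^1(F,\overline{\Alpha}_k)\times \Hom(\AAlpha,\overline{\Alpha}_{k+1})$.

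The key technical step is to upgrade (iii) to the statement that each $\modG_k/\modG_{k+1}$ is \emph{torsion-free} abelian. For this I would invoke Lemma~\ref{lem:freeness}: since $\AAlpha$ is a free $\Z[F]$-module of finite rank, it is in particular torsion-free as a $\Z$-module. The free Lie algebra $\Lie(\AAlpha)=\overline{\Alpha}_+$ on a torsion-free $\Z$-module is itself torsion-free in every degree, so every $\overline{\Alpha}_m$ is a torsion-free abelian group; consequently $Z^1(F,\overline{\Alpha}_k)$ and $\Hom(\AAlpha,\overline{\Alpha}_{k+1})$ are torsion-free abelian, and so then is $\modG_k/\modG_{k+1}$.

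With these pieces in hand the conclusion follows by a standard induction. By (i) the sequence $(\modG_i/\modG_k)_{1\le i\le k}$ is a central series of $\modG_1/\modG_k$, so this quotient is nilpotent. It is also torsion-free by induction on $k$, via the exact sequence
\[
1\longrightarrow \modG_{k-1}/\modG_k \longrightarrow \modG_1/\modG_k \longrightarrow \modG_1/\modG_{k-1}\longrightarrow 1,
\]
in which the normal subgroup is torsion-free abelian (by the previous paragraph) and the quotient is torsion-free (by the inductive hypothesis); any $n$-torsion element would then map to zero in $\modG_1/\modG_{k-1}$, lie in $\modG_{k-1}/\modG_k$, and thus vanish. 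Given any nontrivial $g\in \modG_1$, ingredient~(ii) supplies some $k$ with $g\notin \modG_k$, so $g$ survives in the torsion-free nilpotent quotient $\modG_1/\modG_k$, proving residual torsion-free nilpotency. I do not foresee a serious obstacle: the essential inputs (the N-series property, trivial intersection, injectivity of $\overline{\tau}_+$, and freeness of $\AAlpha$ over $\Z[F]$) are all in place in the preceding sections, and the proof is essentially an assembly. The only point that deserves attention is making sure the centrality in (i) really holds for the filtration restricted to $\modG_1$, but that is immediate from $[\modG_1,\modG_i]\subseteq \modG_{i+1}$.
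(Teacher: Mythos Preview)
Your proof is correct and follows essentially the same approach as the paper: both use the Johnson filtration as the separating N-series, exploit torsion-freeness of $\Lie(\AAlpha)$ to conclude that each $\modG_k/\modG_{k+1}$ is torsion-free, and invoke~\eqref{eq:completeness}. The only difference is presentational: the paper phrases the argument via the rational lower central series criterion (showing $\Gamma^\Q_k\modG_1\subset\modG_k$ by induction), whereas you directly verify that each $\modG_1/\modG_k$ is torsion-free nilpotent, but these amount to the same thing.
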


\begin{proof}
Recall that the \emph{rational lower central series} 
$$
G = \Gamma^\Q_1 G \geq
\Gamma^\Q_2 G \geq \Gamma^\Q_3 G \geq \cdots
$$ 
of a group $G$
is defined by $\Gamma^\Q_k G
:=\big\{g \in G : 
\exists n\geq 1,\, f^n \in \Gamma_k G\big\}$,
and that the residual torsion-free nilpotency of $G$
is equivalent to the triviality of 
the intersection of $(\Gamma_k^\Q G)_{k\geq 1}$.

Since the Johnson filtration of $\modG$ is an extended N-series,
it restricts to an N-series on $\modG_1$. 
Therefore, we have $\Gamma_k \modG_1 \subset \modG_k$
for every $k\geq 1$. Furthermore, 
since $\modG_{m+1}= \modG_{m+1}^1 =\ker \tau_m^1$ and 
since the target of $\tau_m^1$ is torsion-free as an abelian group
(for $\overline{\Alpha}_{m+1}=\Lie_{m+1}(\AAlpha)$ to be so),
we obtain by an induction on $m\in \{1,\dots,k-1\}$
that $\Gamma^\Q_k \modG_1 \subset \modG_{m+1}$.
Thus,  $\Gamma^\Q_k \modT  \subset \modG_k$
and we deduce from \eqref{nested} that
$
\bigcap_{k\geq 1} \Gamma^\Q_k \modT
$
is trivial.
\end{proof}

\subsection{Formality of free pairs} 

We shall prove that the extended N-series $\Alpha_*$ is formal 
in the sense of \S \ref{subsec:formal_eN-series}. For this,
we work over $\Q$.

Since $\overline{\Alpha}_+^\Q$ is the free Lie $\Q$-algebra on $\AAlpha^\Q:= \AAlpha \otimes \Q$,
the universal enveloping algebra  of the extended graded Lie algebra $\overline{\Alpha}_\bu^\Q$ 
is 
$$
U\big(\overline{\Alpha}_\bu^\Q\big) 
= U\big({\overline{\Alpha}_+^\Q}\big) \otimes_\Q \Q[F] 
= T(\AAlpha^\Q) \otimes_\Q \Q[F] ,
$$
where $T(\AAlpha^\Q)$ is the tensor algebra of $\AAlpha^\Q$.
(The multiplication in $U\big(\overline{\Alpha}_\bu^\Q\big)$
is given by \eqref{eq:mult_law}).
So, for degree-completions, we have
$$
\widehat{U}(\overline{\Alpha}_\bu^\Q) = T(\AAlpha^\Q)\, \hat \otimes_\Q\, \Q[F]  
:= \prod_{m\geq 0}\big( (\AAlpha^\Q)^{\otimes m} \otimes_\Q \Q[F] \big),
$$
which strictly contains 
$
\widehat T(\AAlpha^\Q)  \otimes_\Q \Q[F]  
= \big( \prod_{m\geq 0} (\AAlpha^\Q)^{\otimes m} \big) \otimes_\Q \Q[F].
$

\begin{definition} \label{def:expansion_free_pair}
An \emph{expansion} of  the free pair $(\pi,\Alpha)$ is a monoid homomorphism
$$
\theta: \pi \longrightarrow \widehat T(\AAlpha^\Q) \otimes_\Q \Q[F] 
$$
for which there is a map $\ell^\theta$ from  $\pi$ to the degree-completion of $\Lie(\AAlpha^\Q)$ such that
\begin{eqnarray} 
\label{eq:ell}
\theta(x) &=& 
\exp\big(\ell^\theta(x)\big) \otimes \varpi(x) \quad \hbox{for all $x\in\pi$},\\
\label{eq:ell_a}
\ell^\theta(a) &=& [a]_1 + (\deg \geq 2) \quad \hbox{for any $a\in \Alpha$}.
\end{eqnarray}
\end{definition}

From \cite[Lemma 11.1]{HM18}, which identifies 
the group-like part of $\widehat{U}(\overline{\Alpha}_\bu^\Q)$, it follows that 
an expansion of the free pair $(\pi,\Alpha)$ 
in the sense of Definition \ref{def:expansion_free_pair} is the same as
an expansion of the extended N-series~$\Alpha_*$ 
in the sense of \S \ref{subsec:formal_eN-series}.

\begin{lemma} \label{lem:expansion}
There exists an expansion $\theta$ of $(\pi, \Alpha)$.
In particular, the extended N-series $\Alpha_*$  is formal.
\end{lemma}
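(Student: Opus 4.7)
I will construct an expansion $\theta$ explicitly by prescribing its values on a convenient free basis of $\pi$.

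The plan is to fix a basis $\{\alpha_i\}_{i\in I}\sqcup\{\beta_j\}_{j\in J}$ of $\pi$ as in \eqref{eq:free_basis}, so that $\A=\lala\alpha_i\,\vert\,i\rara$ and the $\beta_j$ map under $\varpi$ to a free basis of $F$. Then I would define $\theta$ on generators by
\begin{equation*}
\theta(\alpha_i):=\exp([\alpha_i]_1)\otimes 1,\qquad \theta(\beta_j):=1\otimes\varpi(\beta_j).
\end{equation*}
Both are invertible elements of the target algebra $\widehat T(\AAlpha^\Q)\otimes_\Q\Q[F]$ (the first because $[\alpha_i]_1$ has positive degree, so $\exp([\alpha_i]_1)$ is a unit; the second because $\varpi(\beta_j)\in F$ is a group element). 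So by the universal property of the free group, $\theta$ extends uniquely to a monoid homomorphism $\theta:\pi\to\widehat T(\AAlpha^\Q)\otimes_\Q\Q[F]$.

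The next step is to show that $\theta(x)$ is always of the form \eqref{eq:ell}. Consider the set $S\subset\widehat T(\AAlpha^\Q)\otimes_\Q\Q[F]$ of elements of the form $\exp(\ell)\otimes f$ with $\ell\in\widehat{\Lie}(\AAlpha^\Q)$ and $f\in F$. Using the multiplication rule \eqref{eq:mult_law}, for two such elements
\begin{equation*}
\bigl(\exp(\ell)\otimes f\bigr)\bigl(\exp(\ell')\otimes f'\bigr) = \exp(\ell)\cdot{}^f\!\exp(\ell')\otimes ff'=\exp\bigl(\ell\cdot_{\mathrm{BCH}}{}^f\!\ell'\bigr)\otimes ff',
\end{equation*}
because the $F$-action on $T(\AAlpha^\Q)$ preserves the Lie subalgebra $\widehat{\Lie}(\AAlpha^\Q)$. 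Thus $S$ is a subgroup, and since it contains the generators $\theta(\alpha_i)$, $\theta(\beta_j)$, it contains the image of $\theta$. This produces the desired function $\ell^\theta:\pi\to\widehat{\Lie}(\AAlpha^\Q)$ satisfying \eqref{eq:ell} and the recursion $\ell^\theta(xy)=\ell^\theta(x)\cdot_{\mathrm{BCH}}{}^{\varpi(x)}\ell^\theta(y)$.

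The last step---and the only point requiring a short argument rather than a direct check---is to verify the boundary condition \eqref{eq:ell_a}, namely $\ell^\theta(a)\equiv[a]_1\pmod{\deg\geq 2}$ for $a\in\A$. I would argue as follows. The recursion for $\ell^\theta$ shows that the degree-$1$ truncation $\ell_1:=(\ell^\theta(\cdot))_1:\pi\to\AAlpha^\Q$ satisfies the $1$-cocycle identity $\ell_1(xy)=\ell_1(x)+{}^{\varpi(x)}\ell_1(y)$, since $\mathrm{BCH}$ reduces to addition in degree~$1$. Restricted to $\A$ (where $\varpi=1$), $\ell_1$ becomes an honest group homomorphism $\A\to\AAlpha^\Q$. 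On the generators $\alpha_i$ we have $\ell_1(\alpha_i)=[\alpha_i]_1$ by construction, and a direct computation using the cocycle identity gives $\ell_1(w\alpha_iw^{-1})={}^{\varpi(w)}[\alpha_i]_1=[w\alpha_iw^{-1}]_1$ for all $w\in\pi$. Since $\A$ is generated by these conjugates, $\ell_1$ coincides with the canonical projection $\A\to\AAlpha\hookrightarrow\AAlpha^\Q$, which is \eqref{eq:ell_a}.

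The step I expect to be most delicate is the last one, but it is entirely algebraic manipulation with the $1$-cocycle identity and reduces to checking the equality on normal generators of $\A$; the preceding construction and formality claim then follow immediately from the criterion recalled after \eqref{theta}.
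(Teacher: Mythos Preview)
Your proposal is correct and follows essentially the same approach as the paper: both fix a basis of type \eqref{eq:free_basis}, define $\theta$ on generators by $\theta(\alpha_i)=\exp([\alpha_i]_1)\otimes 1$ and $\theta(\beta_j)=1\otimes\varpi(\beta_j)$, use the BCH formula to obtain the form \eqref{eq:ell}, and then verify \eqref{eq:ell_a} by reducing to the normal generators of $\Alpha$. The only cosmetic difference is that you package the degree-$1$ verification as a $1$-cocycle computation, whereas the paper phrases it as showing that the subset $\widetilde{\Alpha}\subset\Alpha$ of elements satisfying \eqref{eq:ell_a} is closed under multiplication and under conjugation by the $\beta_j$; these are the same argument in different dress.
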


\begin{proof} 
Let $\{\al_i\}_{i\in I}\sqcup\{\be_j\}_{j\in J} $
be a basis of $\pi$ of type \eqref{eq:free_basis}.
We set $a_i=[\alpha_i]_1 \in \AAlpha$ for every $i\in I$,
and $x_j=[\beta_j]_0 \in F$ for every $j\in J$.
Let $\theta: \pi \to \widehat T(\AAlpha^\Q) \otimes \Q[F] $ be the  monoid homomorphism such that
$$
\theta(\alpha_i) = \exp(a_i) \otimes 1 = \sum_{k\geq 0} \frac{a_i^k}{k!} \otimes 1
\quad \hbox{and} \quad
\theta(\beta_j)= 1 \otimes x_j.
$$
For any $\ell,\ell'\in \Lie(\AAlpha^\Q)$ and $f,f'\in F$, we have
\begin{eqnarray*}
\big(\exp(\ell) \otimes f\big) \cdot \big(\exp(\ell') \otimes f'\big) 
&=& \big(\exp(\ell)  \exp({}^f\! \ell')\big) \otimes (ff')\\
&=& \exp\Big( \ell + {}^f\! \ell' + \frac{1}{2}[\ell, {}^f\! \ell'] + \cdots \Big) \otimes (ff').
\end{eqnarray*} 
Since $\pi$ is generated by $\alpha \sqcup \beta$, 
the BCH formula  implies that
there is a unique map $\ell^\theta$  satisfying \eqref{eq:ell}. 
Furthermore, we have
\begin{equation}  \label{eq:ppr}
\ell^\theta(xx') =  \ell^\theta(x) + {}^{\varpi(x)}\! \big(\ell^\theta(x')\big) 
+ \frac{1}{2}\big[\ell^\theta(x), {}^{\varpi(x)}\! \big( \ell^\theta(x')\big)\big] + 
\cdots
\end{equation}
for all $x,x'\in \pi$, 
where the terms not shown are Lie commutators of $\ell^\theta(x)$ and ${}^{\varpi(x)}\! \big( \ell^\theta(x')\big)$ of higher length.

Let $\widetilde \Alpha$ be the subset of $\Alpha$ consisting of the elements $a$ satisfying \eqref{eq:ell_a}.
Clearly $\widetilde \Alpha$ contains $\alpha$. By \eqref{eq:ppr},
$\widetilde \Alpha$ is stable under multiplication, hence under conjugation by elements of $\alpha$.
Besides, $\widetilde \Alpha$ is stable under conjugation by elements of $\beta$ since we have,
for any $a\in \widetilde{\Alpha}$ and $j\in J$,
\begin{eqnarray*}
\theta(\beta_j a \beta_j^{-1}) &=&  
(1 \otimes \varpi(\beta_j)) \cdot \big(\exp(\ell^\theta(a)) \otimes 1\big)  \cdot (1 \otimes \varpi(\beta_j)^{-1}) \\
&=& \exp\big( {}^{\varpi(\beta_j)}(\ell^\theta(a))  \big)  \otimes 1 \\
&=& \exp\big( {}^{\varpi(\beta_j)}([a]_1) + (\deg \geq 2) \big)  \otimes 1  
\end{eqnarray*}
so that $\beta_j a \beta_j^{-1}$ belongs to $ \widetilde{\Alpha}$. We conclude that $\widetilde \Alpha=\Alpha$.
\end{proof}

Recall that $\mathcal{G}_1$ is the subgroup of $\mathcal{G}$ acting trivially on the abelianization $\AAlpha$ of~$\Alpha$. 
The complete Lie algebra 
$\widehat D_+\big(\overline{\Alpha}^\Q_\bu\big)$ is viewed as a group
with multiplication given by the BCH formula.

\begin{theorem} \label{thm:varrho}
Let $\theta$ be an expansion of the free pair $(\pi,\Alpha)$. 
There is a group homomorphism
\begin{equation} \label{eq:varrho_free_pair}
\varrho^\theta: \mathcal{G}_1 \lto \widehat D_+\big(\overline{\Alpha}^\Q_\bu\big)
\end{equation}
whose restriction to $\mathcal{G}_m$ $(m\geq 1)$
starts in degree $m$ 
with $(\tau^0_m,\tau^1_m)$.
Furthermore, $\varrho^\theta$ is injective.
\end{theorem}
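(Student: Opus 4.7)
The plan is to deduce this from the general formality framework of Section~\ref{subsec:formal_eN-series} by specializing to the free pair case and then transporting the conclusion via the truncation isomorphism \eqref{eq:DD}.

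First, I would invoke Lemma~\ref{lem:expansion}, which ensures that the extended N-series $\A_*$ is formal and that the given expansion $\theta$ of $(\pi,\A)$ is an expansion in the sense of \S\ref{subsec:formal_eN-series}. Hence the construction \eqref{eq:varrho} of Section~\ref{sec:general_theory} applies and produces a filtered group homomorphism
$$
\varrho^\theta: \G_1 \longrightarrow \widehat{\Der}_+\big(\overline \A_\bu^\Q\big),
$$
whose target is equipped with the BCH product, and which induces $\bar\tau_+^\Q$ on the associated graded (by the last sentence of Section~\ref{subsec:formal_eN-series}).

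Next, I would transport this along the truncation isomorphism \eqref{eq:DD}, observing that since $\overline{\A}_+^\Q = \Lie(\AAlpha^\Q)$ is free on $\AAlpha^\Q$, the map \eqref{eq:DD} extends to an isomorphism of complete graded Lie algebras $\widehat{\Der}_+(\overline{\A}_\bu^\Q) \simeq \widehat D_+(\overline{\A}_\bu^\Q)$, compatibly with the Lie bracket (and hence with the BCH product). Composing with this isomorphism defines the desired group homomorphism
$$\varrho^\theta: \G_1 \longrightarrow \widehat D_+\big(\overline \A_\bu^\Q\big).$$
Because $\varrho^\theta$ induces $\bar\tau_+^\Q$ on the associated graded and the truncation sends $\bar\tau_m$ to $(\tau_m^0,\tau_m^1)$ by definition, the restriction of $\varrho^\theta$ to $\G_m$ has its leading degree-$m$ component equal to $(\tau_m^0,\tau_m^1)$.

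Finally, I would prove injectivity by a standard filtration-descent argument using \eqref{eq:completeness}. Suppose $g\in \G_1$ satisfies $\varrho^\theta(g)=0$. I proceed by induction: assume $g\in\G_m$ for some $m\geq 1$. The leading degree-$m$ component of $\varrho^\theta(g)$ equals $(\tau_m^0(g),\tau_m^1(g))$, which must vanish; in particular $\tau_m^1(g)=0$, so by \eqref{eq:ker_tau_m_0_1} we have $g\in \ker \tau_m^1 = \G_{m+1}$. Iterating, $g\in \bigcap_{m\geq 1}\G_m$, which is trivial by \eqref{eq:completeness}.

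The main conceptual point to check carefully is the compatibility of the truncation isomorphism \eqref{eq:DD} with the extended Lie algebra structures on completions and with the BCH product; everything else is a direct application of the general theory together with the already-established formality Lemma~\ref{lem:expansion} and the description \eqref{eq:ker_tau_m_0_1} of the kernels of the $\tau_m^1$.
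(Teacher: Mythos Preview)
Your proposal is correct and follows essentially the same approach as the paper: specialize the general construction \eqref{eq:varrho} to the extended N-series $\A_*$, transport through the truncation isomorphism \eqref{eq:DD}, and deduce injectivity from \eqref{eq:completeness}. The paper's proof is more terse---it simply cites \eqref{eq:completeness} for injectivity, whereas you spell out the filtration-descent argument---and it additionally remarks that torsion-freeness of $\overline{\A}_+=\Lie(\AAlpha)$ ensures no information is lost in passing to the rational version $\bar\tau_+^\Q$; but the substance is the same.
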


\begin{proof}
The first statement is a specialization of \eqref{eq:varrho} to the 
extended N-series $\Alpha_*$ for the free pair  $(\pi,\Alpha)$. Indeed,
 since $\overline{\Alpha}_+=\Lie(\AAlpha)$ is a torsion-free abelian group, 
there is no loss of information in considering the rational version  
$\overline{\tau}_+^\Q$ instead of $\overline{\tau}_+$ \cite[Remark 12.8]{HM18}.
Furthermore, we use here the ``truncation'' isomorphism
$\Der_+ (\overline{\Alpha}^\Q_\bu)  
\simeq D_+(\overline{\Alpha}^\Q_\bu)$.

The second statement is a direct consequence of \eqref{eq:completeness}.
\end{proof}

\section{First terms of the Johnson filtration for a free pair}
\label{sec:first_quotients}

Let $p,q\ge0$ be integers with either $p\ge2$ or $p,q\ge1$.
%Let $p\geq 2$ and  $q\geq 0$ be integers.
We assume that $\pi$ is a free group with basis 
$\alpha \sqcup \beta =\{\alpha_1,\dots, \alpha_p\}\sqcup\{\beta_1,\dots,\beta_q\}$  and that 
$$
\Alpha := \langle\!\langle \alpha_1,\dots, \alpha_p \rangle\!\rangle.
$$ 
Set $F:= \pi/\Alpha$ and let $\varpi:\pi \to F$ be the canonical projection.
Then $F$ is a free group with basis $(x_1,\dots, x_q)$, 
where $x_j:=\varpi(\beta_j)$ for each $j\in\{1,\dots,q\}$.
Hence $(\pi,\Alpha)$ is a free pair.
In this section, we compute the first few terms
of the Johnson filtration for  $\Aut(\pi,\Alpha)$.

\subsection{Magnus representations for a free pair}
\label{subsec:Magnus}

Let $\Z[\pi]$ be the group algebra of $\pi$. Let $\varepsilon: \Z[\pi] \to \Z$ be the augmentation, 
and $\overline{\ \cdot \ }: \Z[\pi] \to \Z[\pi]$ the linear map defined by $\overline{u}=u^{-1}$ for all $u\in \pi$.
Consider the (left) Fox derivatives with respect to the basis $(\alpha,\beta)$ of $\pi$, which are the linear maps 
$$
\frac{\partial \ }{\partial \alpha_i}: \Z[\pi] \longrightarrow \Z[\pi] \quad  (i=1,\dots,p)
\quad \hbox{and} \quad 
\frac{\partial \ }{\partial \beta_j}: \Z[\pi] \longrightarrow \Z[\pi] \quad (j=1,\dots,q)
$$
defined by
\begin{equation} \label{eq:Fox}
w-\varepsilon(w) = \sum_{i=1}^p \frac{\partial w}{\partial \alpha_i} (\alpha_i-1)
+ \sum_{j=1}^q \frac{\partial w}{\partial \beta_j} (\beta_j-1) \quad \hbox{for all $w\in \Z[\pi]$}.
\end{equation}

We associate to every $g\in \Aut(\pi)$ its \emph{free Jacobian matrix} with respect to the basis $\alpha \sqcup\beta$ of $\pi$:
$$
J(g) := 
{\scriptsize
 \begin{pmatrix} 
\overline{\frac{\partial g(\alpha_1)}{\partial \alpha_1} }& \cdots& \overline{\frac{\partial g(\alpha_p)}{\partial \alpha_{1}} } & \overline{\frac{\partial g(\beta_1)}{\partial \alpha_1} } & \cdots& \overline{\frac{\partial g(\beta_q)}{\partial \alpha_{1}}} \\
\vdots & &\vdots & \vdots & & \vdots\\
\overline{\frac{\partial g(\alpha_1)}{\partial \alpha_p}} & \cdots& \overline{\frac{\partial g(\alpha_p)}{\partial \alpha_{p}} } &\overline{ \frac{\partial g(\beta_1)}{\partial \alpha_p}} & \cdots&\overline{\frac{\partial g(\beta_q)}{\partial \alpha_{p}} } \\ 
&&&&&\\
\overline{\frac{\partial g(\alpha_1)}{\partial \beta_1} }& \cdots& \overline{\frac{\partial g(\alpha_p)}{\partial \beta_{1}} } & \overline{\frac{\partial g(\beta_1)}{\partial \beta_1} } & \cdots& \overline{\frac{\partial g(\beta_q)}{\partial \beta_{1}}} \\
\vdots & &\vdots & \vdots & & \vdots\\
\overline{\frac{\partial g(\alpha_1)}{\partial \beta_q}} & \cdots& \overline{\frac{\partial g(\alpha_p)}{\partial \beta_{q}} } &\overline{ \frac{\partial g(\beta_1)}{\partial \beta_q}} & \cdots&\overline{\frac{\partial g(\beta_q)}{\partial \beta_{q}} }  
\end{pmatrix}} \in \GL({p+q};\Z[\pi]).
$$

In the sequel, we use the notations of \S \ref{subsec_free_pairs}. In particular, set $\modG=\Aut(\pi,\Alpha)$.
Let $J^F: \modG \to \GL({p+q};\Z[F])$ be the map obtained by restricting $J:\Aut(\pi) \to \GL({p+q};\Z[\pi])$ to $\modG$  and by applying $\varpi$ to each matrix entry. 
The next proposition identifies the four blocks 
(namely $p\times p$, $p\times q$, $q\times p$ and $q \times q$)
of the corresponding  matrices.

\begin{proposition} \label{prop:blocks}
\begin{enumerate}
\item The lower-left  block of $J^F$ is trivial.
\item The lower-right block of $J^F$ defines a crossed homomorphism
$$
\Mag^0_0: \modG \longrightarrow \GL(q;\Z[F]), \quad
g \longmapsto \Bigg(\varpi\Big(\overline{\frac{\partial g(\beta_j)}{\partial \beta_i} }\Big)\Bigg)_{i,j}
$$
which is equivalent to $\tau_0^0: \modG \to \Aut(F)$
(in the sense that $\Mag_0^0$ factors through an injective crossed homomorphism $\Aut(F)\to \GL(q;\ZF)$).
\item The upper-left block of $J^F$ defines a crossed homomorphism
$$
\Mag^1_0: \modG \longrightarrow \GL(p;\Z[F]), \quad
g \longmapsto \Bigg(\varpi\Big(\overline{\frac{\partial g(\alpha_j)}{\partial \alpha_i} }\Big)\Bigg)_{i,j}
$$
which, with the knowledge of $\Mag^0_0$, 
is equivalent to $\tau_0^1: \modG \to \Aut(\AAlpha)$.
\item The upper-right block of $J^F$ restricts to a homomorphism
$$
\Mag^0_1: \modG_1 
\longrightarrow \Mat(p\times q;\Z[F]), \quad
g \longmapsto \Bigg(\varpi\Big(\overline{\frac{\partial g(\beta_j)}{\partial \alpha_i} }\Big) \Bigg)_{i,j}
$$
which is equivalent to $\tau^0_1: \modG_1 \to Z^1(F, \AAlpha)$
(in the sense that $\Mag^0_1$ factors through an injective homomorphism $Z^1(F,\AA)\to\Mat(p\times q;\ZF)$).
\end{enumerate}
\end{proposition}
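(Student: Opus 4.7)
The plan is to reduce every statement to Fox calculus over the pair $(\pi,\A)$. Write $\partial_i:=\partial/\partial\alpha_i$ and $\partial'_k:=\partial/\partial\beta_k$. Applying $\varpi$ to the fundamental identity \eqref{eq:Fox} annihilates each term $(\alpha_i-1)$, so every $w\in\pi$ satisfies
\[ \varpi(w)-1=\sum_{k=1}^q \varpi(\partial'_k w)\,(x_k-1)\in\Z[F]. \]
Comparing with the Fox expansion of $\varpi(w)\in F$ in the basis $(x_1,\dots,x_q)$ of $F$ yields the compatibility
\[ \varpi(\partial'_k b)=\partial\varpi(b)/\partial x_k\qquad\text{for every }b\in\pi,\quad(\ast) \]
i.e., reduction modulo $\A$ intertwines Fox derivatives in the $\beta$-directions on $\pi$ with Fox derivatives on $F$.

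Two further identities concern elements of $\A$. First,
\[ \varpi(\partial'_k a)=0\qquad\text{for every }a\in\A,\quad(\dagger) \]
which I prove by induction over a presentation of $a$ as a product of conjugates $w\alpha_i^{\pm 1}w^{-1}$; the key computation is $\partial'_k(w\alpha_iw^{-1})=(1-w\alpha_iw^{-1})\partial'_k w$, whose $\varpi$-image vanishes. Second, using the $\Z[F]$-basis $\{[\alpha_i]\}_{i\in I}$ of $\AA$ from Lemma \ref{lem:freeness},
\[ [a]_1=\sum_{i\in I}\varpi(\partial_i a)\,[\alpha_i]\in\AA\qquad\text{for every }a\in\A,\quad(\ddagger) \]
which follows from the same type of induction, the model computations being $\varpi(\partial_l(w\alpha_iw^{-1}))=\varpi(w)\,\delta_{il}$ on one side and $[w\alpha_iw^{-1}]_1=\varpi(w)\cdot[\alpha_i]$ on the other.

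Assembling the four parts of the proposition is then routine. \textbf{Part (1):} since $g\in\modG$ sends $\alpha_j$ into $\A$, identity $(\dagger)$ kills each entry of the lower-left block. \textbf{Part (2):} by $(\ast)$ applied to $b=g(\beta_j)$, the lower-right block is the classical Magnus free Jacobian matrix of $\bar g=\tau_0^0(g)\in\Aut(F)$; the chain rule for Fox derivatives yields the crossed homomorphism property, and the classical faithfulness of the Magnus representation of $\Aut(F)$ gives injectivity. \textbf{Part (3):} by $(\ddagger)$ applied to $a=g(\alpha_j)\in\A$, the upper-left block gives the coefficients of $\tau_0^1(g)([\alpha_j]_1)$ in the $\Z[F]$-basis $\{[\alpha_i]\}$; together with $\tau_0^0(g)$ this recovers $\tau_0^1(g)$ via its $\tau_0^0(g)$-semi-linearity \eqref{eq:equiv}, and the crossed property again follows from the chain rule. \textbf{Part (4):} for $g\in\modG_1$ we have $g(\beta_j)\beta_j^{-1}\in\A$, whence $(\ddagger)$ together with $\partial_i\beta_j^{-1}=0$ gives
\[ \tau_1^0(g)(x_j)=[g(\beta_j)\beta_j^{-1}]_1=\sum_{i\in I}\varpi(\partial_i g(\beta_j))\,[\alpha_i]. \]
Evaluation on the free generators of $F$ embeds $Z^1(F,\AA)\hookrightarrow\AA^q\simeq\Mat(p\times q;\Z[F])$, and $\tau_1^0$ factors through $\Mag^0_1$ via this embedding; the genuine (non-crossed) homomorphism property on $\modG_1$ is immediate because elements of $\modG_1$ act trivially on $\AA$.

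The main technical obstacle is identity $(\ddagger)$: the inductive step has to track carefully how conjugation in $\pi$ translates into the $F$-action on $\AA$, and how that action interacts with the product rule for Fox derivatives. Once $(\ddagger)$ and its companion $(\dagger)$ are in place, the four blocks of $J^F$ fall into line in a coordinated way and there are no further surprises.
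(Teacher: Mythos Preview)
Your proof is correct and follows essentially the same approach as the paper. The paper isolates your identity $(\ddagger)$ as a separate lemma (Lemma~\ref{lem:kappa}, stating that $\kappa:\AAlpha\to\Z[F]^p$, $[a]\mapsto(\varpi(\partial_i a))_i$, is a $\Z[F]$-module isomorphism), proves your $(\dagger)$ inline exactly as you do, and derives $(\ast)$ by applying $\varpi$ to the Fox expansion of $g(\beta_r)-1$; the crossed-homomorphism property is invoked from the literature rather than re-derived via the chain rule, but otherwise the structure and computations are the same.
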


The next lemma, which follows from elementary properties
of Fox derivatives,
is needed for the proof of Proposition \ref{prop:blocks}.

\begin{lemma} \label{lem:kappa}
The map $\kappa: \AAlpha \to \Z[F]^p$ defined by
\begin{equation}   \label{eq:kappa}
\kappa([a]) = \Big(\varpi\big(\frac{\partial a}{\partial \alpha_1} \big), \dots , \varpi\big(\frac{\partial a}{\partial \alpha_p} \big)\Big) 
\end{equation}
is an isomorphism of $\Z[F]$-modules, where $\Z[F]$ acts on $\Z[F]^p$ 
by left multiplication.
\end{lemma}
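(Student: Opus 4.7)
The plan is to verify in turn that $\kappa$ is well-defined on $\AAlpha$, that it is $\Z[F]$-equivariant, and finally that it sends the canonical basis of $\AAlpha$ to the standard basis of $\Z[F]^p$.

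\textbf{Well-definedness.} I would first show that the formula for $\kappa$ defines a group homomorphism from $\Alpha$ to $\Z[F]^p$, which then necessarily factors through the abelianization $\AAlpha$. For $a,b\in \Alpha$, the Leibniz rule for Fox derivatives gives
\[
\frac{\partial (ab)}{\partial \alpha_i} = \frac{\partial a}{\partial \alpha_i} + a\cdot \frac{\partial b}{\partial \alpha_i}.
\]
Applying $\varpi$ and using $\varpi(a)=1$ since $a\in\Alpha=\ker\varpi$, the product rule becomes additive on the nose: $\varpi\big(\partial(ab)/\partial \alpha_i\big) = \varpi\big(\partial a/\partial \alpha_i\big)+\varpi\big(\partial b/\partial \alpha_i\big)$.

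\textbf{Equivariance.} Next I would check that for $b\in\pi$ and $a\in\Alpha$, one has $\kappa([{}^b\!a]) = \varpi(b)\cdot \kappa([a])$. Using the Fox rules,
\[
\frac{\partial(bab^{-1})}{\partial \alpha_i} = \frac{\partial b}{\partial \alpha_i} + b\cdot\frac{\partial a}{\partial \alpha_i} + ba\cdot\frac{\partial (b^{-1})}{\partial\alpha_i},
\]
together with $\partial(b^{-1})/\partial\alpha_i = -b^{-1}\partial b/\partial \alpha_i$, this simplifies to
\[
\frac{\partial(bab^{-1})}{\partial \alpha_i} = \frac{\partial b}{\partial \alpha_i} + b\cdot\frac{\partial a}{\partial \alpha_i} - bab^{-1}\cdot\frac{\partial b}{\partial\alpha_i}.
\]
Applying $\varpi$ and observing that $\varpi(bab^{-1})=1$ (since $\varpi(a)=1$), the first and third terms cancel, yielding exactly $\varpi(b)\cdot\varpi(\partial a/\partial\alpha_i)$.

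\textbf{Bijectivity.} Finally, by Lemma \ref{lem:freeness}, the $\Z[F]$-module $\AAlpha$ is free with basis $\{[\alpha_i]\}_{i=1}^p$. Since $\partial\alpha_j/\partial\alpha_i = \delta_{ij}$, we have $\kappa([\alpha_i])=e_i$, the $i$-th standard generator of the free $\Z[F]$-module $\Z[F]^p$. A $\Z[F]$-linear map sending a basis to a basis is an isomorphism, which concludes the proof.

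No step is truly delicate here; the only point deserving care is the equivariance computation, where one must remember that $a\in\Alpha$ kills the boundary contributions $\partial b/\partial\alpha_i$ after passing to $\Z[F]$.
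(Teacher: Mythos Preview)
Your proof is correct and follows essentially the same approach as the paper's: verify that the Fox-derivative formula gives a homomorphism $\Alpha\to\Z[F]^p$ using the product rule and $\varpi(a)=1$, check $\Z[F]$-equivariance via the same conjugation computation, and conclude by invoking Lemma~\ref{lem:freeness} to see that the free basis $\{[\alpha_i]\}$ is sent to the standard basis of $\Z[F]^p$.
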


\begin{proof}
For any $a_1,a_2 \in \Alpha$, and for all $i\in \{1,\dots,g\}$, we have
$$
\varpi\big(\frac{\partial (a_1 a_2)}{\partial \alpha_i} \big) = 
\varpi\big(\frac{\partial a_1 }{\partial \alpha_i} \big)  + \varpi\big(a_1\frac{\partial  a_2}{\partial \alpha_i} \big) 
= \varpi\big(\frac{\partial a_1 }{\partial \alpha_i} \big)  + \varpi\big(\frac{\partial  a_2}{\partial \alpha_i} \big).
$$
This shows that the right side of \eqref{eq:kappa} defines a homomorphism $\Alpha \to \Z[F]^p$ and, 
since the group $\Z[F]^p$ is abelian, $\kappa$ is a well-defined homomorphism.

Let $i\in \{1,\dots,p\}$, $a\in \Alpha$ and $w\in \pi$. We have
\begin{eqnarray*}
\varpi\big(\frac{\partial (w a w^{-1})}{\partial \alpha_i} \big)
% &=& \varpi\big(\frac{\partial w }{\partial \alpha_i} \big) 
% + \varpi\big(w\frac{\partial ( a w^{-1})}{\partial \alpha_i} \big)  \\
&=& \varpi\big(\frac{\partial w }{\partial \alpha_i} \big) + \varpi\big(w\frac{\partial a }{\partial \alpha_i} \big) 
+ \varpi\big(w a \frac{\partial  w^{-1}}{\partial \alpha_i} \big) 
  \ = \ \varpi(w) \varpi\big(\frac{\partial a} {\partial \alpha_i} \big), 
\end{eqnarray*}
which shows that the map $\kappa$ is $\Z[F]$-linear.

By Lemma \ref{lem:freeness}, the $\Z[F]$-module $\AAlpha$ is free on the classes of $\alpha_1,\dots,\alpha_p$.
This basis of  $\AAlpha$ 
is sent by $\kappa$ to the canonical basis of $\Z[F]^p$.
Hence $\kappa$ is an  isomorphism.
\end{proof}

\begin{proof}[Proof of Proposition \ref{prop:blocks}]
It is well known that $J$ is a crossed homomorphism, i.e.,
$$
J(gg') = J(g)\cdot g\big(J(g')\big) \quad \hbox{for all $g,g' \in \Aut(\pi)$},
$$
see e.g$.$ \cite[\S 3]{Birman}. 
Hence, we get
\begin{equation} \label{eq:product}
J^F(gg') = J^F(g)\cdot g^F\big(J^F(g')\big) \quad \hbox{for all $g,g' \in \modG$},
\end{equation}
where we set $g^F:= \tau_0^0(g)\in \Aut(F)$  for all $g\in \modG$.

Let $g\in \modG$. Then $g(\alpha_i)\in \Alpha$ for all $i\in \{1,\dots,p\}$.
Thus, statement (1) follows since, for any $a \in \Alpha$ 
and for all $j \in \{1,\dots,q\}$, we have
\begin{equation} \label{eq:a}
\varpi\Big({\frac{\partial a}{\partial \beta_j}} \Big)=0.
\end{equation}
To prove \eqref{eq:a}, observe that,
 for any $a',a'' \in \Alpha$, we have
$$
\varpi\Big({\frac{\partial a' a''}{\partial \beta_j}} \Big)
= \varpi\Big({\frac{\partial a'}{\partial \beta_j}} + a' {\frac{\partial a''}{\partial \beta_j}}  \Big)=
 \varpi \Big({\frac{\partial a'}{\partial \beta_j}}\Big) + \varpi \Big({\frac{\partial a''}{\partial \beta_j}}  \Big);
$$
thus, it is enough to check \eqref{eq:a} 
for $a= w \alpha_k w^{-1}$, where $k \in \{1,\dots,p\}$ and $w\in \pi$:
\begin{eqnarray*}
\varpi\Big({\frac{\partial w \alpha_k w^{-1}}{\partial \beta_j}} \Big)
&= & \varpi\Big({\frac{\partial w }{\partial \beta_j}} \Big) + \varpi(w) \, \varpi\Big({\frac{\partial  \alpha_k w^{-1}}{\partial \beta_j}} \Big) \\
&= & \varpi\Big({\frac{\partial w }{\partial \beta_j}} \Big) + \varpi(w) \, \varpi\Big({\frac{\partial  w^{-1}}{\partial \beta_j}} \Big) \ = \ 
\varpi\Big({\frac{\partial w w^{-1}}{\partial \beta_j}} \Big) \ = \ 0.
\end{eqnarray*}

We deduce from statement (1) and from \eqref{eq:product} that the two diagonal blocks of $J^F$
define crossed homomorphisms with values in  general linear groups.

Let $g\in \modG$. Then  $g^F=\tau_0^0(g)\in \Aut(F)$ is determined by its free Jacobian matrix 
with respect to the basis $\{x_1,\dots,x_q\}$.
Thus, statement (2) will follow since the latter 
is equal to the lower-right block of $J^F$.  Indeed, for all  $r\in \{1,\dots, q\}$, we have
$$
g(\beta_r)-1=
\sum_i \frac{\partial g(\beta_r)}{\partial \alpha_i} (\alpha_i-1) + \sum_j \frac{\partial g(\beta_r)}{\partial \beta_j} (\beta_j-1).
$$
Therefore, by applying $\varpi$, we get 
$$
g^F(x_r)-1 = \sum_j \varpi\Big(\frac{\partial g(\beta_r)}{\partial \beta_j}\Big)(x_j-1)
$$
which implies that 
$$
\varpi\Big(\frac{\partial g(\beta_r)}{\partial \beta_j}\Big) = \frac{\partial g^F(x_r)}{\partial x_j} \
\hbox{for all $j\in\{1,\dots,q\}$.}
$$

Let $g\in \modG$.
Since the pair $(\tau^0_0(g),\tau^1_0(g))$ satisfies \eqref{eq:equiv}, 
the automorphism $\tau^1_0(g)$ is determined by its values on the basis 
$\{[\alpha_1],\dots, [\alpha_p]\}$ and by $\tau^0_0(g)$. The converse is also true since $\AAlpha$ is torsion-free as a $\Z[F]$-module.
Besides, Lemma~\ref{lem:kappa} implies that the upper-left block of $J^F(g)$
contains as much information as $[g(\alpha_1)], \dots, [g(\alpha_p)] \in \AAlpha$.
Thus we have proved statement (3).

We conclude with the proof of statement (4). First observe that the diagonal blocks of $J^F$ on $\modG_1$ are identity blocks
since $\modG_1= \ker \tau^1_0$. Thus, the upper-right block of $J^F: \modG_1 \to \GL({p+q};\Z[F])$ is a homomorphism.
Let $g\in \modG_1$. The $1$-cocycle $\tau^0_1(g)$ is determined by its values on the basis $\{x_1,\dots,x_q\}$ of $F$.
For each $j\in \{1,\dots,q\}$, the value of $\tau^0_1(g)$ on $x_j$ is 
$[g(\beta_j)\beta_j^{-1}] \in \AAlpha$ and, for all $i\in \{1,\dots,p\}$,
$$
\varpi\Big(\frac{\partial (g(\beta_j)\beta_j^{-1})}{\partial \alpha_i} \Big)
= \varpi\Big(\frac{\partial g(\beta_j)}{\partial \alpha_i} \Big).
$$
Hence, according to Lemma \ref{lem:kappa}, 
the value of $\tau^0_1(g)$ on $x_j$ is encoded by (the conjugate of)
the $j$-th column of the upper-right block of $J^F(g)$.
\end{proof}

\subsection{The quotient $\G/\G_1^0$}

Let us consider 
the first four terms of the filtration \eqref{nested}:
\begin{equation} \label{eq:start_filtration}
   \modG=\modG_0 \geq \modG_1^0 \geq \modG_1 \geq \modG_2^0 
\end{equation}
The homomorphism $\tau_0^0: \modG \to \Aut(F)$
is split surjective 
since any automorphism of $\la \beta_1,\dots, \beta_q \ra \simeq F$ extends
to an automorphism of $(\pi,\Alpha)$ that fixes each of $\alpha_1,\dots, \alpha_p$.
Hence $\tau_0^0$ induces an isomorphism $\modG/\modG_1^0\simeq \Aut(F)$.
Also, we can identify
$\modG/\modG_1^0$ with a subset of $\GL(q;\Z[F])$ 
via the crossed homomorphism~$\Mag_0^0$. 

In the rest of this section, 
we study the next two successive quotients in \eqref{eq:start_filtration}.

\subsection{The quotient $\G^0_1/\G_1$}

Let us now consider the quotient $\modG_1^0/\modG_1$.
It embeds into $\Aut_{\Z[F]}(\AAlpha)$ via $\tau_0^1$
(see Remark \ref{rem:00}).
Equivalently, $\modG_1^0/\modG_1$ embeds into $\GL(p;\Z[F])$ via $\Mag_0^1$. 
Consider the homomorphism$$r:=\Mag^1_0|_{\G^0_1}:\G^0_1\to\GL(p;\ZF).$$
We have 
$
\ker(r) = \ker(\tau_0^1) \cap \G^0_1 = \G_1^1 \cap \G^0_1 =\G_1.
$
Thus, we have an exact sequence
  \begin{gather}
    \label{e47}
    1\longrightarrow\G_1\longrightarrow\G^0_1\overset{r}{\longrightarrow}\GL(p;\ZF).
  \end{gather}
Now, it remains to identify the image of $r$
(or, equivalently, the image of $\tau_0^1$ in 
$\Aut_{\Z[F]}(\AAlpha) \simeq \GL(p;\ZF)$).
We give below a partial answer.

Let $R$ be a (associative, unital) ring, and let $U(R)$ be its group of units.
The {\em general elementary subgroup} $\GE(p;R)$ of $\GL(p;R)$ 
is generated by  invertible diagonal matrices and elementary matrices, i.e., it is  generated by
\begin{itemize}
\item $d_i(u):=I_p+(u-1)E_{ii}$ for $i\in \{1,\ldots ,p\}$ 
and $u\in U(R)$,
\item $e_{ij}(w):=I_p+wE_{ij}$ for $i,j\in \{1,\ldots ,p\}$, $i\neq j$, and
  $w\in R$.
\end{itemize}
Following Cohn \cite{Cohn}, a ring $R$ is said to be 
\emph{generalized euclidean} if $\GE(p;R)=\GL(p;R)$ for every $p\geq 2$.
Of course, generalized euclidean rings include ordinary euclidean rings
(as a consequence of the Gauss algorithm).
We are interested here in the ring $R:=\Z[F]$
for a free group $F$ of rank $q$, but it does not seem
to be known whether it is generalized euclidean or not
(even in the case $q=1$ \cite{Guyot}).

\begin{remark}
The group algebra $\Q[F]$ with coefficients 
in $\Q$ is generalized euclidean \cite[Theorem 3.4]{Cohn}, 
but  free associative rings  (including the $1$-variable polynomial ring $\Z[X]$) are not \cite[end of \S 8]{Cohn}.  
\end{remark}

The following gives a partial information on $r(\G^0_1)\simeq \G^0_1/\G_1$.

\begin{proposition}
\label{r59}
We have \
$\GE(p;\ZF)\le r(\G^0_1)\le\GL(p;\ZF).$
\end{proposition}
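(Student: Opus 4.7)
The right-hand inclusion is immediate from the definition of $r$. The content is thus to prove $\GE(p;\ZF)\le r(\G_1^0)$, and since $\GE(p;\ZF)$ is by definition generated by the elementary matrices $e_{ij}(w)$ ($i\neq j$, $w\in\ZF$) together with the invertible diagonal matrices $d_i(u)$ ($u\in U(\ZF)$), it suffices to realize each such generator as $r(g)$ for some $g\in\G_1^0$. My plan is to write down explicit Nielsen-type automorphisms of $\pi$ that fix every $\beta_k$ (so that they automatically act trivially on $F$ and preserve $\A$), compute their effect on $\AAlpha$, and read off the Magnus matrix via Proposition~\ref{prop:blocks}(3) and Lemma~\ref{lem:kappa}.

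For an elementary generator $e_{ij}(w)$, expand $\overline{w}=\sum_k n_k f_k$ with $n_k\in\Z$ and $f_k\in F$, choose for each $f_k$ a lift $\ti f_k\in\la\beta_1,\dots,\beta_q\ra\subset\pi$, and define $g$ by
$$
g(\alpha_j)=\Big(\prod_k \ti f_k\,\alpha_i^{n_k}\,\ti f_k^{-1}\Big)\alpha_j,
$$
while fixing every other element of the basis $\alpha\sqcup\beta$. Since the inserted word is $\alpha_j$-free, this is an invertible Nielsen move on $\pi$; it is trivial on $F$ and sends $\A$ into $\A$, hence $g\in\G_1^0$. A short computation gives $[g(\alpha_j)]_1=[\alpha_j]_1+\bar w\cdot[\alpha_i]_1$ in $\AAlpha$, and applying $\kappa$ together with the involution $\overline{\,\cdot\,}$ used in the definition of $J^F$ yields $r(g)=e_{ij}(w)$.

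For a diagonal generator $d_i(u)$, the one external input I would invoke is that the free group $F$ has the unique product property, whence $U(\ZF)=\{\pm f:f\in F\}$. Writing $u=\varepsilon f$ with $\varepsilon\in\{\pm 1\}$ and $f\in F$, and choosing a lift $\ti f\in\la\beta_1,\dots,\beta_q\ra$, the endomorphism of $\pi$ defined by
$$
g(\alpha_i)=\ti f^{-1}\alpha_i^{\varepsilon}\ti f,\qquad g(\alpha_k)=\alpha_k\ (k\neq i),\qquad g(\beta_k)=\beta_k
$$
admits an explicit two-sided inverse sending $\alpha_i\mapsto \ti f\alpha_i^{\varepsilon}\ti f^{-1}$ (and fixing the other generators); it preserves $\A$ and is trivial on $F$, so $g\in\G_1^0$, and $[g(\alpha_i)]_1=\varepsilon f^{-1}\cdot[\alpha_i]_1$. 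Lemma~\ref{lem:kappa} then translates this into $r(g)=d_i(u)$.

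The only conceptual obstacle is really the diagonal case: without the triviality of units in $\ZF$, one would only reach the subgroup generated by elementary matrices and the ``geometric'' diagonal units $\pm f$, rather than all prescribed $d_i(u)$. Once this input is accepted, the remaining verifications are routine bookkeeping with Fox derivatives that I have suppressed.
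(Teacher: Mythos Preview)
Your proof is correct and follows essentially the same approach as the paper: both construct explicit Nielsen-type automorphisms of $\pi$ fixing the $\beta_k$'s to realize the diagonal and elementary generators of $\GE(p;\ZF)$, invoking the triviality of units in $\ZF$ for the diagonal case. The only cosmetic difference is that you realize $e_{ij}(w)$ for arbitrary $w\in\ZF$ in one step via a product of conjugates, whereas the paper realizes $e_{ij}(x)$ for $x\in F$ and implicitly uses the additivity $e_{ij}(w_1+w_2)=e_{ij}(w_1)e_{ij}(w_2)$.
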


In the rest of this section,  we use the following notations
for endomorphisms of the free group
$\pi=\la\al_1,\dots,\al_p,\be_1,\dots,\be_q\ra$.  For distinct
generators $u_1,\dots,u_r\in\{\al_i\}_i \sqcup \{\be_j\}_j$ and elements
$v_1,\dots,v_r\in\pi$, let $(u_1\mapsto v_1,\dots,u_r\mapsto v_r)$
denote the endomorphism of $\pi$ sending $u_i$ to $y_i$ for
$i=1,\dots,r$ and each of the other generators to itself.

\begin{proof}[Proof of Proposition \ref{r59}]
Recall that  $U(\ZF)=\pm F$ 
by a classical result of Higman~\cite{Higman}.
Hence $\GE(p;\ZF)$ is
generated by the matrices  $d_i(\epsilon x)$ with $\epsilon=\pm 1$ and $x\in F$ 
and the matrices $e_{ij}(x)$ with $x\in F$. All these matrices belong to the image of $r$:
\begin{itemize}
    \item  we have $r\big(\ti d_i(\epsilon x)\big)=d_i(\epsilon x)$, where $\ti
  d_i(\epsilon x):=\big(\al_i\mt{}^{x^{-1}}\al_i^\epsilon\big)\in\G^0_1$
  (here and below, we are identifying $x\in F$ with a lift in $B\le\pi$);
  \item we have $r(\ti e_{ij}(x))=e_{ij}(x)$, where $\ti
  e_{ij}(x):=\big(\al_j\mt ({}^{x^{-1}}\al_i)\, \al_j\big)\in\G^0_1$. 
\end{itemize}
\end{proof}

\begin{remark}
  \label{r2}
For any group $G$, the general elementary groups $\GE(p;\Z[G])$ 
are used in the definition of the \emph{Whitehead group}:
\begin{gather*}
  \hbox{Wh}(G)=\ilim_p\GL(p;\Z[G])/\ilim_p\GE(p;\Z[G]).
\end{gather*}
According to \cite{Stallings}, the Whitehead group $\hbox{Wh}(F)$ of the free group $F=F_q$ 
of rank~$q$ is trivial. 
Hence the surjectivity of $r=r_{p,q}$ holds stably in $p$, i.e., for any fixed
$q\ge 0$, the inductive limit
\begin{gather*}
  \ilim_p r_{p,q}:\ilim_p (\G_{p,q})^0_1\rightarrow \ilim_p \GL(p;\Z [F_q]),
\end{gather*}
is surjective.
\end{remark}

\subsection{The quotient $\modG_1/\modG^0_2$}

We now identify 
$\modG_1/\modG^0_2$.

\begin{proposition}
  \label{r73}
  % The map $\tau^0_1:\G_1\to Z^1(F,\AAlpha)$ is surjective.  
  We have the short exact sequence
  \begin{equation} \label{eq:tau^0_1}
    1\longrightarrow\G^0_2\longrightarrow
    \G_1\overset{\tau^0_1}{\longrightarrow }Z^1(F,\AAlpha)\longrightarrow 1
  \end{equation}
  or, equivalently, we have the short exact sequence
   \begin{equation} \label{eq:Mag^0_1}
    1\longrightarrow\G^0_2\longrightarrow
    \G_1\overset{\Mag_1^0}{\longrightarrow } \Mat(p\times q;\Z[F]) \longrightarrow 1.
  \end{equation}
  Thus we have
  $ \ 
  \G_1/\G^0_2\simeq Z^1(F,\AAlpha)\simeq\Mat(p\times q;\Z[F]).
  $
\end{proposition}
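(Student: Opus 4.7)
The plan is to combine three observations: the kernel computation is immediate, the equivalence of the two formulations is formal, and the only real content is the surjectivity.

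For the kernel and the equivalence, \eqref{eq:ker_tau_m_0_1} applied with $m=1$ gives $\ker\tau^0_1=\G^0_2$, which handles exactness at $\G^0_2$ and at $\G_1$. By Proposition~\ref{prop:blocks}(4) combined with Lemma~\ref{lem:kappa}, the map $\tau^0_1$ is equivalent to $\Mag^0_1$ via the canonical isomorphism $Z^1(F,\AA)\simeq\Mat(p\times q;\ZF)$, since a $1$-cocycle on the free group $F=\la x_1,\ldots,x_q\ra$ is determined by its values on the $x_j$, and $\AA$ is free of rank $p$ over $\ZF$. It thus suffices to establish that $\tau^0_1$ is surjective.

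For surjectivity, given $c\in Z^1(F,\AA)$, I would first choose for each $j\in\{1,\ldots,q\}$ a lift $a_j\in\A$ of $c(x_j)\in\AA$, and then define an endomorphism of $\pi$ by
$$
g:=\big(\al_i\mt\al_i,\ \be_j\mt a_j\be_j\big)\in\End(\pi).
$$
The remaining verifications would be: (i) $g$ is an automorphism, since its image contains all $\al_i$ and, from $\be_j=a_j^{-1}(a_j\be_j)$ with $a_j\in\A\subset\pi=g(\pi)$, all $\be_j$, so $g$ is surjective, and $\pi$ being a finitely generated free group is Hopfian; (ii) $g$ preserves $\A$, because $\varpi(a_j)=1$ gives $\varpi\circ g=\varpi$, hence $g(\A)\subset\A$, with the reverse inclusion following from $g(\A)$ being normal in $\pi$ and containing the normal generators $\al_i$ of $\A$; (iii) $g\in\G_1$, because $\tau^1_0(g)\in\Aut_{\ZF}(\AA)$ is $\ZF$-linear and fixes the $\ZF$-basis $\{[\al_1],\ldots,[\al_p]\}$ of $\AA$ provided by Lemma~\ref{lem:freeness}; and (iv) $\tau^0_1(g)(x_j)=[g(\be_j)\be_j^{-1}]_1=[a_j]_1=c(x_j)$ for each $j$, so $\tau^0_1(g)$ and $c$ agree on a basis of $F$ and hence coincide.

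The only step with real content is finding the lift $g$; the Hopfian property of $\pi$ then automatically upgrades the endomorphism to an automorphism, and the other checks are by inspection. The key idea that makes the ansatz work is that, thanks to the decomposition $\pi=A*B$ underlying \eqref{eq:free_basis}, one can decouple the action of $g$ on the $\alpha$-generators (trivial) from its action on the $\beta$-generators (prescribed by the cocycle), which is exactly what prevents this surjectivity from being an obstruction analogous to that of $r$ in the previous quotient $\G^0_1/\G_1$.
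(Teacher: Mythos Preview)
Your treatment of the kernel and of the equivalence between \eqref{eq:tau^0_1} and \eqref{eq:Mag^0_1} is correct, but step (i) contains a genuine error. The assertion ``$a_j\in\A\subset\pi=g(\pi)$'' is circular: the equality $g(\pi)=\pi$ is precisely what you are trying to establish. Worse, the claim is false in general. Take $p=q=1$, so $\pi=\la\al,\be\ra$ and $\A=\lala\al\rara$, and consider the cocycle $c\in Z^1(F,\AA)$ determined by $c(x_1)=x_1^2\cdot[\al]$ with lift $a_1=\be^2\al\be^{-2}\in\A$. Your endomorphism is then $g(\al)=\al$, $g(\be)=\be^2\al\be^{-1}$. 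Under the surjection $\psi:\pi\to S_3$ with $\psi(\al)=(12)$ and $\psi(\be)=(123)$, one computes $\psi(g(\be))=(132)(12)(132)=(12)$, so $\psi(g(\pi))=\la(12)\ra\subsetneq S_3$. Hence $g$ is not surjective, the Hopfian argument does not apply, and step (ii) also collapses (normality of $g(\A)$ in $\pi$ would require $g(\pi)=\pi$).

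The paper's proof avoids this by not attempting to lift an arbitrary cocycle directly. Instead it exhibits, for each triple $(i,s,b)$ with $b\in\la\be_1,\dots,\be_q\ra$, the element
\[
\varphi_{i,s,b}=(\al_s\mt{}^b\al_s)\circ(\be_i\mt\al_s\be_i)\circ(\al_s\mt{}^b\al_s)^{-1},
\]
which is an automorphism \emph{by construction} (a $\G$-conjugate of a Nielsen transformation lying in $\G_1$), and checks that $\tau^0_1(\varphi_{i,s,b})(x_j)=[\,({}^b\al_s)^{\delta_{ij}}\,]$; these values form a $\Z$-basis of $Z^1(F,\AA)$. Note that $\varphi_{i,s,b}$ does \emph{not} fix $\al_s$ in general: insisting on $g(\al_i)=\al_i$ for all $i$, as you do, is exactly the constraint that obstructs surjectivity.
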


\begin{proof}
The equivalence of \eqref{eq:tau^0_1} and 
\eqref{eq:Mag^0_1} follows directly from the
sequence of isomorphisms
\begin{equation} \label{eq:sequence_iso}
Z^1(F,\AAlpha) \simeq \AAlpha^q \simeq (\Z[F]^p)^q = 
\Mat(p\times q;\Z[F])    
\end{equation}
through which $\tau^0_1$ corresponds to $\Mag^0_1$.
(See the proof of Proposition \ref{prop:blocks}.(4).)
Hence it suffices to prove that  $\tau^0_1$ is surjective.

  For $i\in\{1,\dots,q\}$, $s\in\{1,\dots,p\}$, $b\in \la \beta_1,\dots,\beta_q \ra$, set
  \begin{gather*}
    \varphi_{i,s,b}:=(\al_s\mt{}^b\al_s)\circ(\be_i\mt\al_s\be_i)\circ(\al_s\mt{}^{b}\al_s)^{-1}.
  \end{gather*}
  Since $(\be_i\mt \al_s \be_i)\in\G_1$ and $(\al_s\mt
  {}^b\al_s)\in\G$, we have $\varphi_{i,s,b}\in\G_1$.
  Observe that $\varphi_{i,s,b}(\be_i)={}^b\al_s\be_i$, and
  $\varphi_{i,s,b}(\be_j)=\be_j$ for $j\neq i$.
%%   We have
%%   \begin{gather*}
%%     \varphi_{i,s,b}=
%%     (\al_s\mt {}^{\mu _{i,s,b}}\al_s,\;\be_i\mt {}^b\al_s\be_i),
%%   \end{gather*}
%%   where $\mu _{i,s,b}:=(\be_i\mt {}^b\al_s\be_i)(b)^{-1}\cdot b\in K$.
  Hence, we have
  \begin{gather*}
    \tau^0_1(\varphi_{i,s,b})(x_j)=
    \left[({}^b\al_s)^{\delta _{j,i}}\right]
    \in \AA.
  \end{gather*}
  Since the $\tau^0_1(\varphi_{i,s,b})$ form a basis of the free abelian
  group $Z^1(F,\AAlpha)$, it follows that $\tau^0_1$ is surjective.
\end{proof}

\section{The Johnson filtration for the handlebody group}
\label{sec:Johnson_handlebody}

We now review some basic facts about the handlebody group,
and we start the study of the Johnson filtration in this situation.

\subsection{The handlebody group $\modH$}

Let $V$ be a handlebody of genus $g$.
Let $D$ be a disk in $\partial V$, and
set $\Sigma := \partial V \setminus \operatorname{int}(D)$. 
Let $\modH$ be the mapping class group of $V$ rel $D$, 
which is called the  \emph{handlebody group}.
Let $\modM$ 
be the mapping class group of $\Sigma$ rel $\partial \Sigma$. 
By restricting self-homeomorphisms of $V$ to $\Sigma$, we obtain  an injective homomorphism
$\modH  \to \modM$ (see e.g$.$ \cite[\S 3]{Hensel}).
Thus we  regard $\modH$ as a subgroup of~$\modM$.

Choose a base point $\star \in \partial D$, and define the \emph{twist group}
$$
\modT := \ker \big(\modH \longrightarrow \Aut( \pi_1(V,\star))\big)
$$ 
as the subgroup of $\modH$ acting trivially on
the fundamental group of $V$.
A \emph{disk-twist} is the isotopy class of self-homeomorphisms of $V$
defined by twisting $V$ along a properly embedded disk $U$ in $V$
(see e.g$.$ \cite[\S 5]{Hensel}).
The corresponding element of $\modH \subset \modM$ is the Dehn twist $T_{\partial U}$
along the simple closed curve $\partial U$,
which is a \emph{meridian} of $V$.
Thus, disk-twists are also called ``meridional Dehn twists'' in the literature.

In the following, we derive two results on the handlebody group $\modH$ of $V$ rel $D$ from results of Luft \cite{Luft} and Griffiths \cite{Griffiths} on the handlebody group of $V$ rel~$\star$.

\begin{theorem}[Luft] \label{thm:Luft}
The subgroup $\modT$ of $\modH$ is generated by disk-twists.
In other words, the subgroup $\modT$ of $\modM$ is generated by Dehn twists along meridians
(i.e$.$ simple closed curves of $\Sigma$  null-homotopic in $V$).
\end{theorem}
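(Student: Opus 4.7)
The plan is to fix a complete system $\Delta = D_1 \cup \cdots \cup D_g$ of properly embedded meridian disks cutting $V$ into a $3$-ball, and to reduce any $f \in \modT$ to the identity after composition with a suitable sequence of disk-twists along meridians.

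First, given $f \in \modT$, put $f(\Delta)$ in transverse general position with respect to $\Delta$. Standard innermost-disk arguments, together with the irreducibility of $V$, allow us to isotope $f$ rel $D$ so as to eliminate all circle components of $f(\Delta) \cap \Delta$. An outermost-arc analysis then eliminates the remaining arc components; it is here that the hypothesis $f \in \modT$ (rather than merely $f \in \modH$) plays the decisive role, because the $\pi_1(V,\star)$-triviality of $f$ is what ensures that the outermost bigons one extracts are compressible in $V$, so that they provide the desired simplifying isotopy.

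Once $f(\Delta) \cap \Delta = \emptyset$, cutting $V$ along $\Delta$ lets us view $f(\Delta)$ as a disjoint system of properly embedded disks in a $3$-ball. A parallelism argument, combined with the triviality of $f$ on $\pi_1(V,\star)$, shows that $f(D_i)$ is properly isotopic to $D_i$ for each $i$ after correction by an integer $n_i \in \modZ$ worth of meridional Dehn twisting along $\partial D_i$. Composing $f$ with $\prod_i T_{\partial D_i}^{n_i}$ therefore reduces us to the case where $f$ fixes a regular neighborhood of $\Delta$ pointwise.

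Finally, cutting $V$ along $\Delta$ exhibits it as a $3$-ball $B$ with $D$ and the two copies $D_i^\pm$ of each $D_i$ marked on its boundary, and our modified $f$ descends to a self-homeomorphism of $B$ fixing this marked region pointwise. By the Alexander trick (equivalently, Cerf's theorem $\pi_0\operatorname{Diff}(D^3,\partial D^3)=0$), this modified $f$ is isotopic to the identity rel $\partial B$, so the original $f$ is a product of the disk-twists used above. The main obstacle lies in the outermost-arc step, where one must carefully exploit the $\pi_1$-triviality to control the intersection pattern of $f(\Delta)$ with $\Delta$; everything else proceeds by routine $3$-manifold topology.
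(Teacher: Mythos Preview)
Your approach is quite different from the paper's. The paper does not reprove Luft's theorem from scratch; it quotes Luft's original result \cite{Luft} for the handlebody group rel a \emph{basepoint} $\star$, and then uses the central extension $1 \to \Z \to \modM \to \hat\modM \to 1$ (whose kernel is generated by the boundary Dehn twist $T_{\partial\Sigma}$, itself a disk-twist) to pass to the version rel the disk $D$. This reduction is a few lines long.

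Your direct argument has a genuine gap in the final step. After arranging that the modified $f$ fixes a regular neighborhood of $\Delta$ pointwise, you cut along $\Delta$ and invoke the Alexander trick. But Alexander requires the modified $f$ to fix \emph{all} of $\partial B$, whereas you have only arranged that it fixes the marked region $D \cup \bigcup_i D_i^\pm$. On the complementary $(2g{+}1)$-holed sphere $P = \partial B \setminus (D \cup \bigcup_i D_i^\pm)$, the modified $f$ still acts by an arbitrary mapping class, and this is exactly where almost all of the twist group lives: taken literally, your argument would yield $\modT = \langle T_{\partial D_1}, \ldots, T_{\partial D_g}\rangle$, forcing $\modT$ to be abelian of rank at most $g$, which is false for $g \geq 2$. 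What is missing is the observation that the mapping class group of $P$ rel $\partial P$ is generated by Dehn twists, and every essential simple closed curve in $P \subset \partial B$ bounds a disk in the ball $B$ and is therefore a meridian of $V$; hence the modified $f$ restricted to $P$ is already a product of meridional Dehn twists, and only \emph{then} does Alexander finish the job. A smaller point: the outermost-arc reduction in your first step is standard for any $f \in \modH$ and does not use $\pi_1$-triviality; that hypothesis is actually needed in your second step, to rule out that $f(D_i)$ is a handle-slid disk, and the ``parallelism argument'' you allude to there would also require justification.
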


\begin{proof}
This is derived from Luft's theorem  \cite{Luft}.
Let $\modT_\circ$ be the subgroup of $\modH$ generated by disk-twists.
Since any disk-twist acts trivially on $\pi_1(V,\star)$, we have $\modT_\circ \subset \modT$. It remains to prove the converse inclusion.

Let $\hat\modM$ and $\hat\modH$ be the mapping class groups of $\partial V$ and $V$, respectively, rel $\star$.
Let $\hat\modT$ denote the twist group for $V$ rel $\star$, which is defined 
as the kernel of the canonical map $\hat\modH \to \Aut( \pi_1(V,\star))$.
By \cite[Cor. 2.2]{Luft}, $\hat\modT$
coincides
with the subgroup $\hat\modT_\circ$  of $\hat\modH$ generated by disk-twists. 
There is a short exact sequence
\begin{equation} \label{eq:simple_ses}
 1\to   \Z {\longrightarrow }\modM 
 \overset{p}{\longrightarrow} \hat\modM\rightarrow 1,
\end{equation}
where $1\in \Z$ is mapped to the Dehn twist $T_\zeta$ 
along the boundary curve $\zeta=\partial \Sigma$,
and $p$ is the canonical map.
Let $f\in  \modT$. Clearly $p(f)\in \hat\modT$ so that  $p(f)\in \hat\modT_\circ$.
Thus, for some $h\in \modT_\circ$ we have $p(h)=p(f)$.
Hence $f=h\, T_\zeta^{n}$ for some $n\in \Z$.
Since $T_\zeta$ is a disk-twist, we have $f\in  \modT_\circ$.
\end{proof}

We regard $\modH$ and $\modT$
as subgroups of the automorphism group $\Aut(\pi)$ of the free group $\pi:= \pi_1(\Sigma,\star)$,
via the Dehn--Nielsen representation $\modM \to \Aut(\pi)$.
Set $F:= \pi_1(V,\star)$ and
let $\varpi: \pi \to F$ be induced 
by the inclusion $\iota: \Sigma \hookrightarrow V$. 
Then, for $\Alpha := \ker \varpi$, we have the free pair $(\pi, \Alpha)$ and we can consider the subgroup $\modG:= \Aut(\pi,\Alpha)$ of $\Aut(\pi)$.
Note that $\A$ is isomorphic to the relative homotopy group $\pi_2(V,\Sigma)$.

\begin{theorem}[Griffiths] \label{thm:Griffiths}
We have $\modH = \modM \cap \modG$. 
\end{theorem}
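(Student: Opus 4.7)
The plan is to reduce this to Griffiths' original result in \cite{Griffiths}, which addresses the analogous statement for the mapping class groups rel basepoint $\star$ (rather than rel the disk $D$), and then to bridge between the two settings via the central extension
\begin{equation*}
1 \lto \Z \lto \modM \overset{p}{\lto} \hat\modM \lto 1
\end{equation*}
already used in the proof of Theorem \ref{thm:Luft}, together with its handlebody analogue $1 \to \Z \to \modH \to \hat\modH \to 1$; the generator of $\Z$ is, in both cases, the Dehn twist $T_\zeta$ along $\zeta = \partial \Sigma$.

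First I would dispatch the easy inclusion $\modH \subset \modM \cap \modG$: any self-homeomorphism $\tilde h$ of $V$ rel $D$ restricts to a self-homeomorphism of $\Sigma$ rel $\partial\Sigma$, and its compatibility with the inclusion $\iota:\Sigma\hookrightarrow V$ forces the induced automorphism of $\pi$ to preserve $\A = \ker(\iota_*)$.

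For the reverse inclusion, I would take $g \in \modM \cap \modG$ and descend to $\hat g := p(g) \in \hat\modM$, viewed as an automorphism of $\hat\pi := \pi_1(\partial V, \star)$. The van Kampen decomposition $\partial V = \Sigma \cup D$ along $\zeta$ (with $D$ a disk) gives $\hat\pi = \pi/\lala \zeta \rara$ and hence $\ker(\hat\pi \to F) = \A/\lala \zeta \rara$. Since $g$ fixes $\partial \Sigma$ pointwise we have $g(\zeta)=\zeta$, so $g$ stabilises $\lala \zeta \rara$ and $\hat g$ is well-defined; moreover $\hat g$ preserves $\A/\lala \zeta \rara$ precisely because $g$ preserves $\A$. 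Griffiths' original theorem then yields $\hat g \in \hat\modH$, so we may choose $h \in \modH$ with $p(h) = p(g)$, and deduce $g = h\, T_\zeta^{n}$ for some $n \in \Z$.

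The final step is to observe $T_\zeta \in \modH$: a representative of $T_\zeta$ supported in a thin annular neighbourhood of $\zeta$ inside $\Sigma$, disjoint from $D$, extends by the identity through a collar of $\Sigma$ in $V$ to a self-homeomorphism of $V$ rel $D$. The main obstacle I anticipate is purely book-keeping, namely the careful comparison between the two versions of ``preserving the kernel''---at the level of $\pi$ (free group) versus at the level of $\hat\pi$ (surface group)---which relies on the fact that $\lala\zeta\rara$ is contained in $\A$ and is preserved by every $g \in \modM$.
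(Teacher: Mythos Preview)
Your proposal is correct and follows essentially the same route as the paper: descend via the central extension $1\to\Z\to\modM\to\hat\modM\to1$, invoke Griffiths' original result for the pointed setting to conclude $\hat g\in\hat\modH$, lift to some $h\in\modH$, and absorb the resulting power of $T_\zeta$ into $\modH$. The only differences are expository: you spell out the van Kampen identification $\hat\pi\simeq\pi/\lala\zeta\rara$ and the reason $T_\zeta\in\modH$, whereas the paper leaves these implicit (in fact $T_\zeta$ is a disk-twist, so this was already used in the proof of Theorem~\ref{thm:Luft}).
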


\begin{proof}
This is derived from Griffiths' theorem \cite{Griffiths}.
Here we use the notations in the proof of Theorem~\ref{thm:Luft}.
The inclusion $\modH \subset \modM \cap \modG$ follows
easily from the functoriality of the fundamental group.

To prove the converse inclusion, let $f\in \modM \cap \modG$
and $\hat f = p(f)\in \hat\modM$.
As an automorphism of $\pi_1(\partial V,\star)$, 
$\hat f$ preserves the kernel of 
$\iota_*: \pi_1(\partial V,\star) \to \pi_1(V,\star)$.
Thus we have $\hat f\in\hat\modH$ by \cite[Cor. 10.2]{Griffiths}. 
Hence for some $h\in \modH$ we have $p(h)=\hat f$.
By the short exact sequence \eqref{eq:simple_ses}, we have 
$f=h\, T_\zeta^n$ for some $n \in \Z$, hence $f\in \modH$.
\end{proof}

\subsection{The Johnson filtration of $\modH$}

By Theorem \ref{thm:Griffiths} we can apply to $\modH$ the constructions and results  of the previous sections for $\modG=\Aut(\pi,\Alpha)$.

In particular, 
by restricting the filtrations \eqref{nested} of $\modG$ to~$\modH$, 
and setting $\modH_m=\modH\cap\G_m$ and $\modH^0_m=\modH\cap\G^0_m$ 
for $m\geq 0$,
we obtain two nested filtrations
\begin{equation} \label{eq:HH}
\modH=\modH^0_0=\modH_0\ge \modH^0_1\ge \modH_1\ge \cdots \ge \modH_{m-1} \ge \modH^0_m\ge \modH_m\ge \cdots  
\end{equation}
such that
\begin{equation} \label{eq:completeness_HH}
\bigcap_{m\geq 0} \modH_m^0
= \bigcap_{m\geq 0} \modH_m= \{1\}   . 
\end{equation}
It turns out that these  two filtrations coincide.

\begin{theorem} \label{thm:=}
For each $m\geq 0$, we have $\modH^0_m= \modH_m$.
\end{theorem}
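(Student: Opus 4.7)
The plan is to proceed by induction on $m\ge 0$. The case $m=0$ is immediate since $\modH_0=\modH_0^0=\modH$ by definition. The inclusion $\modH_m\subset \modH_m^0$ is automatic, so for $m\ge 1$ the work is to establish the reverse inclusion, assuming the equality $\modH_k^0=\modH_k$ is known for all $k<m$.

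Fix $f\in \modH_m^0$. The first reduction I would make is to place $f$ inside $\modH_{m-1}$: applying the defining condition of $\modG_m^0$ to elements $a\in \A\subset \pi$ gives $f(a)a^{-1}\in \A_m$, hence $f\in\modG_{m-1}^1=\modG_{m-1}$ by \eqref{GmGm1}. Consequently $f\in \modH\cap \modG_{m-1}=\modH_{m-1}$, and the Johnson homomorphism
$$
\tau_{m-1}^1(f):\AAlpha \lto \overline{\A}_{m}
$$
is defined. Since $\ker \tau_{m-1}^1=\modG_m^1=\modG_m$ by \eqref{eq:ker_tau_m_0_1}, the goal collapses to showing $\tau_{m-1}^1(f)=0$.

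Next I would exploit that $f\in\modG_m^0=\ker\tau_{m-1}^0$, so $\tau_{m-1}^0(f)=0$. By \eqref{eq:non-equiv}, this forces $\tau_{m-1}^1(f)$ to be $F$-equivariant, i.e.\ a $\Z[F]$-module homomorphism. Since $\AAlpha$ is a free $\Z[F]$-module on the classes $[\alpha_i]_1$ ($i=1,\dots,g$) by Lemma~\ref{lem:freeness}, it suffices to prove $\tau_{m-1}^1(f)([\alpha_i]_1)=0$ for each $i$. Now I would bring in the condition $f\in\modH$, namely $f(\zeta)=\zeta\in\pi$. Since $\varpi(\zeta)=\prod_i[1,x_i]=1$, we have $\zeta\in\A$, whence
$$
\tau_{m-1}^1(f)\big([\zeta]_1\big)=\big[f(\zeta)\zeta^{-1}\big]_{m}=0\in\overline{\A}_m.
$$
A standard cocycle/Fox-calculus computation gives $[\zeta]_1=\sum_i(1-x_i)[\alpha_i]_1\in\AAlpha$, so the boundary condition reads
$$
\sum_{i=1}^{g}(1-x_i)\,\tau_{m-1}^1(f)\big([\alpha_i]_1\big)\;=\;0 \quad\text{in } \overline{\A}_m.
$$

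The main obstacle is the last step: deducing from this single relation that \emph{each} value $\tau_{m-1}^1(f)([\alpha_i]_1)$ vanishes. I anticipate that this requires more than the boundary condition at level $m$ alone and uses one of the following inputs. In the base case $m=1$, Luft's theorem (Theorem~\ref{thm:Luft}) reduces the statement to checking that a Dehn twist $T_{\partial U}$ along a meridian acts trivially on $\AAlpha$, which can be verified directly from the formula for the action of $T_{\partial U}$ on $\pi$ (noting that meridional classes lie in $\A$ and hence act trivially on $\AAlpha$ by conjugation). For $m\ge 2$, the induction hypothesis $\modH_{m-1}^0=\modH_{m-1}$ combined with the graded Lie algebra structure of the Johnson image, together with the fact that the 1-cocycle $\tilde{\tau}^0_m(f)$ of Remark~\ref{rem:00}(2) is defined on all of $\pi$ and vanishes on $\zeta$, should propagate the $m=1$ vanishing to all higher levels by a bracket computation in the target $\Der_+^\zeta(F\ltimes\Lie(\AAlpha))$.
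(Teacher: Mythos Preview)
Your reduction is correct and matches the paper's approach closely: you arrive at exactly the relation the paper derives, namely that the classes $[f(\alpha_i)\alpha_i^{-1}]_m\in\overline{\A}_m$ satisfy
\[
\sum_{i=1}^g (1-x_i^{\pm1})\,[f(\alpha_i)\alpha_i^{-1}]_m=0
\]
(the paper obtains this by a direct commutator computation in $\pi$ modulo $\A_{m+1}$, you obtain it by reading the boundary condition through $\tau_{m-1}^1$; these are equivalent). You also correctly identify that the whole difficulty is to deduce $[f(\alpha_i)\alpha_i^{-1}]_m=0$ from this single $\Z[F]$-linear relation.

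The genuine gap is your proposed resolution of that obstacle. The paper does \emph{not} proceed by induction, and no bracket computation in $\Der_+^\zeta(F\ltimes\Lie(\AAlpha))$ will help here: the relation lives in the $\Z[F]$-module $\overline{\A}_m=\Lie_m(\AAlpha)$, and what is needed is the purely module-theoretic fact that
\[
\sum_{i=1}^g (1-x_i)\,m_i=0\ \text{in a free }\Z[F]\text{-module}\quad\Longrightarrow\quad m_1=\cdots=m_g=0.
\]
This is Lemma~\ref{r54}, which follows from the freeness of $I_F$ as a $\Z[F]$-module. The nontrivial input you are missing is Lemma~\ref{lem:free_Lie}: for every $m\ge1$, the $\Z[F]$-module $\Lie_m(\AAlpha)$ is \emph{free}. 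The paper proves this by constructing an $F$-equivariant Hall set on the $\Z$-basis $F\cdot\{a_1,\dots,a_g\}$ of $\AAlpha$ (Lemma~\ref{lem:equiv-Hall}), using that $F$ is left-orderable. Once this freeness is in hand, Lemma~\ref{r54} finishes the proof uniformly for all $m\ge1$, with no induction.

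Your $m=1$ route via Luft can be made to work (any meridian can be placed in a disk system, so the associated Dehn twist fixes a $\Z[F]$-basis of~$\AAlpha$), but it is not the paper's argument, and more importantly your ``propagation'' for $m\ge2$ has no content: knowing $\modH_{m-1}^0=\modH_{m-1}$ gives you no handle on elements of $\modH_m^0\setminus\Gamma_m\modT$, and the vanishing you need is a statement about individual elements, not about the image of the graded Lie algebra.
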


To prove Theorem \ref{thm:=}, we need the identification of $\modM$ 
with the subgroup of $\Aut(\pi)$ fixing the homotopy class
$$
\zeta := [\partial \Sigma]
$$
of the boundary curve. 
We write $\zeta$ in  an explicit basis of the free group~$\pi$ as follows.
Let $g\geq 1$ be the genus of $\Sigma$, and fix a system 
\begin{equation} \label{eq:(a,b)}
    (\alpha,\beta)=(\alpha_1,\dots,\alpha_g,\beta_1,\dots,\beta_g)
\end{equation} 
of meridians and parallels on the surface $\Sigma$:   \label{page:meridians_parallels}
$$
\labellist \scriptsize \hair 0pt 
\pinlabel {\large $\star$} at 567 3
\pinlabel {$\alpha_1$} [l] at 310 105
\pinlabel {$\alpha_g$} [l] at 568 65
\pinlabel {$\beta_1$} [b] at 216 80
\pinlabel {$\beta_g$} [b] at 472 69
\endlabellist
\includegraphics[scale=0.4]{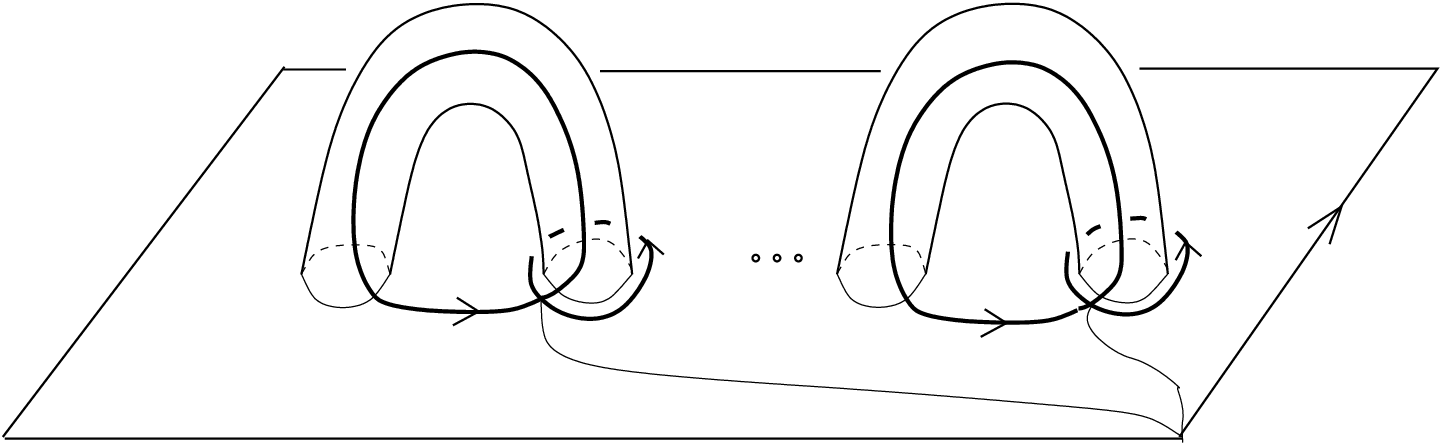}
$$
Here the curves $\alpha_1,\dots,\alpha_g$ bound pairwise-disjoint embedded disks in $V$.
The  basis of $\pi$  defined by $(\alpha,\beta)$ and the connecting arcs to $\star$ shown above
is still denoted by $(\alpha, \beta)$. Then we have
\begin{equation} \label{eq:zeta}
\zeta^{-1}= \prod_{i=1}^g \big[\beta_i^{-1},\alpha_i\big].    
\end{equation}

\begin{proof}[Proof of Theorem \ref{thm:=}]
  \def\equ{\underset{\A_{m+1}}{\equiv}}
It suffices to prove $\modH^0_m\subset \modH^1_m$.
  Thus, for $f\in \modH^0_m=\modH\cap\G^0_m$, 
  we need to check that
  \begin{equation}\label{eq:f(a)a^{-1}}
      f(a)a^{-1}\in\A_{m+1} \quad \hbox{for all $a\in \Alpha$}.
  \end{equation}
  Since $f \in \G^0_m$ and $\Alpha=\lala \alpha_1,\dots,\alpha_g\rara$, it suffices to verify \eqref{eq:f(a)a^{-1}}
  for $a=\alpha_i$  with $i=1,\ldots ,g$.
  Setting
  \begin{gather*}
    u_i := f(\al_i) \al_i^{-1}\in \A_m,\quad
    v_i := f(\be_i) \be_i^{-1}\in \A_m,
    \quad (i=1,\ldots ,g),
  \end{gather*}
  we have
  \begin{gather*}
    \begin{split}
      &f([\al_i,\be_i^{-1}])=[f(\al_i),f(\be_i)^{-1}]
      =[u_i\al_i,\be_i^{-1}v_i^{-1}]
      =[u_i\al_i,\be_i^{-1}] \cdot[u_i\al_i,v_i^{-1}]^{\be_i}\\
      &\equ [u_i\al_i,\be_i^{-1}] 
      = {}^{u_i}[\al_i,\be_i^{-1}] \cdot [u_i,\be_i^{-1}] 
      \equ [\al_i,\be_i^{-1}] \cdot [u_i,\be_i^{-1}]. 
    \end{split}
  \end{gather*}
Hence, by \eqref{eq:zeta}, we get
\begin{gather*}
    \begin{split}
      &f(\zeta^{-1})=f\left(\prod_{i=1}^g[\be_i^{-1},\al_i]\right)
      \equ\prod_{i=1}^g [\be_i^{-1},u_i] \cdot  [\be_i^{-1},\al_i]  \\
      &\quad \equ\left(\prod_{i=1}^g [\be_i^{-1},\al_i] \right) 
      \cdot\left(\prod_{i=1}^g  [\be_i^{-1},u_i] \right)
      =  \zeta^{-1} \cdot \prod_{i=1}^g [\be_i^{-1},u_i].
    \end{split}
  \end{gather*}
It follows that
 \begin{gather*}
    1\equ\prod_{i=1}^g[\be_i^{-1},u_i]
    =\prod_{i=1}^g   u_i{}^{\be_i} \cdot u_i^{-1}.
  \end{gather*}
Thus, we obtain
  \begin{gather*}
    \sum_{i=1}^g \big(x_i^{-1}-1\big)[u_i]_m=0 \in \ol\A_m,
  \end{gather*}
where $x_i=[\beta_i]\in F=\pi/\Alpha$
and we use the $\Z[F]$-module structure of $\ol\A_m$.
Since the free group $F$ is left-orderable, 
we can apply Lemma \ref{lem:free_Lie} below to deduce that 
the $\Z[F]$-module $\ol\A_m= \Lie_m(\AAlpha)$ is free.
 Then, it follows from  Lemma~\ref{r54} below 
 that $[u_i]_m=0\in \ol\A_m$ for all $i$, i.e.,
  $u_i\in \A_{m+1}$ for all $i$.
\end{proof}

The rest of this section is devoted to some lemmas used in the proof of Theorem~\ref{thm:=}.
We shall need the following definitions.

The free magma $\Mag(S)$ on a set $S$ consists of non-associative words in the alphabet $S$.
Let $|w|$ denote the length of such a word $w$.
For instance, if $x,y\in S$, then $(x,((x,y),y)) \in \Mag(S)$
and $\vert (x,((x,y),y))  \vert=4$.
A \emph{Hall set} on the alphabet~$S$ \cite[\S 4.1]{Reutenauer}
is a totally-ordered subset $H \subset \Mag(S)$ containing $S$ and satisfying the following two conditions:
\begin{itemize}
\item[$(H_1)$] for any word $h=(h',h'')$ in $H \setminus S$, we have $h''\in H$ and $h<h''$;
\item[$(H_2)$] for any word $h=(h',h'')$ in $\Mag(S) \setminus S$, 
we have $h\in H$ if and only if
$$
h',h''\in H \hbox{ and } h'<h'' 
\hbox{ and either $h'\in S$ or $h'=(u,v)$ with $v \geq h''$.}   
$$
\end{itemize}
Note that, if a group $G$ acts on a set $S$,
then it also acts on $\Mag(S)$ by acting on every letter of any word.
For instance, if $x,y \in S$ and $g\in G$, then 
$g\cdot (x,(y,x))= (g\cdot x,(g\cdot y,g \cdot x))$.

\begin{lemma} \label{lem:equiv-Hall}
Let $G$ be a group acting on a set $S$.
Let $\leq$ be a total order on $S$ such that $s\leq t$ implies $g \cdot s\leq g \cdot t$ for all $s,t\in S$ and $g\in G$.
Then, there exists a $G$-stable Hall set $H$ on the alphabet $S$ such that the order $\leq$ of $H$ extends that of~$S$.
\end{lemma}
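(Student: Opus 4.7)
The plan is to reduce the problem to two independent steps: (i) extend the $G$-invariant order on $S$ to a $G$-invariant length-compatible total order on the whole magma $\Mag(S)$, and (ii) apply the standard construction of Hall sets (as in \cite[\S 4.1]{Reutenauer}) to this ambient order. The payoff is that the Hall-set axioms $(H_1)$ and $(H_2)$ are phrased purely in terms of an ambient order and the bracket decomposition, so a group action that respects both will automatically stabilize the resulting Hall set.

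For step (i), I would equip $\Mag(S)$ with a length-lexicographic order $\preceq$, defined by induction on length: decree $w \prec w'$ whenever $|w|<|w'|$; use the given order on $S$ when $|w|=|w'|=1$; and for $|w|=|w'|>1$ with $w=(w_1,w_2)$ and $w'=(w'_1,w'_2)$, compare the pairs $(w_1,w_2)$ and $(w'_1,w'_2)$ lexicographically using $\preceq$ on the strictly shorter subwords (well-defined by induction). Since the $G$-action on $\Mag(S)$ is letter-wise, it preserves length and commutes with the bracket operation, so $G$-invariance of $\preceq$ follows from $G$-invariance of the order on $S$ by an immediate induction on length.

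For step (ii), I would define $H\subset \Mag(S)$ inductively by $H\cap S = S$ together with the rule that a word $h=(h',h'')$ of length $>1$ lies in $H$ iff $h',h''\in H$, $h'\prec h''$, and either $h'\in S$ or $h'=(u,v)$ with $h''\preceq v$. Condition $(H_2)$ is then tautological, and $(H_1)$ is immediate since $|h''|<|h|$ forces $h''\prec h$ under the length-compatible order; the restriction of $\preceq$ to $H$ is the desired total order extending that of $S$. Finally, a straightforward induction on length gives $G$-stability: for $g\in G$ and $h=(h',h'')$, the conditions $g\cdot h',g\cdot h''\in H$, $g\cdot h'\prec g\cdot h''$, and ``$g\cdot h'\in S$ or $g\cdot h'=(g\cdot u,g\cdot v)$ with $g\cdot h''\preceq g\cdot v$'' all transfer verbatim from those on $h$, using the $G$-invariance of $\preceq$, of $S$, and of the bracket. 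I do not anticipate a genuine obstacle here — the only new ingredient beyond the classical construction is the $G$-invariance of the auxiliary order on $\Mag(S)$, which is automatic for a letter-wise action.
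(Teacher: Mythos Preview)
Your overall strategy matches the paper's: extend the $G$-invariant order on $S$ to a $G$-invariant total order on $\Mag(S)$, then run the standard inductive Hall-set construction and check $G$-stability by induction on length. The $G$-invariance argument and the $G$-stability argument are both fine.

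However, there is a genuine error in the direction of your length comparison. You decree $w\prec w'$ whenever $|w|<|w'|$, and then claim that $(H_1)$ follows because $|h''|<|h|$ forces $h''\prec h$. But $(H_1)$ requires $h<h''$, not $h''<h$; with your convention you obtain exactly the wrong inequality, and the resulting $H$ is \emph{not} a Hall set in the sense defined here. The fix is to reverse the length convention: decree that \emph{longer} words are \emph{smaller} (i.e.\ $|w|<|w'|$ implies $w\succ w'$), exactly as the paper does in \eqref{eq:length_order}. Then $|h|>|h''|$ gives $h\prec h''$ as required, while on $S$ (all of whose elements have length~$1$) the order still agrees with the given one. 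With this single correction your argument goes through; note that your recursive-lexicographic order on $\Mag(S)$ differs from the paper's fibre-by-fibre order over $\Mag(\bullet)$, but either $G$-invariant total order with the correct length behaviour suffices.
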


\begin{proof}
We shall prove the lemma by refining the usual argument for the existence
of Hall sets (see e.g$.$ \cite[Prop. 4.1]{Reutenauer}).

Let $\Mag(\bullet)$ denote the free magma on one element $\bu$. 
The unique map $S \to \{ \bullet\}$ induces a magma homomorphism 
$p:\Mag(S) \to \Mag(\bullet)$, which records the parenthesization
of non-associative words in $S$.
For any $u\in \Mag(\bullet)$, 
the fiber $p^{-1}(u)$ can be canonically identified
with $S^{\vert u \vert}$ (for instance, given $u=(\bullet,(\bullet,\bullet))$,
we identify the word $(s_1,(s_2,s_3))$ with the triplet $(s_1,s_2,s_3)$
for any $s_1,s_2,s_3\in S$).
Choose a total order of $\Mag(\bullet)$ such that 
\begin{equation} \label{eq:length_order}
    \hbox{for all $u,u' \in\Mag(\bullet)$, $\vert u \vert < \vert u' \vert$
    implies $u>u'$}.
\end{equation}
It lifts via $p$
to a unique total order of $\Mag(S)$ that restricts to the lexicographic order 
of every fiber $p^{-1}(u)\simeq S^{\vert u \vert}$.
Since the order of $S$ is compatible with the $G$-action, so is this total order of $\Mag(S)$.

Then, the condition $(H_2)$ gives an inductive rule to construct a Hall set $H$
starting with $S$ in length $1$ (the order of $H$ is then the restriction of the total order of $\Mag(S)$). 
Then $(H_1)$ follows from \eqref{eq:length_order}.

It remains to verify that the Hall set $H$ is $G$-stable.
 Let $h \in H$ and $g\in G$.
One verifies $g\cdot h\in H$ by induction on $\vert h\vert$.
If $\vert h\vert =1$, then (obviously) $h\in S$ so that $g\cdot h\in S \subset H$.
If $\vert h\vert >1$, then $h=(h',h'')$ satisfies $(H_2)$ and, using the compatibility of the $G$-action with the total order $\leq$ in $\Mag(S)$, it is straightforward to verify that $g\cdot h=(g\cdot h',g\cdot h'')$ also satisfies $(H_2)$. 
\end{proof}

\begin{lemma} \label{lem:free_Lie}
Let $(G,\leq)$ be a left-ordered group
and $M$ a free $\Z[G]$-module.
Then, $\Lie(M)$ is free as a $\Z[G]$-module.
\end{lemma}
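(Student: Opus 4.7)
The plan is to build a Hall basis of $\Lie(M)$ that is permuted freely by $G$, so that picking one representative per $G$-orbit yields a $\Z[G]$-basis.

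First I would fix a $\Z[G]$-basis $S$ of $M$ and consider the $\Z$-basis $T := G\cdot S$ of $M$, which (since $M$ is free over $\Z[G]$) is canonically identified with $G\times S$ and carries a free $G$-action given on the first factor by left multiplication. Next, fix any total order on $S$, and order $T \simeq G\times S$ lexicographically with the $G$-coordinate primary, using the left-order of $G$. Left-invariance of the order on $G$ ensures that if $t<t'$ in $T$ then $h\cdot t < h\cdot t'$ for every $h\in G$, so the order on $T$ is compatible with the $G$-action in the sense of Lemma \ref{lem:equiv-Hall}.

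By Lemma \ref{lem:equiv-Hall}, there is a $G$-stable Hall set $H\subset \Mag(T)$ on the alphabet $T$, extending the chosen order. The classical Hall basis theorem (see, e.g., \cite[Thm. 4.9]{Reutenauer}) says that the corresponding Lie monomials form a $\Z$-basis of $\Lie(M)$; let us still write $H$ for this basis. Because $H$ is $G$-stable, $G$ acts on $H$ via the diagonal action on letters.

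The key point is then that this $G$-action on $H$ is free. I would check this by induction on word length: for $h\in H$ of length one, $h\in T$ and the action on $T$ is free; for $|h|\geq 2$, write $h=(h',h'')$ and observe that if $g\cdot h=h$ then necessarily $g\cdot h' = h'$ and $g\cdot h'' = h''$ (since the magma decomposition of a Hall word is unique), so $g=e$ by the inductive hypothesis. Having established freeness, choose a set $H_0\subset H$ of $G$-orbit representatives. Then $H = \bigsqcup_{h_0\in H_0} G\cdot h_0$ as a $G$-set, and since $H$ is a $\Z$-basis of $\Lie(M)$ it follows that $H_0$ is a $\Z[G]$-basis of $\Lie(M)$. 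The main subtlety is really only the compatibility of the order on $T$ with the $G$-action, which is exactly where left-orderability of $G$ enters; the rest is Hall-basis bookkeeping combined with Lemma \ref{lem:equiv-Hall}.
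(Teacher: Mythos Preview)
Your proposal is correct and follows essentially the same approach as the paper: order the $\Z$-basis $G\cdot S$ of $M$ compatibly with the $G$-action (using left-orderability of $G$), invoke Lemma~\ref{lem:equiv-Hall} to get a $G$-stable Hall set, and extract a $\Z[G]$-basis from $G$-orbit representatives. The only cosmetic differences are that the paper takes the lexicographic order with the $S$-coordinate primary rather than the $G$-coordinate, and it verifies generation and linear independence of the orbit representatives directly rather than first abstractly checking freeness of the $G$-action on $H$; your inductive freeness argument is a clean alternative to that step.
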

\begin{proof}
Let $X$ be a $\Z[G]$-basis of $M$, 
and $\leq$ a total order on $X$.
Then  
$$
G\cdot X =\big\{ g\cdot x \, \big\vert \, g\in G, x\in X\big\}
$$
is a $\Z$-basis of $M$.
Identifying $G\cdot X$ with $X\times G$ in the obvious way,
we can transport the lexicographic order of the latter to the former.
Thus, $G\cdot X$ is a totally ordered $G$-set whose order
is compatible with the $G$-action.
Hence, by Lemma \ref{lem:equiv-Hall}, there exists a $G$-stable
Hall set $H \subset \Mag(G\cdot X)$. 
Let $\beta:\Mag(G\cdot X) \to \Lie(M)$ be the bracketing map.
By a property of a Hall set
(see e.g$.$ \cite[Theorem~4.9]{Reutenauer}), 
$\beta(H)$ is a $\Z$-basis of $\Lie(M)$.
%We extract from $H$ a subset $E$ by only retaining,
Let $E$ be the subset of $H$ such that $E$ retains, in each $G$-orbit contained in $H$,  
the unique non-associative word 
whose leftmost letter is in $X \subset G \cdot X$.

Let us verify that $\beta(E)$ freely generates the $\Z[G]$-module $\Lie(M)$. 
Since any element of $H$ can be transformed to a (unique) element of $E$
by the $G$-action, and since $\beta(H)$ generates $\Lie(M)$ as an abelian group,
the subset $\beta(E)$ generates $\Lie(M)$ as a $\Z[G]$-module. 
Assume now a $\Z[G]$-linear relation between some elements of $\beta(E)$.
Since the $\Z[G]$-module $\Lie(M)$ is graded, we can assume that 
this linear relation occurs in the homogeneous part $\Lie_m(M)$ 
of degree $m$ for some $m\geq 1$:
$$
\sum_{e\in E'_m} z_e \cdot \beta(e) \in \Lie_m(M),
$$
where the sum is taken over a subset $E'_m$ of 
$E$ consisting of finitely many  words of length $m$, and 
$z_e\in \Z[G]$ for all $e\in E'_m$. Decomposing 
$z_e=\sum_{g\in G} z_e(g)\cdot g$, we obtain a $\Z$-linear relation:
$$
\sum_{e\in E'_m,g\in G} z_e(g) \cdot \beta(g\cdot e) \in \Lie_m(M)
$$
Since $\beta(H)$ is $\Z$-free, we conclude that each $z_e(g)\in \Z$ is trivial, 
so that each $z_e\in \Z[G]$ is trivial. 
\end{proof}

\begin{lemma}
  \label{r54}
  Let $F$ be a free group of rank $n$ with basis $\{x_1,\dots, x_n\}$,
 and let $M$ be a free $\Z[F]$-module.  If $m_1,\ldots ,m_n\in M$ satisfy
  \begin{gather}
    \label{e40}
    \sum_{i=1}^n (1-x_i) \cdot m_i=0,
  \end{gather}
  then $m_1=\dots =m_n=0$.
\end{lemma}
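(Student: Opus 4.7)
The plan is to first reduce to the scalar case $M=\Z[F]$, and then to transport the given relation, via the antipode of $\Z[F]$, into the standard form controlled by the Fox calculus of \eqref{eq:Fox}.

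For the reduction, I would fix a free $\Z[F]$-basis $\{e_k\}_k$ of $M$ and expand each $m_i=\sum_k r_{ik}e_k$ with $r_{ik}\in\Z[F]$. The freeness of $M$ decomposes the hypothesis \eqref{e40} coordinate-wise into $\sum_{i=1}^n(1-x_i)r_{ik}=0$ in $\Z[F]$ for every $k$. Thus it suffices to prove the scalar statement: if $r_1,\dots,r_n\in\Z[F]$ satisfy $\sum_i(1-x_i)r_i=0$, then $r_i=0$ for all $i$.

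For the scalar case, I would apply the $\Z$-linear anti-involution $\overline{\,\cdot\,}:\Z[F]\to\Z[F]$ defined by $\overline{u}=u^{-1}$ for $u\in F$. Anti-multiplicativity gives $\overline{(1-x_i)r_i}=\overline{r_i}\,(1-x_i^{-1})$, and the identity $1-x_i^{-1}=x_i^{-1}(x_i-1)$ rewrites this as $(\overline{r_i}\,x_i^{-1})(x_i-1)$. Hence the relation becomes
$$\sum_{i=1}^n a_i\,(x_i-1)=0,\qquad a_i:=\overline{r_i}\,x_i^{-1}\in\Z[F],$$
an identity of precisely the form addressed by \eqref{eq:Fox}. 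Applying the Fox derivative $\partial/\partial x_j$ and using the Leibniz rule of \eqref{eq:Fox} together with $\partial(x_i-1)/\partial x_j=\delta_{ij}$ extracts exactly $a_j$ on the left-hand side, so $a_j=0$ for every $j$. Since each $x_j^{-1}$ is a unit of $\Z[F]$ and $\overline{\,\cdot\,}$ is invertible, this forces $r_j=0$.

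The only real subtlety is a bookkeeping issue about the side on which $\Z[F]$ acts: \eqref{eq:Fox} controls expressions of the form $\sum a_i(x_i-1)$ with coefficients on the \emph{left}, whereas the lemma's hypothesis presents coefficients on the \emph{right} of the $(1-x_i)$'s. Recognizing that the antipode of the group ring is precisely the tool that interchanges these two conventions is the key observation; once it is in place, uniqueness of the Fox expansion closes the argument immediately.
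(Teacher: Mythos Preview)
Your proof is correct and follows essentially the same approach as the paper: reduce to the scalar case $M=\Z[F]$ via a basis, then invoke the freeness of the augmentation ideal $I_F$ on $x_1-1,\dots,x_n-1$ (equivalently, the well-definedness of Fox derivatives). The paper simply cites this freeness and stops, whereas you are more explicit about the left/right bookkeeping: the hypothesis has coefficients on the right of $(1-x_i)$, while the Fox expansion \eqref{eq:Fox} places them on the left, and you resolve this with the antipode before applying $\partial/\partial x_j$. This extra step is a nice clarification of a point the paper leaves implicit, but the underlying idea is the same.
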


\begin{proof}
  We can reduce the lemma to the case where $M=\Z[F]$, by
  considering the coefficients with respect to a $\Z[F]$-basis of $M$.  
  This case follows from the well-known fact that 
  the augmentation ideal $I_F$ of $\Z[F]$ is free on $x_1-1,\dots,x_n-1$ as a $\Z[F]$-module.
  (See e.g$.$ \cite[I.(4.4)]{Brown}.)
\end{proof}

\section{First terms of the Johnson filtration for the  handlebody group} 
\label{sec:first_quotient_handlebody}

In this section, we consider the first few terms of the Johnson filtration \eqref{eq:HH} of the handlebody group:
$$
\modH  = \modH_0 \ \ge \  \modH_1 \ \ge \  \modH_2.
$$

Recall that the twist group $\modT=\modH_1^0$
is the subgroup  of $\modH$ acting trivially on $F\simeq\pi/\Alpha$
or, equivalently (by Theorem \ref{thm:=}),
$\modT=\modH_1$ is the subgroup of $\modH$ 
acting trivially on the abelianization $\AAlpha$ of $\Alpha$.
As a first application of the Johnson filtration
of $\modH$, we deduce the following property of $\modT$
directly from Theorem \ref{th:residual}.

\begin{corollary} \label{cor:residual}
The group $\modT$ is residually
torsion-free nilpotent.
\end{corollary}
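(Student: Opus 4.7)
The plan is to simply invoke Theorem \ref{th:residual} applied to the enveloping automorphism group of the free pair and then pass to the subgroup. More precisely, recall that the handlebody group embeds into $\modG = \Aut(\pi,\Alpha)$ via Griffiths' Theorem \ref{thm:Griffiths}, and that under this embedding the twist group satisfies
\[
\modT \;=\; \modH_1 \;=\; \modH \cap \modG_1,
\]
by the definition of the Johnson filtration of $\modH$ as the restriction of that of $\modG$ (and using Theorem \ref{thm:=} to identify $\modH_1 = \modH_1^0$). Hence $\modT$ is a subgroup of $\modG_1$.

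Now Theorem \ref{th:residual} asserts that $\modG_1$ is residually torsion-free nilpotent. The class of residually torsion-free nilpotent groups is closed under taking subgroups: if $\bigcap_k \Gamma_k^\Q \modG_1 = \{1\}$, then for any subgroup $K \leq \modG_1$ we have $\Gamma_k^\Q K \subset \Gamma_k^\Q \modG_1$ for every $k \geq 1$, so $\bigcap_k \Gamma_k^\Q K = \{1\}$ as well. Applying this observation to $K = \modT$ gives the corollary.

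There is no real obstacle here; the only point worth noting is that the result is genuinely a corollary in the strongest sense, since all of the work has already been carried out at the level of the free pair $(\pi,\Alpha)$ in Theorem \ref{th:residual}, whose proof uses that the graded pieces $\overline{\Alpha}_{m+1} = \Lie_{m+1}(\AAlpha)$ are torsion-free (guaranteeing that each kernel $\modG_{m+1} = \ker \tau_m^1$ contains $\Gamma_k^\Q \modG_1$ for $m < k$). Consequently, no additional hypothesis specific to the handlebody setting is needed.
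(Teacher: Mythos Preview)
Your proof is correct and follows exactly the paper's approach: the paper simply states that the corollary is deduced ``directly from Theorem \ref{th:residual}'', and you have spelled out the (straightforward) details of that deduction, namely that $\modT \le \modG_1$ and that residual torsion-free nilpotence passes to subgroups.
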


The group $\modH_0/\modH_1$ embeds both
into $\Aut(F)$ and $\Aut(\AAlpha)$.
Since any automorphism of $F$ can be realized 
by an element of $\modH$ 
(see \cite[Cor.~2.1]{Luft}, 
and  also \cite{Griffiths,Zieschang}),
we have 
\begin{gather}\label{modH0modH1}
\modH_0/\modH_1 \simeq \Aut(F).
\end{gather}

The image of the 
embedding $\modH_0/\modH_1\hookrightarrow\Aut(\AAlpha)$ admits the following characterization.
An element $\rho\in\Aut(\AAlpha)$
is \emph{quasi-equivariant} if there is a $\overrightarrow{\rho}\in\Aut(F)$ 
such that  
$$
\rho(f\cdot a) = \overrightarrow{\rho}(f) \cdot \rho(a)
\quad \hbox{for all $f\in F$ and $a\in \AAlpha$,}
$$
where the action of $F$ on $\AAlpha$
is as in Lemma \ref{lem:freeness}.
Clearly, 
 quasi-equivariant elements
 form a subgroup $\Aut_{\operatorname{qe}}(\AAlpha)$ of $\Aut(\AAlpha)$. 
Since $\overrightarrow{\rho}$ is uniquely determined by~$\rho$,  
there is a well-defined homomorphism
$$\overrightarrow{(-)}:\Aut_{\mathrm{qe}}(\AAlpha)\to\Aut{(F)}.$$
Let $\Aut_{\operatorname{qe}}^\zeta(\AAlpha)$ denote
the subgroup of  $\Aut_{\operatorname{qe}}(\AAlpha)$
fixing $[\zeta]=[\zeta]_1 \in \AAlpha=\Alpha_1/\Alpha_2$.
Having fixed  a system of curves $(\alpha,\beta)$ as in \eqref{eq:(a,b)}, this element writes
\begin{equation} \label{eq:zeta_ab}
[\zeta] = \sum_{i=1}^g (1-x_i^{-1})\cdot a_i     ,
\end{equation}
where $a_i=[\alpha_i]\in \AAlpha$ and $x_i=\varpi(\beta_i)\in F$.
In the sequel, for any matrix $M=(m_{ij})_{i,j}\in \Mat(g\times g;\Z[F])$, let $M^\dagger = (\overline{m_{ji}})_{i,j}$ denote its conjugate transpose.

\begin{proposition} \label{prop:Mag_Mag}
The homomorphism $\overrightarrow{(-)}$ on $\Aut_{\mathrm{qe}}(\mathbb A)$ induces an isomorphism
\begin{gather*}
\overrightarrow{(-)}:
\Aut_{\operatorname{qe}}^\zeta(\AAlpha) 
\underset\simeq\longrightarrow
 \Aut(F),
\end{gather*}
which fits into the following commutative diagram
$$
\xymatrix{
& \mathcal{H} \ar@{->>}[rd]  \ar@{->>}[ld] \ar@/_6pc/[ddl]_-{\Mag_0^1}  
\ar@/^6pc/[ddr]^-{\Mag^0_0}  & \\
\Aut_{\operatorname{qe}}^\zeta(\AAlpha) \ar@{+->}[d] 
 \ar[rr]_-\simeq^-{\overrightarrow{(-)}} & & \Aut(F) \ar@{+->}[d]_-J \\
\GL(g;\Z[F]) \ar[rr]_-\simeq^-{(-)^{\dagger,-1}} & &\GL(g;\Z[F])
}
$$
where the left (resp$.$ right) diagonal arrow gives the canonical action
on $\AAlpha$ (resp$.$~$F$),
the left vertical arrow maps any $\rho \in \Aut_{\operatorname{qe}}^\zeta(\AAlpha)$
to the  conjugate of its matrix $\big(a_i^*(\rho(a_j))\big)_{i,j}$
in the basis $(a_1,\dots,a_g)$ of $\AAlpha$
and the right vertical arrow $J$ maps any $r\in \Aut(F)$
to its free Jacobian matrix 
$J(r)=\Big(\overline{\frac{\partial r(x_j)}{\partial x_i} }\Big)_{i,j}$
in the basis $(x_1,\dots,x_g)$.  % of $F$.
\end{proposition}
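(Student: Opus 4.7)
The plan is to split the proposition into three independent steps: (a) injectivity of $\overrightarrow{(-)}$, (b) surjectivity, and (c) commutativity of the diagram.

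For \emph{injectivity}, I would assume $\rho\in \Aut_{\operatorname{qe}}^\zeta(\AAlpha)$ satisfies $\overrightarrow{\rho}=\id_F$, so that $\rho$ is $\Z[F]$-linear and thus determined by a matrix $M=(m_{ij})\in \GL(g;\Z[F])$ via $\rho(a_j)=\sum_i m_{ij}a_i$. Using $\Z[F]$-linearity of $\rho$ and the explicit expansion \eqref{eq:zeta_ab} of $[\zeta]$ in the basis $(a_i)_i$, the equation $\rho([\zeta])=[\zeta]$ would rearrange to
\[\sum_{j=1}^g (1-x_j^{-1})(m_{ij}-\delta_{ij}) = 0 \quad\text{in }\Z[F]\]
for each $i\in\{1,\dots,g\}$. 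Lemma~\ref{r54}, applied to the free generating set $(x_1^{-1},\dots,x_g^{-1})$ of $F$ and to the rank-one free $\Z[F]$-module $\Z[F]$, would then force $m_{ij}=\delta_{ij}$, hence $\rho=\id_{\AAlpha}$.

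For \emph{surjectivity}, I would exploit the handlebody group. By the quasi-equivariance identity \eqref{eq:equiv}, the map $\tau_0^1$ sends $\modH$ into $\Aut_{\operatorname{qe}}(\AAlpha)$ with $\overrightarrow{\tau_0^1(g)}=\tau_0^0(g)$; and since every $g\in\modH$ fixes $\zeta\in\pi$ it also fixes $[\zeta]\in\AAlpha$, so in fact $\tau_0^1(\modH)\subset \Aut_{\operatorname{qe}}^\zeta(\AAlpha)$. The composition $\overrightarrow{(-)}\circ\tau_0^1=\tau_0^0$ is already surjective by \eqref{modH0modH1}, whence $\overrightarrow{(-)}$ is surjective.

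For the \emph{commutativity} of the diagram, the upper triangle is immediate from the definitions of $\tau_0^0$ and $\tau_0^1$. The identifications of the two diagonal maps $\modH\to\GL(g;\Z[F])$ with $\Mag_0^1$ and $\Mag_0^0$ follow from Proposition~\ref{prop:blocks}(2)--(3) combined with the Fox-derivative description of $\tau_0^1$ supplied by Lemma~\ref{lem:kappa}. The main obstacle will be the commutativity of the lower square, i.e., the identity $(\overline{M})^{\dagger,-1}=J(\overrightarrow{\rho})$ for every $\rho\in \Aut_{\operatorname{qe}}^\zeta(\AAlpha)$. Since the composition $\modH\to \Aut_{\operatorname{qe}}^\zeta(\AAlpha)$ is surjective (by the isomorphism just established together with the surjectivity of $\tau_0^0$), it will suffice to check $\Mag_0^1(g)=\Mag_0^0(g)^{\dagger,-1}$ for all $g\in\modH$. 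This is a Fox-calculus identity arising from the mapping-class-group relation $g(\zeta)=\zeta$: I would differentiate this relation with respect to the basis $(\alpha,\beta)$, apply the Fox chain rule, project to $\Z[F]$, and use the vanishing of the lower-left block of $J^F(g)$ from Proposition~\ref{prop:blocks}(1) to isolate the desired hermitian-type relation between the upper-left block $\Mag_0^1(g)$ and the lower-right block $\Mag_0^0(g)$.
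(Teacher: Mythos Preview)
Your proposal is correct, and the key ingredients (the expansion \eqref{eq:zeta_ab}, Lemma~\ref{r54}, and surjectivity of $\tau_0^0$) are the same as in the paper. The organization differs, however, and your route to the bottom square is more circuitous than necessary.

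The paper proves the bottom square directly for an arbitrary $\rho\in\Aut_{\operatorname{qe}}^\zeta(\AAlpha)$: expanding $\rho([\zeta])$ via quasi-equivariance and \eqref{eq:zeta_ab} gives
\[
[\zeta]=\rho([\zeta])=-\sum_{i,l}(x_l^{-1}-1)\,\overline{\tfrac{\partial\overrightarrow\rho(x_i)}{\partial x_l}}\cdot\rho(a_i),
\]
and Lemma~\ref{r54} then yields $a_l=\sum_i\overline{\tfrac{\partial\overrightarrow\rho(x_i)}{\partial x_l}}\cdot\rho(a_i)$, which is exactly the matrix identity for the bottom square. Injectivity of $\overrightarrow{(-)}$ falls out of this same equation (set $\overrightarrow\rho=\id$), so the paper gets two conclusions from one computation.

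By contrast, you prove injectivity first (your computation is precisely the paper's, specialized to $\overrightarrow\rho=\id$), then reduce the bottom square to elements of $\modH$ via the surjectivity of $\tau_0^1\colon\modH\to\Aut_{\operatorname{qe}}^\zeta(\AAlpha)$, and finally differentiate $g(\zeta)=\zeta$ in $\pi$ with the Fox chain rule before projecting to $\Z[F]$. This is valid, but the detour through $\modH$ is unnecessary: your Fox-calculus identity on $g(\zeta)=\zeta$, once projected, \emph{is} the paper's computation on $\rho([\zeta])=[\zeta]$. You could simply run your injectivity argument without assuming $\overrightarrow\rho=\id$ and obtain the bottom square directly, avoiding the appeal to surjectivity of $\tau_0^1$.
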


\begin{proof}
The commutativity of the right upper triangle is the definition of $\Mag^0_0$,
that of the left upper triangle follows from Lemma \ref{lem:kappa}
and that of the central triangle is clear.
Hence, we only need to prove the commutativity of the bottom square:
\begin{equation} \label{eq:b_square}
\Big(\overline{\frac{\partial \overrightarrow{\rho}(x_j)}{\partial x_i}}\Big)_{i,j}^{-1}
= \big({a_j^*(\rho(a_i))}\big)_{i,j},  \quad 
\hbox{for all $\rho\in\Aut_{\operatorname{qe}}^\zeta(\AAlpha)$.}
\end{equation}
Note that 
$$
\rho([\zeta]) \stackrel{\eqref{eq:zeta_ab}}{=}
-\sum_i \big(\overrightarrow{\rho}(x_i^{-1})-1\big) \cdot \rho(a_i)
= - \sum_{i,l} (x_l^{-1}-1) 
\overline{\frac{\partial \overrightarrow{\rho}(x_i)}{\partial x_l}} \cdot \rho(a_i)
$$
which, using Lemma \ref{r54} and $\rho([\zeta])=[\zeta]$, implies  that
\begin{equation}\label{equ}
a_l = \sum_i  \overline{\frac{\partial\overrightarrow{\rho}(x_i)}{\partial x_l}}
\cdot \rho(a_i) =  
\sum_{i,k}  \overline{\frac{\partial\overrightarrow{\rho}(x_i)}{\partial x_l}}
a_k^*\big(\rho(a_i) \big) \cdot a_k.
\end{equation}
Since $(a_1,\dots,a_g)$ is a $\Z[F]$-basis of $\AAlpha$,
we obtain \eqref{eq:b_square}.

The surjectivity of $\modH \to \Aut(F)$ implies that of $\overrightarrow{(-)}$.
Injectivity of $\overrightarrow{(-)}$ follows from \eqref{equ}.
The other injectivity and surjectivity  in the diagram easily follow. 
\end{proof}{}

Next, we would like to identify  
$\modH_1/\modH_2$.
By Proposition \ref{prop:blocks},
one can embed it into the abelian group $\Mat(g\times g;\Z[F])$ 
via the representation
$$
\Mag:=\Mag_1^0: \modT \lto \Mat(g\times g;\Z[F]), \
f \longmapsto 
\Bigg(\varpi\Big(\overline{\frac{\partial f(\beta_j)}{\partial \alpha_i} }\Big) \Bigg)_{i,j}.
$$
The next proposition is a first step towards
the determination of the image of $\Mag$,
and the more difficult problem of computing the abelianization of $\modT$. 

\begin{proposition} \label{prop::hermitian}
The Magnus representation takes values in hermitian matrices:
for all $f \in \modT$, we have $\Mag(f)= \Mag(f)^\dagger$.
Besides, for all $f \in \modT$ and $h\in \modH$, we have
\begin{gather}\label{eq:hermitian}
\Mag(hfh^{-1}) = \Mag_0^0(h)\cdot h^F\big(\Mag(f)\big)\cdot  \Mag_0^0(h)^\dagger,
\end{gather}
where $h^F\in \Aut(F)$ is 
the action of $h$ on $F=\pi_1(V,\star)$.
\end{proposition}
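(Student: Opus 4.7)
The plan is to treat the two assertions separately. The conjugation formula is essentially formal, coming from the crossed-homomorphism property of $J^F$ together with the identification of the blocks of $J^F(h)$ supplied by Proposition~\ref{prop:Mag_Mag}. The hermitian identity, by contrast, encodes the preservation of the boundary element $\zeta$ and is best understood through the Johnson homomorphism $\tau_1$.

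For the conjugation formula, I will apply twice the crossed-homomorphism identity $J^F(gg')=J^F(g)\cdot g^F(J^F(g'))$ from the proof of Proposition~\ref{prop:blocks}, using $f^F=\id$ (since $f\in\modT$) together with the consequence $h^F(J^F(h^{-1}))=J^F(h)^{-1}$ obtained by specializing the same identity to $hh^{-1}=1$. This produces $J^F(hfh^{-1})=J^F(h)\cdot h^F(J^F(f))\cdot J^F(h)^{-1}$. All three matrices have the block-upper-triangular form of Proposition~\ref{prop:blocks}, and the diagonal blocks of $J^F(f)$ are the identity, so a short block computation shows that the upper-right block of the triple product equals $\Mag_0^1(h)\cdot h^F(\Mag(f))\cdot\Mag_0^0(h)^{-1}$. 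The identity $\Mag_0^1(h)=\bigl(\Mag_0^0(h)^\dagger\bigr)^{-1}$, supplied by the commutative square in Proposition~\ref{prop:Mag_Mag}, then converts this expression into the claimed form.

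For the hermitian property, the core observation is that for $f\in\modT$ the degree-$1$ derivation $\tau_1(f)=(\tau_1^0(f),\tau_1^1(f))$ of the extended graded Lie algebra $\ol\A_\bu=F\ltimes\Lie(\AAlpha)$ satisfies the boundary vanishing $\tau_1^1(f)([\zeta]_1)=0$ in $\Lie_2(\AAlpha)$: indeed, $f(\zeta)=\zeta$ forces $[f(\zeta)\zeta^{-1}]_2=0$. Substituting the explicit formula $[\zeta]_1=\sum_i(1-x_i^{-1})a_i$ from~\eqref{eq:zeta_ab} and expanding via the defect-of-equivariance formula~\eqref{eq:non-equiv}, the coboundary-like left-hand side $\sum_i(1-x_i^{-1})\tau_1^1(f)(a_i)$ gets matched with an explicit combination of wedge products $a_j\wedge a_i$ whose coefficients involve the entries $c_{ij}$ of $\tau_1^0(f)$ in the basis $(a_k)$. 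After a careful bookkeeping of the diagonal $F$-action on $\Lie_2(\AAlpha)=\AAlpha\wedge\AAlpha$ and of the antisymmetry of the wedge, this identity yields the symmetry $c_{ij}=\overline{c_{ji}}$, which is exactly $\Mag(f)=\Mag(f)^\dagger$ in view of the identification $\Mag(f)_{ij}=\overline{c_{ij}}$ provided by Lemma~\ref{lem:kappa}. I expect this extraction step to be the main obstacle; as a backup, one can argue directly by equating the Fox-calculus expansions of $\zeta-1$ and $f(\zeta)-1$ in $\Z[\pi]$ modulo a suitable higher-order piece of the augmentation ideal, reaching the same conclusion.
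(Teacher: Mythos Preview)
Your treatment of the conjugation formula~\eqref{eq:hermitian} coincides with the paper's: both apply the crossed-homomorphism identity for $J^F$, the block-upper-triangular form of $J^F(h)$ and $J^F(f)$, and the relation $\Mag_0^1(h)=\bigl(\Mag_0^0(h)^\dagger\bigr)^{-1}$ from Proposition~\ref{prop:Mag_Mag}.

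For the hermitian identity, your approach is correct but genuinely different from the paper's. The paper uses the full preservation of the homotopy intersection form $\eta$ by $f\in\modM$: the identity $f\circ\eta=\eta\circ(f\times f)$ translates into a matrix equation over $\Z[\pi]$ relating $J(f)^\dagger$, $J(f)$ and the matrix $E$ of $\eta$; after reduction by $\varpi$ and substitution of the explicit $\varpi(E)$ and of the block form of $J^F(f)$, the relation $\Mag(f)=\Mag(f)^\dagger$ drops out by direct inspection. You instead isolate the single consequence $f(\zeta)=\zeta$, read it as $\tau_1^1(f)([\zeta]_1)=0$, and after passing to $F$-coinvariants obtain $\sum_i[c(x_i),a_i]=0$ in $\Lie_2(\AAlpha)_F$. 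Writing $c(x_i)=\sum_j c_{ji}a_j$ and using that $(\Lambda^2\AAlpha)_F$ is free abelian on the classes $a_j\wedge(fa_i)$ subject only to $a_j\wedge(fa_i)=-a_i\wedge(f^{-1}a_j)$, this vanishing is equivalent to $c_{ij}=\overline{c_{ji}}$ for all $i,j$, hence to $\Mag(f)=\Mag(f)^\dagger$. Your route is conceptually clean and stays entirely within the Johnson-homomorphism framework (it amounts to identifying $D_1^0$ with the hermitian matrices), whereas the paper's argument is more self-contained at this point of the exposition, since it does not presuppose the later analysis of $D_+^0$ in Section~\ref{sec:special_derivations}. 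The ``bookkeeping'' you flag as the main obstacle is exactly the description of $(\Lambda^2\AAlpha)_F$ just given; once that is made explicit, the extraction is immediate.
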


\begin{proof}
Set $n=2g$ and $(z_1,\dots,z_{n})
=(\alpha_1,\dots,\alpha_g,\beta_1,\dots,\beta_g)$.
Let $f\in \mathcal{M}$.
We start by recalling  that the condition for $f\in \Aut(\pi)$
to preserve  the homotopy intersection form $\eta$ 
(see \S \ref{subsec:hif_surface}) implies
some strong conditions on the free Jacobian matrix 
$$
J(f)=\left(\overline{\frac{\partial f(z_j)}{\partial  z_i }}\right)_{i,j} 
\in \GL(2g;\Z[\pi]).
$$
(These appear under equivalent forms e.g$.$ in \cite{Morita_abelian} and \cite{Perron}.)

Recall that the (left) Fox derivatives 
$\frac{\partial \ }{\partial z_i}$ are uniquely defined by \eqref{eq:Fox}
and, similarly, we  define some (right) Fox derivatives
$\frac{\laitrap \ }{\laitrap z_i}$ by the identity
$$
w-\varepsilon(w) = \sum_{i=1}^{n} (z_i-1) \frac{\laitrap w}{\laitrap z_i}
\quad \hbox{for all $w\in \Z[\pi]$}.
$$
The properties \eqref{eq:Fox_left}--\eqref{eq:Fox_right}
of $\eta$ to be a Fox pairing imply that
\begin{equation} \label{eq:eta_z}
\eta(x,y)= \sum_{i,j} \frac{\partial x}{\partial z_i} \eta(z_i,z_j) 
\frac{\laitrap y}{\laitrap z_j} \quad \hbox{for all $x,y\in \Z[\pi]$}.
\end{equation}
Using \eqref{eq:eta_z}, 
the identity $f \circ \eta = \eta \circ (f \times f)$
translates into the matrix identity:
$$
f(E) = \left( \frac{\partial f(z_i)}{\partial z_j}\right)_{i,j} 
\cdot E \cdot \left(\frac{\laitrap f(z_j)}{\laitrap  z_i }\right)_{i,j},
$$
where  $E$ denotes the matrix of $\eta$ in the basis $(z_1,\dots,z_{n})$.
It easily follows from the definitions of left and right Fox derivatives
that $\frac{\laitrap w}{\laitrap z_i} = z_i^{-1} 
\overline{\frac{\partial w}{\partial z_i}} w$ for any $w\in \pi$. Hence we obtain
\begin{equation} \label{eq:6-term_0}
f(E) = J(f)^\dagger
\cdot E \cdot \operatorname{diag}(z_1^{-1},\dots,z_{n}^{-1}) \cdot
J(f) \cdot \operatorname{diag}(f(z_1),\dots,f(z_{n})).
\end{equation}

Assume now that $f\in \modT$. 
By applying $\varpi: \Z[\pi] \to \Z[F]$, the previous
matrix identity simplifies to 
\begin{equation} \label{eq:6-term}
\varpi(f(E))=\varpi(E) = J^F(f)^\dagger \cdot \varpi(E) \cdot (I \oplus D^{-1}) \cdot
J^F(f)
\cdot (I \oplus D),
\end{equation}
where $I=I_g$ is the identity matrix and
$D=\operatorname{diag}(x_1,\dots,x_g)$ is the diagonal matrix with entries $x_i$.
It also follows from Proposition  \ref{prop:blocks} that 
\begin{equation} \label{eq:J^F(f)}
J^F(f)= \left(\varpi\Big(\overline{ \frac{\partial f(z_j)}{\partial  z_i }}\Big)\right)_{i,j} 
= 
\left(\begin{array}{c|c}   I  & \Mag(f)   \\ \hline
0  & I  \end{array}\right)
\end{equation}
and from the computation of $E$ in \S \ref{subsec:hif_surface} that
$$
\varpi(E) = \left(\begin{array}{c|c} 0 & D \\ \hline 
-I & D-I \end{array}\right).  
$$
Then the identity $\Mag(f)=\Mag(f)^\dagger$ easily follows 
from \eqref{eq:6-term}.

Let also $h\in \modH$. Since $J^F$ is a crossed homomorphism, we get
$$
J^F(hfh^{-1}) \stackrel{\eqref{eq:product}}{=} J^F(h) \cdot h^F\big(J^F(f)\big) \cdot  h^F\big(J^F(h^{-1})\big),
$$
where $h^F\in \Aut(F)$ is 
the action of $h$ on $F=\pi_1(V,\star)$.
Then, Propositions~\ref{prop:blocks} 
and~\ref{prop:Mag_Mag} give
$$
J^F(h) = \left(\begin{array}{c|c}  \Mag_0^0(h) & \Mag^0_1(h) \\ \hline 
0 & \Mag^0_0(h)^{\dagger,-1} \end{array}\right).  
$$
It is now straightforward to deduce \eqref{eq:hermitian} from \eqref{eq:J^F(f)} and the two identities above.
\end{proof}

\begin{remark}
By applying \eqref{eq:6-term_0} to an $f\in \modH$, 
we get an alternative justification for the identity
$
\Mag^1_0(f) \cdot \Mag^0_0(f)^\dagger =I \in \GL(g;\Z[F]),
$
which is contained in Proposition \ref{prop:Mag_Mag}.
\end{remark}

\begin{example}
Let $T:=T_{\partial \Sigma}$ be the Dehn twist along the boundary curve. Then we have
\begin{equation} \label{eq:Magnus_boundary}
\varpi\Big(\frac{\partial T(\beta_i)}{\partial \alpha_j}\Big)
= \varpi\Big(\frac{\partial \,{}^\zeta\beta_i}{\partial \alpha_j}\Big)
= (1-x_i)\, \varpi\Big(\frac{\partial \zeta}{\partial \alpha_j}  \Big) 
\stackrel{\eqref{eq:zeta_ab}}{=}
(1-x_i)\, (1-x_j^{-1}).
\end{equation}
Hence we have $\Mag(T) = \big( (1-x_i )(1-x_j^{-1})\big)_{i,j}$.
\end{example}

\section{Johnson homomorphisms for the handlebody group}
\label{sec:Johnson_homomorphisms_handlebody}

Restricting  \eqref{eq:tau_0_0_1}
to $\modH=\modH_0$ yields homomorphisms
$$
\tau _0^{{0}} : {\modH}  \longrightarrow  \Aut(F), \quad \tau _0^{{1}}: {\modH}  \longrightarrow  \Aut\big({\AAlpha}\big),
$$
which satisfy \eqref{eq:equiv},
and whose images are described  in Proposition \ref{prop:Mag_Mag}.
Furthermore, restricting  \eqref{eq:tau_m_0_1} yields homomorphisms
$$
\tau _m^{{0}} : {\modH_m}  \longrightarrow  Z^1\big(F, \overline{\Alpha}_m \big),
\quad  \tau _m^{{1}} : 
{\modH_m}  \longrightarrow  \Hom \big({\AAlpha}, \overline{\Alpha}_{m+1} \big)\quad (m\ge1),
$$
which satisfy \eqref{eq:non-equiv}. 
These  maps
satisfy \eqref{eq:bracket0}--\eqref{eq:equiv1_bis}.
By \eqref{eq:ker_tau_m_0_1} we have
\begin{equation} \label{eq:ker_tau_m_0_1_HH}
\ker \tau _m^{0} =  {\modH_{m+1}} \quad  \hbox{and} \quad \ker \tau _m^{1} =  {\modH_{m+1}}.    
\end{equation}

\begin{example} \label{ex:0_1}
By Proposition \ref{prop:blocks}, 
in degree $m=1$, $\tau_1^0: \modT \to Z^1(F,\AAlpha)$
and  $\tau_1^1: \modT \to \Hom(\AAlpha, \Lambda^2\AAlpha)$
are equivalent to 
$\Mag=\Mag_1^0: \modT \to \Mat(g\times g; \Z[F])$,
whose image is partly described in 
Proposition \ref{prop::hermitian}.
\end{example}

Equation \eqref{eq:ker_tau_m_0_1_HH}  
and Theorem \ref{thm:=} immediately imply the following.

\begin{corollary} \label{cor:equivalence}
The two families of Johnson homomorphisms
$(\tau_m^{{0}})_m$ and $(\tau_m^{{1}})_m$ are equivalent to each other
for the handlebody group.
\end{corollary}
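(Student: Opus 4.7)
The plan is to deduce the equivalence directly from two already-proved facts: Theorem~\ref{thm:=} (which identifies $\modH^0_m$ with $\modH_m$) and the kernel identifications \eqref{eq:ker_tau_m_0_1_HH} (which compute the kernels of $\tau_m^0$ and $\tau_m^1$ in terms of the filtration). The point is that, for a general free pair, the two maps $\tau_m^0$ and $\tau_m^1$ genuinely carry different information because their kernels $\modG^0_{m+1}$ and $\modG_{m+1}$ differ; it is the boundary condition $f(\zeta)=\zeta$ that will force these to agree on $\modH$.

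First I would restrict \eqref{eq:ker_tau_m_0_1_HH}, originally stated for $\modG$, to the subgroup $\modH$. This yields $\ker(\tau_m^0|_{\modH_m}) = \modH\cap\modG^0_{m+1} = \modH^0_{m+1}$ and $\ker(\tau_m^1|_{\modH_m}) = \modH\cap\modG_{m+1} = \modH_{m+1}$. Next I would invoke Theorem~\ref{thm:=} to identify $\modH^0_{m+1}=\modH_{m+1}$, so that the two kernels coincide. Hence both $\tau_m^0$ and $\tau_m^1$ factor through injective homomorphisms out of the common graded quotient $\modH_m/\modH_{m+1}$, and knowing the value of either one on $f\in\modH_m$ pins down the class $[f]\in\modH_m/\modH_{m+1}$ and therefore also the value of the other. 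This is the precise sense of equivalence.

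There is no substantive obstacle here: the real work has been absorbed into Theorem~\ref{thm:=}, whose proof uses the boundary condition together with Lemma~\ref{lem:free_Lie} on the freeness of $\overline{\Alpha}_m$ as a $\Z[F]$-module. The only mild conceptual point is to articulate what ``equivalent'' means, but the discussion around the truncation isomorphism \eqref{eq:DD} already makes clear that the entire derivation $\overline{\tau}_m(f)$ is determined by the pair $(\tau_m^0(f),\tau_m^1(f))$; our argument shows the stronger statement that, on $\modH$, each of these two components suffices on its own.
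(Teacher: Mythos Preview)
Your argument is correct and matches the paper's own justification, which simply says that \eqref{eq:ker_tau_m_0_1_HH} together with Theorem~\ref{thm:=} immediately gives the corollary. One minor slip: the equation you cite as ``originally stated for $\modG$'' is actually \eqref{eq:ker_tau_m_0_1}; equation \eqref{eq:ker_tau_m_0_1_HH} is already the restricted version for $\modH$ (and in fact already incorporates Theorem~\ref{thm:=}), but this does not affect the substance of your reasoning.
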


For some purposes (for instance, to have another viewpoint 
on Corollary \ref{cor:equivalence}), 
it will be also convenient to use the
extended graded Lie algebra morphism
$$
\overline{\tau}_\bu:  \overline{\modH}_\bu \lto 
\Der_\bu(\overline{\Alpha}_\bu) \simeq D_\bu(\overline{\Alpha}_\bu),
$$
which encompasses the two families $(\tau_m^{{0}})_m$ and $(\tau_m^{{1}})_m$
into a single map.
It follows from \eqref{eq:ker_tau_m_0_1_HH} that $\overline{\tau}_\bu$ is injective. 
Clearly, $\overline{\tau}_\bu$ takes values in the extended graded Lie subalgebra 
$$
\Der_\bu^\zeta (\overline{\Alpha}_\bu)
\simeq D_\bu^\zeta (\overline{\Alpha}_\bu)
$$
whose degree $0$ part consists of the extended graded Lie algebra automorphisms of $\overline{\Alpha}_\bu$
that fix $[\zeta]_1\in \overline{\Alpha}_1$ and whose positive-degree part consists of extended graded Lie algebra derivations of $\overline{\Alpha}_\bu$
that vanish on $[\zeta]$.
We call it the
\emph{extended graded Lie algebra of special derivations} of $\overline{\Alpha}_\bu$.

In the sequel, we will mainly focus on the twist group, 
for which it is enough to consider the morphism of graded Lie algebras
\begin{equation} \label{eq:gr_gr}
\overline{\tau}_+:  \overline{\modH}_+ \lto 
\Der_+^\zeta(\overline{\Alpha}_\bu) \simeq D_+^\zeta(\overline{\Alpha}_\bu).
\end{equation}
Recall that $\ol{\tau}_m$ is equivariant in the sense that
$$
\overline{\tau}_m([h t h^{-1}]_m) = h_*  \circ \overline{\tau}_m([t]_m) \circ h_*^{-1}    
$$
for any $m\geq 1$, $t\in \modH_m$, $h\in \modH$, where 
$h_*\in \Aut(\overline{\Alpha}_\bu)$ is the unique automorphism of extended graded Lie algebras
given by $\tau_0^0(h)\in \Aut(F)$ in degree $0$
and by $\tau_0^1(h)\in \Aut^\zeta_{\operatorname{qe}}(\AAlpha)$ in degree $1$.

The graded Lie algebra 
$\Der_+^\zeta(\overline{\Alpha}_\bu) \simeq D_+^\zeta(\overline{\Alpha}_\bu)$
is a rich algebraic object, whose study is postponed to \S \ref{sec:special_derivations}.
In the meantime, we explain 
how the embedding \eqref{eq:gr_gr}
lifts to an embedding of $\modT=\modH_1$ 
into the degree-completion of $D_+^\zeta(\overline{\Alpha}_\bu)$. 
For this, we need the following refinement of Lemma \ref{lem:expansion}.

\begin{lemma}\label{lem:special}
There exists an expansion $\theta$ of the free pair $(\pi,\Alpha)$ such that
the associated map 
$\ell^\theta: \pi \to \widehat{\Lie}(\AAlpha^\Q)$ satisfies
\begin{equation} \label{eq:symplectic}
\ell^\theta(\zeta) = [\zeta]_1 \in \AAlpha.
\end{equation}
\end{lemma}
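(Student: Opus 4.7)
The strategy is to start from the expansion $\theta_0$ of Lemma~\ref{lem:expansion} and modify it by an inner automorphism of the complete Hopf algebra $\widehat U(\overline\Alpha_\bu^\Q)$ so as to kill the higher-order terms in $\ell^{\theta_0}(\zeta)$. Since $\varpi(\zeta)=1$, the image $\theta_0(\zeta)=\exp(\ell_0)\otimes 1$ is determined by an element $\ell_0\in\widehat{\Lie}(\AAlpha^\Q)$, which by \eqref{eq:ell_a} has the form $\ell_0 = [\zeta]_1 + \sum_{n\ge 2} L_n$ with $L_n\in\Lie_n(\AAlpha^\Q)$. We must eliminate all terms $L_n$ for $n\ge 2$.

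I would proceed by induction: construct a sequence of expansions $\theta^{(1)}=\theta_0,\theta^{(2)},\theta^{(3)},\dots$ such that $\ell^{\theta^{(n)}}(\zeta) \equiv [\zeta]_1 \pmod{\deg \ge n+2}$; the limit $\theta:=\lim_n\theta^{(n)}$ (which converges because the modifications strictly increase degree) is an expansion satisfying~\eqref{eq:symplectic}. For the inductive step, let $L_{n+1}\in\Lie_{n+1}(\AAlpha^\Q)$ be the degree-$(n+1)$ obstruction of $\ell^{\theta^{(n)}}(\zeta)-[\zeta]_1$. Suppose we can find a degree-$n$ derivation $d=(d_0,d_+)\in\Der_n(\overline\Alpha_\bu^\Q)$ of the extended graded Lie algebra with $d_+([\zeta]_1) = -L_{n+1}$. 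Using Lemma~\ref{lem:canonical_isos}, $d$ lifts to an algebra derivation $\tilde d$ of $\widehat U(\overline\Alpha_\bu^\Q)$, and the Hopf algebra automorphism $\exp(\tilde d)$ belongs to $\operatorname{IAut}(\widehat U(\overline\Alpha_\bu^\Q))$. Set $\theta^{(n+1)}:=\exp(\tilde d)\circ\theta^{(n)}$, which is again an expansion since $\operatorname{IAut}$-action preserves the expansion condition on $\Alpha$. A direct computation, using that $\tilde d$ vanishes on $1\in F\subset\widehat U(\overline\Alpha_\bu^\Q)$ and that $\exp(\tilde d)$ sends group-like to group-like, gives $\theta^{(n+1)}(\zeta)=\exp(e^{d_+}(\ell^{\theta^{(n)}}(\zeta)))\otimes 1$; reading off degree $n+1$ yields $L_{n+1}+d_+([\zeta]_1)=0$, as required.

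The main obstacle is the existence of such a derivation $d$ at each stage, i.e., surjectivity of the evaluation map
$$
\varepsilon_n \colon \Der_n(\overline\Alpha_\bu^\Q)\lto \Lie_{n+1}(\AAlpha^\Q),\quad d \longmapsto d_+([\zeta]_1).
$$
Using the compatibility~\eqref{eq:equiv1}, a direct computation with $[\zeta]_1=\sum_i (1-x_i^{-1})\cdot a_i$ gives
$$
d_+([\zeta]_1) = \sum_{i=1}^g\bigl((1-x_i^{-1})\cdot d_1(a_i) - [d_0(x_i^{-1}),\,x_i^{-1}\cdot a_i]\bigr).
$$
Since $\AAlpha^\Q$ is $\Q[F]$-free on $\{a_i\}_i$ (Lemma~\ref{lem:freeness}), the values $d_1(a_i)\in\Lie_{n+1}(\AAlpha^\Q)$ can be chosen arbitrarily and then extended $\Q[F]$-linearly; similarly $d_0$, being a 1-cocycle on the free group $F$, is determined by arbitrary values $d_0(x_i^{-1})\in\Lie_n(\AAlpha^\Q)$. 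The surjectivity of $\varepsilon_n$ thus reduces to a concrete linear algebra statement: the combined map
$$
\bigl(\Lie_{n+1}(\AAlpha^\Q)\bigr)^g \oplus \bigl(\Lie_n(\AAlpha^\Q)\bigr)^g \lto \Lie_{n+1}(\AAlpha^\Q),\quad
(\mu_i,\lambda_i)_i \longmapsto \sum_i (1-x_i^{-1})\mu_i + \sum_i [x_i^{-1}\cdot a_i,\lambda_i]
$$
is surjective. The first summand already has image $I_F\cdot \Lie_{n+1}(\AAlpha^\Q)$ (where $I_F$ is the augmentation ideal, so $1-x_i^{-1}$ generate it as a $\Q[F]$-module), and the second summand produces elements of the form $[x_i^{-1}\cdot a_i,\Lie_n(\AAlpha^\Q)]$. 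Since $\{x_i^{-1}\cdot a_i\}$ together with $\{a_i\}$ span enough of $\AAlpha^\Q$ to hit the complementary space to $I_F\cdot \Lie_{n+1}(\AAlpha^\Q)$, surjectivity follows. (The last step is the delicate part and uses both the freeness of $F$ and the fact that every homogeneous element of $\Lie(\AAlpha^\Q)$ of degree $\ge 2$ is a $\Q$-linear combination of iterated brackets of generators.)

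This completes the inductive construction of $\theta$. The resulting expansion satisfies $\ell^\theta(\zeta)=[\zeta]_1\in\AAlpha$ as desired.
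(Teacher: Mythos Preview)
Your approach is correct in outline but genuinely different from the paper's. The paper does not argue by successive approximation at all: it builds $\theta$ explicitly from a special expansion $\theta_0$ of the free group $D=\pi_1(\Sigma',\star)$, where $\Sigma'\subset\Sigma$ is the planar subsurface obtained by deleting the handles. Using the presentation $\pi\simeq (D*B)/R$ (with $B=\langle\beta_1,\dots,\beta_g\rangle$), the paper defines $\theta$ on $D$ via $\theta_0$ and on $B$ by an explicit formula involving the primitives $u_i,u'_i$ appearing in $\theta_0$, checks compatibility with the relations $R$, and then verifies \eqref{eq:symplectic} directly from the ``special'' property of $\theta_0$. This explicit construction is reused later (Lemma~\ref{lem:special_to_symplectic}) to produce a special expansion that also maps to a symplectic expansion of $\pi$ under a specific algebra map $\digamma$; your abstract modification procedure would not give that extra structure for free. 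Conversely, your argument is cleaner in that it needs no topology and isolates the obstruction-theoretic content: only the surjectivity of $d\mapsto d_+([\zeta]_1)$ matters.

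Two small points to tighten. First, your indexing is off by one: with $\theta^{(1)}=\theta_0$ you only know $\ell^{\theta^{(1)}}(\zeta)\equiv[\zeta]_1\pmod{\deg\ge 2}$, not $\pmod{\deg\ge 3}$; just shift the induction. Second, the ``delicate part'' at the end deserves one more line: after noting that the first summand has image $I_F\cdot\Lie_{n+1}(\AAlpha^\Q)$, pass to the $F$-coinvariants $(\Lie_{n+1})_F$. There $[x_i^{-1}a_i,\lambda_i]\equiv[a_i,x_i\lambda_i]$, so the image of the second summand in coinvariants is $\sum_i[a_i,\Lie_n]$; since $\Lie_{n+1}=[\AAlpha^\Q,\Lie_n]$ and every $[fa_i,w]=f\cdot[a_i,f^{-1}w]\equiv[a_i,f^{-1}w]$ in coinvariants, this is all of $(\Lie_{n+1})_F$. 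That finishes surjectivity cleanly.
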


An expansion $\theta$ of $(\pi,\Alpha)$ 
that satisfies the additional condition~\eqref{eq:symplectic}, 
i.e., $\ell^\theta(\zeta)$ is concentrated in degree $1$,
is called \emph{special}.
This is an analogue 
of the notion of symplectic expansion of $\pi$ 
that was considered
in relation with the classical study of Johnson homomorphisms
for the Torelli group \cite{Mas12}.
(See Lemma \ref{lem:special_to_symplectic} below, in this connection.)

\begin{proof}[Proof of Lemma \ref{lem:special}]
We start by recalling a related notion of special expansion
for a free group with a given basis \cite{AT,Mas18}.
Let $D$ be a finitely-generated free group, 
with basis $(\delta_1,\dots,\delta_n)$.
Let $\mathbb{D}^\Q := D_{\operatorname{ab}} \otimes \Q$
be its rational abelianization, and set $d_i:=[\delta_i] \in \mathbb{D}^\Q$.
A \emph{special expansion} of $D$ 
(relative to the basis $(\delta_1,\dots,\delta_n)$)
is a monoid homomorphism $\theta_0:D \to \widehat{T}(\mathbb{D}^\Q)$ 
with the following two properties:
\begin{itemize}
\item for all $i\in \{1,\dots, n\}$,
there exists a primitive element $v_i \in \widehat{T}(\mathbb{D}^\Q)$
such that $\theta_0(\delta_i) = \exp(v_i) \exp(d_i) \exp(-v_i)$;
\item we have $\theta_0(\delta_n \cdots \delta_2 \delta_1)=
\exp(d_1+ d_2+ \dots+d_n)$.
\end{itemize}
Such a map $\theta_0$ can be constructed 
by  successive finite-degree approximations of $v_1,\dots,v_n$.
For instance, up to degree one, we can take
\begin{equation} \label{eq:degree_one}
v_i = r_i d_i + \frac{1}{2} \sum_{i<j} d_j +(\deg \geq 2),
\end{equation}
where $r_i \in \Q$ can be chosen arbitrarily.

Let $(\alpha,\beta)$ be a basis of $\pi$ of type \eqref{eq:(a,b)}. Recall that the curves $\alpha_1,\dots,\alpha_g$ are meridians of $V$.
Let $\Sigma'\subset\Sigma$ be the disk with $2g$ holes that is obtained from $\Sigma$ by removing the $g$ handles. Then ${D}:=\pi_1(\Sigma',\star)$ is free on the loops 
$\alpha_1', \alpha_1, \dots, \alpha'_g,\alpha_g$ shown below: 
\begin{equation} \label{eq:loops}
\labellist
\scriptsize\hair 2pt
 \pinlabel {$\alpha_1$} [l] at 280 80
\pinlabel {$\alpha'_1$} [r] at 110 80
\pinlabel {$\alpha_g'$} [r] at 425 80
\pinlabel {$\alpha_g$} [l] at 590 80
\pinlabel {\large $\star$} at 566 4
\endlabellist
\centering
\includegraphics[scale=0.42]{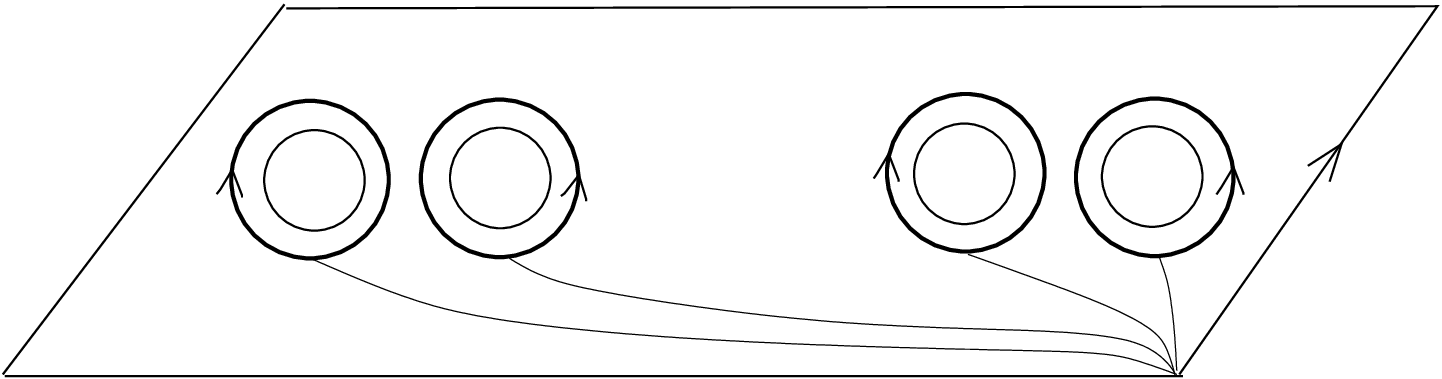}
\end{equation}
Let also ${B}$ be the free group on $\beta_1, \dots,\beta_g$. Then
\begin{equation} \label{eq:pi}
\pi \simeq  \frac{{D} \ast {B}}{{R}},
\end{equation}
where $R$ denotes the subgroup of the free product $D \ast B$ normally generated by the elements $\alpha'_i \beta_i^{-1} \alpha_i^{-1} \beta_i$ for all $i\in \{1,\dots,g\}$.

Choose now  a special expansion $\theta_0: D \to \widehat{T}(\mathbb{D}^\Q)$ 
of the free group $D$ 
(relative to the basis 
$\big((\alpha_1')^{-1}, \alpha_1, \dots, (\alpha'_g)^{-1},\alpha_g\big)$):
thus, there exist primitive elements 
 $u_1,u'_1,\dots,u_g,u'_g \in  \widehat{T}(\mathbb{D}^\Q)$ such that
$$
\theta_0(\alpha_i) = \exp(u_i) \, \exp(a_i) \, \exp(-u_i),
\quad 
\theta_0((\alpha'_i)^{-1}) = \exp(u'_i) \, \exp(-a'_i) \, \exp(-u'_i),
$$
where $a'_i:=[\alpha'_i]\in \mathbb{D}^\Q$ 
and $a_i:=[\alpha_i] \in \mathbb{D}^\Q$, and the following  condition is satisfied:
\begin{equation}   \label{eq:special}
\theta_0\big(\alpha_g (\alpha'_g)^{-1} \cdots \alpha_1 (\alpha'_1)^{-1} \big) 
= \exp\big( (a_1-a'_1) + \cdots + (a_g-a'_g)\big)
\end{equation}
The inclusion $\Sigma'\hookrightarrow\Sigma$ 
induces a homomorphism $D \to \sf{A}$
at the level of the fundamental groups, 
which further induces an (injective) algebra homomorphism 
$q:  \widehat{T}(\mathbb{D}^\Q) \to  \widehat{T}(\AAlpha^\Q)$.
Thus, we can define a multiplicative map 
$\theta_1: D  \to \widehat{T}(\AAlpha^\Q) \otimes_\Q \Q[F]$ by
\begin{equation}
 \label{eq:theta1}
\theta_1(\alpha_i):= q\theta_0(\alpha_i)\otimes 1  
\quad \hbox{and} \quad \theta_1(\alpha'_i):= q \theta_0 (\alpha'_i) \otimes 1.
\end{equation}
We also define a multiplicative map 
$\theta_2: B \to \widehat{T}(\AAlpha^\Q) \otimes_\Q \Q[F]$ by
\begin{equation}
 \label{eq:theta2}
\theta_2(\beta_i) := \exp\big( q (u_i)\big)\, \exp \big({}^{{x_i}}q (-u'_i) \big) \otimes x_i,
\end{equation}
where $x_i= \varpi(\beta_i)$.
Thus we obtain a multiplicative map
\begin{equation}
    \theta_1 \ast \theta_2: D \ast B \longrightarrow 
\widehat{T}(\AAlpha^\Q) \otimes_\Q \Q[F].
\end{equation}
For all $i\in\{1,\dots,g\}$, we have
\begin{eqnarray*}
&& \big(\theta_1 \ast \theta_2\big)\big(\alpha'_i \beta_i^{-1} \alpha_i^{-1} \beta_i\big) \\
&=&\theta_1(\alpha'_i) \cdot  \theta_2(\beta_i)^{-1} 
\cdot \theta_1(\alpha_i)^{-1} \cdot \theta_2(\beta_i) \\
&=& \big(   \exp q (u'_i) \,   \exp q(a'_i) \, \exp q(-u'_i) \otimes 1 \big) 
\cdot \big( \exp q (u'_i) \,  
\exp\big(  q (-u_i)^{{x_i}} \big)\otimes x_i^{-1} \big) \\
 && \cdot \big(   \exp q (u_i) \,   \exp q(-a_i) \, \exp q(-u_i) \otimes 1  \big) 
 \cdot \big( \exp q (u_i)\, \exp \big({}^{{x_i}} q (-u'_i)\big) \otimes x_i \big)  \\
&=& \big(   \exp q (u'_i) \,   \exp q(a'_i) \, 
\exp q \big((-u_i)^{x_i}  \big) 
\otimes  x_i^{-1}  \big)  \\
&&  \cdot \big(   \exp q (u_i) \,   \exp q(-a_i)  
\exp q \big( {}^{{x_i}} (-u'_i) \big) \otimes x_i   \big)  \\
&=& \exp q (u'_i) \,   \exp q(a'_i) \,    \exp q(-a_i)^{{x_i}}  
\exp q(-u'_i)  \otimes  1   \ = \ 1 \otimes 1,
\end{eqnarray*}
where the last identity follows 
from $q(a'_i)= q(a_i)^{x_i} \in \AAlpha^\Q$.
It follows from~\eqref{eq:pi} that $\theta_1 \ast \theta_2$ 
induces a multiplicative map
$$
\theta: \pi \longrightarrow \widehat{T}(\AAlpha^\Q) \otimes_\Q \Q[F].
$$
That $\theta$ satisfies conditions \eqref{eq:ell} and 
\eqref{eq:ell_a} in Definition \ref{def:expansion_free_pair}
follows directly from the definitions of $\theta_1$ and $\theta_2$. 
It remains to verify condition \eqref{eq:symplectic}:
\begin{eqnarray*}
 \theta(\zeta) \!\! \!\! & \stackrel{\eqref{eq:zeta}}{=}&
\theta\big([\alpha_g, \beta_g^{-1}] \cdots  [\alpha_1, \beta_1^{-1}] \big) \\
&=& \theta\big(\alpha_g (\alpha'_g)^{-1} 
\cdots \alpha_1 (\alpha'_1)^{-1} \big) \\
&=& q \theta_0 \big(\alpha_g (\alpha'_g)^{-1} 
\cdots \alpha_1 (\alpha'_1)^{-1} \big) \otimes 1 \\
& \stackrel{\eqref{eq:special}}{=} & q \exp\big( (a_1-a'_1) + \cdots 
+ (a_g-a'_g)\big) \otimes 1 \\
& = &  \exp\big( (q(a_1)-q(a'_1)) + \cdots + (q(a_g)-q(a'_g))\big) \otimes 1 \\
&=& \exp \big( (q(a_1) -  q(a_1)^{{x_1} } ) + \cdots 
+  (q(a_g) -  q(a_g)^{{x_g} }) \big) \otimes 1 
 \, \stackrel{\eqref{eq:zeta_ab}}{=} \, \exp([\zeta]_1) \otimes 1.
\end{eqnarray*}
\end{proof}

Recall that the complete Lie algebra 
$\widehat D_+^\zeta \big(\overline{\Alpha}^\Q_\bu\big)$ can be  viewed as a group
whose multiplication is given by the BCH formula.
The following provides an infinitesimal
version of the action of $\modT$ on the free pair $(\pi,\Alpha)$.

\begin{theorem} \label{thm:varrho_HH}
Let $\theta$ be a special  expansion of the free pair $(\pi,\Alpha)$. 
There is a homomorphism
\begin{equation} \label{eq:varrho_free_pair_HH}
\varrho^\theta: \modT \lto 
\widehat  D_+^\zeta\big(\overline{\Alpha}^\Q_\bu\big)
\end{equation}
whose restriction to $\modH_m$ $(m\geq 1)$ starts in degree $m$ 
with $(\tau^0_m,\tau^1_m)$ .
%\in D_m^\zeta(\overline{\Alpha}_\bu)$ for any $m\geq 1$.
Furthermore, $\varrho^\theta$ is injective.
\end{theorem}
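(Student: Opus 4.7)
The strategy is to obtain $\varrho^\theta$ for $\modT$ by restricting the homomorphism of Theorem~\ref{thm:varrho} for the free pair, and then to verify that the boundary condition~\eqref{eq:symplectic} on the special expansion $\theta$ forces the image to land in the ``special'' subspace $\widehat{D}_+^\zeta\big(\overline{\Alpha}^\Q_\bu\big)$. Since $\modT=\modH_1=\modH\cap\G_1$, we first restrict $\varrho^\theta:\G_1\to\widehat{D}_+(\overline{\Alpha}^\Q_\bu)$ from Theorem~\ref{thm:varrho} to $\modT$. The induced-Johnson-on-graded property and injectivity of the restriction will then follow immediately from the corresponding statements in Theorem~\ref{thm:varrho}, the latter relying on \eqref{eq:completeness_HH}.

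The core of the argument is thus the boundary condition $d([\zeta]_1)=0$ for every $d=\varrho^\theta(f)$ with $f\in\modT$. Let $z:=[\zeta]_1\in\AAlpha^\Q$. Since $\theta$ is special, $\ell^\theta(\zeta)=z$, and since $\varpi(\zeta)=1$, the defining formula~\eqref{eq:ell} gives $\theta(\zeta)=\exp(z)\otimes 1\in\hat U(\overline{\Alpha}_\bu^\Q)$. Any $f\in\modT\subset\modH$ fixes $\zeta\in\pi$, so the induced automorphism $\widehat{\Q[f]}$ of $\widehat{\Q[\A_*]}$ fixes the image of $\zeta$; conjugating by $\hat\theta$ shows that the Hopf algebra automorphism $\rho^\theta(f)=\exp(d)$ fixes $\exp(z)\otimes 1$. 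Because $\rho^\theta(f)$ is an algebra map one has $\exp(d)(\exp(z)\otimes 1)=\exp\bigl(\exp(d)(z)\bigr)\otimes 1$ (using also that $f$ acts trivially on $F$, so the $\Q[F]$-component is preserved), and since $[\zeta]_1\otimes 1$ is the unique primitive whose exponential is the group-like element $\exp(z)\otimes 1$, we deduce $\exp(d)(z)=z$. Writing $d=\sum_{m\ge 1}d_m$ with $d_m$ of degree $m$ and inspecting this identity degree-by-degree yields inductively $d_m(z)=0$ for every $m\ge 1$; equivalently, $(d_0,d_+)\in \widehat{D}_+^\zeta(\overline{\Alpha}_\bu^\Q)$, as required.

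The part about ``$\varrho^\theta$ starts in degree $m$ with $(\tau^0_m,\tau^1_m)$'' on $\modH_m\subset\G_m$ is inherited verbatim from Theorem~\ref{thm:varrho}, via the truncation isomorphism $\Der_+(\overline{\Alpha}_\bu^\Q)\simeq D_+(\overline{\Alpha}_\bu^\Q)$ of \eqref{eq:DD}: the restriction of $\varrho^\theta$ to $\modH_m$ has leading term $(\tau^0_m,\tau^1_m)$ as a consequence of \cite[Theorem~12.6]{HM18} applied to $\A_*$. Injectivity likewise is inherited: the intersection $\bigcap_m\modH_m$ is trivial by \eqref{eq:completeness_HH}, and an element of $\modT$ with trivial image would have to lie in this intersection by the graded-level identification just described.

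The main (and essentially only) obstacle is the argument in the middle paragraph that the special expansion forces $d([\zeta]_1)=0$. The care needed there is twofold: one must check that $\rho^\theta(f)$ preserves the $\Q[F]$-factor of $\theta(\zeta)$ (which uses $f\in\modT$, i.e., that $\tau_0^0(f)=\id_F$), and one must extract $d_m(z)=0$ for every $m\ge 1$ from the single identity $\exp(d)(z)=z$ by a degree induction, using that each $d_m$ strictly increases degree. Once this step is in place, the rest of the proof is a direct packaging of results already established.
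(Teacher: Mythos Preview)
Your proposal is correct and follows essentially the same approach as the paper's proof: restrict $\varrho^\theta$ from Theorem~\ref{thm:varrho} to $\modT=\modH_1\subset\modG_1$, and use that $f\in\modT$ fixes $\zeta$ together with the special condition $\ell^\theta(\zeta)=[\zeta]_1$ to deduce that the derivation $\log(\rho^\theta(f))$ vanishes on $[\zeta]_1$. The paper states this last step in one sentence (``the condition for $g\in\modT$ to fix $\zeta$ translates into the property for the derivation $\log(\rho^\theta(g))$ to vanish on $\theta(\zeta)$, or equivalently on $\log\theta(\zeta)=\ell^\theta(\zeta)$''), while you spell out the degree-by-degree induction; your parenthetical about $f$ acting trivially on $F$ is not really the reason the $\Q[F]$-factor is preserved---rather, it is that $d\in\Der_+$ maps $\AAlpha^\Q$ into $\widehat{\Lie}(\AAlpha^\Q)\otimes 1$---but the conclusion is unaffected.
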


\begin{proof}
This is obtained by restricting the homomorphism $\varrho^\theta$ of 
Theorem \ref{thm:varrho} to the twist group $\modT=\modH_1 \subset \modG_1$.
Observe that, here, the condition for a $g\in \modT$
to fix $\zeta \in \pi$ translates into the property for the derivation 
$\log(\rho^\theta(g))$ to vanish on $\theta(\zeta)$
or, equivalently, on  $\log \theta(\zeta)=\ell^\theta(\zeta)$.
\end{proof}

\section{The Lie algebra of special derivations for the handlebody group} 
\label{sec:special_derivations}

In this section, we give two  equivalent descriptions of the Lie algebra 
of special derivations~$\Der_+^\zeta(\overline{\Alpha}_\bu) \simeq
D_+^\zeta(\overline{\Alpha}_\bu)$.

\subsection{The graded Lie algebra $D^0_+$}

Let $I_F$ be the augmentation ideal of $\Z[F]$, viewed as a left $\Z[F]$-module.
So far, we have regarded $\AAlpha$ as a left $\Z[F]$-module
using the left conjugation of $\pi$ on $\Alpha$,
but we can also regard it as a right $\Z[F]$-module $\AAlpha^r$ using the right conjugation.
Given a left (resp$.$ right) $\Z[F]$-module~$M$, 
let $M^*=\Hom_{\ZF}(M,\ZF)$ denote the module of $\Z[F]$-linear forms on $M$,
 which  is a right (resp$.$ left) $\Z[F]$-module. 
With these notations, 
the intersection operation $\langle -,-\rangle$ 
given in Proposition \ref{prop:intersection}
induces canonical isomorphisms
\begin{equation}  \label{eq:two_isomorphisms}
\AAlpha^r \simeq  I_F^*
\quad \hbox{and} \quad I_F  \simeq (\AAlpha^{r})^*
\end{equation}
of right and left $\Z[F]$-modules, respectively.

\begin{lemma} \label{lem:vartheta}
For any left  $\Z[F]$-module $M$,  
there is a canonical isomorphism
$$
\vartheta:Z^1(F,M) \stackrel{\simeq}{\longrightarrow} \AAlpha^r \otimes_{\Z[F]} M.
$$
\end{lemma}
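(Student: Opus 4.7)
The plan is to construct $\vartheta$ as the composition of three canonical isomorphisms, exploiting the duality $I_F \simeq (\AAlpha^r)^*$ provided by \eqref{eq:two_isomorphisms} and the freeness of $\AAlpha$ over $\Z[F]$ granted by Lemma~\ref{lem:freeness}.

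First, I would recall the classical identification of $1$-cocycles with $\Z[F]$-linear forms on the augmentation ideal. For any group $G$ and any left $\Z[G]$-module $N$, the map
\[
\Phi_G \colon Z^1(G,N) \overset{\simeq}{\longrightarrow} \Hom_{\Z[G]}(I_G, N), \quad c \longmapsto \bigl((g-1)\mapsto c(g)\bigr),
\]
is an isomorphism: the cocycle relation $c(gh)=c(g)+g\cdot c(h)$ matches the decomposition $gh-1=(g-1)+g(h-1)$ in $I_G$, and the inverse sends $\varphi$ to $g\mapsto \varphi(g-1)$. Applied with $G=F$ and $N=M$, this gives $Z^1(F,M)\simeq \Hom_{\Z[F]}(I_F, M)$.

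Second, the isomorphism $I_F \simeq (\AAlpha^r)^*$ of left $\Z[F]$-modules from \eqref{eq:two_isomorphisms} induces
\[
\Hom_{\Z[F]}(I_F, M) \simeq \Hom_{\Z[F]}\bigl((\AAlpha^r)^*, M\bigr).
\]
Third, for any right $\Z[F]$-module $N$ and any left $\Z[F]$-module $M$, the canonical evaluation map
\[
\mathrm{ev}\colon N\otimes_{\Z[F]} M \longrightarrow \Hom_{\Z[F]}(N^*, M), \quad n\otimes m \longmapsto \bigl(\varphi \mapsto \varphi(n)\cdot m\bigr),
\]
is well-defined and $\Z$-linear. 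Taking $N=\AAlpha^r$, which is free of finite rank by Lemma~\ref{lem:freeness}, the map $\mathrm{ev}$ is an isomorphism: one verifies this on a $\Z[F]$-basis $(a_i)$ of $\AAlpha^r$ and the corresponding dual basis of $(\AAlpha^r)^*$, where it reduces to the identity $\Z[F]\otimes_{\Z[F]} M \simeq M$ in each coordinate.

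Composing these three isomorphisms defines the desired $\vartheta$. The argument is essentially formal, so there is no substantive obstacle; the only point requiring care is the consistent bookkeeping of left versus right $\Z[F]$-module structures, which is precisely why the right-module version $\AAlpha^r$ of $\AAlpha$ must be used throughout in order that both the tensor product $\AAlpha^r\otimes_{\Z[F]} M$ make sense and the pairing from Proposition~\ref{prop:intersection} yield $I_F \simeq (\AAlpha^r)^*$ in the correct variance.
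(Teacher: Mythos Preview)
Your proof is correct and follows essentially the same approach as the paper: both factor $\vartheta$ as the standard identification $Z^1(F,M)\simeq\Hom_{\Z[F]}(I_F,M)$ followed by the duality \eqref{eq:two_isomorphisms} and a tensor--Hom isomorphism for a free module of finite rank. The only cosmetic difference is that the paper invokes $\AAlpha^r\simeq I_F^*$ (hence $\Hom_{\Z[F]}(I_F,M)\simeq I_F^*\otimes_{\Z[F]}M$, using that $I_F$ is free of finite rank), whereas you invoke $I_F\simeq(\AAlpha^r)^*$ (hence $\Hom_{\Z[F]}((\AAlpha^r)^*,M)\simeq\AAlpha^r\otimes_{\Z[F]}M$, using that $\AAlpha^r$ is free of finite rank); these are two sides of the same non-singular pairing and give the same map.
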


\begin{proof}
For any $c\in Z^1(F, M)$, let 
$d: I_F \to M$ be the restriction to $I_F$ of the $\Z$-linearization $c:\ZF\to M$ of $c$. 
Conversely, each $d\in \Hom_{\Z[F]}(I_F,M)$ 
yields a $1$-cocycle $c\in Z^1(F,M)$ by $c(f)=d(f-1)$. Let $\vartheta$ be the composition of the following isomorphisms
$$
Z^1(F , M) \simeq \Hom_{\Z[F]}\big(I_F,{M}\big) \simeq
I_F^*  \otimes_{\Z[F]} M
\ \stackrel{\eqref{eq:two_isomorphisms}}{\simeq} \AAlpha^r \otimes_{\Z[F]} M.\\[-0.4cm]
$$
\end{proof}

Let $[-,-]: \AAlpha \times  {\overline{\Alpha}_+} \to {\overline{\Alpha}_+}$ denote the restriction of the Lie bracket of $\ol\A_+$.
We regard ${\overline{\Alpha}_+}$ as a $\Z[F]$-module.
Then we have  
$\overline{\Alpha}_{+}/I_F\overline{\Alpha}_{+}
\simeq(\ol\A_+)_F$, the $F$-coinvariants of $\ol\A_+$.
Define a $\Z$-linear map
$$
\beta: \AAlpha^r \otimes_{\Z[F]}  {\overline{\Alpha}_+} 
\to  \overline{\Alpha}_{+}/I_F\overline{\Alpha}_{+}
$$
by composing $[-,-]$ with the projection
${\overline{\Alpha}_+} \to \overline{\Alpha}_{+}/I_F\overline{\Alpha}_{+}$,
and set
\begin{equation} \label{eq:D^0_+}
D^0_+ = \big\{ c \in Z^1(F,\overline{\Alpha}_+) : \beta \vartheta(c)=0\big\}.    
\end{equation}

The following proposition means that every element of $\Der^\zeta_+(\overline{\A}_\bu)$ is determined by its degree $0$ part.

\begin{proposition} \label{prop:iso_D0}
We have an isomorphism
\begin{equation} \label{eq:iso_D0}
\Der_+^\zeta(\overline{\Alpha}_\bu) \stackrel{\simeq}{\longrightarrow} D^0_+,
\ (d_i)_{i\geq 0} \longmapsto d_0,
\end{equation}
\end{proposition}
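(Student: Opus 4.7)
My plan is to pass via the truncation isomorphism \eqref{eq:DD} and show that, for each $m\geq 1$, restriction to $d_0$ is a bijection between pairs $(d_0,d_1)\in D_m(\overline{\Alpha}_\bu)$ satisfying $d_1([\zeta]_1)=0$ and elements of $D^0_m$. Since $\AAlpha$ is $\Z[F]$-free on $\{a_1,\ldots,a_g\}$ by Lemma \ref{lem:freeness}, any such $d_1$ is encoded by the tuple $(u_i):=(d_1(a_i))\in\overline{\Alpha}_{m+1}^g$ via the formula $d_1({}^fa_i)=[d_0(f),{}^fa_i]+{}^fu_i$ dictated by \eqref{eq:equiv1}; conversely, any tuple $(u_i)$ determines a $\Z$-linear $d_1$ compatible with the cocycle $d_0$, consistency being a direct consequence of the cocycle identity.

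Next, I will compute $d_1([\zeta]_1)$ directly. Using $[\zeta]_1=\sum_i(1-x_i^{-1})a_i$, the cocycle identity $d_0(x_i^{-1})=-x_i^{-1}d_0(x_i)$, and the $F$-equivariance of the bracket, one obtains
\begin{equation*}
d_1([\zeta]_1)=\sum_i(1-x_i^{-1})u_i+\sum_ix_i^{-1}[d_0(x_i),a_i].
\end{equation*}
Hence the vanishing $d_1([\zeta]_1)=0$ is equivalent to the equation
\begin{equation*}
\sum_i(1-x_i^{-1})u_i=-\sum_ix_i^{-1}[d_0(x_i),a_i]\qquad(\ast)
\end{equation*}
in the unknowns $(u_i)$. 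By Lemma \ref{lem:free_Lie}, every homogeneous component of $\overline{\Alpha}_+=\Lie(\AAlpha)$ is a free $\Z[F]$-module, so Lemma \ref{r54} (applied to the basis $\{x_i^{-1}\}_i$ of $F$) shows that $(u_i)\mapsto\sum_i(1-x_i^{-1})u_i$ is injective with image exactly $I_F\cdot\overline{\Alpha}_{m+1}$. Therefore $(\ast)$ admits at most one solution $(u_i)$, and admits one iff $\sum_i[d_0(x_i),a_i]\equiv 0\pmod{I_F\overline{\Alpha}_{m+1}}$; the uniqueness is already enough to yield injectivity of \eqref{eq:iso_D0}.

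The final step is to identify this obstruction with $\beta\vartheta(d_0)=0$. Unwinding the definition of $\vartheta$, the canonical isomorphism $Z^1(F,\overline{\Alpha}_+)\simeq I_F^*\otimes_{\Z[F]}\overline{\Alpha}_+$ sends $d_0$ to $\sum_i(x_i-1)^*\otimes d_0(x_i)$; via the intersection-induced isomorphism $I_F^*\simeq\AAlpha^r$ of Appendix \ref{sec:intersection}, which sends $(x_i-1)^*\mapsto a_i$ (the algebraic counterpart of the topological intersection numbers $\alpha_i\cdot\beta_j=\delta_{ij}$), this becomes $\vartheta(d_0)=\sum_i a_i\otimes d_0(x_i)$, so that $\beta\vartheta(d_0)\equiv -\sum_i[d_0(x_i),a_i]\pmod{I_F\overline{\Alpha}_+}$. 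Thus $\beta\vartheta(d_0)=0$ is exactly the solvability condition for $(\ast)$, and the image of \eqref{eq:iso_D0} is precisely $D_+^0$; combining with the injectivity already established, this gives the desired isomorphism.

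The main obstacle will be this last identification: it requires tracking the explicit form of $\vartheta$ with respect to the chosen bases, which in turn rests on the precise compatibility between the intersection pairing and the bases $\{x_i-1\}$ of $I_F$ and $\{a_i\}$ of $\AAlpha$, as established in Appendix \ref{sec:intersection}. Once this compatibility is pinned down, the proof becomes a direct assembly of the freeness lemmas with the cocycle computation above.
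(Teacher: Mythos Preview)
Your proposal is correct and follows essentially the same route as the paper: compute $d_1([\zeta]_1)$ directly from \eqref{eq:zeta_ab} and \eqref{eq:equiv1}, use the freeness of $\Lie_{m+1}(\AAlpha)$ as a $\Z[F]$-module (Lemma~\ref{lem:free_Lie}) together with Lemma~\ref{r54} to obtain uniqueness of the $(u_i)$, and identify the solvability obstruction for $(\ast)$ with the condition $\beta\vartheta(d_0)=0$.

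One sign needs correcting. By \eqref{eq:in_bases} one has $\langle x_i-1,a_j\rangle=-\delta_{ij}$, so the isomorphism $I_F^*\simeq\AAlpha^r$ of \eqref{eq:two_isomorphisms} sends $(x_i-1)^*$ to $-a_i$, not to $a_i$. Consequently $\vartheta(d_0)=-\sum_i a_i\otimes d_0(x_i)$ and $\beta\vartheta(d_0)\equiv -\sum_i[a_i,d_0(x_i)]=\sum_i[d_0(x_i),a_i]\pmod{I_F\overline{\Alpha}_+}$, the negative of what you wrote. This does not affect the argument, since the condition in play is the \emph{vanishing} of $\beta\vartheta(d_0)$; but you should fix the sign to match the conventions of Appendix~\ref{sec:intersection}, as you yourself flagged this identification as the delicate point.
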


\begin{proof}
We first verify that, for any $(d_i)_{i\geq 0} \in \Der_+(\overline{\Alpha}_\bu)$ with $d_1([\zeta])=0$ we have $\beta \vartheta(d_0)=0$. This follows from 
\begin{equation} \label{eq:fact_c}
\beta\vartheta(d_0)   \equiv  d_1([\zeta])  \mod I_F\overline{\Alpha}_{+},
\end{equation}
which we now prove.
As in \eqref{eq:zeta_ab},
let $a_i=[\alpha_i]\in \overline{\Alpha}_1 = \AAlpha$
and $x_i=[\beta_i] \in \overline{\Alpha}_0\simeq F$,
for $i\in \{1,\dots,g\}$.
By \eqref{eq:in_bases}, the isomorphism \eqref{eq:two_isomorphisms}  maps 
the dual basis  $\big((x_i-1)^*\big)_i$ of $I_F^*$ to $-1$ times the basis $(a_i)_i$ of $\AAlpha^r$.
Therefore, 
\begin{equation} \label{eq:betavartheta}
\beta\vartheta(d_0) = \beta\Big(- \sum_{i=1}^g  a_i \otimes d_0(x_i)\Big)
\equiv -  \sum_{i=1}^g   [a_i ,d_0(x_i)]\mod I_F\overline{\Alpha}_{+}.    
\end{equation}
On the other hand, we have
\begin{eqnarray}
\label{eq:1_0} d_1([\zeta]) 
& \stackrel{\eqref{eq:zeta_ab}}{=} & d_1 \big(\sum_{i=1}^g (1-x_i^{-1})\cdot a_i\big)\\
\notag &\stackrel{\eqref{eq:equiv1}}{=}&
\sum_{i=1}^g (1-x_i^{-1}) \cdot d_1(a_i) 
-  \sum_{i=1}^g [ d_0(x_i^{-1}), {x_i^{-1}} \cdot a_i]\\
\notag &=& \sum_{i=1}^g (1-x_i^{-1}) \cdot d_1(a_i) 
+  \sum_{i=1}^g x_i^{-1} [d_0(x_i) ,  a_i]
\end{eqnarray}
which immediately implies \eqref{eq:fact_c}.
 
Next, we prove the injectivity of \eqref{eq:iso_D0}.
Let $(d_i)_{i\geq 0} \in \Der_+^\zeta(\overline{\Alpha}_\bu) $ 
with $d_0=0$. We deduce from \eqref{eq:1_0} that
$$
\sum_{i=1}^g (1-x_i^{-1}) \cdot d_1(a_i) =0
$$
and, by Lemmas \ref{lem:free_Lie} and  \ref{r54}, we obtain  
$d_1(a_1)=\cdots=d_1(a_g)=0$. Therefore~$d_1$ and 
consequently $(d_i)_{i\geq 0}$ are trivial. 

Finally, we prove the surjectivity of \eqref{eq:iso_D0}. Let $c\in D_+^0$. 
It follows from \eqref{eq:betavartheta} that
$\sum_{i=1}^g   x_i^{-1}\cdot[a_i ,c(x_i)] \in I_F \overline{\A}_+$.
Hence,  for some $u_1,\dots,u_g  \in\overline{\Alpha}_{+}$ we have
\begin{equation} \label{eq:d_ci}
\sum_{i=1}^g    x_i^{-1}[a_i ,c(x_i)]= \sum_{i=1}^g (1-x_i^{-1}) \cdot u_i .   
\end{equation}
Since the abelian group $\AAlpha$ is free on the ${}^f a_i$
for all $f\in F$ and $i\in \{1,\dots,g\}$, there is a unique homomorphism
$u: \AAlpha \to \overline{\Alpha}_{+}$ such that
$$
u({}^f a_i)= {}^f u_i + \big[c(f),{}^f a_i\big].
$$
By construction, the pair $(c,u)$ belongs to $D_+(\overline{\Alpha}_\bu)$
and, setting $(d_0,d_1)=(c,u)$, it can be extended 
to $(d_i)_{i\geq 0} \in \Der_+(\overline{\Alpha}_\bu)$.
Then, combining \eqref{eq:1_0} and \eqref{eq:d_ci}, we get $d_1([\zeta])=0$. 
Therefore, there exists a $(d_i)_{i\geq 0} \in \Der_+^\zeta(\overline{\Alpha}_\bu)$
such that $d_0=c$.
\end{proof}

\begin{remark}
According to \eqref{eq:b0}, 
the Lie bracket in $D^0_+$ corresponding to the Lie bracket of derivations through
\eqref{eq:iso_D0} is given by the following formula. For any $d,e \in D^0_+$,
we define $[d,e]$ by
\begin{equation} \label{eq:bracket_D^0_+}
 [d,e](f) = d_+(e(f))-e_+(d(f))- [d(f),e(f)] \quad\text{for all } f\in F,
\end{equation}
 where $d_+$ and $e_+$ 
 are the derivations of $\Lie(\AAlpha)=\overline{\Alpha}_+$ 
 completing  $d$ and $e$, respectively,
to derivations of the extended graded Lie algebra $\overline{\Alpha}_\bu$.

 Proposition \ref{prop:inversion} below describes the map $ D^0_+ \to \Der_+^\zeta(\overline{\Alpha}_\bu)$
inverse to \eqref{eq:iso_D0}.
This  allows for a complete understanding of the  bracket \eqref{eq:bracket_D^0_+}.
 \end{remark}

\subsection{The graded Lie algebra $D^1_+$}

The group $F$ acts on the abelian group $\Hom(\AAlpha, \overline{\A}_+)$ 
by the rule 
$$
({}^fd)(a):= {}^f\big(d\big({}^{f^{-1}}a\big)\big)\quad
\text{for all } a\in \AAlpha
$$
for any  $d\in \Hom(\AAlpha, \overline{\A}_+)$ and $f\in F$.
An element $d\in \Hom(\AAlpha, \overline{\A}_+)$ 
is said to be \emph{quasi-equivariant} 
if, for all $f\in F$, we have ${}^fd-d=[v,-]$
for some $v\in \overline{\A}_+$. Set
$$
D^1_+ = \big\{ d\in \Hom(\AAlpha, \overline{\A}_+) : 
\hbox{\small $d$ is quasi-equivariant
with values in $\overline{\A}_{\geq 2}$ and } d([\zeta])=0 \big\}.
$$

The following proposition means that every element of $\Der^\zeta_+(\overline{\A}_\bu)$ is also determined by its degree $1$ part.

\begin{proposition} \label{prop:iso_D1}
We have an isomorphism
\begin{equation}
\label{eq:iso_D1}
\Der_+^\zeta(\overline{\Alpha}_\bu) \stackrel{\simeq}{\longrightarrow} D^1_+,
\ (d_i)_{i\geq 0} \longmapsto d_1    .
\end{equation}
\end{proposition}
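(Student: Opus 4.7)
The plan is to verify well-definedness, injectivity, and surjectivity of the map \eqref{eq:iso_D1}, making essential use of the truncation isomorphism \eqref{eq:DD} to reduce everything to the pair $(d_0, d_1)$, combined with the fact that $\Lie(\AAlpha)$ has trivial center in positive degree.

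First, I would verify well-definedness. If $(d_i)_{i \geq 0} \in \Der_+^\zeta(\overline{\Alpha}_\bu)$ has degree $m \geq 1$, then $d_1: \AAlpha \to \overline{\Alpha}_{m+1} \subset \overline{\Alpha}_{\geq 2}$, and rewriting \eqref{eq:equiv1} in the form $d_1({}^f a) - {}^f d_1(a) = [d_0(f), {}^f a]$ shows that ${}^f d_1 - d_1 = [-d_0(f), -]$, so $d_1$ is quasi-equivariant. Finally $d_1([\zeta]) = 0$ by the very definition of $\Der_+^\zeta$.

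The key algebraic fact behind both injectivity and surjectivity is that $\AAlpha$ is $\Z[F]$-free of rank $p$, so (given the standing assumption that either $p \geq 2$ or $p,q \geq 1$) its $\Z$-rank is at least $2$, and consequently the free Lie algebra $\Lie(\AAlpha) = \overline{\Alpha}_+$ has trivial center in positive degree. Injectivity follows at once: if $d_1 = 0$, then the unique derivation $d_+$ of $\overline{\Alpha}_+$ extending $d_1$ is zero, and \eqref{eq:equiv1} reduces to $[d_0(f), {}^f a] = 0$ for every $f \in F$ and every $a \in \AAlpha$; since the elements ${}^f a$ generate $\Lie(\AAlpha)$ as a Lie algebra, each $d_0(f)$ is central, hence zero.

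For surjectivity, given $d \in D^1_m$, extend it uniquely to a derivation $d_+$ of the free Lie algebra $\overline{\Alpha}_+$. For each $f \in F$, quasi-equivariance furnishes an element $v_f \in \overline{\Alpha}_+$ with ${}^f d - d = [v_f, -]$; comparing degrees on both sides and invoking centerlessness forces $v_f$ to be homogeneous of degree $m$, uniquely determined, so that setting $d_0(f) := -v_f \in \overline{\Alpha}_m$ yields a well-defined map. A direct computation using the relation $d({}^{fg} a) = {}^f (d({}^g a)) + [d_0(f), {}^{fg} a]$ expanded in two ways (i.e.\ acting step by step with $g$ then $f$) combined with uniqueness from centerlessness shows that $d_0(fg) = d_0(f) + {}^f d_0(g)$, so $d_0$ is a $1$-cocycle. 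The pair $(d_0, d_1)$ then satisfies \eqref{eq:equiv1} by construction and lies in $D_m(\overline{\Alpha}_\bu)$; by the isomorphism \eqref{eq:DD} it extends uniquely to a derivation $(d_i)_{i \geq 0} \in \Der_m(\overline{\Alpha}_\bu)$, and the hypothesis $d([\zeta]) = 0$ places this derivation in $\Der_m^\zeta(\overline{\Alpha}_\bu)$. By construction it maps to $d$.

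The main obstacle is the manipulation of quasi-equivariance in the surjectivity step: one has to promote the quasi-equivariance datum, which is a priori only a pointwise condition on $F$, to a coherent $1$-cocycle $d_0$ taking values in the correct degree. Centerlessness of $\Lie(\AAlpha)$ in positive degree is exactly what makes this promotion unambiguous, and is the reason the rank hypothesis on $(p,q)$ is essential.
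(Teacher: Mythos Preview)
Your proof is correct and follows essentially the same approach as the paper's own proof: well-definedness via \eqref{eq:equiv1}, injectivity and surjectivity both via centerlessness of $\Lie(\AAlpha)$ in positive degree, and reconstruction of $(d_i)_{i\geq 0}$ from $(d_0,d_1)$ via the truncation isomorphism \eqref{eq:DD}. Your version is simply more explicit about the degree argument for $v_f$ and the cocycle verification, which the paper leaves implicit.
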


\begin{proof}
For $(d_i)_{i\geq 0} \in \Der_+^\zeta(\overline{\Alpha}_\bu)$,
the quasi-equivariance of $d_1$ follows from \eqref{eq:equiv1}.

We prove the injectivity. 
Let $(d_i)_{i\geq 0} \in \Der_+^\zeta(\overline{\Alpha}_\bu) $ 
with $d_1=0$. Let $f\in F$.
By~\eqref{eq:equiv1}, $d_0(f)$ 
commutes with $\AAlpha=\overline{\A}_1$
in the free Lie algebra $\Lie(\AAlpha)=\overline{\A}_+$, hence $d_0(f)=0$.
Thus, $d_0$ and consequently $(d_i)_{i\geq 0}$ are trivial.

We now prove the surjectivity. Let $d_1\in D_+^1$.
There exists a (unique) map $d_0:F\to \overline{\A}_+$
such that $[d_0(f), -]= d_1-{}^fd_1$ for any $f\in F$,
and it is easily checked that $d_0$ is a $1$-cocycle.
By construction, we have $(d_0,d_1)\in D_+^\zeta(\overline{\A}_\bu)$, which
%Then $(d_0,d_1):=(c,d)$ 
extends to a derivation
$(d_i)_{i\geq 0} \in \Der_+^\zeta(\overline{\A}_\bu) $.
\end{proof} 

\begin{remark}
According to \eqref{eq:b1}, 
the Lie bracket in $D^1_+$ corresponding to the Lie bracket of derivations through
\eqref{eq:iso_D1} is given by the following formula. For any $d,e \in D^1_+$,
define $[d,e]$ by
$$
 [d,e](a) = d_+(e(a)) -e_+(d(a))
\quad \text{for all } a\in \AAlpha,$$
 where $d_+$ and $e_+$ are the derivations of 
 $\Lie(\AAlpha)=\overline{\Alpha}_+$ extending $d$ and $e$ respectively.
 \end{remark}

The following proposition describes the isomorphism $D^0_+\simeq D^1_+$
that is obtained by combining Propositions \ref{prop:iso_D0}
and \ref{prop:iso_D1}.

\begin{proposition} \label{prop:inversion}
The $1$-cocycle $c:F \to \Lie(\AAlpha)$ in $D^0_+$ 
corresponding to a  homomorphism $d:\AAlpha \to \Lie(\AAlpha)$ in $D^1_+$
is given by
\begin{equation} \label{eq:c_formula}
\quad [c(f),-]=d -{}^fd\quad
\text{for all }f\in F.
\end{equation}
Conversely, the homomorphism $d:\AAlpha \to \Lie(\AAlpha)$ in $D^1_+$
corresponding to a $1$-cocycle $c:F \to \Lie(\AAlpha)$ in $D^0_+$ is given by 
\begin{equation} \label{eq:d_formula}
d(a) = - \Big[\overline{\Psi\big(\vartheta(c)^\ell,a\big)^\ell} 
\cdot \vartheta(c)^r, \Psi\big(\vartheta(c)^\ell,a\big)^r \Big]\quad
\text{for all }a\in \AAlpha, 
\end{equation}
where $\vartheta(c)^\ell \otimes \vartheta(c)^r 
\in \AAlpha \otimes \Lie(\AAlpha)$
denotes a lift of $\vartheta(c) 
\in \AAlpha^r \otimes_{\Z[F]} \Lie(\AAlpha)$
such that 
$[\vartheta(c)^\ell,\vartheta(c)^r]=0 \in \Lie(\AAlpha)$.
\end{proposition}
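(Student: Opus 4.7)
The first identity \eqref{eq:c_formula} is obtained by rearranging the compatibility condition \eqref{eq:equiv1} between $d_0$ and $d_1$. Substituting $a \mapsto {}^{f^{-1}} a$ in \eqref{eq:equiv1} and using the convention $({}^f d)(a) = {}^f(d({}^{f^{-1}} a))$, one finds $d(a) - ({}^f d)(a) = [c(f), a]$, i.e., $[c(f), -] = d - {}^f d$. No appeal to the intersection operations is needed here.

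The substantive content lies in the converse formula \eqref{eq:d_formula}. Uniqueness is already clear: by Propositions \ref{prop:iso_D0} and \ref{prop:iso_D1}, there is exactly one $d \in D^1_+$ associated to a given $c \in D^0_+$. My plan is to define $\tilde d(a)$ by the right-hand side of \eqref{eq:d_formula} and then verify that $\tilde d \in D^1_+$ satisfies $\tilde d - {}^f \tilde d = [c(f), -]$ for every $f \in F$; this will force $\tilde d = d$. Concretely, I would break the verification into three steps: (i) independence of the chosen lift $\vartheta(c)^\ell \otimes \vartheta(c)^r$ subject to $[\vartheta(c)^\ell, \vartheta(c)^r] = 0$; (ii) the quasi-equivariance relation above; and (iii) the boundary condition $\tilde d([\zeta]) = 0$. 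For (i), any two admissible lifts differ by $\ZF$-balanced relations, and the $\ZF$-bilinearity of $\Psi$ combined with the Leibniz rule (exploiting the bracket-vanishing constraint) should eliminate the ambiguity. For (iii), substituting $a = [\zeta] = \sum_i (1 - x_i^{-1}) \cdot a_i$ into the formula and applying a boundary-type identity for $\Psi$ should make the sum telescope to zero.

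The technical heart is step (ii), and this is where I expect the main obstacle. Computing $({}^f \tilde d)(a)$ requires commuting $f$ past the tensor $\Psi(\vartheta(c)^\ell, {}^{f^{-1}} a)$; the residual terms must produce exactly $[c(f), a]$, a correction that should be extracted from the pairing $\langle -,- \rangle$ of \eqref{eq:two_isomorphisms} underlying $\vartheta$. This match-up depends delicately on the axioms of $\Psi$ stated in Appendix~\ref{sec:intersection}, ultimately inherited from the quasi-Poisson structure of the homotopy intersection form $\eta$ on $\Sigma$. A possibly more elementary alternative is to check the formula only on a $\Z[F]$-basis element $a_j$ of $\AAlpha$ and compare the output with the element $u_j \in \overline{\Alpha}_+$ defined implicitly by \eqref{eq:d_ci}; this would reduce the proposition to a single identity involving $\sum_i \Psi(a_i, a_j)$ and the unique solution of \eqref{eq:d_ci}, after which quasi-equivariance would propagate $\tilde d$ to all of $\AAlpha$.
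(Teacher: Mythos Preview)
Your approach is essentially the one taken in the paper: derive \eqref{eq:c_formula} from the compatibility \eqref{eq:equiv1}, then define $\tilde d$ by the right-hand side of \eqref{eq:d_formula} and check it lies in $D^1_+$ with the correct cocycle. Two small corrections of emphasis. First, what needs to be addressed about the lift is \emph{existence}, not just independence: the paper shows that any lift $\sum_i u_i\otimes v_i$ of $\vartheta(c)$ can be modified to one with vanishing bracket using $\beta\vartheta(c)=0$ (independence then follows a posteriori from injectivity in Proposition~\ref{prop:iso_D0}). Second, step~(ii) is much lighter than you anticipate: a single application of \eqref{eq:conj_right} to $\Psi(\vartheta(c)^\ell,{}^f a)$ immediately yields $\tilde d({}^f a)={}^f\tilde d(a)+[c(f),{}^f a]$, once one recognizes that $\langle f,\vartheta(c)^\ell\rangle\cdot\vartheta(c)^r=c(f)$ by the very definition of $\vartheta$. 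For step~(iii), the relevant ``boundary-type identity'' is exactly \eqref{eq:Psi_zeta}, which gives $\tilde d([\zeta])=-[\vartheta(c)^r,\vartheta(c)^\ell]=0$ by the constraint on the lift; no telescoping is needed. Your alternative basis-by-basis route via \eqref{eq:d_ci} would also work but is unnecessary.
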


In the above proposition,
$\Psi: \AAlpha \otimes \AAlpha \to \Z[F] \otimes \AAlpha$
is the intersection operation introduced in~\S\ref{subsec:Psi}.
Recall also that, with our conventions,  the expansion of a tensor product
$\Psi(a,b)\in \Z[F] \otimes \AAlpha$ 
is denoted by $\Psi(a,b)^\ell \otimes \Psi(a,b)^r$.

\begin{proof}[Proof of Proposition \ref{prop:inversion}]
The first statement follows immediately 
from the proof of Proposition \ref{prop:iso_D1}.

To prove the second statement, let $c\in D^0_+$ 
and let us first check that $\vartheta(c) 
\in \AAlpha^r \otimes_{\Z[F]} \Lie(\AAlpha)$
does have a lift to $\AAlpha \otimes \Lie(\AAlpha)$
with trivial Lie bracket. 
Choose any lift
$\sum_i u_i \otimes v_i \in \AAlpha \otimes \Lie(\AAlpha)$
of $\vartheta(c)$. Since $\beta\vartheta(c) =0$, 
there exist some $f_j \in F$ and $w_j \in \Lie(\AAlpha)$ such that
$$
\sum_{i} [u_i, v_i] = \sum_j (f_j-1) \cdot w_j.
$$
For every $j$, we can find some $s_{jk}\in \AAlpha$ and $t_{jk} \in \Lie(\AAlpha)$
such that $w_j =  \sum_k [s_{jk},t_{jk}]$. Hence
$$
\sum_i u_i \otimes v_i - \sum_{j,k} (f_j\cdot s_{jk})\otimes (f_j \cdot t_{jk})
+ \sum_{j,k}  s_{jk}\otimes   t_{jk}
$$
has trivial Lie bracket and, clearly, it is also a lift of $\vartheta(c)$.

Let now $d:\AAlpha \to \Lie(\AAlpha)$
be the homomorphism defined by~\eqref{eq:d_formula}. 
The condition on the lift 
$\vartheta(c)^\ell \otimes \vartheta(c)^r$ of $\vartheta(c)$ 
implies that
$$
d([\zeta]) \stackrel{\eqref{eq:Psi_zeta}}{=} - [\vartheta(c)^r, \vartheta(c)^\ell] =0.
$$
Besides, for any $f\in F$ and $a\in \AAlpha$, we have
\begin{eqnarray*}
 d({}^f a)&=& - \Big[\overline{\Psi\big(\vartheta(c)^\ell,{}^fa\big)^\ell} 
\cdot \vartheta(c)^r, \Psi\big(\vartheta(c)^\ell,{}^fa\big)^r \Big] \\  
&\stackrel{\eqref{eq:conj_right}}{=} &
 - \Big[\overline{\Psi\big(\vartheta(c)^\ell,a\big)^\ell f^{-1}} 
\cdot \vartheta(c)^r, f\cdot \Psi\big(\vartheta(c)^\ell,a\big)^r \Big]  + \big[{\langle f,\vartheta(c)^\ell  \rangle}  
\cdot \vartheta(c)^r,  {}^f a \big]\\
&=& {}^f d(a) + \big[ c(f),{}^f a \big].
\end{eqnarray*}
Thus, we have $d\in D^1_+$, and the corresponding $1$-cocycle is $c$.
\end{proof}

\begin{remark}
It follows from the injectivity of \eqref{eq:iso_D0} 
that the right hand side of \eqref{eq:d_formula}
does not depend on the choice of the lift of
$\vartheta(c) 
\in \AAlpha^r \otimes_{\Z[F]} \Lie(\AAlpha)$
to $\AAlpha^r \otimes \Lie(\AAlpha)$. 
This can also be checked directly using \eqref{eq:conj_left} and Lemma \ref{r54}.
\end{remark}

\section{The Lie algebra of oriented trees with beads} \label{sec:diagrams}

In this section, we define the Lie algebra of oriented trees with beads, and give a diagrammatic 
description of the Lie algebra 
of special derivations~
$D^0_+\simeq \Der_+^\zeta(\overline{\Alpha}_\bu) \simeq D^1_+$
with rational coefficients.

\subsection{Trees with beads}

We start with a general study of certain spaces of ``trees'', which are ``tree-shaped Jacobi diagrams with beads'', see Remark \ref{Jd}.

By a \emph{tree} we mean a finite simply-connected graph 
with only univalent vertices (called \emph{leaves}) 
and trivalent vertices  (called \emph{nodes}).
A tree is \emph{edge-oriented} if each
 edge is {oriented},  \emph{node-oriented} if
a cyclic order of the three edges incident at each vertex is specified, and  \emph{oriented} if it is both edge-oriented and node-oriented.
In figures, the edge-orientations are shown with little arrows, 
and we agree that node-orientations are always given 
by the counter-clockwise direction.
% The \emph{degree} of a tree is the (number of nodes)+1.

Let $V$ and $H$ be sets.
A tree $T$ is  said to be \emph{on} $V$ 
if all its leaves are colored by elements of $V$, 
and it is said to have \emph{$H$-beads} 
if some of its edges are colored by elements of $H$.
In figures, the edges that are colored by $H$
are decorated with beads;
there may be multiple beads on a single edge.

We define the \emph{degree} of a tree $T$ to be $1$ plus the number of nodes of $T$, which equals the number of leaves of $T$ minus $1$.

\begin{example}
Here is an oriented tree on $V$ with $H$-beads of degree $4$:\\[0.1cm]
$$
\qquad \quad
\labellist
\small \hair 2pt
 \pinlabel {$a$} [ru] at -15 -5
  \pinlabel {$b$} [rd] at -15 150
  \pinlabel {$c$} [d] at 190 200
  \pinlabel {$d$} [ld] at 380 155
  \pinlabel {$e$} [lt] at 360 0
   \pinlabel {$x$} [t] at 145 65
    \pinlabel {$y$} [lt] at 335 120
\endlabellist
\centering
\includegraphics[scale=0.24]{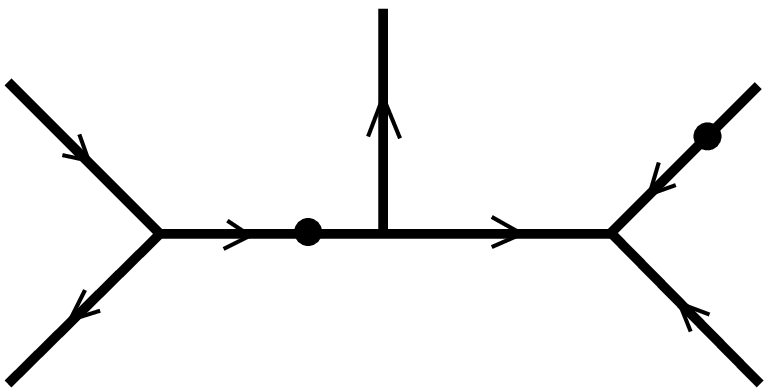}
\qquad \hbox{(with $a,b,c,d,e\in V$, $x,y\in H$)}
$$ 
\end{example}

In the following, let 
$H=(H,\Delta, \varepsilon, S)$ 
be a cocommutative Hopf $\Q$-algebra
and $V$ a left $H$-module.
Consider the $\Q$-vector space 
$$
\mathcal{D}(V,H) :=
\frac{\Q \cdot \left\{
\hbox{oriented trees on $V$ with $H$-beads}\right\}}{\hbox{AS, IHX, multilinearity, Hopf, bead-out}}
$$
where the relations are defined as follows.
\begin{itemize}
\item The \emph{AS} and \emph{IHX} relations,
which appear in the theory of finite-type invariants
(see, for instance, \cite[\S 6.1]{Ohtsuki}),
take place in a neighborhood of a node 
and an internal edge, respectively:
$$
\labellist
\small\hair 2pt
 \pinlabel {$=-$} at 110 45
 \pinlabel {$-$}  at 483 45
  \pinlabel {$,$} at 300 47
 \pinlabel {$+$} at 607 47
 \pinlabel {$=0$}  at 725 50
\endlabellist
\centering
\includegraphics[scale=0.3]{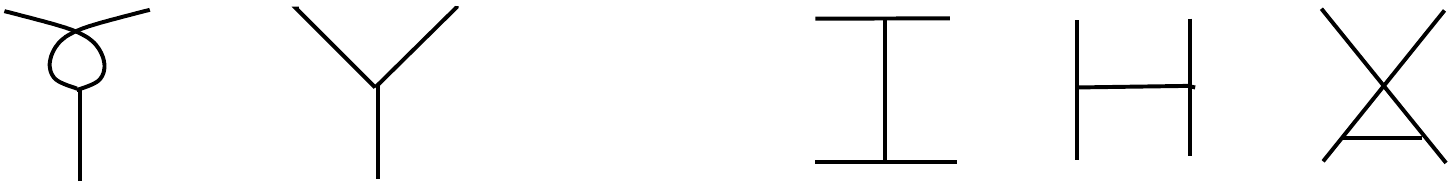}
$$
(No beads should appear here, and the edge-orientations are arbitrary;
the trivalent vertices are nodes of the trees, 
whereas the 4-valent vertices are ``fake'', being
caused by the planar presentation of trees.)
\item The \emph{multilinearity} relations require $\Q$-multilinearity
for the colors at the leaves and edges.
\item The \emph{Hopf} relations involve the Hopf algebra
structure of $H$, and they take place in a neighborhood
of an edge or a node:
$$
\labellist
\small\hair 2pt
 \pinlabel {$1$} [t] at 65 192
  \pinlabel {$=$}  at 195 196
   \pinlabel {$=$}  at 195 50
      \pinlabel {=} at 820 196
   \pinlabel {=}  at 820 50
    \pinlabel {,}  at 500 194
\pinlabel {,}  at 500 50
   \pinlabel {,}  at 1100 194
 \pinlabel {$x$} [t] at 35 47
 \pinlabel {$y$} [t] at 72 48
 \pinlabel {$xy$} [t] at 320 47
 \pinlabel {$x$} [t] at 681 194
 \pinlabel {$\overline{x}$} [t] at 943 194
 \pinlabel {$x$} [t] at 655 46 
 \pinlabel {$x'$} [br] at 959 76
 \pinlabel {$x''$} [rt] at 955 28
\endlabellist
\centering
\includegraphics[scale=0.25]{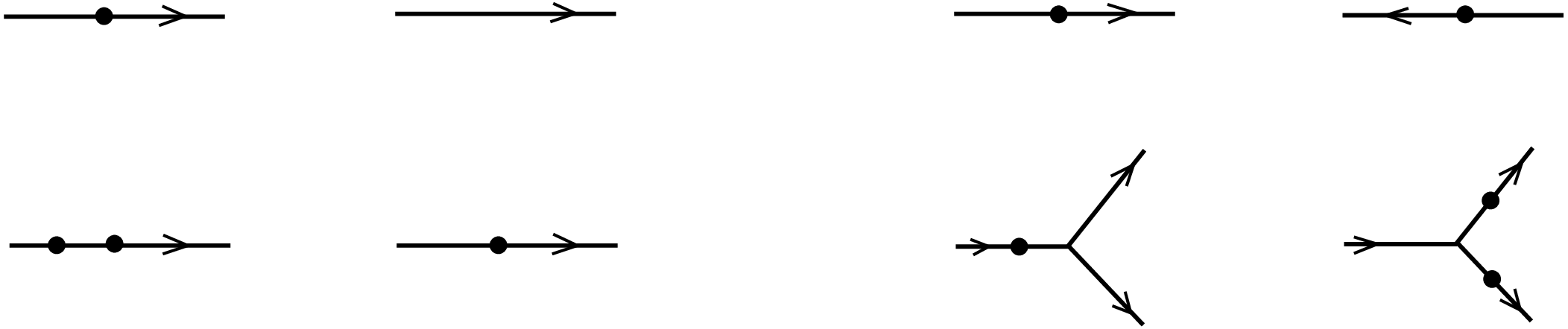}
$$
(Here the antipode $S(x)$ of an  $x \in  H$ is denoted by $\overline{x}$
and the coproduct $\Delta(x)=x' \otimes x''$ is expanded
using Sweedler's notation.)
\item The \emph{bead-out} relation takes
place in a neighborhood of a leaf, and it involves
the $H$-action on $V$:
$$
\labellist
\small\hair 2pt
 \pinlabel {$x$} [r] at 1 50
 \pinlabel {$v$} [t] at 8 3
 \pinlabel {$=$} at 144 93
 \pinlabel {${}^x v$} [t] at 240 4
\endlabellist
\centering
\includegraphics[scale=0.2]{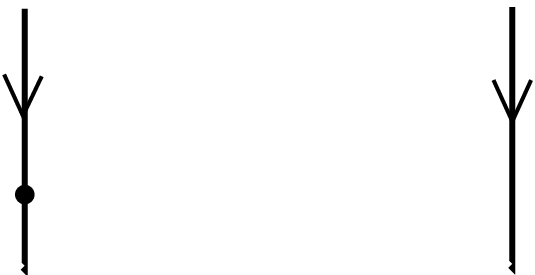}
$$
\end{itemize}
We obtain a graded $\Q$-vector space
$$
\mathcal{D}(V,H) = \bigoplus_{k=1}^\infty \mathcal{D}_k(V,H)
$$
where the degree $k$ part of $\mathcal{D}(V,H)$ is spanned by the trees of degree $k$.

\begin{remark}\label{Jd}
 Oriented trees with beads
 are an instance of ``Jacobi diagrams with beads'',
 which  appear 
 in  the theory of finite-type invariants,
 see e.g$.$ \cite{GL01}, \cite{HM21}. 
\end{remark}

We will define a $\Q$-vector space $\mathcal{K}(V,H)$, which is isomorphic to $\mathcal D(V,H)$ by Proposition \ref{prop:eta} below.
The (left) $H$-action on $V$ extends to that on $\Lie(V)$, the free Lie algebra  on $V$, by the inductive rule
$$x\cdot[u,v]=[x'\cdot u,x''\cdot v]\quad(x\in H,\;u,v\in\Lie(V)).$$
Let $V^r$ denote $V$ 
with the right $H$-action 
defined by $v^h:= {}^{\overline{h}} v$
($v\in V$, $h\in H$).
Since the Lie bracket $[-,-]:V\ot_\Q\Lie(V)\to\Lie(V)$ is a $H$-module map, it induces a $\Q$-linear map
$$
\beta: V^r \otimes_{H} \Lie(V) \simeq \big(V\ot_\Q\Lie(V)\big)_H
\longrightarrow 
\Lie(V)_H
% \simeq \Lie(V)\big/\ker(\varepsilon)\cdot \Lie(V),
$$
where $W_H= W\big/\big(\ker(\varepsilon)\cdot W\big)$ 
denotes the space of $H$-coinvariants of $W$ for any $H$-module $W$.
Then, we define the $\Q$-vector space
$$
\mathcal{K}(V,H) := \ker(\beta) \ \subset \  V^r \otimes_{H}\Lie(V),
$$
which is graded
with  degree $k$ part
$\mathcal K_k(V,H)=\mathcal K(V,H)\cap (V^r\ot_H \Lie_k(V)).$

\nc\D{\mathcal D}
\nc\DVH{\D(V,H)}
Let $D$ be a tree in $\DVH$ of degree $k$, and $\ell$ a leaf of $D$.
Let $ \col(\ell) \in V$ denote the color of $\ell$. 
We reorient the tree $D$ (if necessary) by using the second Hopf relation
to have all edges of $D$ oriented \emph{outwards} $\ell$,
and we then let $\word(\ell) \in \Lie_{k}(V)$ 
denote the Lie word that is encoded  by the tree $D$ rooted at $\ell$,
where each bead  colored by an $x\in H$ is interpreted 
as a (left) action of $x$ on the outer subtree
and each node is interpreted as a Lie bracket
according to the following inductive rule:
$$
\begin{array}{c} \labellist
\small\hair 2pt
\pinlabel {root} [r] at 0 110
\pinlabel {left} [r] at 350 190
\pinlabel {right} [r] at 360 40
\endlabellist
\centering
\includegraphics[scale=0.24]{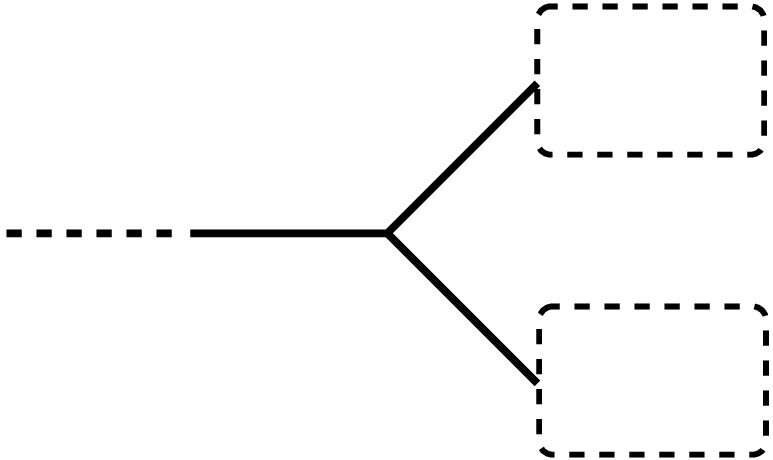}
\end{array}
\leadsto \big[\operatorname{left}, \operatorname{right}\!\big]
$$
Then set
\begin{equation}   \label{eq:eta_iso}
\eta(D) := \sum_{\ell:\operatorname{leaf} }   \col(\ell)  \otimes \word(\ell)
\ \in V^r \otimes_{H} \Lie_{k}(V).
\end{equation}

\begin{example} For instance, we have
\begin{eqnarray*}
\eta\Bigg(\ \
\begin{array}{c}\labellist
\small\hair 2pt
\pinlabel {${a}$} [r] at 2 150
 \pinlabel {${d}$} [r] at 2 5
\pinlabel {${b}$} [l] at 293 149
\pinlabel {${c}$} [l] at 294 2
\pinlabel {$x$} [b] at 53 111
\pinlabel {$y$} [b] at 153 85
\pinlabel {$z$} [tr] at 255 48
\endlabellist
\centering
\includegraphics[scale=0.24]{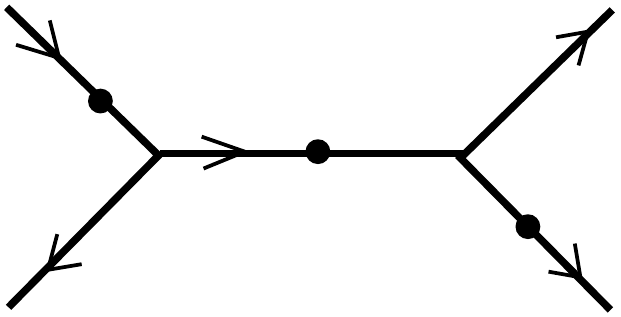}\end{array}\ \ \Bigg) &= &
a  \otimes {}^x \big[{}^y[{b},{}^z{c}],{d}\big]
+ b\otimes  \big[{}^z{c} ,{}^{\overline{y}}[ {d}, {}^{\overline{x}}{a}] \big]\\
&& 
+ c\otimes {}^{\overline{z}}\big[{}^{\overline{y}}[{d},{}^{\overline x}a],{b}\big] 
+ d \otimes \big[{}^{\overline{x}}{a},{}^y[b,{}^z{c}]\big].
\end{eqnarray*}
\end{example}

\begin{proposition} \label{prop:eta}
Assume that $\operatorname{Tor}_1^H\big(\ker(\varepsilon),\Lie(V)\big)=0$.
Then \eqref{eq:eta_iso} defines an  isomorphism  
$\eta:\mathcal{D}(V,H)  \to \mathcal{K}(V,H)$
of  graded $\Q$-vector spaces.
\end{proposition}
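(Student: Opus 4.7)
The approach is to construct an inverse $\zeta: \mathcal{K}(V,H) \to \mathcal{D}(V,H)$ and verify that $\zeta$ and $\eta$ are mutually inverse. The proof splits into three parts: well-definedness of $\eta$, the image of $\eta$ lying in $\ker(\beta)$, and the construction and well-definedness of $\zeta$---the last being the main obstacle.

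Well-definedness of $\eta$ amounts to checking each defining relation of $\mathcal{D}(V,H)$. The AS relation corresponds to antisymmetry $[u,v]=-[v,u]$ of the Lie bracket applied at each leaf-rooting; IHX corresponds to the Jacobi identity in $\Lie(V)$; multilinearity is immediate. The Hopf relations encode the $H$-action extended to $\Lie(V)$: the multiplication Hopf relation on an edge reflects that this is a module action, the coproduct relation at a node reflects the derivation property $x\cdot[u,v]=[x'\cdot u,\,x''\cdot v]$ built into the extension, and the antipode/reversal relation reflects the definition $v^h={}^{\bar h}v$ of the right $H$-action on $V^r$, which is exactly what makes the tensor product over $H$ respect the reversal of edge orientation. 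The bead-out relation follows from the definition of the $H$-action on $V$ itself. Next, to show $\eta(T)\in\ker(\beta)$, one verifies that $\sum_\ell [\col(\ell),\word(\ell)]\equiv 0$ in $\Lie(V)_H$, by induction on the degree of $T$: at any internal node with three incident subtrees, the contributions reorganize via the inductive hypothesis into a sum of three iterated commutators that cancel by the Jacobi identity.

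To construct $\zeta$, use that $\Lie(V)$ is spanned as a $\Q$-vector space by iterated Lie brackets in elements of $V$ with interspersed $H$-actions, each such expression being encoded by a rooted oriented tree with beads. To $v\otimes w\in V\otimes_\Q\Lie(V)$ we associate the unrooted tree obtained by attaching a leaf colored by $v$ at the root of a tree presenting $w$. The Hopf and bead-out relations in $\mathcal{D}(V,H)$ ensure that this $\Q$-linear map factors through $V^r\otimes_H\Lie(V)$, and its restriction to $\ker(\beta)=\mathcal{K}(V,H)$ is our candidate $\zeta$. Then $\eta\circ\zeta=\id$ is immediate from the construction (attaching a root and then rerooting at each leaf reproduces the defining formula), and $\zeta\circ\eta=\id$ holds on tree generators because summing the contributions over all leaves and then reconstructing at each root yields the original tree modulo AS and IHX.

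The main obstacle is the well-definedness of $\zeta$: different rooted presentations of the same element of $\mathcal{K}(V,H)$ must produce the same class in $\mathcal{D}(V,H)$. The potential ambiguities are (a) the choice of Lie monomial presentation of $w$, controlled by AS and IHX; (b) the $H$-equivariance relations in the tensor product, controlled by Hopf and bead-out; and (c) the vanishing witnessed by membership in $\ker(\beta)$. This last point is where the hypothesis $\operatorname{Tor}_1^H(\ker(\varepsilon),\Lie(V))=0$ enters: tensoring the short exact sequence $0\to\ker(\varepsilon)\to H\to\Q\to 0$ over $H$ with $\Lie(V)$ yields a long exact Tor sequence controlling the interaction between $V^r\otimes_H\Lie(V)$ and the coinvariants $\Lie(V)_H$, and the vanishing of $\operatorname{Tor}_1^H(\ker(\varepsilon),\Lie(V))$ ensures that every relation in $V^r\otimes_H\Lie(V)$ witnessing membership in $\ker(\beta)$ is already accounted for by a combination of AS, IHX, Hopf, and bead-out relations in $\mathcal{D}(V,H)$, so that $\zeta$ descends unambiguously from the space of rooted presentations.
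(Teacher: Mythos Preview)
Your construction of $\zeta$ and the verification of the inverse identities contain a genuine gap. The claim that $\zeta\circ\eta=\id$ is incorrect: for a tree $T$ of degree $k$ (hence with $k+1$ leaves), applying $\zeta$ to $\eta(T)=\sum_\ell \col(\ell)\otimes\word(\ell)$ yields $\sum_\ell T = (k+1)\,T$, since rooting at any leaf $\ell$ and then reattaching a leaf colored $\col(\ell)$ recovers the \emph{same} unrooted tree $T$ every time. Likewise, $\eta\circ\zeta$ applied to a single tensor $v\otimes w$ returns $v\otimes w$ \emph{plus} the contributions from all the other leaves of the resulting tree, and these do not vanish in general. Normalizing $\zeta$ by $\tfrac{1}{k+1}$ repairs one direction and shows that $\eta$ is injective, but the surjectivity of $\eta$ onto $\mathcal{K}(V,H)$---equivalently, the vanishing of $\ker(\zeta)\cap\mathcal{K}(V,H)$---remains unproved. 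Your appeal to the Tor hypothesis at the end is too vague to close this; you have not explained which short exact sequence the snake lemma is applied to, nor why the resulting identification forces the relations in $\mathcal{K}(V,H)$ to be generated by AS, IHX, Hopf and bead-out.

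The paper's proof avoids constructing an explicit inverse. It invokes the known classical isomorphism $\eta:\mathcal{D}(V)\simeq\mathcal{K}(V)$ in the bead-free case, observes that it is $H$-equivariant and hence descends to coinvariants as $\eta_H:\mathcal{D}(V)_H\simeq\mathcal{K}(V)_H$, and then uses the snake lemma on the short exact sequence $0\to\mathcal{K}(V)\to V\otimes_\Q\Lie(V)\to\Lie(V)\to 0$ together with the hypothesis $\operatorname{Tor}_1^H(\ker(\varepsilon),\Lie(V))=0$ to identify $\mathcal{K}(V)_H$ with $\mathcal{K}(V,H)$. Finally, one checks that the natural surjection $u:\mathcal{D}(V)_H\to\mathcal{D}(V,H)$ satisfies $\eta\circ u=\eta_H$; since $\eta_H$ is an isomorphism and $u$ is surjective, both $u$ and $\eta$ are isomorphisms. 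This is where the Tor hypothesis enters concretely: it guarantees that the kernel of $\ker(\varepsilon)\cdot(V\otimes_\Q\Lie(V))\to\ker(\varepsilon)\cdot\Lie(V)$ is exactly $\ker(\varepsilon)\cdot\mathcal{K}(V)$, so that passing to $H$-coinvariants commutes with taking the kernel of the bracket.
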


\begin{remark}
The $\Q$-vector space
$\operatorname{Tor}_1^H\big(\ker(\varepsilon),\Lie(V)\big)
\simeq H_2\big(H;\Lie(V)\big)$
vanishes if $V$ is a projective $H$-module
or if we have $H=\Q[F]$ with $F$ a free group.
\end{remark}

\begin{proof}[Proof of Proposition \ref{prop:eta}]
Consider the following graded $\Q$-vector space, which depends only on $V$:
$$
\mathcal{D}(V) :=
\frac{\Q \cdot \left\{
\hbox{node-oriented trees on $V$}\right\}}{\hbox{AS, IHX, multilinearity}}.
$$
Every generator $D$ of $\mathcal{D}(V)$ of degree $k$
is transformed to an element 
$\eta(D)\in {V \otimes_{\Q} \Lie_{k}(V)}$
using the same formula as  \eqref{eq:eta_iso}.
 It is well known 
 that one defines in this way
a $\Q$-linear isomorphism 
\begin{equation} \label{eq:eta_first}
\eta:\mathcal{D}(V)  \longrightarrow 
\mathcal{K}(V)
\quad
\hbox{where }
\mathcal{K}(V) 
:= \ker\big([-,-]: V \otimes_\Q \Lie(V) \to \Lie(V) \big),
\end{equation}
see e.g$.$ \cite{HP}.

The (left) action of $H$ on $V$ extends to that on
$\mathcal{D}(V)$ using the coproduct of $H$.
Consider the space of $H$-coinvariants
\begin{equation} \label{eq:with_coinvariants}
\mathcal{D}(V)_H = 
{\mathcal{D}(V)}\,/\,{\ker(\varepsilon)\cdot \mathcal{D}(V)}.
\end{equation}
Since  \eqref{eq:eta_first} is a $H$-module isomorphism,
it induces a linear isomorphism 
$\eta_H:\mathcal{D}(V)_H  \to \mathcal{K}(V)_H$.
By the snake lemma we easily see  that
the kernel of the canonical map
$\mathcal{K}(V) \to \mathcal{K}(V,H)$
is the kernel of 
$$
\ker(\varepsilon)\otimes_H [-,-]: 
\ker(\varepsilon)\cdot \big(V \otimes_\Q\Lie(V)\big)
\lto \ker(\varepsilon)\cdot  \Lie(V) 
$$
and, by the flatness assumption, this is $ \ker(\varepsilon)\cdot \mathcal{K}(V)$.
Therefore, we have a canonical isomorphism 
$\mathcal{K}(V)_H \simeq \mathcal{K}(V,H)$,
so that we can view $\eta_H$
as an isomorphism between $\mathcal{D}(V)_H$ and $\mathcal{K}(V,H)$. 

We have a natural $\Q$-linear map  $\mathcal{D}(V) \to \mathcal{D}(V,H)$.
The Hopf and bead-out relations in $\mathcal{D}(V,H)$ 
imply that this map is surjective and vanishes on $\ker(\varepsilon)\cdot\mathcal{D}(V)$.
Therefore, it induces a surjective $\Q$-linear map
$u:\mathcal{D}(V)_H \to \mathcal{D}(V,H)$.

It is easily checked that  \eqref{eq:eta_iso}
defines a map $\eta:\mathcal{D}(V,H) \to V^r \otimes_{H} \Lie(V)$,
and, clearly, we have $\eta \circ u = \eta_H$.
Since $u$ is surjective and $\eta_H$ is an isomorphism,
we conclude that $\eta$ is an isomorphism onto
the subspace $\mathcal{K}(V,H)$ of $V^r \otimes_{H} \Lie(V)$.
\end{proof}

% \begin{remark}
% The $\Q$-vector spaces $\mathcal D(V,H)$ and $\mathcal K(V,H)$,
% as well as the map $\eta:\mathcal D(V,H)\to\mathcal K(V,H)$ given by \eqref{eq:eta_iso}, can be defined over $\Z$.  
% \end{remark}

\subsection{Diagrammatic description of $D^0_+ \simeq D^1_+$
with rational coefficients}

We now restrict to the case where $H:=\Q[F]$ 
and $V:= \AAlpha^\Q = \AAlpha\otimes \Q$.
Thus, we consider the space
$$
\mathcal{D} := \mathcal{D}(\AAlpha^\Q, \Q[F])
$$
of trees on $\AAlpha^\Q$ with $\Q[F]$-beads.
Note that $ \mathcal{K}(\AAlpha^\Q, \Q[F])$ is isomorphic
to $D^0_+ \otimes \Q$ via the identification $\vartheta^{-1}$ of 
Lemma \ref{lem:vartheta}. Hence, by  Proposition \ref{prop:eta},
we get  an isomorphism of graded $\Q$-vector spaces
\begin{equation} \label{eq:eta_bis}
\eta: \mathcal{D} \stackrel{\simeq}{\longrightarrow} D^0_+\ot\Q.
\end{equation}
The following describes the derivation of $\Lie(\AAlpha^\Q)$
corresponding to an element of~$\mathcal{D}$.
(Again, we use the pairings 
$\langle-,-\rangle: \Z[F] \times \AAlpha \to \Z[F]$ and 
$\Psi: \AAlpha \times \AAlpha \to \Z[F] \otimes \AAlpha$
presented in~\S \ref{subsec:hif_handlebody}
 and~\S\ref{subsec:Psi}, respectively).)

\begin{proposition} \label{prop:action}
Let $D$ be a tree on $\AAlpha^\Q$ with $\Q[F]$-beads,
and let $d\in D^1_+\otimes \Q$ be the derivation 
corresponding to $\eta(D) \in D^0_+\otimes \Q$. 
Then, for any $a\in \AAlpha^\Q$, we have
\begin{eqnarray}
\label{eq:d(a)} d(a) &=&
-\sum_v  \Big[\overline{\Psi(\col(v),a)^\ell}  
\cdot \word(v) , \Psi(\col(v),a)^r  \Big]\\
\notag && + \sum_b \overline{\langle \col(b)',a\rangle} 
\cdot \left[\word^\ell(b),\col(b)''\cdot \word^r(b)\right].
\end{eqnarray}
Here the first sum is over all leaves $v$ of $D$
and uses the same notations as \eqref{eq:eta_iso},
and the second sum is over all beads $b$ of $D$.
The element
$\col(b)\in \Q[F]$ is the color of $b$,
while the elements $\word^\ell(b),\word^r(b)\in\Lie(V)$ are 
represented by the two half-trees obtained
by cutting $D$ at $b$
(orienting all the edges of these half-trees outwards $b$,
deleting the bead $b$ from both,
and assuming that 
$\word^\ell(b)$ comes before $\word^r(b)$
if one follows the original orientation of the edge around $b$).
\end{proposition}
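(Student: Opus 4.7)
The plan is to derive formula \eqref{eq:d(a)} as an explicit consequence of Proposition \ref{prop:inversion}, being careful that the natural representative of $\eta(D)$ in $\AAlpha^\Q \otimes \Lie(\AAlpha^\Q)$ does not in general satisfy the zero Lie bracket condition required by that proposition; the bead sum in \eqref{eq:d(a)} will arise precisely from the correction terms needed to produce a valid lift.

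Consider first the case in which $D$ has no beads. The natural representative $\tilde{\eta}(D) := \sum_v \col(v) \otimes \word(v) \in \AAlpha^\Q \otimes \Lie(\AAlpha^\Q)$ of $\eta(D)$ then automatically satisfies $\sum_v [\col(v), \word(v)] = 0$ in $\Lie(\AAlpha^\Q)$, as a consequence of the antisymmetry and Jacobi identities implicit in the AS and IHX relations defining $\mathcal{D}$. Plugging $\tilde{\eta}(D)$ into formula \eqref{eq:d_formula} of Proposition \ref{prop:inversion} then yields exactly the leaf sum of \eqref{eq:d(a)}, while the bead sum is indeed empty.

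For a tree with beads, the same representative $\tilde{\eta}(D)$ has a residual bracket $\sum_v [\col(v), \word(v)]$ which lies in $\ker(\varepsilon)\cdot \Lie(\AAlpha^\Q)$ but is typically nonzero. Using the Hopf and bead-out relations, together with the fact that the bead colors in $F$ are group-like (so $\Delta(x) = x \otimes x$ for $x \in F$), this residual bracket may be written as a sum of explicit local contributions, one per bead of $D$. From this expression I would construct a corrected lift $c^\ell \otimes c^r$ of $\eta(D)$ by adding to $\tilde{\eta}(D)$ a term at each bead which splits the tree locally via the coproduct. Applying \eqref{eq:d_formula} to this corrected lift, the $\tilde{\eta}(D)$-part reproduces the leaf sum of \eqref{eq:d(a)} exactly as in the bead-free case, while the bead-correction part yields the bead sum after simplification. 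The algebraic engine for this last simplification is the interplay between the two intersection operations $\Psi$ and $\langle -, -\rangle$ coming from the quasi-Poisson double bracket structure of Appendix \ref{sec:intersection}: this interplay is what converts a $\Psi$-computation on a bead correction into the $\langle -, - \rangle$-contribution appearing in \eqref{eq:d(a)}.

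The main obstacle will be the combinatorial bookkeeping: tracking edge orientations of $D$, the signs introduced by the antipode when reorienting edges (cf.\ the second Hopf relation), and ensuring that the convention ``cutting at a bead'' producing $\word^\ell(b), \word^r(b)$ is consistent with how the correction terms enter. As a cleaner alternative, I would consider verifying directly that the right-hand side of \eqref{eq:d(a)} satisfies the characterization \eqref{eq:c_formula} of $d$ in terms of $c = \eta(D)$, i.e.\ that $[c(f), a] = d(a) - {}^f(d({}^{f^{-1}}a))$ for all $f \in F$ and $a \in \AAlpha^\Q$; this would reduce the problem to verifying a few identities involving $\Psi$, $\langle -, - \rangle$, and the $F$-action, and would avoid constructing the corrected lift explicitly.
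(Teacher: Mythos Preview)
Your bead-free case is exactly the paper's starting point, and it is correct that \eqref{eq:d(a)} then follows immediately from \eqref{eq:d_formula}. But for the general case the paper takes a much cleaner route than either of your two options. Rather than constructing a corrected lift or verifying \eqref{eq:c_formula} directly, the paper observes that bead-free trees already generate $\mathcal{D}$ as a $\Q$-vector space (any bead can be removed via the Hopf and bead-out relations). Hence both sides of \eqref{eq:d(a)} are $\Q$-linear maps on $\mathcal{D}$ that agree on a generating set, so it suffices to check that the \emph{right-hand side} of \eqref{eq:d(a)} is well-defined on $\mathcal{D}$, i.e.\ annihilates each defining relation (AS, IHX, multilinearity, the four Hopf relations, bead-out). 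These are local checks, each using one property of $\Psi$ or $\langle -,-\rangle$ from Appendix \ref{sec:intersection}; the paper records them in a table.

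Your approach of building an explicit corrected lift of $\eta(D)$ with zero bracket is not wrong in principle, but it forces you into exactly the orientation and sign bookkeeping you worry about, and the passage from the $\Psi$-contribution of the correction terms to the $\langle -,-\rangle$-form of the bead sum is nontrivial to organise tree by tree. The paper's trick buys you a clean separation: the actual derivation content lives entirely in the bead-free case, and the bead sum enters only as the piece needed to make the formula descend through the bead-out relation. Your alternative of checking \eqref{eq:c_formula} directly is closer in spirit but still does more than necessary, since it re-derives the $D^0_+\to D^1_+$ correspondence rather than simply invoking it on generators.
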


\begin{proof}
If $D$ has no bead, then \eqref{eq:d(a)} immediately
follows from \eqref{eq:eta_iso} and \eqref{eq:d_formula}.
Since the $\Q$-vector space $\mathcal{D}$ 
is generated by trees without bead, 
it suffices to show that (for $a$ fixed)
the right-hand side of \eqref{eq:d(a)} 
defines a $\Q$-linear form on $\mathcal{D}$.
To prove this, we need to check that each defining relation
of $\mathcal{D}$ is mapped to $0$.
This consists in  straightforward verification
whose key arguments are given in Table \ref{tab:arg}.
\begin{table}[h] 
\begin{tabular}{|c|c|} \hline
\textbf{Defining relation of $\mathcal{D}$} & \textbf{Argument}  \\
\hline
multilinearity 
&  $\Q$-bilinearity of $\Psi$ and $\langle-,-\rangle$ \\
\hline 
$\begin{array}{c}  \\[-0.2cm] \includegraphics[scale=0.3]{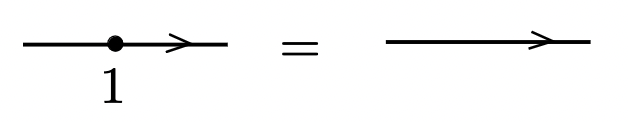}\end{array} $ &   
Property \eqref{eq:Fox_left} \\
\hline
$\begin{array}{c}  \\[-0.2cm] \includegraphics[scale=0.3]{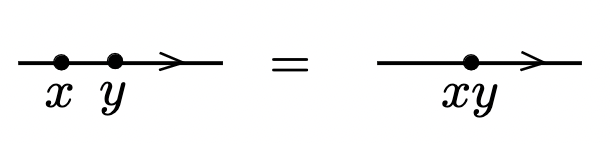}\end{array} $ &   
Property \eqref{eq:Fox_left} \\
\hline
$\begin{array}{c}  \\[-0.2cm] \includegraphics[scale=0.3]{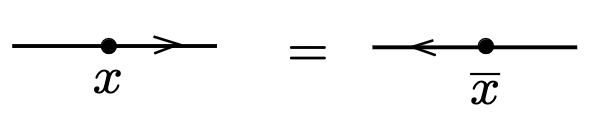}\end{array} $ & 
Property \eqref{eq:Fox_left}  \\
\hline
$\begin{array}{c}  \\[-0.2cm] \includegraphics[scale=0.3]{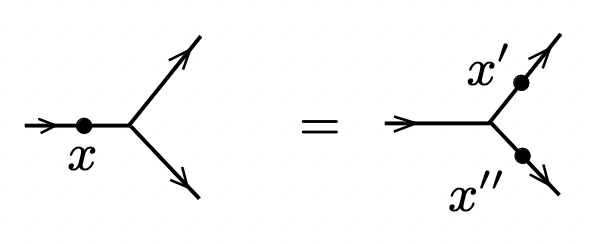}\end{array} $ &   
Axioms of $\Lie(\AAlpha)$  \\
\hline
IHX  & Axioms of $\Lie(\AAlpha)$  \\
\hline
AS & Axioms of $\Lie(\AAlpha)$   \\
\hline
$\begin{array}{c}  \\[-0.2cm] \includegraphics[scale=0.25]{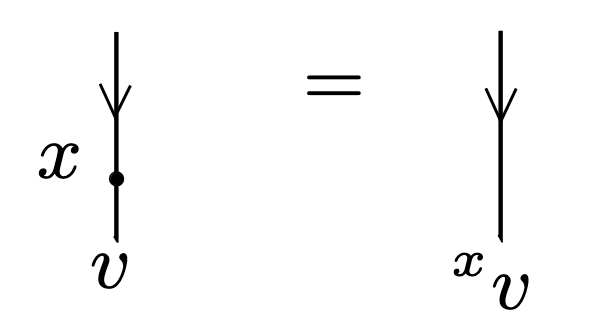}\end{array} $  & 
Property\eqref{eq:conj_left} \& formula \eqref{eq:<>_to_Theta}
\\ \hline
\end{tabular}\\[0.5cm]
\caption{} \label{tab:arg}
\end{table}
\end{proof}

The Lie bracket on $D^0_+\otimes \Q$ transports through $\eta$
to a Lie bracket on $\mathcal{D}$.
We now aim at giving an explicit description of this Lie bracket $[D,E]$ 
for any two trees $D,E$ in~$\mathcal{D}$.
For any leaves $v$ and $w$ of $D$ and $E$, respectively,
let $D \stackrel{v,w}{\vee} E$ be the $\Q$-linear combination
of trees defined by ``branching'' as follows: 
$$
\labellist
\small\hair 2pt
 \pinlabel {$v$} [t] at 74 90
 \pinlabel {$w$} [t] at 178 93
 \pinlabel {$\Psi(v,w)^\ell$} [r] at 589 169
 \pinlabel {$\Psi(v,w)^r$} [ ] at 647 1
 \pinlabel {$\leadsto$} [r] at 406 94
  \pinlabel {.} [l] at 756 94
\endlabellist
\centering
\includegraphics[scale=0.25]{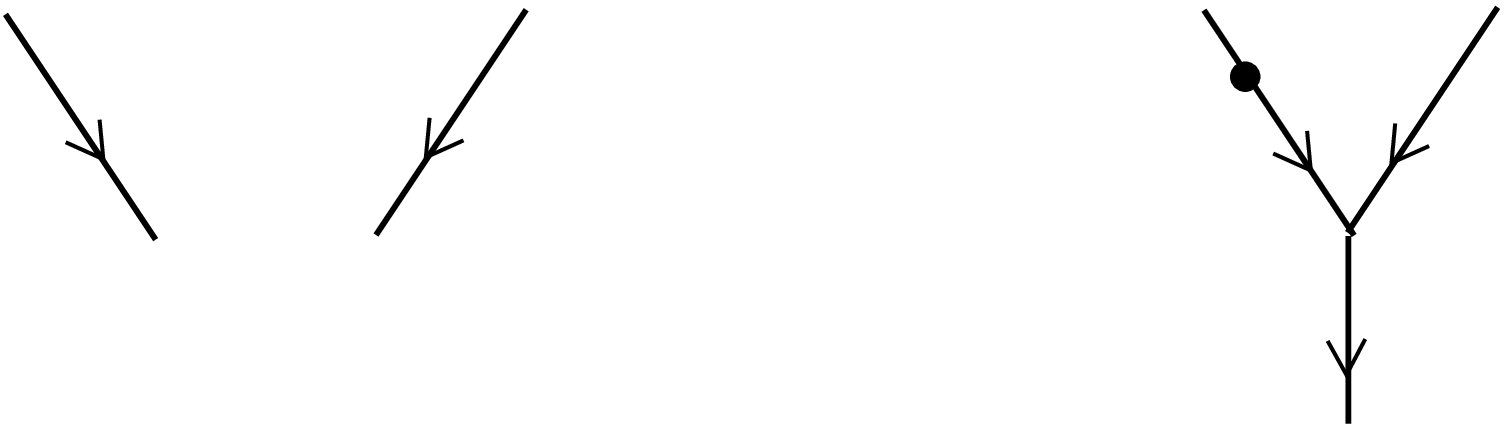} 
$$
Here we use the rationalization 
$\Psi: \AAlpha^\Q  \times \AAlpha^\Q   \to \Q[F] \otimes_\Q \AAlpha^\Q$
of the intersection operation \eqref{eq:Psi},
and we have omitted the symbols $\col(-)$ on the right-hand side
for simplicity.
Similarly, for any bead $b$ of $D$ and for any leaf $w$ of $E$,
let $D \stackrel{b,w}{\bot} E$ be the $\Q$-linear combination
of trees defined by ``grafting'' as follows:
$$
{\labellist
\scriptsize \hair 2pt
 \pinlabel {$w$} [t] at 109 76
 \pinlabel {$b$} [t] at 108 8
 \pinlabel {$\Theta(b,w)^r $} [t] at 538 8
 \pinlabel {$\Theta(b,w)^\ell$} [t] at 686 6
 \pinlabel {$b''$} [t] at 1200 6
 \pinlabel {$\overline{\langle b',w\rangle}$} [l] at 1130 98
 \pinlabel {$\leadsto$} at 368 50
\pinlabel {{\normalsize $\Bigg($}} at 900 50
\pinlabel {{\normalsize $\Bigg)$}} at 1280 50
 \pinlabel {$=$} at 950 50
  \pinlabel {.} at 1310 50
\endlabellist
\centering
\includegraphics[scale=0.25]{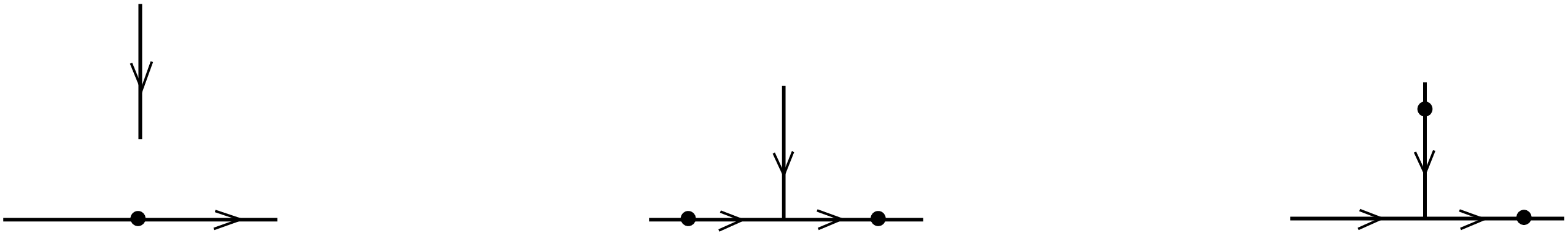}}\\[0.2cm]
$$
Here we use the rationalization 
$\Theta: \Q[F]  \times \AAlpha^\Q  \to \Q[F] \otimes_\Q \Q[F] $
of the intersection operation \eqref{eq:Theta},
and the indicated identity follows from \eqref{eq:<>_to_Theta}.

\begin{theorem} \label{th:tree_bracket}
Let $D,E$ be oriented trees on $\AAlpha^\Q$ with $\Q[F]$-beads. 
Then we have
\begin{equation} \label{eq:tree_bracket}
[D,E] = \sum_{v,w} D \stackrel{v,w}{\vee} E -
\sum_{b,w} D \stackrel{b,w}{\bot} E 
+ \sum_{v,c} E \stackrel{c,v}{\bot} D  \ \in \mathcal{D}   , 
\end{equation}
where $v$ (resp. $w$) 
runs over all leaves of $D$ (resp. $E$), 
and $b$ (resp. $c$) 
runs over all beads of $D$ (resp. $E$).
\end{theorem}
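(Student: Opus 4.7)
The plan is to pass through the isomorphism $\eta: \modD \to D_+^0 \otimes \Q$ of~\eqref{eq:eta_bis}, transport to the $D_+^1 \otimes \Q$ description via Proposition~\ref{prop:iso_D1} and Proposition~\ref{prop:inversion}, and then verify the formula~\eqref{eq:tree_bracket} by a direct computation of the bracket as derivations of $\Lie(\AAlpha^\Q)$. The advantage of working in $D_+^1$ is that the bracket has the clean expression $[d,e](a) = d_+(e(a)) - e_+(d(a))$ for $a \in \AAlpha^\Q$, with no cocycle correction. Writing $\partial_D \in D_+^1 \otimes \Q$ for the derivation attached to a tree $D$, formula~\eqref{eq:d(a)} of Proposition~\ref{prop:action} gives an explicit expression for $\partial_D(a)$ summed over the leaves and beads of $D$. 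It is thus sufficient to verify, for every $a \in \AAlpha^\Q$, the identity $(\partial_D)_+(\partial_E(a)) - (\partial_E)_+(\partial_D(a)) = \partial_T(a)$, where $T$ denotes the right-hand side of~\eqref{eq:tree_bracket}.

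First I would expand $\partial_E(a)$ via~\eqref{eq:d(a)} into ``leaf contributions'', indexed by the leaves $w$ of $E$ and involving $\Psi(\col(w), a)$, and ``bead contributions'', indexed by the beads $c$ of $E$ and involving $\langle \col(c)', a \rangle$. Then I would evaluate $(\partial_D)_+$ on this expression using the Leibniz rule for the Lie brackets appearing in $\partial_E(a)$, together with the quasi-equivariance relation~\eqref{eq:equiv1} each time $(\partial_D)_+$ encounters an $F$-conjugation ${}^f u$, whereupon a bracket correction $[\eta(D)(f), {}^f u]$ arises. The resulting atomic terms fall into three recognizable families: (i)~the leaf-part of $\partial_D$ evaluated at a leaf-color $b$ of $E$, producing, after summing over the leaves $v$ of $D$, exactly $\eta(D \stackrel{v,w}{\vee} E)$ (where $w$ is the leaf of $E$ carrying $b$) by the very definition of the branching operation; (ii)~the bead-part of $\partial_D$ evaluated at a leaf-color $b$ of $E$, producing, after summing over the beads $b'$ of $D$ and using identity~\eqref{eq:<>_to_Theta} to convert $\overline{\langle \col(b')', b \rangle}$ into $\Theta$, the tensor $\eta(D \stackrel{b',w}{\bot} E)$; (iii)~the correction $[\eta(D)(f), \cdot]$ at each bead $c$ of $E$ (with $f = \col(c)$), which via the same identity~\eqref{eq:<>_to_Theta} produces $\eta(E \stackrel{c,v}{\bot} D)$ summed over the leaves $v$ of $D$.

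Subtracting the symmetric expression $(\partial_E)_+(\partial_D(a))$ then recombines the contributions of type~(i) into the antisymmetric $\vee$-sum of~\eqref{eq:tree_bracket}, and pairs the type~(ii) terms arising in $(\partial_D)_+(\partial_E(a))$ with the type~(iii) terms arising in $(\partial_E)_+(\partial_D(a))$ into the $D \bot E$ sum, and vice versa for the $E \bot D$ sum, with the indicated signs. The main obstacle is the careful tracking of the $\Q[F]$-conjugations and of the Sweedler decompositions of the beads throughout the Leibniz expansion, together with the repeated invocation of the intersection-theoretic identities~\eqref{eq:Fox_left}, \eqref{eq:conj_left} and~\eqref{eq:<>_to_Theta}---precisely those used in the table of arguments in the proof of Proposition~\ref{prop:action}. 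A useful simplification is that $\modD$ is generated by bead-free trees, inherited from the surjection $\mathcal D(V)_H \twoheadrightarrow \modD$ in the proof of Proposition~\ref{prop:eta}; this does not eliminate beads from the output of the bracket, since $\Psi$ introduces $\Q[F]$-weights, but it restricts the combinatorial case analysis in identifying the Leibniz-expanded terms with the diagrammatic outputs of $\vee$ and $\bot$. Bilinearity of~\eqref{eq:tree_bracket} and injectivity of $\eta$ then complete the verification.
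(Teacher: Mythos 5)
Your overall framework is the same as the paper's (transport through $\eta$, use Proposition \ref{prop:action} to realize trees as derivations in $D^1_+\otimes\Q$, and compare $d_+(e(a))-e_+(d(a))$ with the derivation attached to the right-hand side of \eqref{eq:tree_bracket}), but the step you dispose of in one sentence is precisely where the real work lies, and as stated it is a gap. When you expand $d_+(e(a))$ by the Leibniz rule and quasi-equivariance, the terms you obtain involve \emph{iterated} intersection operations of the form $\Psi\big(\col(v),\Psi(\col(w),a)^r\big)$, together with cocycle corrections at the $\Q[F]$-conjugations; by contrast, the evaluation of the branched tree $D \stackrel{v,w}{\vee} E$ at $a$ via \eqref{eq:d(a)} involves $\Psi\big(\Psi(v,w)^r,a\big)$ at the new leaf and $\langle \Psi(v,w)^{\ell\prime},a\rangle$ at the new bead. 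These two collections of terms do \emph{not} coincide ``by the very definition of the branching operation'': identifying them is exactly the content of the paper's long computation of $Q(a)+R(a)-M(a)-N(a)=X(a)+W(a)$, which requires the quasi-Jacobi identities \eqref{eq:pseudo-Jacobi} and \eqref{eq:pseudo-Jacobi_bis} (applied twice), together with \eqref{eq:kind_of_sym} and \eqref{eq:conj_left}. The identities you cite (\eqref{eq:Fox_left}, \eqref{eq:conj_left}, \eqref{eq:<>_to_Theta}, i.e. those of Table \ref{tab:arg}) are not sufficient; the quasi-Poisson-type identities of Appendix \ref{sec:intersection} are the analytic heart of the theorem and must appear explicitly in any proof along these lines.

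There is also a structural imprecision in how you use the fact that $\modD$ is spanned by bead-free trees. If you verify the identity only for bead-free $D,E$ (as the paper does), then the two grafting sums in \eqref{eq:tree_bracket} are empty and your families (ii)--(iii) are vacuous; to conclude the full formula for beaded trees you then need a separate verification that the right-hand side of \eqref{eq:tree_bracket} descends to a well-defined bilinear map on $\modD$, i.e. is compatible with the multilinearity, Hopf, bead-out, AS and IHX relations (the paper's Table \ref{tab:arguments}, using \eqref{eq:left_Theta}, \eqref{eq:right_Theta}, \eqref{eq:inversion}, \eqref{eq:conj_right}, etc.). If instead you insist on computing directly with arbitrary beaded $D,E$, the well-definedness issue disappears (injectivity of $\eta$ then gives it for free), but the Leibniz expansion acquires further cross terms (beads of $D$ correcting at beads of $E$, the derivation hitting $\word^\ell(c)$ and $\word^r(c)$, etc.) that your three families do not obviously absorb, and the quasi-Jacobi identities are again unavoidable. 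Either route can be made to work, but your proposal currently asserts the crucial cancellations instead of proving them.
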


\begin{proof}
We claim that the right-hand side of \eqref{eq:tree_bracket}
defines a binary operation in the space  $\mathcal{D}$.
To prove this, we need to verify that each defining relation of $\mathcal{D}$ is mapped to $0$.
This is a straightforward verification, which is left to the reader.
Table \ref{tab:arguments} gives the key arguments 
that are involved for each of those relations.

\begin{table}[h] 
\begin{tabular}{|c|c|} \hline
\textbf{Defining relation of $\mathcal{D}$} & \textbf{Argument}  \\
\hline
multilinearity 
&  $\Q$-bilinearity of $\Theta$ and $\Psi$ \\
\hline 
$\begin{array}{c}  \\[-0.2cm] \includegraphics[scale=0.3]{bead_one}\end{array} $ &   
Property \eqref{eq:left_Theta} \\
\hline
$\begin{array}{c}  \\[-0.2cm] \includegraphics[scale=0.3]{bead_product}\end{array} $ &   
Property \eqref{eq:left_Theta}  \\
\hline
$\begin{array}{c}  \\[-0.2cm] \includegraphics[scale=0.3]{bead_inversion}\end{array} $ & 
Property \eqref{eq:inversion}\\
\hline
$\begin{array}{c}  \\[-0.2cm] \includegraphics[scale=0.3]{bead_coproduct}\end{array} $ &   IHX   \\
\hline
IHX  & IHX   \\
\hline
AS & AS   \\
\hline
$\begin{array}{c}  \\[-0.2cm] \includegraphics[scale=0.25]{beadout}\end{array} $  & 
Properties \eqref{eq:right_Theta}, \eqref{eq:conj_left} \& \eqref{eq:conj_right} 
\\ \hline
\end{tabular}\\[0.5cm]
\caption{} \label{tab:arguments}
\end{table}

We now aim at proving that the Lie bracket
$[D,E]$ is equal to the right-hand side of \eqref{eq:tree_bracket}.
Since the $\Q$-vector space $\mathcal{D}$
is generated by trees without bead,
we can assume  that $D$ and $E$ have no bead.
Set  $c:=\eta(D)$ and $g:=\eta(E)$,
and let $d,e \in D^1_+\otimes \Q$ be the derivations corresponding
to $c,g\in D^0_+\otimes \Q$, respectively.
Then, by Proposition \ref{prop:action}, we have
$$
d(a)= -\sum_v  \Big[\overline{\Psi(v,a)^\ell}  \cdot \word(v) , \Psi(v,a)^r  \Big] \quad
\text{for all }a \in \AAlpha^\Q,
$$
where the sum is over all leaves $v$ of $D$
and we have denoted $\col(v)$ simply by $v$.
Therefore, for any  $a \in \AAlpha^\Q$,
we get $e(d(a))=L(a)+M(a)+N(a)$ with
\begin{eqnarray*}
L(a) &:=& -\sum_v  \left[\overline{\Psi(v,a)^\ell}  \cdot e(\word(v)), \Psi(v,a)^r  \right] \\
M(a) &:= & -\sum_v  
\left[ \left[ g\big(\big(\overline{\Psi(v,a)^\ell}\big)'\big)  ,
\big(\overline{\Psi(v,a)^\ell}\big)'' \cdot \word(v) \right], \Psi(v,a)^r  \right] \\
&=&  \sum_v  
\left[ \overline{\Psi(v,a)^{\ell'}} \cdot  \left[ g\big({\Psi(v,a)^{\ell''}}\big)  ,
\word(v) \right], \Psi(v,a)^r  \right]  \\
N(a)&:=&  -\sum_v  \left[\overline{\Psi(v,a)^\ell}  \cdot \word(v), 
e\big(\Psi(v,a)^r\big)  \right] ,
\end{eqnarray*}
where, in the sum $M(a)$, we have used Sweedler's notation for the coproduct
of $\overline{\Psi(v,a)^\ell}$.
We get a similar formula $d(e(a))=P(a)+Q(a)+R(a)$  
by exchanging the roles of $D$ and $E$.
It follows from the above computation that the Lie bracket $[d,e]$ of the derivations $d,e$ maps any $ a \in \AAlpha^\Q$ to 
\begin{equation} \label{eq:de-ed}
d(e(a))-e(d(a)) =  P(a)+Q(a)+R(a)-L(a)-M(a)-N(a).
\end{equation}

Since $D$ and $E$ have no bead, the right-hand side of \eqref{eq:tree_bracket}
reduces to the sum $\sum_{v,w} D \stackrel{v,w}{\vee} E$.
Besides, according to Proposition \ref{prop:action},
the derivation in $D^1_+\otimes \Q$ corresponding to 
$\eta\big(\sum_{v,w} D \stackrel{v,w}{\vee} E\big)$ 
maps an $a \in \AAlpha^\Q$ to $\big(X(a)+Y(a)+Z(a)\big)+W(a)$, where
\begin{eqnarray*}
X(a)&=&-\sum_{v,w}  
\Big[\overline{\Psi\big(\Psi(v,w)^r,a\big)^\ell}  \cdot 
[\overline{\Psi(v,w)^\ell}\cdot \word(v),  \word(w)]
, \Psi\big(\Psi(v,w)^r,a\big)^r  \Big],\\
Y(a)&=& -\sum_{v,w,v'\neq v}  \Big[\overline{\Psi(v',a)^\ell}  \cdot
\big(\word(v')\big\vert_{v\mapsto \Psi(v,w)^\ell \cdot 
[\word(w),\Psi(v,w)^r] } \big), \Psi(v',a)^r  \Big], \\
Z(a)&=& -\sum_{v,w,w'\neq w} \Big[\overline{\Psi(w',a)^\ell}  \cdot
\big(\word(w')\big\vert_{w\mapsto[\Psi(v,w)^r, 
\overline{\Psi(v,w)^\ell}\cdot \word(v)]} \big)
, \Psi(w',a)^r  \Big],\\
W(a)&=& \sum_{v,w} \overline{\langle \Psi(v,w)^{\ell'}, a\rangle} \cdot
\left[\word(v) , \Psi(v,w)^{\ell''} \cdot[ \word(w), \Psi(v,w)^r] \right]. 
\end{eqnarray*}
In the sum $Y(a)$, the third index
runs over all leaves $v'$ of $D$ different from $v$, 
and the notation $\word(v')\vert_{v\mapsto u}$
means that $\word(v')\in \Lie(\AAlpha^\Q)$ is transformed into
another word by inserting $u\in \Lie(\AAlpha^\Q)$
in place of the letter $v$; a similar remark applies to the sum $Z(a)$.
It is easily seen that $Z(a)=P(a)$
and, using \eqref{eq:kind_of_sym}, that $Y(a)=-L(a)$.
Therefore, by comparison with \eqref{eq:de-ed}, it suffices to prove
the following identity:
\begin{equation}
\label{eq:final_identity}  Q(a)+R(a)-M(a)-N(a)= X(a)+W(a).
\end{equation}

To prove this, we come back to 
$M(a)+N(a)$ and  we simplify our notations further  by denoting, for any leaf $v$,  
the corresponding element $\word(v)$ of $\Lie(\AAlpha^\Q)$ 
by the corresponding upper-case letter $V$:
\begin{eqnarray*}
&& M(a)+N(a)    \\
&=& \sum_{v,w}  
\left[\,  \overline{\Psi(v,a\big)^{\ell'}} \cdot   
\left[ \big\langle  \Psi(v,a\big)^{\ell''}, w\big\rangle \cdot W , V \right], 
\Psi(v,a)^r  \right]\\
&& + \sum_{v,w}  \left[\overline{\Psi(v,a)^\ell}  \cdot V, 
\left[ \overline{\Psi(w,\Psi(v,a)^r)^\ell} \cdot W , 
\Psi(w,\Psi(v,a)^r)^r \right]  \right]  \\
&=& -\sum_{v,w}   \left[  \overline{\Psi(v,a\big)^{\ell'''}} \cdot   V ,
\left[ \left(\overline{\Psi(v,a\big)^{\ell'}}  
\big\langle  \Psi(v,a\big)^{\ell''}, w\big\rangle \right)\cdot W, 
\Psi(v,a)^r  \right]  \right]\\
&& + \sum_{v,w}   \left[  
\left(\overline{\Psi(v,a\big)^{\ell'}}  
\big\langle  \Psi(v,a\big)^{\ell''}, w\big\rangle \right)\cdot W , 
\left[ \
\overline{\Psi(v,a\big)^{\ell'''}} \cdot   V , \Psi(v,a)^r \right] \right]\\
&& + \sum_{v,w}  \left[\overline{\Psi(v,a)^\ell}  \cdot V, 
\left[ \overline{\Psi(w,\Psi(v,a)^r)^\ell} \cdot W , 
\Psi(w,\Psi(v,a)^r)^r \right]  \right] .
\end{eqnarray*}
Therefore, a symmetric computation for $Q(a)+R(a)$ gives
\begin{eqnarray*}
&& M(a)+N(a)-Q(a)-R(a) \\
&=& \Bigg(  -\sum_{v,w}   \left[  \overline{\Psi(v,a\big)^{\ell'''}} \cdot   V ,
\left[ \left(\overline{\Psi(v,a\big)^{\ell'}}  
\big\langle  \Psi(v,a\big)^{\ell''}, w\big\rangle \right)\cdot W, 
\Psi(v,a)^r  \right]  \right]\\
&& + \sum_{v,w}  \left[\overline{\Psi(v,a)^\ell}  \cdot V, 
\left[ \overline{\Psi(w,\Psi(v,a)^r)^\ell} \cdot W , 
\Psi(w,\Psi(v,a)^r)^r \right]  \right] \\
&& - \sum_{w,v}   \left[  
\left(\overline{\Psi(w,a\big)^{\ell'}}  
\big\langle  \Psi(w,a\big)^{\ell''}, v\big\rangle \right)\cdot V , 
\left[ \
\overline{\Psi(w,a\big)^{\ell'''}} \cdot   W , \Psi(w,a)^r \right] \right] \Bigg)\\
&& -\big(\hbox{the symmetric counterpart $D\leftrightarrow E$}\big).
\end{eqnarray*}
Next, a double application of \eqref{eq:pseudo-Jacobi_bis} leads to
\begin{eqnarray*}
&& M(a)+N(a)-Q(a)-R(a) \\
&=& \sum_{v,w} \left[ \overline{\Psi(\Psi(w,v)^r,a)^{\ell'}} \cdot V, 
\left[ \overline{\Psi(w,v)^\ell\, \Psi(\Psi(w,v)^r,a)^{\ell''}} \cdot W  , 
\Psi(\Psi(w,v)^r,a)^r\right]\right] \\
&& - \sum_{w,v} \left[ \overline{\Psi(\Psi(v,w)^r,a)^{\ell'}} \cdot W, 
\left[ \overline{\Psi(v,w)^\ell\, \Psi(\Psi(v,w)^r,a)^{\ell''}} \cdot V  , 
\Psi(\Psi(v,w)^r,a)^r\right]\right].
\end{eqnarray*}
Here, the first sum can be transformed by applying 
\eqref{eq:kind_of_sym} to $\Psi(w,v)$ 
and by using \eqref{eq:conj_left} next, 
which results in the following identity:
\begin{eqnarray*}
&& M(a)+N(a)-Q(a)-R(a) \\
&=& -\sum_{v,w} \Bigg[\overline{\Theta(\Psi(v,w)^{\ell ''},a)^{r'}} \cdot V, \\
&& \qquad \qquad 
\left[\left(\overline{\Theta(\Psi(v,w)^{\ell ''},a)^{r''}}\, \Psi(v,w)^{\ell'}
\right) \cdot W,
\Theta(\Psi(v,w)^{\ell''},a)^\ell \cdot \Psi(v,w)^r\right]\Bigg]\\
&& + \sum_{v,w} \left[ \overline{\Psi(v,w)^{\ell}\, 
\Psi(\Psi(v,w)^r,a)^{\ell'}} \cdot V, 
\left[ \overline{ \Psi(\Psi(v,w)^r,a)^{\ell''}} \cdot W  , 
\Psi(\Psi(v,w)^r,a)^r\right]\right] \\
&& - \sum_{w,v} \left[ \overline{\Psi(\Psi(v,w)^r,a)^{\ell'}} \cdot W, 
\left[ \overline{\Psi(v,w)^\ell\, \Psi(\Psi(v,w)^r,a)^{\ell''}} \cdot V  , 
\Psi(\Psi(v,w)^r,a)^r\right]\right].
\end{eqnarray*}
In this last identity, the first term can be seen to coincide with $-W(a)$,
while the second and third terms give $-X(a)$.
This proves \eqref{eq:final_identity}.
\end{proof}

\begin{remark}
We can directly prove that the binary operation  \eqref{eq:tree_bracket} 
in $\mathcal{D}$ satisfies the axioms of a Lie bracket.
Since the space $\mathcal{D}$ is generated by trees without beads,
it is enough to verify the antisymmetry $[D,E]=-[E,D]$
and the Jacobi identity $[[D,E],F]+[[F,D],E]+[[E,F],D]=0$
for trees $D,E,F$ with no bead. Then,
the antisymmetry is easily deduced from \eqref{eq:kind_of_sym},
and the Jacobi identity can be proved by a long computation
using \eqref{eq:pseudo-Jacobi} and \eqref{eq:pseudo-Jacobi_bis}.
Note that, in the proof of Proposition~\ref{prop:Psi},
the latter identities are derived from \eqref{eq:quasi-Jacobi},
which is a form of ``quasi-Jacobi'' identity satisfied 
by the intersection double bracket \cite{MT14}.
\end{remark}

The space $\mathcal{D} = \mathcal{D}(\AAlpha^\Q, \Q[F])$ has another description,
which will be used in the next section in a few places.
Consider the space 
$$
\mathcal{D}' :=
\mathcal{D}\big(\Lie(\AAlpha^\Q), T(\AAlpha^\Q) \otimes_\Q \Q[F]\big)
$$
of trees on $\Lie(\AAlpha^\Q)$ 
with $\big(T(\AAlpha^\Q) \otimes_\Q \Q[F]\big)$-beads.
Here $T(\AAlpha^\Q) \otimes_\Q \Q[F]$ is viewed 
as the universal enveloping algebra of the extended graded Lie algebra 
$\overline{\Alpha}_\bullet^\Q$.
The adjoint action of this cocommutative Hopf algebra on itself
restricts to an action on $\Lie(\AAlpha^\Q)= \overline{\Alpha}_+^\Q$.
We consider the following two types of operations 
on an arbitrary tree  $D \in \mathcal{D}'$:
\begin{itemize}
\item The \emph{expansion} of $D$ at a leaf $\ell$  is the element 
$D_\ell \in \mathcal{D}'$
that is obtained from $D$ by representing $\col(\ell)\in \Lie(\AAlpha^\Q)$ 
as a linear combination of half-trees with $\AAlpha^\Q$-colored leaves.
(That $D_\ell$ is well-defined follows from the AS, IHX and multilinearity relations.)
\item The \emph{expansion} of $D$ at a bead $b$  is the element 
$D_b \in \mathcal{D}'$
that is obtained from $D$ by the modification
$$
\labellist
\small\hair 2pt
 \pinlabel {$b$} [b] at 181 34
  \pinlabel {$a_i^{(1)}$} [b] at 760 92
   \pinlabel {$a_i^{(2)}$} [b] at 830 92
 \pinlabel {$a_i^{(n)}$} [b] at 910 92
  \pinlabel {$\cdots$} at 870 50
 \pinlabel {$x_i$} [b] at 1004 34
\pinlabel {$\leadsto \qquad {\normalsize \sum_i}$} at 594 15
\endlabellist
\centering
\includegraphics[scale=0.25]{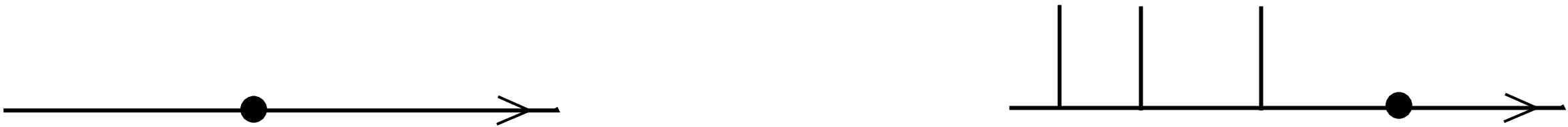}
$$
if $b$ is colored by $\sum_{i} a_i^{(1)}\cdots a_i^{(n_i)} x_i$
with $a_i^{(1)},\dots,a_i^{(n_i)} \in \AAlpha^\Q$ 
and $x_i\in \Q[F]$.
(That $D_b$ is well-defined follows from the multilinearity relations.)
\end{itemize}
Let $\mathcal{E}$ be the subspace of  $\mathcal{D}'$
generated by the differences 
$D-D_\ell$ and $D-D_b$,  for all trees
$D \in \mathcal{D}'$,
leaf $\ell$ of $D$ and bead $b$ of $D$. 
(In fact, it is easily checked from the ``bead-out'' relation
that the expansions of beads follow from expansions of leaves.)

\begin{lemma} \label{lem:T_tilde}
The $\Q$-linear map 
$f:\mathcal{D} \to \mathcal{D}' /\mathcal{E}$
that is induced by the inclusions $\AAlpha^\Q \subset \Lie(\AAlpha^\Q)$ and 
$\Q[F] \subset T(\AAlpha^\Q) \otimes_\Q \Q[F]$ is an isomorphism.
\end{lemma}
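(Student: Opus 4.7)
My plan is to exhibit a two-sided inverse $g:\mathcal{D}'/\mathcal{E}\to\mathcal{D}$ of $f$ by constructing an ``expansion'' map. This uses the fact that, by construction of $\mathcal{E}$, every tree in $\mathcal{D}'$ is equivalent modulo $\mathcal{E}$ to a tree whose leaves are colored by length-one Lie words (i.e.\ by $\AAlpha^\Q\subset\Lie(\AAlpha^\Q)$) and whose beads are colored by elements of $\Q[F]\subset T(\AAlpha^\Q)\otimes_\Q\Q[F]$.

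More precisely, on generators I would define $\tilde g:\mathcal{D}'\to\mathcal{D}$ as follows. Given an oriented tree $D\in\mathcal{D}'$, at each leaf $\ell$ whose color $w\in\Lie(\AAlpha^\Q)$ is a Lie polynomial, replace $\ell$ by the half-tree bracketing $w$ (with trivalent nodes and $\AAlpha^\Q$-colored leaves), using multilinearity to split sums of Lie monomials. At each bead $b$ whose color, in the canonical decomposition $U(\overline\A_\bu^\Q)=T(\AAlpha^\Q)\otimes_\Q\Q[F]$, is a sum $\sum_i a_i^{(1)}\cdots a_i^{(n_i)}\otimes x_i$, replace $b$ by the sum over $i$ of the trees obtained by subdividing the edge with $n_i$ new trivalent nodes carrying $\AAlpha^\Q$-leaves $a_i^{(1)},\ldots,a_i^{(n_i)}$, leaving one single $\Q[F]$-bead colored by $x_i$ (as drawn in the definition of $\mathcal{E}$).

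The main task is to check that $\tilde g$ respects every defining relation of $\mathcal{D}'$, so that it descends to $\mathcal{D}'$, and then to check that it vanishes on $\mathcal{E}$. The AS, IHX, and multilinearity relations unfold into the identically-named relations of $\mathcal{D}$. For the Hopf relations, the key combinatorial observation is that $\AAlpha^\Q$ consists of primitive elements and $F$ of grouplike elements of $U(\overline\A_\bu^\Q)$; the coproduct of a monomial $a_1\cdots a_n x$ then expands via the shuffle formula into exactly the diagrammatic Hopf relation applied to each of the leaf beads (which, being primitive, just split) together with the $\Q[F]$-bead (which is grouplike). The antipode and unit relations similarly reduce to statements about primitive/grouplike elements. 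The bead-out relation reduces to its counterpart in $\mathcal{D}$ using the observation that the $U(\overline\A_\bu^\Q)$-action on $\Lie(\AAlpha^\Q)$ is the one induced by the action of $\AAlpha^\Q$ and $F$ inside $\overline\A_\bu^\Q$. Finally, $\tilde g$ vanishes on generators $D-D_\ell$ and $D-D_b$ of $\mathcal{E}$ essentially tautologically, because partial expansion followed by total expansion agrees with total expansion. I expect the Hopf relations to be the only delicate point, since they mix the tensor-algebra multiplication of $T(\AAlpha^\Q)$ and the twisted multiplication \eqref{eq:mult_law} in $U(\overline\A_\bu^\Q)$; the bookkeeping is routine but must be carried out case by case.

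Once $g:\mathcal{D}'/\mathcal{E}\to\mathcal{D}$ is available, the composition $g\circ f$ is clearly the identity on $\mathcal{D}$: if $D\in\mathcal{D}$ has leaves in $\AAlpha^\Q$ and beads in $\Q[F]$, then $\tilde g$ does nothing to it. Conversely, for any $D\in\mathcal{D}'$, the difference $D-f(\tilde g(D))$ is a $\Q$-linear combination of leaf- and bead-expansions, hence lies in $\mathcal{E}$, so $f\circ g=\operatorname{id}$ on $\mathcal{D}'/\mathcal{E}$. This gives the claimed isomorphism.
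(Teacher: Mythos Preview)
Your proposal is correct and follows essentially the same approach as the paper: construct the ``total expansion'' map $e:\mathcal{D}'/\mathcal{E}\to\mathcal{D}$ as an inverse to $f$. The paper is simply terser---it asserts that $e$ is well-defined without spelling out the relation-by-relation check you outline, and it concludes using surjectivity of $f$ together with $e\circ f=\id$ rather than verifying both compositions---but the underlying argument is the same.
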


\begin{proof}
Clearly, $f$ is  surjective.     
For any tree 
$D \in \mathcal{D}'$,
let $e(D)\in\mathcal{D}$ be obtained from~$D$
by expanding all its leaves and all its beads at the same time.
It is easily checked that the assignment $D\mapsto e(D)$ defines
a $\Q$-linear map
$e:\mathcal{D}'/\mathcal{E}\to  \mathcal{D}$.
Clearly, $e \circ f$ is the identity.
\end{proof}

\section{Formulas, examples and applications}

\label{sec:formulas_examples}

In this  section, we provide some explicit formulas 
for Johnson homomorphisms,
which are based on the diagrammatic descriptions 
of Section \ref{sec:diagrams}. 
Specifically, for every $k\geq 1$, 
let $\tau_k^d:\modH_k\to\modD_k$ be the composition
\begin{equation} \label{eq:tau_k^d}
\xymatrix{
\modH_k \ar[r]^-{\tau_k^0} \ar@{-->}@/_2pc/[rrr]^-{\tau_k^d}  &
D^0_k  \ar@{>->}[r] &  D^0_k \otimes \Q &
\ar[l]_-\eta^-\simeq  \mathcal{D}_k(\AAlpha^\Q, \Q[F]) = \modD_k,
}\\[0.2cm]
\end{equation}
which may be regarded as a diagrammatic version of $\tau_k^0$.
As an application, we prove that certain quotients 
(of subgroups) of the twist group are not finitely generated.
We also identify the  restriction of the Johnson filtration  
to the  pure braid group, in relation with Milnor invariants.

\subsection{The first Johnson homomorphism on disk twists}

The first proposition of this section
computes the first Johnson homomorphism on a disk twist.

\begin{proposition} \label{prop:tau_1(disk_twist)}
For any properly embedded disk $U \subset V$, we have
\begin{equation} \label{eq:disk_twist}
\tau_1^d(T_{\partial U}) 
= - \frac{1}{2}[ u] \textsf{---} [ u]
\in \modD_1,
\end{equation}
where $u\in \Alpha$ is the homotopy class of the closed curve $\partial U$ 
(with an arbitrary orientation, and an arbitrary basing at $\star$)
and $[u]\in \AAlpha$ is the corresponding class.
\end{proposition}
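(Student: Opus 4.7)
The plan is to compute $\tau_1^0(T_{\partial U}) \in D^0_1$ directly from the Dehn twist formula, and then transport the answer through the isomorphisms $\vartheta$ of Lemma~\ref{lem:vartheta} and $\eta$ of Proposition~\ref{prop:eta} to recognize it as the tree $-\tfrac{1}{2}[u]\textsf{---}[u] \in \modD_1$.

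First I would evaluate the $1$-cocycle $\tau_1^0(T_{\partial U}): F \to \AAlpha$. Fix an orientation of $\partial U$ and a connecting arc from $\star$ to $\partial U$ to define $u\in\A$. Representing $\beta_j$ by a simple loop in $\Sigma$ transverse to $\partial U$, the standard Dehn twist formula writes $T_{\partial U}(\beta_j)\beta_j^{-1}$ as a product in $\A$ of signed conjugates of $u$, one factor per intersection point of $\beta_j$ with $\partial U$. Reducing modulo $\A_2 = [\A,\A]$ produces a $\Z[F]$-multiple of $[u]\in\AAlpha$, the coefficient being an intersection number in $\Z[F]$ built from the initial segments of $\beta_j$ up to each intersection point. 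Expressing $T_{\partial U}(\beta_j)\beta_j^{-1}$ through Fox derivatives and combining with Lemma~\ref{lem:kappa} identifies this coefficient as the conjugate of the component of $[u]$ along $a_j$ in the $\Z[F]$-basis of $\AAlpha$. One also checks that $\beta\vartheta(\tau_1^0(T_{\partial U})) = 0$, consistent with $T_{\partial U}$ fixing $\zeta$.

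Next I would apply $\vartheta$ to land in $\AAlpha^r \otimes_{\Z[F]} \AAlpha$. Using the identification of the dual basis $((x_j-1)^*)_j$ of $I_F^*$ with $-(a_j)_j$ in $\AAlpha^r$ (from the proof of Proposition~\ref{prop:iso_D0}), balancing the $\Z[F]$-scalars across the tensor product, and invoking the expansion $[u] = \sum_j \varpi(\partial u/\partial\alpha_j)\cdot a_j$ of Lemma~\ref{lem:kappa}, the computation collapses to
\begin{equation*}
\vartheta\big(\tau_1^0(T_{\partial U})\big) = -[u]\otimes[u] \in \AAlpha^r \otimes_{\Z[F]} \AAlpha.
\end{equation*}
Finally, by \eqref{eq:eta_iso}, the two-leaf tree $[u]\textsf{---}[u]$ satisfies $\eta([u]\textsf{---}[u]) = [u]\otimes[u] + [u]\otimes[u] = 2\,[u]\otimes[u]$, so $\eta^{-1}(-[u]\otimes[u]) = -\tfrac{1}{2}[u]\textsf{---}[u]$, yielding \eqref{eq:disk_twist}.

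The main difficulty is the first step: cleanly running through the Dehn twist formula and its reduction modulo $\A_2$ while tracking all the conjugating arcs, and verifying that the resulting $1$-cocycle is independent of the choices of orientation and basing of $\partial U$. The fact that the final answer $-[u]\otimes[u]$ is manifestly invariant under these choices (conjugating $u$ merely changes the $\Z[F]$-factorization of the tensor, while inverting $u$ flips both factors simultaneously, and the tree $[u]\textsf{---}[u]$ is symmetric in its two identically-colored leaves) provides a strong consistency check for the computation.
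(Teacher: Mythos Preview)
Your approach is correct and follows essentially the same strategy as the paper: compute the $1$-cocycle $\tau_1^0(T_{\partial U})$ from the Dehn twist formula, then transport through $\vartheta$ and $\eta^{-1}$. The paper streamlines your first step considerably, however: rather than working in coordinates with the generators $\beta_j$ and Fox derivatives, it computes $\tau_1^0(T_{\partial U})([x])$ for an arbitrary $x\in\pi$ and observes that the resulting intersection-point sum is literally the same sum that defines $-\langle [x],[u]\rangle$ via the homotopy intersection form (Appendix~\ref{sec:intersection}), so that the cocycle is immediately identified as $f\mapsto -\langle f,[u]\rangle\cdot[u]$. Since the isomorphism $\vartheta$ is built from this very pairing, $\vartheta(\tau_1^0(T_{\partial U}))=-[u]\otimes[u]$ then follows at once without any basis manipulation, and the final $\eta^{-1}$ step is as you describe.
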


\begin{proof}
Note that, as a consequence of the bead-out relation,
the right-hand side of \eqref{eq:disk_twist} does not depend on the choice of $u\in \Alpha$.
Let $U'$ be a  closed curve  in $\Sigma$ 
which is isotopic to $\partial U$  
and satisfies $U'\cap \partial \Sigma=\{\star \}$.
Hence, by orienting $U'$ arbitrarily 
and by regarding it as a loop based at $\star$,
we can take $u:=[U']\in\Alpha$.

We shall use the same notation as in \S \ref{subsec:hif_surface}.
In particular, let $\bullet$ be a second base-point in $\partial \Sigma$.
Let $x\in \pi$ and let $X$ be a loop based at $\bullet$
such that $(\overline{\partial \Sigma})_{\star\bullet }\, X\, 
(\partial \Sigma)_{\bullet \star}$ represents $x$.
We assume that $X$ meets $U'$ transversely in finitely many double points,
which are numbered $1,\dots,n$,
and appear in this order along~$X$. 
Then we have
$$
(T_{\partial U}(x))\, x^{-1} = \Big[
\prod_{i=1}^n (\overline{\partial \Sigma})_{\star\bullet} \,
X_{\bullet i} \, (U')^{\varepsilon_i}_i \, \overline{X}_{i \bullet}\,
(\partial \Sigma)_{\bullet \star} \Big] \in \pi,
$$
where $\varepsilon_i=\varepsilon_i(U',X)\in\{-1,+1\}$ 
is the sign of the intersection of $U'$
and $X$ at $i$, and $(U')_i^{\varepsilon_i}$ denotes the loop
$U'$ based at $i$, with the opposite orientation if $\varepsilon_i=-1$.
Hence the $1$-cocycle $\tau_1^0(T_{\partial U})$ maps $[x]\in F$ to 
\begin{equation} \label{eq:result}
\sum_{i=1}^n \varepsilon_i \left[  
(\overline{\partial \Sigma})_{\star\bullet} \,
X_{\bullet i} \, U'_i \, \overline{X}_{i \bullet}\,
(\partial \Sigma)_{\bullet \star} \right] \in \AAlpha.    
\end{equation}
Besides, using now the notations of  \S \ref{subsec:hif_handlebody},
we have 
\begin{eqnarray*}
\langle [x], [u] \rangle \ = \ \varpi \big(\eta(x,u)\big) &=&
-\sum_{i=1}^n \varepsilon_i
\left[(\overline{\partial \Sigma})_{\star\bullet } 
X_{\bullet i}  U'_{i \star}\right] \in \Z[F].
\end{eqnarray*}
Thus, the action of $-\langle [x], [u] \rangle$ on $[u]$ gives \eqref{eq:result},
which completes the proof of~\eqref{eq:disk_twist}.
\end{proof}

\begin{remark}
Recall from Example \ref{ex:0_1} that 
$\tau_1^0: \mathcal{T} \to Z^1(F,\AAlpha)$
is equivalent  to the Magnus representation 
$\Mag=\Mag_1^0: \mathcal{T}\to \operatorname{Mat}(g\times g;\Z[F])$.
Tracing  the sequence of isomorphisms \eqref{eq:sequence_iso}
and taking into account Proposition \ref{prop::hermitian},
it is easily checked that, for any $f\in \mathcal{T}$ 
with Magnus representation  $M=(m_{ij})_{i,j}$, 
\begin{equation} \label{eq:tau_M}
\tau_1^d(f) = 
-\frac{1}{2} \sum_{i,j=1}^g a_i  
\textsf{---}\!\!\!\!\!\stackrel{m_{ij}}{\bullet}\!\!\!\!
\textsf{---}\!\!\!\!>\!\!\!\!\textsf{---}  a_j.
\end{equation}
For instance, for the Dehn twist along the boundary curve, 
Proposition \ref{prop:tau_1(disk_twist)} gives 
$$
\tau_1^d(T_{\partial \Sigma})
= -\frac{1}{2} [\zeta] \textsf{---} [\zeta]
\stackrel{\eqref{eq:zeta_ab}}{=} -\frac{1}{2} \sum_{i,j=1}^g  
\big((1-x_i^{-1})\cdot a_i\big)\textsf{-----} \big((1-x_j^{-1})\cdot a_j\big)
$$
and we recover our previous computation \eqref{eq:Magnus_boundary} 
of $\Mag(T_{\partial \Sigma})$.
\end{remark}

\subsection{Infiniteness results}

Proposition \ref{prop:tau_1(disk_twist)} is the key
for a new proof of a result of 
McCullough \cite[Th$.$~1.2]{McCullough}.

\begin{theorem}[McCullough] \label{th:McCullough}
If $g\geq 2$, 
then the twist group $\mathcal{T}$ surjects onto 
a free abelian group of countably-infinite rank.
In particular, $\mathcal{T}$ is not finitely generated.
\end{theorem}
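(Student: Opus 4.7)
The plan is to apply Proposition \ref{prop:tau_1(disk_twist)} to an infinite family of disk twists whose first Johnson images are $\Z$-linearly independent in the target $D_1^0 \simeq \mathrm{Sym}^2(\AAlpha)_{\Z[F]}$ (the identification being given in the example preceding Proposition~\ref{prop:tau_1(disk_twist)}).  Since $g\geq 2$ and $\AAlpha$ is a free $\Z[F]$-module with basis $(a_1,\dots,a_g)$, the abelian group $\mathrm{Sym}^2(\AAlpha)_{\Z[F]}$ admits a countably-infinite $\Z$-basis: normalizing the first factor of a symmetric tensor $(ua_i)\otimes(va_j)$ by the diagonal $F$-action, the classes $\{\,a_1\otimes(w\cdot a_2)\,\}_{w\in F}$ form an infinite $\Z$-linearly independent family.

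For each $n\in\Z$, I would construct a properly embedded disk $U_n\subset V$ by band-summing the disjoint meridional disks bounded by $\alpha_1$ and $\alpha_2$ along an arc on $\Sigma$ that runs $n$ times around the handle $\beta_1$ before reaching $\alpha_2$.  The boundary $c_n:=\partial U_n$ is a simple closed curve on $\Sigma$ whose homotopy class, with suitable basing and orientation, represents
$u_n:=\alpha_1\cdot\beta_1^n\alpha_2\beta_1^{-n}\in\Alpha$,
so that $[u_n]=a_1+x_1^n\cdot a_2$ in $\AAlpha$.

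Applying Proposition \ref{prop:tau_1(disk_twist)} and expanding in $\mathrm{Sym}^2(\AAlpha)$,
$$
\tau_1^0(T_{c_n})\,=\,-[u_n]\otimes[u_n]\,=\,-a_1\otimes a_1\,-\,2\,a_1\otimes(x_1^n\cdot a_2)\,-\,(x_1^n\cdot a_2)\otimes(x_1^n\cdot a_2).
$$
Using $(x_1^n\cdot a_2)\otimes(x_1^n\cdot a_2)\equiv a_2\otimes a_2$ in the $\Z[F]$-coinvariants (via $x_1^{-n}$), the differences
$$
\tau_1^0\big(T_{c_n}T_{c_0}^{-1}\big)\,=\,-2\,a_1\otimes\big((x_1^n-1)\cdot a_2\big)\,\in\,\mathrm{Sym}^2(\AAlpha)_{\Z[F]},\qquad n\in\Z\setminus\{0\},
$$
are $\Z$-linearly independent by the basis description of the first paragraph.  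Composing $\tau_1^0$ with projection onto the $\Z$-span of these elements then yields the desired surjection of $\mathcal{T}$ onto a free abelian group of countably-infinite rank, whence $\mathcal{T}$ is not finitely generated.

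The main obstacle will be justifying the band-sum construction: checking that $c_n$ is a simple closed curve on $\Sigma$, that $U_n$ is properly embedded in $V$, and that $c_n$ represents the claimed element of $\Alpha$.  This is a standard handlebody manipulation, but it requires care with basing arcs and orientations.
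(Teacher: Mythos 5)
Your overall strategy -- apply Proposition \ref{prop:tau_1(disk_twist)} (equivalently formula \eqref{eq:tau_1^0}) to an infinite family of disk twists and extract an infinite-rank free abelian quotient from the first Johnson homomorphism -- is exactly the mechanism of the paper's proof, and your linear-algebra in $\mathrm{Sym}^2(\AAlpha)_{\Z[F]}$ is fine. The genuine gap is the family of disks $U_n$: the band-sum you describe does not exist. For $U_n=D_1\cup\mathrm{band}\cup D_2$ to be embedded, the core of the band must be an arc from $\partial D_1$ to $\partial D_2$ whose interior misses both disks; but the $x_1$-exponent of the resulting conjugator $w$ (i.e.\ of $\varpi(w)\in F$) is the algebraic intersection number with $D_1$ of the loop obtained by closing the band core with the fixed basing arcs, and every piece of that loop is disjoint from $D_1$ up to a bounded correction. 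So $\varpi(w)$ can never be $x_1^n$ for large $|n|$: a band joining the two disks cannot wind around handle $1$, the very handle whose meridian disk is one of its ends. This is not a matter of ``care with basing arcs'': in the minimal case $g=2$, which the theorem must cover, there is no third handle to reroute the band around, and it is not even clear that $a_1+x_1^n\cdot a_2$ is the $\AAlpha$-class of \emph{any} meridian (non-separating meridian classes lie in the orbit of $a_1$ under the quasi-equivariant $\Aut(F)$-action of Proposition \ref{prop:Mag_Mag}, and e.g.\ handle slides of $D_1$ over $D_2$ through handle $1$ produce classes such as $a_1-(x_1+\cdots+x_1^n)\cdot a_2$ rather than $a_1+x_1^n\cdot a_2$).

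The repair is the choice of curves made in the paper: band together two oppositely-oriented parallel copies of the disk bounded by $\alpha_1$ along a band winding $n$ times around handle $2$ (this band avoids both disks), giving meridians $\gamma_n$ with class $x_1^{-1}\cdot a_1-x_2^n\cdot a_1$. Your coinvariant computation then goes through verbatim: the cross terms of $-[\gamma_n]\otimes[\gamma_n]$ give the pairwise distinct orbit-basis elements $a_1\otimes(x_1x_2^n\cdot a_1)$ (distinguished by $x_1x_2^n$ up to inversion), so the differences $\tau_1^0(T_{\gamma_n}T_{\gamma_0}^{-1})$ are $\Z$-independent. Finally, your last step should not invoke a ``projection onto the $\Z$-span'' -- such a retraction need not exist. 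Instead, note that $\mathrm{Sym}^2(\AAlpha)_{\Z[F]}$ is free abelian (the torsion-free group $F$ acts freely on the evident basis of symmetric tensors), so the image of $\tau_1^0\vert_{\modT}$ is itself free abelian, of countably infinite rank by your independence claim, and $\modT$ surjects onto it. The paper sidesteps both issues by composing the Magnus representation with $\Z[F]\to\Z[t^{\pm1}]$ and identifying the image of the resulting map $m:\modT\to\Z[t^{\pm1}]$ exactly with the free abelian subgroup generated by $1$ and the $t^n+t^{-n}$.
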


\begin{proof}
Let  $p:\Z[F] \to \Z[\Z]$ be the ring homomorphism
induced by the homomorphism $F\to \Z$ that maps every $x_i$ to $1$.
By reducing coefficients with $p$,
the Magnus representation induces a homomorphism
$\operatorname{Mag}^p: \mathcal{T} \to \operatorname{Mat}(g\times g;\Z[t^{\pm 1}])$.

The upper-left corner of $\operatorname{Mag}^p$ provides
a homomorphism $m:\mathcal{T} \to \Z[t^{\pm 1}]$. By Proposition \ref{prop::hermitian}, $m$ takes values in the subgroup
$S$ of $\Z[t^{\pm 1}]$ generated by $1$ and $t^n +t^{-n}$ for all $n\in \Z$.
It suffices to prove $m(\mathcal{T})=S$. 

For any $n\in \Z$, there is a simple closed
curve $\gamma_n$ in $\Sigma$, whose homotopy class 
(for an appropriate orientation and basing at $\star$) is of the form
$$
\alpha'_1 z_n \alpha_1^{-1} z_n^{-1} \in \Alpha \subset \pi,
$$
where $\alpha_1,\alpha'_1$ are the loops shown in \eqref{eq:loops}, 
and $z_n\in \pi$ satisfies $\varpi(z_n)= x_2^n\in F$. 
By Proposition \ref{prop:tau_1(disk_twist)}, we have 
$$
\tau_1^d(T_{\gamma_n}) = -\frac{1}{2}
(x_1^{-1}\cdot a_1- x_2^n\cdot a_1) \textsf{---} (x_1^{-1}\cdot a_1- x_2^n\cdot a_1),
$$
and we deduce from \eqref{eq:tau_M} that 
$$
m(T_{\gamma_n})= (t-t^{-n})(t^{-1}-t^n) = 2 - t^{n+1} -t^{-n-1}.
$$
Besides, by considering now the curve $\alpha_1$, we obtain
$$
\tau_1^d(T_{\alpha_1}) = -\frac{1}{2} a_1 \textsf{---} a_1, \quad
\hbox{hence } m(T_{\alpha_1})= 1.
$$
Thus, products of Dehn twists (and their inverses) along the curves $\alpha_1$
and $\gamma_n$ (for $n\in \Z$) realize any element of $S$.
\end{proof}

\begin{remark}
Some arguments similar to the proof of Theorem \ref{th:McCullough} 
show that  
$\operatorname{Mag}^p: \mathcal{T} \to \operatorname{Mat}\big(g\times g;\Z[t^{\pm 1}]\big)$
is surjective onto the subgroup of hermitian matrices, for $g\geq 3$.
In fact, the above proof 
is  inspired by the arguments in \cite{McCullough}, 
which involve an infinite cyclic cover of the handlebody~$V$.
\end{remark}

We now explain how the Johnson homomorphisms \eqref{eq:tau_k^d}
of higher degrees for the handlebody group 
can be used to obtain
further infiniteness results on subquotients of the twist group.
Let $\overline{\modT}_+$ be the graded Lie algebra associated 
with the lower central series of $\modT$,
and recall that $\overline{\modH}_+$ denotes the graded Lie algebra
associated with the Johnson filtration of $\modH$.

\begin{theorem} \label{th:infiniteness}
Assume that $g\geq 3$.
There exists a subgroup $\modL$ of $\modT$
which is free of countably-infinite rank and 
whose associated graded $\overline{\modL}_+=\Lie(\modL_{\operatorname{ab}})$
(with respect to its lower central series)
embeds both into $\overline{\modT}_+$ and $\overline{\modH}_+$.
Consequently, for every $k\geq 1$, 
 the abelian groups  $\Gamma_k\modT/\Gamma_{k+1}\modT$ and 
$\modH_k /\modH_{k+1}$ are of infinite rank.
\end{theorem}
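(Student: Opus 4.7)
The plan is to construct $\modL$ as the subgroup of $\modT$ generated by a carefully chosen infinite family of disk twists $T_{\gamma_n}$ ($n\geq 1$), and to use the diagrammatic calculus of Section \ref{sec:diagrams} as the main tool for detecting freeness. By Proposition \ref{prop:tau_1(disk_twist)}, each $\tau_1^d(T_{\gamma_n})$ is of the explicit form $-\tfrac{1}{2}[u_n]\,\textsf{---}\,[u_n]\in\modD_1$, where $u_n\in\Alpha$ represents a based lift of $\gamma_n$. Exploiting $g\geq 3$, I would take $\gamma_n$ to be boundaries of embedded disks whose classes $u_n$ all involve a common meridional generator $a_1$ but are translated by increasing powers of distinct elements of $F$ supported on the remaining handles, in the spirit of the family used in the proof of Theorem \ref{th:McCullough}. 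Having at least three handles is what supplies the geometric room needed to make the intersection data $\Psi(u_n,u_m)$ behave independently for distinct pairs $(n,m)$.

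The crucial step is then to prove that the family $e_n:=\tau_1^d(T_{\gamma_n})$ freely generates a free Lie subalgebra of $\modD$ for the bracket given by Theorem \ref{th:tree_bracket}. I would compute iterated brackets $[e_{i_1},[e_{i_2},[\dots,e_{i_k}]\dots]]$ via the grafting--branching formula \eqref{eq:tree_bracket} and show inductively on $k$ that these are linearly independent oriented trees in $\modD_k$. This linear independence is the main obstacle: each bracket produces a sum of many diagrams coming from all (leaf, leaf) and (bead, leaf) pairs, and one must verify that no cancellation spoils the expected independence. The explicit linking patterns of the $\gamma_n$, visible through the intersection operations $\Psi$ and $\Theta$ coming from the homotopy intersection form of $\Sigma$, should leave a distinct combinatorial fingerprint at each bracketing step.

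Once the freeness of this Lie subalgebra is established, the standard theory of Baker--Campbell--Hausdorff groups of complete filtered Lie algebras implies that elements of $\widehat{D}_+^\zeta(\ol{\AAlpha}_\bu^\Q)$ whose logarithms have leading terms that freely generate a free Lie subalgebra themselves freely generate a free subgroup under the BCH product. Applied via the injective homomorphism $\varrho^\theta$ of Theorem \ref{thm:varrho_HH}, this shows that the $T_{\gamma_n}$ freely generate a free subgroup $\modL\leq \modT$ of countably-infinite rank, and that $\varrho^\theta$ detects the lower central series of $\modL$ degree-by-degree.

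Finally, the inclusions $\Gamma_k\modL\subset\Gamma_k\modT\subset \modH_k$ (the second one holding because the Johnson filtration of $\modH$ is an N-series with $\modH_1=\modT$) induce morphisms of graded Lie algebras $\ol{\modL}_+\to \ol{\modT}_+\to \ol{\modH}_+$, whose injectivity on $\ol{\modL}_+$ is guaranteed by composing with $\ol{\tau}_+$ and using the diagrammatic version \eqref{eq:tau_k^d}: the composition sends the generator of degree $k$ coming from an iterated commutator of the $T_{\gamma_n}$'s to the corresponding iterated Lie bracket of the $e_n$'s, which is nonzero by freeness. Since $\ol{\modL}_+=\Lie(\modL_\ab)$ with $\modL_\ab$ free abelian of countably infinite rank, every homogeneous component is itself of infinite rank, whence the stated infiniteness of $\Gamma_k\modT/\Gamma_{k+1}\modT$ and $\modH_k/\modH_{k+1}$ for every $k\geq 1$.
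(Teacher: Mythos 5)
Your overall architecture (detect freeness diagrammatically in degree $1$, push iterated commutators through the graded Lie morphism $\overline{\tau}_+$, then transfer freeness back to the group) is the same as the paper's, but the proposal leaves the actual heart of the theorem unproved. The crucial step you flag yourself as ``the main obstacle'' --- that the elements $e_n=\tau_1^d(T_{\gamma_n})$ generate a \emph{free} Lie subalgebra of $\modD$, i.e$.$ that no cancellation occurs among the many grafting/branching terms of \eqref{eq:tree_bracket} in iterated brackets --- is exactly the point where a concrete mechanism is needed, and ``a distinct combinatorial fingerprint'' is not an argument. Moreover your choice of generators makes this harder than necessary: a single disk twist always has $\tau_1^d$ of the symmetric form $-\tfrac12\,[u]\,\textsf{---}\,[u]$, and brackets of such ``squares'' produce a large mixture of terms whose independence in $\modD$ is not at all transparent.

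The paper circumvents both difficulties by two devices you are missing. First, it does not use single disk twists: it takes $\ell_n:=T_{\upsilon_n}^{-1}T_{\alpha_1}T_{\alpha_2}$, a product of three twists chosen so that $\tau_1^d(\ell_n)$ is a single strut with one leaf $a_1$, one leaf $a_2$ and a bead $x_3^{-n}$ (this is where $g\geq 3$ enters, via the third handle). Second, instead of proving independence in $\modD$ itself, it projects to the quotient $\modD/\modR$, where $\modR_k\subset\modD_k$ is spanned by trees having at least two leaves colored in $\Q[F]\cdot a_1$; using $\Psi(a_i,a_j)=\delta_{ij}\otimes a_i$ and $\langle x_i,a_j\rangle=-\delta_{ij}$, every iterated bracket $w(\tau_1^d(\ell_{n_1}),\dots,\tau_1^d(\ell_{n_k}))$ is congruent modulo $\modR_k$ to the literal free Lie word $w(x_3^{n_1}\cdot a_2,\dots,x_3^{n_k}\cdot a_2)$ inside a copy of $\Lie(S)$ with $S=\Z[x_3^{\pm}]\cdot a_2$, whose freeness is standard. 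This reduction is what makes the no-cancellation claim provable; without it (or some substitute), your proof does not go through. Finally, for the freeness of $\modL$ you can avoid the BCH argument: once $\overline{\iota}_+:\Lie(L_{\mathrm{ab}})\to\overline{\modH}_+$ is injective, injectivity of $L\to\modH$ follows from residual nilpotence of the free group $L$, which is the route the paper takes.
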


\begin{proof}
Having fixed a system of curves $(\alpha,\beta)$ 
as in \eqref{eq:(a,b)}, let $a_i=[\alpha_i]\in \AAlpha$
and $x_i=\varpi(\beta_i)\in F$ for $i\in \{1,\dots,g\}$.
We start by constructing, for all $n\in \Z$, 
an element $\ell_n\in\modT$ satisfying
\begin{equation} \label{eq:tau_un}
\tau_1^d(\ell_n) = a_2 
\textsf{---}\!\!\!\!\!\stackrel{x_3^{-n}}{\bullet}\!\!\!\!
\textsf{---}\!\!\!\!>\!\!\!\!\textsf{---}a_1.
\end{equation}
Indeed, there is a simple closed
curve $\upsilon_n$ in $\Sigma$ whose homotopy class 
(for an appropriate orientation and basing at $\star$)
is
$$
 z_n \alpha_2 z_n^{-1} \alpha_1 \in \Alpha \subset \pi,
$$
where $z_n\in \pi$ satisfies $\varpi(z_n)= x_3^n\in F$. 
By Proposition \ref{prop:tau_1(disk_twist)}, we have
$$
\tau_1^d(T_{\upsilon_n}) = -\frac{1}{2}
(x_3^n \cdot a_2+a_1) \textsf{---} (x_3^n \cdot a_2+a_1).
$$
Thus, the element 
$\ell_n := T_{\upsilon_n}^{-1} T_{\alpha_1}  T_{\alpha_2}$
of $\modT$ has the desired property.
Let $\modL$ (resp$.$  ${L}$)
be the subgroup of $\modT$ (resp$.$ the free group)
generated by $\{\ell_n\,\vert \,n\in \Z\}$.
We denote by $\iota: {L} \to \modH$
the obvious homomorphism (with image $\modL$).

For every $k\geq 1$,
let $\modR_k$ be the subspace of $\modD_k$ generated by trees (of degree $k$)
with at least two leaves colored by elements of $\Q[F]\cdot a_1$,
and let $\modR:=\bigoplus_{k\ge1}\modR_k$. 
Viewing the Laurent polynomial ring 
$\Z[x_3^{\pm}]$ as a subring of $\Z[F]$, let
$$
S:= \Z[x_3^{\pm}]\cdot a_2
$$
be the $\Z[x_3^{\pm}]$-submodule of $\AAlpha$ generated by $a_2$.
By considering trees with a single leaf colored by $a_1$, no beads
and all other leaves colored by elements of $S\otimes \Q$,
and by viewing the unique $a_1$-leaf of such trees as a ``root'',
the $\Q$-vector space $\modD$ is seen
to contain one copy of the free Lie algebra $\Lie(S)$ on $S$. 
Besides, $\Lie(S)$ can also be viewed as a subspace of $\modD/\modR$ 
via the canonical projection $p:\modD \to \modD/\modR$.

Let $w$ be a non-associative word of length $k\geq 1$ in the single letter $\bullet$,
and let $n_1,\dots, n_k \in \Z$.
Interpreting $w$ as an iterated commutator, it defines an element
$$
W:= w(t_{n_1},\dots, t_{n_k}) \in  \Gamma_k \modL \subset \modH_k
$$
and, 
interpreting $w$ as in iterated Lie bracket, 
it also defines an element 
$$
W':= 
w\big(\tau_1^d(t_{n_1}),\dots, \tau_1^d(t_{n_k})\big) \in \modD_k.
$$
Since \eqref{eq:gr_gr} is a morphism of graded Lie algebras,
we have $\tau_k^d(W)=W'$. Furthermore, an easy computation based on
\eqref{eq:tau_un}, using Theorem \ref{th:tree_bracket} and the formulas 
$$
\Psi(a_i,a_j)=\delta_{ij} \otimes a_i, \quad 
\langle x_i, a_j\rangle =- \delta_{ij}
\qquad (i,j\in \{1,\dots,g\}),
$$
shows that
$$
W'\equiv \underbrace{w(x_3^{n_1}\cdot a_2, \dots, 
x_3^{n_k} \cdot a_2)}_{\in \Lie_k(S) 
\subset \modD_k} \mod \modR_k.
$$

Since the integer $k\geq 1$ and the non-associative word $w$
are arbitrary in the above paragraph,
 we have proved that the following diagram is commutative:
$$
\xymatrix{
\overline{L}_+ \ar[d]_-\simeq
\ar[r]^{\overline{\iota}_+} & \overline{\modH}_+ \ar[r]^{\overline{\tau}_+ } 
&  \modD \ar@{->>}[d]^-p \\
\Lie(S) \ar@{^{(}->}[rr]&& \modD/\modR
}
$$
Here, the isomorphism between $\overline{L}_+=\Lie(L_{\operatorname{ab}})$
and $\Lie(S)$ identifies each generator $\ell_n$ of the free group $L$
with the element $x_3^n\cdot a_2$ of $S$.
Thus,  the free Lie algebra
$\overline{L}_+$ embeds into $\overline{\modH}_+$
and, similarly, we can show that $\overline{L}_+$ 
embeds into $\overline{\modT}_+$.

It remains to observe that $\iota: L \to \modH$ is injective 
(so that $\modL$ is free). 
This follows  from the injectivity of $\overline{\iota}_+$
and the fact that $L$ is residually nilpotent.
\end{proof}

\begin{remark}
It is likely that Theorem \ref{th:infiniteness} can be extended 
to the genus $g=2$ case by considering the curves $\gamma_n$ 
(which were used in the proof of Theorem \ref{th:McCullough})
instead of the curves $\upsilon_n$.
\end{remark}

\subsection{An analogue of the Kawazumi--Kuno formula}

Proposition \ref{prop:tau_1(disk_twist)}
is fully generalized by the following result,
where the isomorphism of Lemma \ref{lem:T_tilde}
is implicit.

\begin{theorem} \label{th:KK_analogue}
For any special expansion $\theta$ 
of the free pair $(\pi,\AAlpha)$ 
and for any properly embedded disk $U \subset V$, we have
\begin{equation} \label{eq:analogue_KK}
\varrho^\theta(T_{\partial U})
= -\eta\Big( \frac{1}{2} \log \theta(u) \textsf{---} \log \theta(u)\Big),
\end{equation}
where $u\in \Alpha$ is the homotopy class of 
the closed curve $\partial U$  (with an arbitrary orientation
and basing at $\star$).
\end{theorem}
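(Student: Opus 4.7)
The plan is to pass through the associative Hopf algebra side. By Lemma~\ref{lem:canonical_isos}, a derivation of $\widehat U(\overline{\Alpha}_\bu^\Q)$ that preserves the Hopf structure, sends $\AAlpha^\Q$ into $\widehat{\overline{\Alpha}}_{\geq 2}^\Q$, and is compatible with the coset structure over $\Q[F]$, is determined by an element of $\widehat D_+^\zeta(\overline{\Alpha}_\bu^\Q)$, and conversely. Via $\log$, the automorphism $\rho^\theta(T_{\partial U})=\hat\theta\circ\widehat{\Q[T_{\partial U}]}\circ\hat\theta^{-1}$ thus corresponds to $\varrho^\theta(T_{\partial U})$. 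Let $v:=\log\theta(u)=\ell^\theta(u)\in\widehat\Lie(\AAlpha^\Q)$, and let $\delta$ be the derivation of $\widehat U(\overline{\Alpha}_\bu^\Q)$ that corresponds, via Lemma~\ref{lem:T_tilde} and Propositions~\ref{prop:iso_D0}~and~\ref{prop:action}, to the tree $-\tfrac12\, v\,\textsf{---}\, v$. The goal is to prove $\log(\rho^\theta(T_{\partial U}))=\delta$.

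First, I would check the formula in degree~$1$. The degree~$1$ part of $-\tfrac12\, v\,\textsf{---}\, v$ is $-\tfrac12 [u]_1 \textsf{---}[u]_1$, which by Proposition~\ref{prop:tau_1(disk_twist)} equals $\tau_1^d(T_{\partial U})$. Since the leading term of $\varrho^\theta(T_{\partial U})$ for a twist group element is $\tau_1$ (Theorem~\ref{thm:varrho_HH}), this pins down the lowest-degree part. The rest of the proof matches the two derivations of $\widehat U(\overline{\Alpha}_\bu^\Q)$ by testing them on the generating set $\{\theta(y)\mid y\in\pi\}$.

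Second, I would write out the Dehn-twist side. Choosing a loop $Y$ representing $y$ and transverse to $\partial U$, with intersections $p_1,\ldots,p_n$ of signs $\varepsilon_1,\ldots,\varepsilon_n$, the same geometric argument used in the proof of Proposition~\ref{prop:tau_1(disk_twist)} gives
\[
T_{\partial U}(y) \;=\; \prod_{i=1}^n {}^{Y_{\star p_i}}\!\bigl(u^{\varepsilon_i}\bigr)\cdot y,
\]
where each $Y_{\star p_i}$ is the arc of $Y$ up to $p_i$, suitably based. Applying the monoid homomorphism $\theta$ and using the grouplikeness of $\theta(u^{\varepsilon_i})=\exp(\varepsilon_i v)$ (which lies in $\widehat T(\AAlpha^\Q)\otimes 1$ since $u\in\A$), the product $\theta(T_{\partial U}(y))\cdot\theta(y)^{-1}$ becomes an ordered product of conjugates of $\exp(\varepsilon_i v)$ by $\theta(Y_{\star p_i})$. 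The key algebraic step is to rewrite this ordered product in the form $\exp(\delta)(\theta(y))\cdot\theta(y)^{-1}$, which, upon taking $\log$ and using BCH, amounts to identifying $\delta(\theta(y))\cdot\theta(y)^{-1}$ with $\sum_i \mathrm{Ad}_{\theta(Y_{\star p_i})}(\varepsilon_i v)$ modulo higher-order BCH corrections.

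Third, I would match this with the algebraic side by evaluating $\delta$ on $\theta(y)$ via the ``contract against the two leaves'' description of Proposition~\ref{prop:action}. The essential combinatorial identity is that
\[
\Psi([u]_1,[y]_1) \;=\; -\sum_i \varepsilon_i\, \varpi(Y_{\star p_i})\otimes {}^{Y_{\star p_i}}[u]_1\quad\text{(or $\langle-,-\rangle$ when $y\notin\A$)},
\]
which is the geometric content of the homotopy intersection form recalled in Appendix~\ref{sec:intersection} and the definition of $\Psi$ and $\Theta$. Plugging this into the formula \eqref{eq:d(a)} for $\delta$ (extended to $\widehat U$), the sum unfolds into $\sum_i \mathrm{Ad}_{\theta(Y_{\star p_i})}(\varepsilon_i v)$, matching the previous step. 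The factor $-\tfrac12$ in front of $v\,\textsf{---}\,v$ is absorbed by the symmetry: both leaves contribute identically, doubling the sum, while the antisymmetry of $\Psi$ (via \eqref{eq:kind_of_sym}) together with the $[\lambda,\lambda]$-like cancellations picks up the correct sign.

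The main obstacle, and the part deserving the most care, is the \emph{all-degree} matching: the ordered product on the Dehn-twist side is not literally equal to $\exp(\delta)(\theta(y))$ term by term, but only after BCH expansion and repeated commutator identities. Concretely, one must check that iterating $\delta$ reproduces, at each degree, the multi-fold intersection contributions coming from BCH-combining the grouplike conjugates above. This is the analogue of the Kawazumi--Kuno argument in \cite{KK14}, and the special condition $\ell^\theta(\zeta)=[\zeta]_1$ will intervene only to guarantee that the resulting derivation lies in the $\zeta$-annihilating subspace $\widehat D_+^\zeta$, not in the actual matching step. Once the equality $\log(\rho^\theta(T_{\partial U}))=\delta$ is established on all $\theta(y)$, it extends by continuity and multiplicativity to the whole algebra, proving \eqref{eq:analogue_KK}.
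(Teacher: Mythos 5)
Your outline is a plausible \emph{direct} strategy, but it has a genuine gap exactly where the theorem's content lies. The degree-$1$ check plus the geometric formula $T_{\partial U}(y)=\prod_i {}^{Y_{\star p_i}}(u^{\varepsilon_i})\cdot y$ only pin down the leading term; the whole point of \eqref{eq:analogue_KK} is the identity in \emph{all} degrees, and this is precisely the step you defer with ``one must check that iterating $\delta$ reproduces, at each degree, the multi-fold intersection contributions coming from BCH-combining the grouplike conjugates.'' That verification is not routine: when you expand the ordered product of conjugates $\mathrm{Ad}_{\theta(Y_{\star p_i})}(\exp(\varepsilon_i v))$ and compare with $\exp(\delta)(\theta(y))$, the higher iterates of $\delta$ act on the already-inserted copies of $v$ and on the beads coming from the arcs $Y_{\star p_i}$, and matching these against the BCH corrections amounts to re-proving the Kawazumi--Kuno theorem from scratch in the beaded/handlebody setting. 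Calling it ``the analogue of the Kawazumi--Kuno argument'' names the difficulty without resolving it; also note that matching two automorphisms on the generators $\theta(y)$ ``up to higher BCH corrections'' does not determine them, since many distinct elements of $\widehat D_+^\zeta$ share the same low-degree truncation.

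The paper avoids redoing that computation altogether. It first shows (via the action of $\operatorname{IAut}^\zeta$ on $\widehat{\mathcal D}$, Lemma \ref{lem:actions}) that the validity of \eqref{eq:analogue_KK} is independent of the choice of special expansion, and then constructs in Lemma \ref{lem:special_to_symplectic} one particular special expansion $\theta$ together with an \emph{injective} algebra map $\digamma$ such that $\tilde\theta=\digamma\circ\theta$ is a symplectic expansion of $\pi$; here the special condition $\ell^\theta(\zeta)=[\zeta]_1$ is used essentially to get the symplectic condition on $\tilde\theta$, not merely to ensure the target is $\widehat D_+^\zeta$ as you suggest. The known Kawazumi--Kuno formula \cite{KK14} is then invoked for $\tilde\theta$, and the result is transported back through $\digamma$ (the $f(u)=u/(\exp(u)-1)$ bookkeeping on leaves and beads), with injectivity of $\digamma$ concluding the argument. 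If you want to keep your direct route, you would need to supply the full all-degree induction replacing the citation of \cite{KK14}; as written, the proposal does not prove the theorem.
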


The proof involves the Kawazumi--Kuno formula 
for the logarithms of Dehn twists  \cite{KK14}.
To derive \eqref{eq:analogue_KK} from the  Kawazumi--Kuno formula,
we first need to relate precisely the notion of ``special expansion''
for the free pair $(\pi,\Alpha)$ 
to the notion of ``symplectic expansion'' for the free group $\pi$.
Recall from \cite{Mas12} that an \emph{expansion} of the free group $\pi$
is a monoid homomorphism $\tilde\theta: \pi \to \widehat{T}(H^\Q)$, with values
in the degree-completed tensor algebra on $H^\Q:=H_1(\pi;\Q)$, such that
$\tilde \theta(x)$ is group-like for every $x\in \pi$
and satisfies $\log \tilde \theta(x)=[x]+(\deg \geq 2)$.
Furthermore, 
the expansion $\tilde\theta$ is said to be \emph{symplectic} if 
\begin{equation}
\label{eq:symplectic_tilde}
\tilde\theta(\zeta)= \exp([\zeta]_2),
\end{equation}
where 
$[\zeta]_2\in \Gamma_2\pi /\Gamma_3 \pi \simeq \Lambda^2 H^\Q$
is regarded as a tensor of degree $2$.

\begin{lemma} \label{lem:special_to_symplectic}
There exists a  special expansion 
$\theta: \pi \to   \hat{T}(\AAlpha^\Q) \otimes_\Q \Q[F]$ of $(\pi,\Alpha)$
and  an injective $\Q$-algebra map
$\digamma: {T}(\AAlpha^\Q) \hat\otimes_\Q \Q[F] \to \widehat{T}(H^\Q)$
such that 
$\tilde \theta := \digamma \circ \theta$ is a symplectic expansion of $\pi$.
\end{lemma}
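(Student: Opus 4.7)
The plan is to give an explicit formula for $\digamma$ and, for a carefully chosen special expansion $\theta$ coming from Lemma~\ref{lem:special}, to verify that $\tilde\theta := \digamma \circ \theta$ is a symplectic expansion of $\pi$.

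First, I would define $\digamma$ on generators by
$$
\digamma(a_i) := \frac{\ad(b_i)}{1 - e^{-\ad(b_i)}}(a_i), \qquad \digamma(x_j) := \exp(b_j) \qquad (i,j \in \{1,\dots,g\}),
$$
where $x/(1 - e^{-x}) = 1 + x/2 + x^2/12 + \cdots$ is the classical Bernoulli series. Since $\AAlpha^\Q$ is a free $\Q[F]$-module on $a_1,\dots,a_g$, this determines a unique $\Q$-linear map $\mu: \AAlpha^\Q \to \widehat T(H^\Q)$ via the forced rule $\mu({}^f a) := \digamma(f)\,\mu(a)\,\digamma(f)^{-1}$, which extends to an algebra map on $T(\AAlpha^\Q)$ and combines with the multiplicative extension of $x_j \mapsto \exp(b_j)$ on $\Q[F]$ to yield $\digamma: T(\AAlpha^\Q) \hat\otimes \Q[F] \to \widehat T(H^\Q)$; compatibility with the twisted multiplication~\eqref{eq:mult_law} is precisely the conjugation formula for $\mu$.

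Next, I would take $\theta$ as in the proof of Lemma~\ref{lem:special}, with the additional constraint that the parameters of~\eqref{eq:degree_one} satisfy $r_{2j-1} + r_{2j} = 1/2$ for each $j \in \{1,\dots,g\}$. A direct computation using~\eqref{eq:theta2}, \eqref{eq:degree_one} and the identity $[\alpha'_k] = a_k \in H^\Q$ shows that this constraint forces the degree-$1$ part of $\log \tilde\theta(\beta_j)$ to equal $b_j$, whence $\tilde\theta$ is an expansion of $\pi$. The symplecticness of $\tilde\theta$ then follows from an automatic calculation: by the special-expansion property~\eqref{eq:symplectic}, $\theta(\zeta) = \exp([\zeta]_1) \otimes 1$ with $[\zeta]_1 = \sum_i (1 - x_i^{-1}) \cdot a_i$, so
$$
\log \tilde\theta(\zeta) = \mu([\zeta]_1) = \sum_i \bigl(1 - e^{-\ad(b_i)}\bigr)(\digamma(a_i)) = \sum_i \ad(b_i)(a_i) = \sum_i [b_i, a_i] = [\zeta]_2,
$$
where the key cancellation uses the identity $(1 - e^{-x}) \cdot x/(1 - e^{-x}) = x$.

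Injectivity of $\digamma$ would be handled last, by a filtered argument. Using the decomposition $\AAlpha^\Q \simeq \Q[F] \otimes_\Q A^\Q$ (where $A^\Q$ is the $\Q$-span of $a_1,\dots,a_g$), I would filter the source by powers of the augmentation ideal $I_F$ in $\Q[F]$ combined with the tensor degree in $T(\AAlpha^\Q)$; then $\digamma$ increases this total filtration to the usual tensor degree on $\widehat T(H^\Q)$, and on associated gradeds a nonzero element $(x_{j_1} - 1) \cdots (x_{j_q} - 1) \cdot a_i$ maps to the iterated bracket $[b_{j_1}, [b_{j_2}, \dots, [b_{j_q}, a_i]\cdots]]$ in $T^{q+1}(H^\Q)$, and these are linearly independent in the tensor algebra. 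The main technical obstacle lies in this last step, since the naive degree-$k$ part of $\digamma$ on $T^k(\AAlpha^\Q)$ has a substantial kernel coming from the abelianization $\AAlpha^\Q \to A^\Q$, and one must carefully identify the correct leading degree in $\widehat T(H^\Q)$ where a given nonzero element is first detected via iterated brackets with the $b_j$'s arising from the adjoint action of $\digamma(\Q[F])$.
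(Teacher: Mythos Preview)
Your approach is essentially the same as the paper's: the same formula for $\digamma$ (the paper writes it as $\frac{-\mathrm{ad}_{b_i}}{\exp(-\mathrm{ad}_{b_i})-\id}$), the same construction of $\theta$ from Lemma~\ref{lem:special} with a constraint on the free parameters $r_i$, and the same symplecticity calculation via the Bernoulli-series cancellation. Your constraint $r_{2j-1}+r_{2j}=1/2$ is exactly what is needed and is slightly more general than the paper's specific choice; the paper instead forces the degree-one parts of $u_i$ and $u'_i$ to coincide already in $\mathbb{D}^\Q$. One omission: you should also verify that $\log\tilde\theta(\alpha_i)$ has degree-one part $a_i$ and that $\tilde\theta$ is group-like-valued, though both follow easily once you observe that $\digamma$ sends primitives to primitives and group-likes to group-likes.

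The real divergence is in the injectivity argument, where you correctly flag the obstacle but do not resolve it. The paper's trick is to first compose with the filtered automorphism $P$ of $\widehat T(H^\Q)$ defined by $P(b_i)=b_i$ and $P(a_i)=\big(\frac{e^{-\mathrm{ad}_{b_i}}-1}{-\mathrm{ad}_{b_i}}\big)(a_i)$, so that $\digamma':=P\circ\digamma$ satisfies simply $\digamma'(a_i)=a_i$ and $\digamma'(x_i)=\exp(b_i)$. Then, rather than arguing at the associated-graded level (where, as you note, the completion mismatch causes trouble), the paper constructs an explicit left inverse $\Upsilon:\widehat T(H^\Q)\to T(\AAlpha^\Q)\,\widehat{\hat\otimes}\,\Q[F]$ into the \emph{double} completion (with respect to your filtration $V_n$) by $\Upsilon(a_i)=a_i$, $\Upsilon(b_i)=\log(x_i)$, and reduces the problem to the injectivity of the canonical map $\rho$ from the single completion $T(\AAlpha^\Q)\,\hat\otimes\,\Q[F]$ into the double completion. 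That last step is a direct argument using the separatedness of the $I_F$-adic filtration on $\Q[F]$. This bypasses entirely the need to track ``correct leading degrees'' of general elements.
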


\begin{proof}
We consider the special expansion $\theta$ of $(\pi,\Alpha)$
that appears in the proof of Lemma \ref{lem:special}
starting from a special expansion $\theta_0$ of the free group~$D$.
We shall follow the notations of this proof,
but we will not specify the algebra map 
$q:  \widehat{T}(\mathbb{D}^\Q) \to  \widehat{T}(\AAlpha^\Q)$
when using it. 
Set  $a_i:=[\alpha_i] \in H^\Q$ and
$b_i:=[\beta_i] \in H^\Q$ for  $i\in \{1,\dots,g\}$.

Let
$\digamma: {T}(\AAlpha^\Q) \otimes_\Q \Q[F] \to \widehat{T}(H^\Q)$
be the $\Q$-algebra homomorphism defined by
\begin{equation}
\label{eq:digamma}
\digamma(a_i) := \Big(\frac{-\mathrm{ad}_{b_i}}{\exp(-\mathrm{ad}_{b_i})-\id}\Big)(a_i),
\quad \digamma(x_i) := \exp(b_i)    
\end{equation}
for all $i\in \{1,\dots,g\}$. 
Since $\digamma$ preserves the degree-filtrations,
it extends continuously to a complete $\Q$-algebra homomorphism
$\digamma: {T}(\AAlpha^\Q) \hat \otimes_\Q \Q[F] \to \widehat{T}(H^\Q)$.
We claim that, for an appropriate choice of $\theta_0$, the map
$\tilde \theta := \digamma \circ \theta$ is a symplectic expansion of $\pi$:
\begin{itemize}
\item[(i)] We have
\begin{eqnarray*}
\tilde \theta(\alpha_i)
&\stackrel{\eqref{eq:theta1}}{=}&
\digamma( \exp (u_i) \exp(a_i)  \exp (-u_i) \otimes 1) \\
&=& \digamma( \exp\big(\exp(\mathrm{ad}_{u_i}) (a_i)\big)  \otimes 1)\\ 
& = &  \exp\big(\exp(\mathrm{ad}_{\digamma(u_i)}) (\digamma(a_i))\big)  \otimes 1;
\end{eqnarray*}
since $u_i$ is primitive 
and since $\digamma$ preserves the primitive parts, 
the element $\tilde \theta(\alpha_i)$  of $\widehat{T}(H^\Q)$ is group-like;
besides, the above formula shows that  $\log \tilde \theta(\alpha_i)$ starts
like $\digamma(a_i)$ with $a_i$ in degree $1$. 
\item[(ii)] We have 
\begin{eqnarray*}
\tilde \theta(\beta_i)
&\stackrel{\eqref{eq:theta2}}{=}&
\digamma\big(\exp(u_i) \exp(-({}^{x_i}u'_i))\otimes x_i\big)\\
&=& \exp(\digamma(u_i)) \exp({b_i}) \exp(-\digamma(u_i'));
\end{eqnarray*}
by the same argument as in (i), the tensors $\digamma(u_i)$ and $\digamma(u'_i)$
are primitive,  so that the element $\tilde \theta(\beta_i)$  is group-like;
besides,
according to \eqref{eq:degree_one}, one can choose the special expansion $\theta_0$
so that the degree-one part of $u_i$ is 
$$
\frac{1}{2} a_i+\frac{1}{2}\sum_{j>i}(-a'_j+a_j)
$$
and is equal to the degree-one part of $u_i'$; 
therefore, the above formula shows that  $\log \tilde \theta(\beta_i)$ starts
with $b_i$ in degree $1$.
\item[(iii)] It follows from (i) and (ii) that $\tilde \theta$ is an expansion of $\pi$.
\item[(iv)] It remains to verify the symplectic condition \eqref{eq:symplectic_tilde}:
\begin{eqnarray*}
\tilde\theta(\zeta) & \stackrel{\eqref{eq:symplectic}}{=}& 
\digamma\Big(\sum_{i=1}^g \big(a_i -(a_i)^{x_i}\big)\Big) \\
&=& \sum_{i=1}^g \big( \digamma(a_i) - \exp(-b_i)\, \digamma(a_i) \exp(b_i)\big) \\
&=& \sum_{i=1}^g  \big(\id -\exp(-\mathrm{ad}_{b_i})\big) (\digamma(a_i))
\ = \ - \sum_{i=1}^g [a_i,b_i].
\end{eqnarray*}
\end{itemize}

We now prove the injectivity of $\digamma$. 
Let $P: \widehat T(H^\Q) \to \widehat T(H^\Q)$ be the endomorphism
of complete $\Q$-algebras defined by 
$$
P(a_i) = 
\Big(\frac{\exp(-\mathrm{ad}_{b_i})-\id}{-\mathrm{ad}_{b_i}}\Big)(a_i), \qquad
P(b_i)=b_i
$$
for all $i\in \{1,\dots,g\}$. Since $P$ induces the identity at the graded level,
it is an isomorphism. Thus, the injectivity of $\digamma$
is equivalent to the injectivity of the map $\digamma' := P \circ \digamma$, 
which is given by $\digamma'(a_i)=a_i$ and $\digamma'(x_i)=\exp(b_i)$.

Let $I=I_F$ denote the augmentation ideal of $\Q[F]$. Consider the filtration $(V_n)_{n\geq 0}$ 
on $T(\AAlpha^\Q) \otimes_\Q \Q[F]$ induced by the degree-filtration on  ${T}(\AAlpha^\Q)$
and the $I$-adic filtration on $\Q[F]$:
$$
V_n = \sum_{i+j=n} T^{\geq i}(\AAlpha^\Q) \otimes_\Q I^j, \ n\geq 0.
$$
Let $T(\AAlpha^\Q) \widehat{\hat{\otimes}}_\Q \Q[F]$ denote the completion of 
 $T(\AAlpha^\Q) \otimes_\Q \Q[F]$ with respect to $V$.
 The degree-filtration of  $T(\AAlpha^\Q) {\otimes}_\Q \Q[F]$ is contained in $V$,
i.e$.$ we have 
$$
T^{\geq n}(\AAlpha^\Q) {\otimes}_\Q \Q[F]\subset V_n, \ n\geq 0.
$$
Therefore, the identity induces a $\Q$-linear map
$\rho: T(\AAlpha^\Q) {\hat{\otimes}}_\Q \Q[F] \to T(\AAlpha^\Q) \widehat{\hat{\otimes}}_\Q \Q[F]$.
Let $\Upsilon:\widehat{T}(H^\Q) \to T(\AAlpha^\Q) \widehat{\hat{\otimes}}_\Q \Q[F]$
 be the unique homomorphism of filtered algebras 
 such that $\Upsilon(a_i) = a_i$ and 
 $\Upsilon(b_i)=  \log(x_i) =\sum_{k\geq 1} (-1)^{k+1}(x_i-1)^k/k$. 
 Clearly, we have $\Upsilon\circ \digamma'= \rho$. So, it is enough to prove the injectivity of $\rho$.
 
 Let $w\in T(\AAlpha^\Q) {\hat{\otimes}}_\Q \Q[F]$ with $\rho(w)=0$. We write $w$ as 
 a (possibly infinite) sum $w =\sum_{m\geq 0} w_m$
with  $w_m \in T^m(\mathbb{A}^\Q) \otimes_\Q \Q[F]$. 
 Fix an integer $k\geq 0$.
 By assumption on $w$, we have 
 $ \sum_{m= 0}^k w_m \in V_{k+1}$.
 For every $n\geq 0$, the space $V_n$ can also be written as  the direct sum
 $$
V_n =  \Big( T^{>n}(\AAlpha^\Q) \otimes_\Q \Q[F] \Big) \oplus 
 \Big( \bigoplus_{i+j=n} T^{i}(\AAlpha^\Q) \otimes_\Q I^j\Big).
% \sum_{i+j=n} T^{ i}(\AAlpha^\Q) \otimes_\Q I^j 
% =  \bigoplus_{i+j=n} T^{ i}(\AAlpha^\Q) \otimes_\Q I^j, \   n\geq 0.
 $$
 Hence, for every $m\in \{0,\dots,k\}$, we obtain that 
 $w_m$ belongs to the subspace  $T^m(\mathbb{A}^\Q) \otimes_\Q I^{k+1-m}$ of $T^m(\mathbb{A}^\Q) \otimes_\Q \Q[F]$.
 Now, fixing $m\geq 0$, we get that
 $$
w_m \in \bigcap_{k>m} \Big(T^m(\mathbb{A}^\Q) \otimes_\Q I^{k+1-m}\Big)
  = T^m(\mathbb{A}^\Q) \otimes_\Q \bigcap_{k>m}  I^{k+1-m} \ = \ \{0\}
 $$
and we conclude that $w=0$.
\end{proof}

To prove \eqref{eq:analogue_KK}, we shall also need 
the diagrammatic description
of the conjugation action of the automorphism group 
on the Lie algebra of special derivations.
Recall from \S \ref{subsec:formal_eN-series} that 
$\operatorname{IAut}\big({T}(\AAlpha^\Q) \hat\otimes_\Q \Q[F]\big)$
is the automorphism group of complete Hopf algebra 
inducing the identity on the associated graded.
Recall also that 
${\operatorname{Der}}_+\big({T}(\AAlpha^\Q) \hat\otimes_\Q \Q[F]\big)$
is the Lie algebra of derivations mapping any $x\in F$
to  $\widehat \Lie(\AAlpha^\Q)\, x$ 
and mapping $ \AAlpha^\Q $ to $\widehat \Lie_{\geq 2}(\AAlpha^\Q)$.
Let also  $\operatorname{IAut}^\zeta({T}(\AAlpha^\Q) \hat\otimes_\Q \Q[F])$
 be the subgroup of automorphisms fixing $[\zeta]\in \AAlpha$
and, similarly,  let
${\operatorname{Der}}_+^\zeta({T}\big(\AAlpha^\Q) \hat\otimes_\Q \Q[F]\big)$ be the Lie subalgebra vanishing on $[\zeta]\in \AAlpha$.
We can sum up Lemma \ref{lem:canonical_isos}, isomorphism \eqref{eq:DD},
Proposition \ref{prop:iso_D0}, Proposition \ref{prop:iso_D1}
and isomorphism \eqref{eq:eta_bis} as follows:
\begin{equation} \label{eq:many_isomorphisms}
\xymatrix{
\operatorname{IAut}^\zeta\big({T}\big(\AAlpha^\Q) \hat\otimes_\Q \Q[F]\big)
\ar[r]_-{\simeq}^-\log & 
{\operatorname{Der}}_+^\zeta({T}\big(\AAlpha^\Q) \hat\otimes_\Q \Q[F]\big)
\ar[r]_-\simeq^-{\operatorname{res}} & 
\widehat{\Der}_+^\zeta\big(\overline{\Alpha}^\Q_\bullet \big)
\ar[d]_-\simeq^-{\operatorname{res}} \ar[ld]^-\simeq_-{\operatorname{res}}\\
\widehat{\mathcal{D}} \ar[r]_-\eta^-\simeq &  
\widehat D^0_+\otimes \Q & \widehat D^1_+\otimes \Q
}
\end{equation}
Besides, by transforming beads and leaves in the obvious way, 
$\operatorname{IAut}\big({T}(\AAlpha^\Q) \hat\otimes_\Q \Q[F]\big)$
acts on the degree-completion of the quotient space $\mathcal{D}'/\mathcal{E} \simeq \modD$
introduced in Lemma~\ref{lem:T_tilde}. So
we get a  canonical action of  
$\operatorname{IAut}\big({T}(\AAlpha^\Q) \hat\otimes_\Q \Q[F]\big)$
on $\widehat{\mathcal{D}}$.

\begin{lemma} \label{lem:actions}
The  canonical action of  
$\operatorname{IAut}^\zeta\big({T}(\AAlpha^\Q) \hat\otimes_\Q \Q[F]\big)$
on $\widehat{\mathcal{D}}$ corresponds to the conjugation action of
$\operatorname{IAut}^\zeta\big({T}(\AAlpha^\Q) \hat\otimes_\Q \Q[F]\big)$
on ${\operatorname{Der}}_+^\zeta({T}\big(\AAlpha^\Q) \hat\otimes_\Q \Q[F]\big)$
through the isomorphism $\widehat{\mathcal{D}} 
\simeq {\operatorname{Der}}_+^\zeta({T}\big(\AAlpha^\Q) \hat\otimes_\Q \Q[F]\big)$ 
described by \eqref{eq:many_isomorphisms}.
\end{lemma}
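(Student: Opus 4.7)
The plan is to verify the $\operatorname{IAut}^\zeta$-equivariance of the isomorphism \eqref{eq:many_isomorphisms} by direct computation on algebra generators of $\hat U := T(\AAlpha^\Q) \hat\otimes_\Q \Q[F]$. First I would check that both actions are well-defined: the canonical action preserves $\widehat{\mathcal D}$ (the expansion relations defining $\mathcal E$ in Lemma \ref{lem:T_tilde} are preserved by any Hopf automorphism $\phi$, since expansion of leaves uses the Lie bracket and expansion of beads uses the coproduct), while the conjugation action preserves $\operatorname{Der}_+^\zeta \hat U$ precisely because any $\phi \in \operatorname{IAut}^\zeta$ fixes $[\zeta]$, giving $(\phi \circ \tilde d \circ \phi^{-1})([\zeta]) = \phi(\tilde d([\zeta])) = 0$ whenever $\tilde d([\zeta]) = 0$. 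Both actions are $\Q$-linear and continuous with respect to the degree filtration.

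Next I would unpack the chain \eqref{eq:many_isomorphisms} concretely. Using Lemma \ref{lem:T_tilde}, represent an element of $\widehat{\mathcal D}$ by a tree $D$ with leaves colored in $\widehat\Lie(\AAlpha^\Q)$ and beads colored in $\hat U$. The corresponding derivation $\tilde d_D$ is determined by its values on the algebra generators of $\hat U$: on $a \in \AAlpha^\Q$ by the explicit formula of Proposition \ref{prop:action}, and on $f \in F$ by $\tilde d_D(f) = d_0(f) \cdot f$, where the $1$-cocycle $d_0 \in \widehat D^0_+ \otimes \Q$ is $\vartheta^{-1}(\eta(D))$. Writing $\phi \cdot D$ for the tree obtained from $D$ by applying $\phi$ to every leaf color (well-defined since $\phi$ preserves the primitive part $\widehat\Lie(\AAlpha^\Q)$ of $\hat U$) and to every bead color, the lemma reduces to proving
\begin{equation*}
\tilde d_{\phi \cdot D} \;=\; \phi \circ \tilde d_D \circ \phi^{-1}
\end{equation*}
as derivations of $\hat U$, which in turn reduces to equality on $\AAlpha^\Q$ and on $F$.

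For $a \in \AAlpha^\Q$, I would compute both sides via Proposition \ref{prop:action} and match term-by-term; the key tool is the naturality of the intersection operations $\Psi$ and $\Theta$ under the Hopf automorphism $\phi$, which follows from $\phi$ being both an algebra and a coalgebra map. For $f \in F$, one writes $\phi^{-1}(f)$ as a group-like element of the form $\exp(\ell)\cdot f$ with $\ell \in \widehat\Lie(\AAlpha^\Q)$, applies the Leibniz rule and the known values of $\tilde d_D$, and then applies $\phi$, comparing the outcome with $\tilde d_{\phi\cdot D}(f) = d_{0,\phi\cdot D}(f)\cdot f$ computed via Proposition \ref{prop:inversion}.

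The main obstacle is the bookkeeping in this last step: the cross terms produced by $\tilde d_D$ acting on the prefactor $\exp(\ell)$ (via its values on $\AAlpha^\Q$) must combine with the explicit $d_0$-to-$d_1$ conversion formula \eqref{eq:d_formula} to reconstruct precisely the transformed cocycle attached to $\phi\cdot D$. This cancellation relies crucially on the tight coupling between $d_0$ and $d_+$ encoded in Proposition \ref{prop:inversion}, together with the boundary constraint $d([\zeta]) = 0$ which pins down the lift of $\vartheta(d_0) \in \AAlpha^r \otimes_{\Q[F]} \widehat\Lie(\AAlpha^\Q)$ used in \eqref{eq:d_formula}.
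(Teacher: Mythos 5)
Your overall architecture (reduce to showing $\tilde d_{\phi\cdot D}=\phi\circ\tilde d_D\circ\phi^{-1}$ on the generators $\AAlpha^\Q$ and $F$ of ${T}(\AAlpha^\Q)\hat\otimes_\Q\Q[F]$) is reasonable, but the step that carries all the weight is unjustified and, as stated, not available: there is no ``naturality of $\Psi$ and $\Theta$ under the Hopf automorphism $\phi$''. These operations are defined only on $\AAlpha$ and $\Z[F]$ (they encode the topology of $\Sigma\subset V$ via the homotopy intersection form), and an element $\phi\in\operatorname{IAut}^\zeta$ does not even preserve the subspaces $\AAlpha^\Q$ and $\Q[F]$ of the completion, so expressions like $\Psi(\phi(a),\phi(b))$ are undefined without first re-expanding; once you re-expand, the identity you would need is essentially equivalent to the equivariance you are trying to prove, so invoking it is circular. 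The only compatibilities of $\Psi$ and $\Theta$ with automorphisms that the paper establishes are the quasi-derivation identities with respect to the $F$-action (\eqref{eq:conj_left}, \eqref{eq:conj_right}, \eqref{eq:right_Theta}, \eqref{eq:left_Theta}) and the quasi-Jacobi-type identities \eqref{eq:pseudo-Jacobi}--\eqref{eq:pseudo-Jacobi_bis}; matching your two sides ``term-by-term'' on $\AAlpha^\Q$, and the ``bookkeeping'' you defer in the $F$-step, would amount to exponentiating those identities by hand — a computation comparable in length to the proof of Theorem \ref{th:tree_bracket}, which your proposal does not supply.

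The paper's proof avoids this entirely by working infinitesimally: writing $\delta=\log(\psi)$, conjugation by $\psi$ on derivations is $\exp(\ad_\delta)$, and the claim becomes that the canonical action of $\psi$ on a tree $t$ equals $\exp(\ad_\tau)(t)$, where $\tau$ is the tree corresponding to $\delta$. This is proved by the single observation — obtained by comparing \eqref{eq:d(a)} with \eqref{eq:tree_bracket} — that bracketing a (bead-free) tree with $\tau$ is the sum over leaves of applying $\delta$ to the color of that leaf and expanding; iterating and using a multinomial identity then gives $t\big(\exp(\delta)(\ell_1),\dots,\exp(\delta)(\ell_r)\big)=\exp(\ad_\tau)(t)$. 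In other words, the group-level equivariance is deduced from the already-proved Lie-level compatibility (Theorem \ref{th:tree_bracket} and Proposition \ref{prop:action}) rather than re-verified directly. If you want to keep your direct approach, you would have to replace the appeal to ``naturality'' by an argument of this kind (or redo the quasi-Poisson computations in exponentiated form), since $\phi$ being an algebra and coalgebra map gives you nothing about $\Psi$ and $\Theta$.
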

\begin{proof}
The (left)  conjugation action of the automorphism group
of the algebra ${T}(\AAlpha^\Q) \hat\otimes_\Q \Q[F]$
on the Lie algebra of its derivations restricts to an action of the subgroup
$\operatorname{IAut}^\zeta\big({T}(\AAlpha^\Q) \hat\otimes_\Q \Q[F]\big)$
on the Lie subalgebra 
${\operatorname{Der}}_+^\zeta({T}\big(\AAlpha^\Q) \hat\otimes_\Q \Q[F]\big)$.
Indeed, let $\psi \in \operatorname{IAut}^\zeta\big({T}(\AAlpha^\Q) \hat\otimes_\Q \Q[F]\big)$
and let $d \in {\operatorname{Der}}_+^\zeta({T}\big(\AAlpha^\Q) \hat\otimes_\Q \Q[F]\big)$.
Setting $\delta:=\log(\psi)$, we have
$
\psi d \psi^{-1} = \exp(\ad_\delta)(d).
$
Since $\delta$ belongs to
${\operatorname{Der}}_+^\zeta({T}\big(\AAlpha^\Q) \hat\otimes_\Q \Q[F]\big)$, 
so does $\ad_\delta^n(d)$ for any $n\geq 0$.
Therefore, $\psi d \psi^{-1}$ belongs to 
${\operatorname{Der}}_+^\zeta({T}\big(\AAlpha^\Q) \hat\otimes_\Q \Q[F]\big)$.

Let  $t,\tau \in \widehat{\mathcal{D}} $   correspond
to $d,\delta \in  {\operatorname{Der}}_+^\zeta({T}\big(\AAlpha^\Q) \hat\otimes_\Q \Q[F]\big)$, respectively,
in \eqref{eq:many_isomorphisms}.
Let  $t':=\psi \cdot t$ be the result of the action of $\psi$ on $t$,
and let $d'\in {\operatorname{Der}}_+^\zeta({T}\big(\AAlpha^\Q) \hat\otimes_\Q \Q[F]\big)$
correspond to $t' \in \widehat{\mathcal{D}}$ in \eqref{eq:many_isomorphisms}.
We claim that
\begin{equation} \label{eq:claim_t}
t' = \exp(\ad_\tau)(t),
\end{equation}
which implies that $d' = \exp(\ad_\delta)(d)=\psi d \psi^{-1}$ and proves the lemma.
By  multilinearity and the bead-out relation, we can assume that $t$ consists of a single tree without bead.
Then, assume that $t$ has $r$ leaves and number them in an arbitrary way:
let $\ell_1,\dots,\ell_r \in \AAlpha^\Q$ be the  colors of these leaves.
Given $\ell'_1,\dots,\ell'_r \in \widehat{\Lie}(\AAlpha^\Q)$,
let $t(\ell'_1,\dots,\ell'_r)\in  \widehat{\mathcal{D}}$ denote the series of trees
obtained from $t$ by changing each color $\ell_i$ to~$\ell'_i$
and expanding the leaf (see Lemma \ref{lem:T_tilde}).
Then, the claim \eqref{eq:claim_t} can be reformulated as
$$
t\big(\exp(\delta)(\ell_1),\dots,\exp(\delta)(\ell_r)\big) = \exp(\ad_\tau)(t),
$$
or, more explicitly, as 
\begin{equation} \label{eq:t_tau}
\sum_{k_1,\dots, k_r\geq 0} \frac{1}{k_1!\cdots k_r!}
t\big(\delta^{k_1}(\ell_1),\dots,\delta^{k_r}(\ell_r)\big)
= \sum_{n\geq 0} \frac{1}{n!} [\underbrace{\tau,[\tau,\dots [\tau}_{\hbox{\scriptsize $n$ times}},t] \dots ]] .
\end{equation}
Next, we observe the following fact by comparing 
the formulas \eqref{eq:d(a)} and \eqref{eq:tree_bracket}.
Let~$E'$ be a tree 
with leaves  colored by  $\widehat{\Lie}(\AAlpha^\Q)$ and  without bead,
and let $E\in \widehat{\mathcal{D}}$
be obtained by simultaneous expansions  of $E'$ at all leaves.
Then $[\tau,E] \in \widehat{\mathcal{D}}$ 
is the sum of all ways of choosing a leaf of $E'$, 
applying $\delta$ to the color of that leaf, and then expanding at all leaves.
It follows from the previous observation that
$$
 [\underbrace{\tau,[\tau,\dots [\tau}_{\hbox{\scriptsize $n$ times}},t] \dots ]] 
 = \sum_{k_1,\dots, k_r\geq 0} \binom{n}{k_1,\dots,k_r} \,
 t\big(\delta^{k_1}(\ell_1),\dots,\delta^{k_r}(\ell_r)\big)
$$
and \eqref{eq:t_tau} immediately follows.
\end{proof}

We can now prove formula \eqref{eq:analogue_KK}.

\begin{proof}[Proof of Theorem \ref{th:KK_analogue}]
Let $\theta_1$ and $\theta_2$ 
be two special expansions of the free pair $(\pi,\Alpha)$.
Then, there is a (unique)  automorphism 
$\psi$ of the complete Hopf algebra $ T(\AAlpha^\Q) \hat \otimes_\Q \Q[F]$
inducing the identity at the graded level and 
such that ${\psi \circ \theta_1 =\theta_2}$. 
On the one hand, using the actions of 
$\operatorname{IAut}^\zeta\big({T}(\AAlpha^\Q) \hat\otimes_\Q \Q[F]\big)$
that are discussed in Lemma~\ref{lem:actions}, we have
\begin{eqnarray*}
\eta\Big( \frac{1}{2} \log \theta_2(u) \textsf{---} \log \theta_2(u)\Big) 
&=& \eta\Big( \frac{1}{2} \psi\big(\log \theta_1(u)\big) 
\textsf{---}\, \psi\big( \log \theta_1(u) \big)\Big)\\
&=& 
\psi \circ \eta\Big( \frac{1}{2} \log \theta_1(u) \textsf{---} \log \theta_1(u)\Big)   \circ \psi^{-1} ,
\end{eqnarray*}
where the values of $\eta$ are viewed as elements of
$\widehat{D}^0_+ 
\simeq {\operatorname{Der}}_+^\zeta({T}\big(\AAlpha^\Q) \hat\otimes_\Q \Q[F]\big)$. 
On the other hand, the definition of the representation
$\varrho^{\theta_i}: \mathcal{T} \to 
{\operatorname{Der}}_+^\zeta({T}\big(\AAlpha^\Q) \hat\otimes_\Q \Q[F]\big)$
implies that
$$
\varrho^{\theta_2}(T_{\partial U}) = 
\psi \circ \varrho^{\theta_1}(T_{\partial U}) \circ \psi^{-1}.
$$
Therefore, \eqref{eq:analogue_KK} holds for $\theta_1$
if and only if it does for  $\theta_2$.

Thus, we can restrict ourselves  to a special expansion $\theta$ 
of $(\pi,\Alpha)$ as in Lemma~\ref{lem:special_to_symplectic}.
Let $\tilde \theta = \digamma \theta$ 
be the corresponding symplectic expansion of $\pi$.
By its definition, and when viewed as a $1$-cocycle,
$\varrho^\theta(T_{\partial U})$ maps $x_i\in F$ $(1 \leq i \leq g)$ to 
$$
U_i:= \log\left( \theta\, \widehat{\Q[{T_{\partial U}}]}\, 
\theta^{-1}\right)(x_i) \ x_i^{-1}
\in \widehat{\Lie}(\AAlpha^\Q) \subset T(\AAlpha^\Q) \hat \otimes \Q[F],
$$
where $\widehat{\Q[{T_{\partial U}}]}: \widehat{\Q[\mathsf{A}_*]} \to \widehat{\Q[\mathsf{A}_*]}$ is induced by $T_{\partial U}\in \Aut(\pi,\Alpha)$
on the $J_*^\Q(\Alpha_*)$-completion of $\Q[\pi]$,
and 
$\theta: \widehat{\Q[\mathsf{A}_*]} \to 
\hat U(\overline{\mathsf{A}}_*^\Q)=  T(\AAlpha^\Q) \hat \otimes_\Q \Q[F]$
is the continuous extension of $\theta$.
We deduce from the definition \eqref{eq:digamma} of $\digamma $ that
\begin{equation} \label{eq:U_i}
\digamma(U_i) = 
 \log\left(  \tilde{\theta}\,  \widehat{\Q[{T_{\partial U}}]}\, 
 \tilde{\theta}^{-1} \right) (\exp(b_i)) \ \exp(-b_i),
\end{equation}
where $\widehat{\Q[{T_{\partial U}}]}: \widehat{\Q[\pi]} \to \widehat{\Q[\pi]}$ 
is induced by $T_{\partial U}\in \Aut(\pi)$ 
on the $I$-adic completion of $\Q[\pi]$,
and $\tilde \theta:\widehat{\Q[\pi]} \to \hat T(H^\Q)$ 
is the continuous extension of $\tilde \theta$.

Next, the map
$D:= \log\big(  \tilde{\theta}\,  \widehat{\Q[{T_{\partial U}}]}\, 
 \tilde{\theta}^{-1} \big)$,
which appears in  \eqref{eq:U_i},
is a derivation of $\hat T(H^\Q)$, and it can be computed as follows.
Indeed, Kawazumi \& Kuno gave in \cite{KK14} a closed formula 
for the logarithm $\log(\widehat{\Q[{T_{\gamma}}]})$ of any Dehn twist~$T_\gamma$,
as a derivation of the complete $\Q$-algebra $\widehat{\Q[\pi]}$.
Their formula can be stated diagrammatically using any symplectic expansion of $\pi$, 
such as $\tilde \theta$.
Let $\tilde \eta: \modD(H^\Q) \to \Der_+^\omega\big(\Lie(H^\Q)\big)$ denote
the isomorphism \eqref{eq:Kontsevich} between 
the space of trees on $H^\Q$ (modulo AS and IHX)
and the Lie algebra of symplectic derivations.
Then, the Kawazumi--Kuno formula writes  
$$
 \log\big(  \tilde{\theta}\,  \widehat{\Q[{T_{\gamma}}]}\, 
 \tilde{\theta}^{-1} \big) \big\vert_{\widehat{\Lie}(H^\Q)}
= \tilde \eta\Big( \frac{1}{2} \log \tilde \theta(\gamma ) 
\textsf{---} \log \tilde \theta(\gamma)\Big),     
$$
see  \cite[\S 4]{KMT21} for details. Thus,
specializing to the meridian $\gamma:=\partial U$, we obtain
\begin{equation}
\label{eq:KK_D}
D\big\vert_{\widehat{\Lie}(H^\Q)}
= \tilde \eta\Big( \frac{1}{2}  \digamma(\log \theta(\gamma ))  \textsf{---} \digamma(\log  \theta(\gamma))\Big).
\end{equation}

By the usual formula expressing the value of a derivation $D$
on a formal power series (see e.g$.$ \cite[Theorem 3.22]{Reutenauer}), we~get
\begin{equation} \label{eq:digamma(U_i)}
\digamma(U_i) \stackrel{\eqref{eq:U_i}}{=} D(\exp(b_i))\, \exp(-b_i)
= \big(f^{-1}(\mathrm{ad}_{b_i})\big) \big(D(b_i)\big),
\end{equation}
where 
$$
f(u) := \frac{u}{\exp(u)-1} \in \Q[[u]].
$$
The series of trees 
$ \frac{1}{2} \digamma(\log \theta(\gamma ))  \textsf{---} \digamma(\log  \theta(\gamma))$ (with leaves colored by $H^\Q$) is obtained 
from the series of trees 
$
S:= \frac{1}{2} \log \theta(\gamma )  \textsf{---} \log \theta(\gamma)
$
(with leaves colored by $\AAlpha^\Q$ and  beads colored by $F$)
by applying the following operations to all leaves  and beads,
respectively:\\[0.1cm]
$$
\labellist
\small\hair 2pt
 \pinlabel {$x_i$} [b] at 133 40
 \pinlabel {$a_i$} [r] at 50 173
 \pinlabel {$\leadsto$} at 314 175
 \pinlabel {$\leadsto$}  at 314 28
 \pinlabel {$a_i$} [r] at 432 172
 \pinlabel {\scriptsize $f\!(u)$} [ ] at 514 173
 \pinlabel {$b_i$} [b] at 514 220
 \pinlabel {\scriptsize $e^u$} [ ] at 512 27
 \pinlabel {$b_i$} [b] at 513 74
 \pinlabel {where} [ ] at 756 110
 \pinlabel {\scriptsize $u^n$} at 969 106
  \pinlabel {$:=$} at 1080 106
 \pinlabel {$b_i$} [b] at 967 159
 \pinlabel {\scriptsize  $b_i$} [b] at 1170 154
 \pinlabel {\scriptsize  $b_i$} [b] at 1194 154
  \pinlabel {$\overbrace{\hphantom{qqqqq}}$} [b] at 1195 175
  \pinlabel {\scriptsize $n$ times} [b] at 1195 205
  \pinlabel {\tiny $...$} [c] at 1205 130
 \pinlabel {\scriptsize  $b_i$} [b] at 1225 154
\endlabellist
\centering
\includegraphics[scale=0.29]{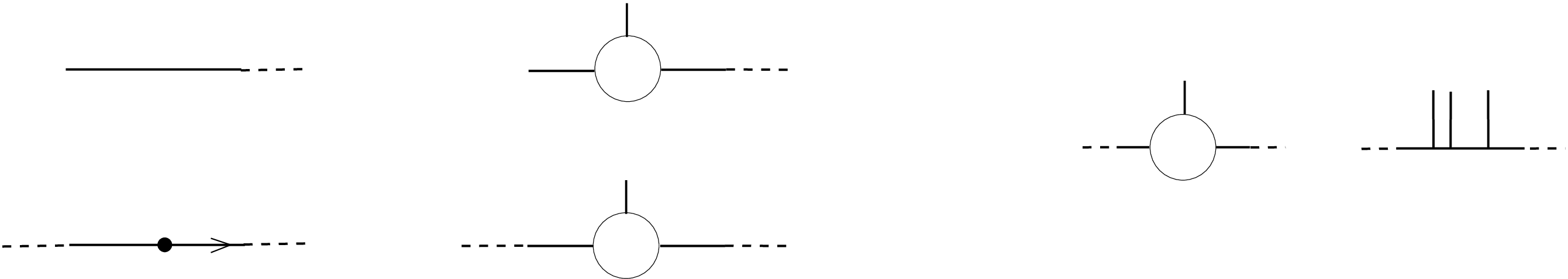}
$$
(Here, thanks to the bead-out relation, we assume that all the leaves 
of $S$ are colored by the $\Q[F]$-basis $(a_1,\dots,a_g)$ of $\AAlpha^\Q$.)
Thus, we can compute $D(b_i)$ from \eqref{eq:KK_D} and we get the sum
$$
\sum_{\ell} \big(f(\mathrm{ad}_{b_i})\big) \big(\digamma(\mathrm{word}(\ell))\big)
$$
over all the leaves $\ell$ of  $S$  that are
colored by $a_i$. Therefore,
applying \eqref{eq:digamma(U_i)}, we obtain that 
$$
\digamma(U_i)= 
\digamma\Big(\sum_{\ell} \mathrm{word}(\ell)\Big)
= -\digamma\big(\eta(S)(x_i)\big).
$$
Since $\digamma$ is injective, we conclude that 
$
\varrho^\theta(T_{\partial U})(x_i) = 
U_i = -\eta(S)(x_i).
$
Therefore, the 1-cocycles underlying the special derivations
$\varrho^\theta(T_{\partial U})$
and $-\eta(S)$ are equal.
\end{proof}

\begin{remark}
The presence of a minus sign in \eqref{eq:analogue_KK}
in contrast with the absence of sign in \eqref{eq:KK_D}
is explained as follows.
The identification \eqref{eq:two_isomorphisms}
between $\mathbb{A}^\Q$ and the dual of the
augmentation ideal of $\Q[F]$ (which is involved in the definition of $\eta$)
 is given by $a\mapsto \langle -,a\rangle$
using the homotopy intersection form \eqref{eq:<-,->} of the handlebody.
On the contrary,
the identification of $H^\Q=H_1(\Sigma;\Q)$ with its dual
(which is involved in the definition of $\tilde \eta$)
is given by $h\mapsto \omega(h,-)=-\omega(-,h)$
using the homology intersection form $\omega$ of the surface.
\end{remark}

\subsection{Example: the pure braid group}

We consider here  Oda's embeddings \cite{Oda}
of the $g$-strand pure braid group $PB_g$  
into the mapping class group $\modM=\modM(\Sigma,\partial \Sigma)$
and, to fit our purposes,
we assume here that the image of the embedding 
is contained in the twist group~$\modT=\modT(V)$.
Embeddings of the (framed) pure braid groups into the twist groups, in the context of Johnson filtrations, were also considered in \cite{HV20}.

To be more specific, we decompose $\partial H $ as 
$C \cup_\partial C’$,
where $C,C'$ are surfaces of genus $0$ such that the disk
$D=\partial H \setminus \hbox{int}(\Sigma)$ is contained in $C’$ and,
for a system of curves $(\alpha,\beta)$ such as \eqref{eq:(a,b)},
$\partial C$ consists of the curves $\alpha_1, \dots, \alpha_g$ 
and an ``outer'' boundary curve $\upsilon$: 
$$
\labellist
\small\hair 2pt
 \pinlabel {$\star$} at 832 3
 \pinlabel {$\alpha_1$} [tr] at 294 72
 \pinlabel {$\alpha_2$} [tr] at 506 73
 \pinlabel {$\alpha_g$} [tr] at 777 75
 \pinlabel {$C$}  [l] at 790 45
 \pinlabel {$C'\setminus \operatorname{int}(D)$}  at 860 166
 \pinlabel {$\upsilon$} [tr] at 229 55
\endlabellist
\centering
\includegraphics[scale=0.36]{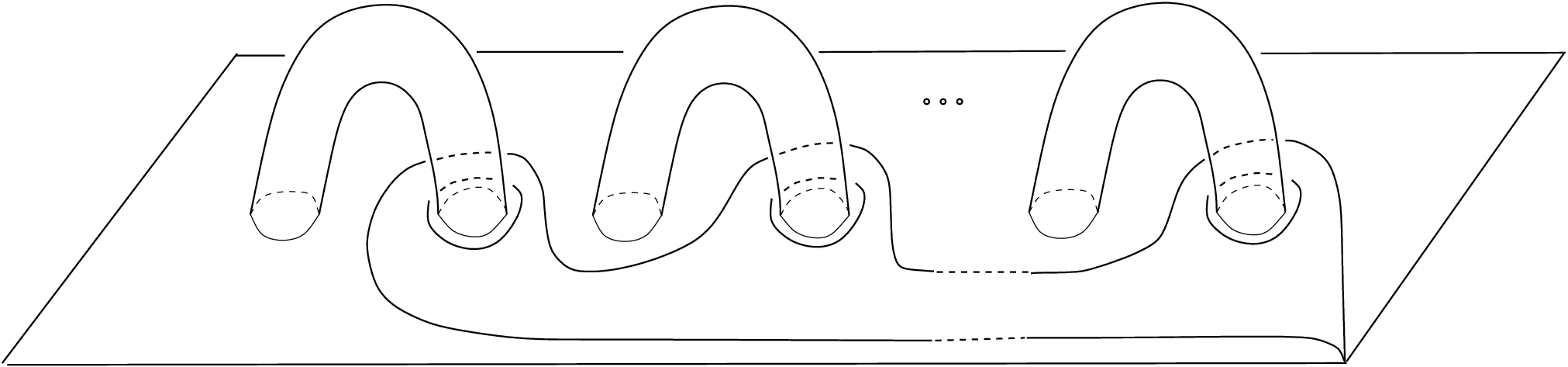}
$$

The inclusion of $C$ into $\Sigma$ induces a homomorphism
$\modM(C,\partial C)\to \modM$, 
which is injective.
Furthermore, $\modM(C,\partial C)$ can be identified with the 
$g$-strand framed pure braid group,
so that $PB_g$ embeds canonically in $\modM(C,\partial C)$
as the group of $0$-framed pure braids.
Hence, we can view $PB_g$ as a subgroup of $\modM$,
and it is easily seen that $PB_g$ is actually contained in
the twist group $\modT=\modT(V)$.

The next result relates the lower central series of the pure braid group
to the lower central series of the twist group,
and to the Johnson filtration of the handlebody group.
This is an analogue of \cite[Theorem 1.1]{GH02},
which deals with the Torelli group and the usual Johnson filtration
of the mapping class group.

\begin{theorem} \label{th:Milnor_to_Johnson}
For all $k\geq 1$, we have
$
\Gamma_k PB_g = PB_g \cap \Gamma_k \modT = PB_g \cap \modH_k
$
and the following diagram is commutative:
\begin{equation} \label{eq:square}
\xymatrix{
\Gamma_k PB_g \ar@{^{(}->}[r]  \ar[d]_-{(-1)^k\,\mu_k} 
& \modH_k \ar[d]^-{\tau_k^d} \\
\mathcal{D}_k(A^\Q) \ar@{>->}[r] 
& \mathcal{D}_k(\AAlpha^\Q,\Q[F]).
}
\end{equation}
\end{theorem}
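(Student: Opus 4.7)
My plan is to prove this in three stages: the easy inclusions, the reverse inclusion, and the commutativity of the square.

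First, for the filtration identities, I would begin by noting the straightforward chain $\Gamma_k PB_g \subseteq PB_g \cap \Gamma_k \modT \subseteq PB_g \cap \modH_k$. The first inclusion follows from the construction since $PB_g \subseteq \modT$. The second follows because the Johnson filtration $(\modH_k)_{k\geq 1}$ is an N-series on $\modT = \modH_1$ (as it is the restriction of an extended N-series), so $\Gamma_k \modT \subseteq \modH_k$ for all $k \geq 1$. The reverse inclusion $PB_g \cap \modH_k \subseteq \Gamma_k PB_g$ is the harder direction. My approach is to identify the action of $PB_g \hookrightarrow \modH$ on the free pair $(\pi, \Alpha)$ as essentially the Artin representation: a pure braid $b$ is supported in $C$ and fixes $\partial C$ pointwise, so it preserves the free subgroup $A := \langle \alpha_1, \dots, \alpha_g\rangle \leq \Alpha$ (which identifies with $\pi_1(C,\star)$ up to conjugation), and the induced action $PB_g \to \Aut(A)$ is the classical Artin embedding. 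Under this identification, and using the fact that Milnor invariants detect the lower central series (Falk--Randell), namely $\bigcap_k \Gamma_k PB_g = \{1\}$ and $\mu_k$ is injective on $\Gamma_k PB_g / \Gamma_{k+1} PB_g$, the reverse inclusion will follow once the square~\eqref{eq:square} is shown to commute \emph{and} the bottom horizontal arrow is injective.

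For the commutativity, my strategy is to work on generators of $\Gamma_k PB_g$. Since $b \in \Gamma_k PB_g \subseteq \modH_k$ fixes the $\beta_j$'s up to elements of $A \cap \Gamma_{k+1} \A$ (the pure braid acts on parallels only via its action on meridians, by supporting geometry), one can compute $\tau_k^0(b) : F \to \overline{\A}_k$ directly from the Artin action on $A$, observing that the answer only involves the sub-Lie algebra $\Lie(A^\Q) \subseteq \Lie(\AAlpha^\Q)$ and involves no nontrivial bead from $\Q[F]$, since $b$ is supported away from the parallels. The diagrammatic incarnation $\tau_k^d(b) \in \modD_k(\AAlpha^\Q, \Q[F])$ therefore lies in the subspace $\modD_k(A^\Q)$ (trees on $A^\Q$ with no beads), giving the right vertical arrow as well as the bottom horizontal inclusion. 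Then the content of the square is to match the tree $\tau_k^d(b)$ with $(-1)^k \mu_k(b)$. This is done by unpacking the Magnus expansion of the Artin automorphism $b \in \Aut(A)$ and comparing it with the diagrammatic formula~\eqref{eq:eta_iso} defining $\eta$: both produce the same tree up to the sign coming from the identification $\vartheta: Z^1(F,\overline{\A}_k) \simeq \AAlpha^r \otimes_{\Z[F]} \overline{\A}_k$ which uses $-\langle-,-\rangle$ (see the computation in~\eqref{eq:betavartheta} and Proposition~\ref{prop:tau_1(disk_twist)}, whose proof already produced one factor of the sign).

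The main obstacle will be the third step: getting the sign $(-1)^k$ correct requires a careful bookkeeping of the many conventions involved---the direction of arrows in oriented trees, the left versus right action of $F$, the choice of $\vartheta$ (which involves the Fox calculus and has a built-in minus sign coming from $(x_i - 1)$ versus $(1 - x_i^{-1})$), and the normalization of Milnor's $\mu_k$. The sign accumulates as one composes the Magnus expansion with $\eta$ at each internal edge, producing exactly $(-1)^k$ in degree $k$, analogous to \cite[Theorem 1.1]{GH02}. A secondary difficulty is verifying that the inclusion $\modD_k(A^\Q) \hookrightarrow \modD_k(\AAlpha^\Q,\Q[F])$ in the bottom of the square is indeed injective, which should follow because $A^\Q$ is a direct summand of $\AAlpha^\Q$ (as a $\Q$-vector space, via the retraction $\AAlpha^\Q \to A^\Q$ induced by $F$-coinvariants) and trees with no beads are distinguished among all trees by the bead-out relation being trivial on them.
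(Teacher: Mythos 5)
Your first two stages do match the paper's argument: the chain $\Gamma_k PB_g\subset PB_g\cap\Gamma_k\modT\subset PB_g\cap\modH_k$ is immediate, and the reverse inclusion follows by induction on $k$ from the commutativity of \eqref{eq:square}, the injectivity of the bottom arrow, and the kernel identities $\ker\mu_k=\Gamma_{k+1}PB_g$, $\ker\tau_k^d=\modH_{k+1}$. The genuine gap is in your third stage, the commutativity itself. First, your starting point is wrong as stated: for $b\in\Gamma_k PB_g\subset\modH_k$ one only has $b(\beta_j)\beta_j^{-1}\in\Gamma_k\A$, not $\Gamma_{k+1}\A$; if the latter held then $\tau^0_k(b)$ would vanish, i.e$.$ $b\in\modH_{k+1}$, and there would be nothing to compare --- the class $[b(\beta_j)\beta_j^{-1}]_k$ is exactly the nontrivial datum that must be matched against the longitude class $[\ell_j(b)]_k$. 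Second, your sign accounting does not amount to a derivation of $(-1)^k$: the minus sign built into $\vartheta$ (see \eqref{eq:in_bases} and \eqref{eq:betavartheta}) is a single, degree-independent $-1$, and ``accumulation at each internal edge'' is not a mechanism. In a direct comparison of $\tau^0_k(b)$ with the Artin/Magnus data, the degree-dependent factor has to be located in the relation between the loop $b(\beta_j)\beta_j^{-1}$ and the longitude $\ell_j(b)$: the dragging loop recovers the longitude only up to a reversal/orientation discrepancy, and such an anti-automorphism acts by $(-1)^{k-1}$ on $\Gamma_kA/\Gamma_{k+1}A$; nothing in your sketch identifies, let alone proves, this point, and without it the naive identification of the two classes would yield a uniform sign $-1$ for all $k$, contradicting the statement for $k$ even.

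You also miss the shortcut that makes all of this unnecessary and which is how the paper proceeds: verify the square only in degree $1$ and only on the elementary braids $t_{ij}=T_{\gamma_{ij}}T_{\alpha_i}^{-1}T_{\alpha_j}^{-1}$, where Proposition \ref{prop:tau_1(disk_twist)} gives $\tau_1^d(t_{ij})=-\,a_i\,\textsf{---}\,a_j=-\mu_1(t_{ij})$; then check that the inclusion $\modD_k(A^\Q)\to\modD_k(\AAlpha^\Q,\Q[F])$ intertwines the bracket \eqref{eq:grafting_bis} with the bracket of Theorem \ref{th:tree_bracket} (this is a one-line comparison, using $\Psi(a_i,a_j)=\delta_{ij}\otimes a_i$ and the absence of beads); since both $(\mu_k)_k$ and $(\tau_k^d)_k$ are morphisms of graded Lie algebras and $\Gamma_kPB_g/\Gamma_{k+1}PB_g$ is generated by length-$k$ commutators in the $t_{ij}$, the sign $(-1)^k$ is then just one factor $-1$ per degree-one generator. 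Finally, your injectivity argument for the bottom arrow is only a gesture: the remark that the bead-out relation ``is trivial'' on beadless trees is not a proof, since the relations of $\modD(\AAlpha^\Q,\Q[F])$ could a priori identify distinct beadless trees. It can be repaired along the lines you hint at, by constructing a retraction $\modD(\AAlpha^\Q,\Q[F])\to\modD(A^\Q)$ which applies the augmentation to every bead and the coinvariant projection $\AAlpha^\Q\to(\AAlpha^\Q)_F\simeq A^\Q$ to every leaf color (one must check compatibility with the Hopf and bead-out relations); the paper instead argues via a decomposition of $\modD(\AAlpha^\Q,\Q[F])$ indexed by double cosets of colorings. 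Either way, this step needs an actual argument, not an appeal to $A^\Q$ being a direct summand of $\AAlpha^\Q$.
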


\noindent
In the above diagram, $\mu_k$ is the \emph{$k$-th Milnor homomorphism}
which encompasses all Milnor invariants of length $k+1$.
Denoting by $A^\Q$ the $\Q$-vector space with basis 
$\{a_1,\dots,a_g\}$, the invariant  $\mu_k$ takes values in the space
$$
\mathcal{D}_k(A^\Q) =
\frac{\Q \cdot \left\{
\hbox{node-oriented trees on $A^\Q$}\right\}}{\hbox{AS, IHX, multilinearity}}.
$$
We now review the definition of this homomorphism.
It involves the canonical action of $PB_g$
on the fundamental group $A:=\pi_1(C,\star)$,
which is the free group generated by $\alpha_1,\dots,\alpha_g$. 

Let $t\in PB_g$.
Let $\ell_1(t),\dots, \ell_g(t) \in A$ denote
the \emph{longitudes} of $t$, which are uniquely defined by the conditions that
 $t(\alpha_i) = \ell_i(t) \cdot \alpha_i \cdot \ell_i(t)^{-1}\in A$,
and $[\ell_i(t)]\in H_1(A;\Z)$ is a $\Z$-linear combination
of the $a_j=[\alpha_j]$ for $j\neq i$.
Then, for any $k\geq 2$,
the following statements are well-known to be equivalent to each other
(see e.g$.$ \cite[Proof of Lemma 16.4]{HM00}):
\begin{itemize}
\item[(i)] $t$ belongs to $\Gamma_k PB_g$,
\item[(ii)] for all $x\in A$, $t(x)x^{-1}$ belongs to $\Gamma_{k+1} A$,
\item[(iii)] for all $i\in \{1,\dots,g\}$, $\ell_i(t)$ belongs to $\Gamma_{k} A$.
\end{itemize}
In the sequel, we identify  the associated graded of the lower central series of $A$
(with rational coefficients) with the Lie $\Q$-algebra generated by $A^\Q$.
Assume now that $t \in \Gamma_k PB_g$ and define
$$
\mu_k(t) = \sum_{i=1}^g a_i \otimes [\ell_i(t)]_k 
\in  A^\Q \otimes_\Q \Lie_k(A^\Q).
$$
Since $t$ preserves 
the  boundary component $\upsilon$ of $C$, it follows that $\mu_k(t)$
belongs to the kernel of the Lie bracket of $\Lie(A^\Q)$.
Hence, using the isomorphism \eqref{eq:eta_first}, 
we can view $\mu_k(t)$ as an element of $\mathcal{D}_k(A^\Q)$.
The resulting map  $\mu_k:\Gamma_k PB_g \to \mathcal{D}_k(A^\Q)$ is a homomorphism
and, by the above equivalence ``(i)$\Leftrightarrow$(iii)'',
we have $\ker \mu_k = \Gamma_{k+1} PB_g$. 
It is also known (see e.g$.$ \cite[Prop$.$ 3]{HP}) that the  map
$$
(\mu_k)_{k\geq 1}: 
\bigoplus_{k\geq 1} \frac{\Gamma_k PB_g}{\Gamma_{k+1} PB_g}\otimes \Q
\longrightarrow \mathcal{D}(A^\Q) 
$$
is a homomorphism of graded Lie $\Q$-algebras, 
if $\mathcal{D}(A^\Q)$ is endowed with the Lie bracket defined, 
for any trees~$D$ and $E$, by the formula
\begin{equation} \label{eq:grafting_bis}
[D,E] = \sum_{v,w}  \ \delta_{\col(v),\col(w)} {\labellist
\small\hair 2pt
 \pinlabel {$D$} [ ] at 23 86
 \pinlabel {$E$} [ ] at 122 86
 \pinlabel {$\col(v)$} [t] at 73 3
\endlabellist
\centering
\includegraphics[scale=0.22]{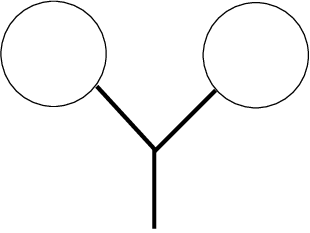}}.
\end{equation}
Here the sum is over all leaves $v$ and $w$  of $D$ and $E$, respectively,
and the corresponding ``branched'' tree is obtained by gluing $D$ and $E$ along the half-edges incident to $v$ and $w$
whenever $\col(v)=\col(w)$, the new leaf being then colored by~$\col(v)$.

\begin{proof}[Proof of Theorem \ref{th:Milnor_to_Johnson}]
Let us first prove the injectivity of 
$\mathcal{D}(A^\Q)  \to \mathcal{D}(\AAlpha^\Q,\Q[F])$.
For this, we identify $\mathcal{D}(\AAlpha^\Q,\Q[F])$
with the space $\mathcal{D}(\AAlpha^\Q)_{\Q[F]}$ 
defined at~\eqref{eq:with_coinvariants}.
Set
$
S:= \{a_1,\dots, a_g\}
$.
Consider the cartesian product $F\times S$,
which, to suggest the canonical left action of $F$ on $F\times S$,
we prefer to refer to as
$$
F\cdot S:= \big\{f \cdot a_1,\dots, f\cdot a_g\, \big\vert\, f\in F \big\}.
$$
Then,
getting rid of the ``multilinearity'' relations, 
we can identify $\mathcal{D}(A^\Q)$ with
$$
\mathcal{D}(S) :=
\frac{\Q \cdot \left\{
\hbox{node-oriented trees on $S$}\right\}}{\hbox{AS, IHX}},
$$
and identify $\mathcal{D}(\AAlpha^\Q)_{\Q[F]}$ with
$$
\mathcal{D}(F\cdot S)_F :=
\frac{\Q \cdot \left\{
\hbox{node-oriented trees on $F\cdot S$}\right\}}{\hbox{AS, IHX, translation}},
$$
where, for any $f\in F$, 
the ``translation'' relation identifies any tree on $F\cdot S$
with the same tree where $f$ as acted on each leaf.
(This last identification is allowed since $\AAlpha^\Q$
is free on $a_1,\dots,a_g$ as a $\Q[F]$-module.)
We now fix a degree~$k\geq 1$.
The diagonal action of $F$ on $(F\cdot S)^{k+1}$ commutes
with the canonical action of the symmetric group $\mathfrak{S}_{k+1}$.
Let $Q$ be the corresponding double quotient set. 
An element of $Q$ can be assigned to any tree of degree $k$ on $F\cdot S$. Thus, $\mathcal{D}(F\cdot S)_F$ splits as a direct sum over $Q$ and, clearly,
$\mathcal{D}(S)$ is there the sum of the direct summands corresponding 
to the double cosets that have a (unique) representative in $S^{k+1}$.
This proves the injectivity of the map 
$\mathcal{D}_k(A^\Q)  \to \mathcal{D}_k(\AAlpha^\Q,\Q[F])$ for every $k\geq 1$.

Since $PB_g\subset\modT$, we have 
$\Gamma_k PB_g \subset PB_g \cap \Gamma_k \modT \subset PB_g \cap \modH_k$. Hence, we only have to prove the inclusion
$PB_g \cap \modH_k \subset \Gamma_k PB_g$.
But, since we have $\Gamma_{k+1} PB_g =\ker \mu_k$ 
and $\modH_{k+1}=\ker \tau_k$, the inclusion 
$PB_g \cap \modH_{k+1} \subset \Gamma_{k+1} PB_g$ follows from 
\eqref{eq:square} by induction on $k$.

Therefore, it only remains to prove the commutativity of \eqref{eq:square}.
Recall that $PB_g$ is generated by the ``elementary'' pure braids $t_{ij}$
(for $i\neq j$) that ``clasp'' the $i$-th and the $j$-th strands in front of
the other strands. Specifically, we have 
$$
t_{ij} = T_{\gamma_{ij}} T_{\alpha_i}^{-1} T_{\alpha_j}^{-1} ,
$$
where $\gamma_{ij}$ is a simple closed curve 
``encircling'' $\alpha_i$ and $\alpha_j$.
We have $\mu_1(t_{ij})= a_i \textsf{---} a_j$ 
and, according to Proposition \ref{prop:tau_1(disk_twist)}, we have 
$$
\tau_1^d(t_{ij})= -\frac{1}{2}   (a_i+a_j) \textsf{---} (a_i+a_j)
+ \frac{1}{2}  a_i \textsf{---} a_i +  \frac{1}{2}  a_j \textsf{---} a_j 
= - a_i \textsf{---} a_j.
$$
Therefore, the diagram \eqref{eq:square} is commutative for $k=1$.
Besides, by comparing formulas \eqref{eq:grafting_bis} and
\eqref{eq:tree_bracket},
we observe that
the map $\mathcal{D}_k(A^\Q)  \to \mathcal{D}_k(\AAlpha^\Q,\Q[F])$
preserves the Lie brackets.
The conclusion follows since,
for arbitrary $k\geq 1$, the abelian group $\Gamma_k PB_g/ \Gamma_{k+1} PB_g$
is generated by iterated commutators of length $k$ in the $t_{ij}$'s.
\end{proof}

\section{Connections and prospects}

In this concluding section, we mention some relations between our approach and some other approaches to the handlebody group,
and also briefly describe some new perspectives
that the present paper opens up.

\subsection{Restriction of the usual Johnson filtration to the handlebody group}

Another natural filtration of the handlebody group $\modH$ is the restriction $(\modM_k\cap\H)_{k\ge0}$ of the usual Johnson filtration $(\modM_k)_{k\geq 1}$ 
of the mapping class group~$\modM$,
which was reviewed in \S \ref{subsec:usual_Johnson}.
For instance, 
the first term $\modH \cap \modM_1$ is the intersection
of  the handlebody group with the Torelli group,
see  \cite{Pitsch, Omori} for a generating system.

This approach, which is evoked in \cite[\S 7]{Hensel}
and  considered e.g$.$ in \cite{Faes}, is quite different from ours.
Nevertheless, 
our Johnson filtration 
of $\modH$ is contained in $(\T\cap\modM_k)_{k\ge1}$,
and  our Johnson homomorphisms  determine the usual ones.
Specifically, we have
$$
\modH_{k} \ \subset \ \modM_{k-1} \cap \modH, \quad \hbox{for all $k\geq 1$}
$$
and, for $f \in \modH_k$ with $k\geq 2$, 
the usual Johnson homomorphism $\tau_{k-1}^{\operatorname{usual}}(f)\in \modD_{k-1}(H^\Q)$
is obtained from $\tau_{k}(f)\in \modD_{k}(\AAlpha^\Q,\Q[F])$
by ignoring beads and transforming the colors of leaves 
through the homomorphism $\AAlpha \to H$ 
(induced by the inclusion of $\Alpha$ in $\pi$).
More generally, it would be interesting to relate our Johnson filtration of the handlebody group to the “double Johnson filtration” that was introduced in \cite{HV20}.

\subsection{Relative weight filtration for the handlebody group}

In \cite{Hain_handlebody}, Hain considers the ``relative weight filtration'' on (the relative completion of) the mapping class group with respect to 
a finite set of pairwise-disjoint simple closed curves on the surface. 
When those curves are given by a pants decomposition of the surface,  this filtration depends only on the handlebody underlying the pants decomposition.
Furthermore, the $0$-th and $1$-st terms of this filtration
are  the corresponding handlebody group and twist group, respectively. 

It would be interesting to compare Hain's relative weight filtration
of the handlebody group  to our Johnson filtration.
Indeed, combining the strictness and exactness of the former 
with the explicit algebraic descriptions of the latter 
may be  useful in the study of handlebody groups.

\subsection{Abelianization of the twist group}

In contrast with the Torelli group \cite{Johnson_abelianization},
the structure of the abelianization of $\modT$ is not well understood.
The first step in the understanding of this structure would be to determine  the image of the first Johnson homomorphism
$\tau_1^0: \modT \to D^0_1$
or, equivalently, the image of the Magnus representation 
$\Mag: \modT \to \hbox{Mat}(g\times g;\Z[F])$.

The second step would be to decide 
whether the abelianization of $\modT$ is torsion-free.
In fact, computing the rational abelianization of
$\modT$ is already a challenge,
and it is necessary 
for the computation of its Malcev Lie algebra. 

\subsection{Images of the Johnson homomorphisms and trace maps}

A more general problem for a further study of the filtration 
$(\modH_k)_{k\geq 1}$ would be to determine the images of the Johnson homomorphisms in any degree $k\geq 1$.
In the case of the usual Johnson filtration $(\modM_k)_{k\geq 1}$,
reviewed in \S \ref{subsec:usual_Johnson},
this problem has not been solved yet, 
but there exist    ``divergence'' $1$-cocycles
on the Lie algebra $\Der_+^\omega(\Lie(H))$ which are known to vanish 
on $\overline{\tau}_+(\overline{\modM}_+)$. Such $1$-cocycles include 
the  Morita trace \cite{Morita_abelian}
and the Enomoto--Satoh trace \cite{ES}.
It is important to construct analogues of those $1$-cocycles
for the Johnson filtration of  the handlebody group.

\subsection{Tree-level of the extended Kontsevich integral}
\label{treelevel}

The handlebody groups are
the groups of automorphisms 
in the category of ``bottom tangles in handlebodies'' \cite{Habiro}. 
The Kontsevich integral (originally defined for tangles in balls)
was extended in \cite{HM21} to a functor~$Z$ from this category 
to the category of ``Jacobi diagrams in handlebodies''.
Thus, we obtain from $Z$ new diagrammatic representations
of the handlebody group $\modH$ in any genus $g\geq 1$.
These might be useful for the
problem of determining the associated graded 
$\overline{\modT}_+$ of the lower central series of $\modT$
and, more difficultly, the Malcev Lie algebra of $\modT$.

In a future work, it will be shown that the tree-level of $Z\vert_\modT$
(which consists in ignoring non-acyclic Jacobi diagrams)
is equivalent to the ``infinitesimal'' action $\varrho^\theta$
of $\modT$ on the free pair $(\pi,\Alpha)$.
Here $\theta$ is a certain  special expansion of $(\pi,\Alpha)$,
which is itself defined from the extended Kontsevich integral.
It follows that, for every~$k\geq 1$,
the degree~$k$ part of the tree-level of 
$Z\vert_{\modH_k}$ is equivalent to~$\tau_k$.

\appendix
\section{Intersection operations in a handlebody}
\label{sec:intersection}

In this appendix, we describe several intersection operations  in a handlebody.

\subsection{The homotopy intersection form of a surface}
\label{subsec:hif_surface}

We start by reviewing Turaev's homotopy intersection form 
of a  surface \cite{Turaev}. This form determines 
the homology intersection forms of $\Sigma$ with arbitrary twisted coefficients,
and it is also implicit in Papakyariakopoulos' work \cite{Papakya}.

Let $\Sigma$ be a compact connected oriented surface with one boundary component.
Its fundamental group $\pi=\pi_1(\Sigma,\star)$ 
is based at a point $\star \in \partial \Sigma$. Let
$$
\eta: \Z[\pi] \times \Z[\pi] \longrightarrow \Z[\pi]
$$
be the $\Z$-bilinear pairing that maps any pair $(x,y)\in \pi\times \pi$ to
\begin{equation}\label{eq:eta}
\eta({x}, {y}) =
 \sum_{p\in X \cap Y} \varepsilon_p(X,Y)\
 \left[(\overline{\partial \Sigma})_{\star\bullet }
 X_{\bullet p} Y_{p \star}\right].
\end{equation}
Here $\bullet \ (\neq\star)$ is a second base-point in $\partial \Sigma$,
 $X$ is a loop  based at $\bullet$ 
 such that  $(\overline{\partial \Sigma})_{\star\bullet }\, X\, (\partial \Sigma)_{\bullet \star}$ represents $x$,
$Y$ is a loop   based at $\star$  representing $y$, and we assume that
$X$ and $Y$ meet transversely in a finite set of double points;
at every such point $p\in X\cap Y$, the sign
$ \varepsilon_p(X,Y)$ is equal to $+1$ if, and only if, 
a unit tangent vector of $X$ followed by a unit tangent vector of $Y$ 
gives a direct frame of $\Sigma$;
finally, $X_{\bullet p}$ (resp$.$ $Y_{p \star}$)
denotes the arc in $X$ (resp. $Y$)
connecting $\bullet$ to $p$ (resp$.$ $p$ to $\star$).

The operation $\eta$ is a \emph{Fox pairing} in the sense 
that it is a left \emph{Fox derivative} in its first argument:
\begin{equation} \label{eq:Fox_left}
\eta(xx',y) = x\, \eta(x',y)+ \eta(x,y)\, \varepsilon (x')  
\quad \hbox{(for all $x,x',y'\in \Z[\pi]$)}
\end{equation}
and right \emph{Fox derivative} in its second argument:
\begin{equation} \label{eq:Fox_right}
 \eta(x,yy') = \eta(x,y)\, y'+ \varepsilon (y)\,  \eta(x,y')
 \quad \hbox{(for all $x,y,y'\in \Z[\pi]$)},
\end{equation}
where $\varepsilon: \Z[\pi] \to \Z$ denotes the augmentation map. 
 (See \cite[\S 7]{MT14} for details.)
Observe that  $\eta$ is ``almost skew-symmetric'':
\begin{equation}    \label{eq:ask}
\forall u,v \in \pi, \quad \eta(u,v) = - u\, \overline{\eta({v},{u})}\, v - (u-1)(v-1)
\end{equation}

The Fox pairing $\eta$  is determined by its values on a basis of $\pi$.
For instance,
the matrix of $\eta$ in a basis $(\alpha,\beta)$ 
 of  type \eqref{eq:(a,b)} is 
\begin{equation} \label{eq:E}
E=\left(\begin{array}{c|c} E_{\alpha \alpha} & E_{\alpha\beta} \\ 
\hline E_{\beta \alpha} & E_{\beta \beta} \end{array}\right)
\in \Mat(2g\times 2g; \Z[\pi]),    
\end{equation}
where
\begin{gather*}
\begin{matrix}
    (E_{\alpha\alpha})_{i,j}=
\begin{cases}
    \alpha_i-1&(i=j),\\
    P(\alpha_i,\alpha_j)&(i>j),\\
    0&(i<j),
\end{cases}
&\quad
    (E_{\alpha\beta})_{i,j}=
\begin{cases}
    \alpha_i+\beta_i-1&(i=j),\\
    P(\alpha_i,\beta_j)&(i>j),\\
    0&(i<j),
\end{cases}
\\&\\
    (E_{\beta\alpha})_{i,j}=
\begin{cases}
    -1&(i=j),\\
    P(\beta_i,\alpha_j)&(i>j),\\
    0&(i<j),
\end{cases}
&\quad
    (E_{\beta\beta})_{i,j}=
\begin{cases}
    \beta_i-1&(i=j),\\
    P(\beta_i,\beta_j)&(i>j),\\
    0&(i<j),
\end{cases}
\end{matrix}
\end{gather*}
with $P(x,y)=-(x-1)(y-1)$.
(See \cite[\S 5]{Morita_abelian} or 
\cite[Lemma 2.4]{Perron} for a similar computation.)

\subsection{The intersection double bracket of a surface}

We now review a variant of the homotopy intersection form $\eta$,
which was considered  in~\cite{MT14}.
Define a $\Z$-linear map
$$
\double{-}{-}: \Z[\pi] \otimes \Z[\pi] 
\lto  \Z[\pi] \otimes \Z[\pi], 
$$
by
\begin{equation} \label{eq:Fox_to_double}
\forall a,b \in \Z[\pi], \quad
\double{a}{b} = b'\, \overline{\big(\eta( a'', b'')\big)'}\,  a' 
\otimes \big(\eta( a'', b'')\big)'',
\end{equation}
where we use Sweedler's notation for the coproduct of $\Z[\pi]$.
The pairing $\eta$ can be recovered from $\double{-}{-}$
by the identity $\eta = (\varepsilon \otimes \id) \circ\double{-}{-}$.
Note that, with the notations of \eqref{eq:eta}, we have 
\begin{equation}\label{eq:double}
\forall x,y\in \pi, \quad
\double{x}{y} =
 \sum_{p\in X \cap Y} \varepsilon_p(X,Y)\
\left[Y_{\star p}   X_{p \bullet }   (\partial \Sigma)_{\bullet \star }\right]
\otimes \left[(\overline{\partial \Sigma})_{\star\bullet }X_{\bullet p} Y_{p \star}\right].
\end{equation}

Properties \eqref{eq:Fox_left}--\eqref{eq:Fox_right} imply that the operation $\double{-}{-}$
is a biderivation in the following sense:
\begin{eqnarray}
\label{eq:outer}\qquad \forall a,b,c\in \Z[\pi],  \quad 
\double{a}{bc}& = &b 
\double{a}{c}^\ell \otimes \double{a}{c}^r+ \double{a}{b}^\ell \otimes \double{a}{b}^rc,\\
\label{eq:inner}\qquad \forall a,b,c\in \Z[\pi],  \quad 
\double{ab}{c}& = & 
\double{b}{c}^\ell \otimes a\double{b}{c}^r+ \double{a}{c}^\ell b \otimes \double{a}{c}^r.
\end{eqnarray} 
Besides, the ``skew-symmetry'' \eqref{eq:ask} of $\eta$ implies the following:
\begin{equation}
\label{eq:ask_bis}
\forall a,b\in \Z[\pi], \quad
\double{a}{b}= - \double{b}{a}^r \otimes \double{b}{a}^\ell
- ba \otimes 1 -  1 \otimes ab + b\otimes a  + a \otimes b.
\end{equation}
Furthermore,
according to \cite[(7.2.12)]{MT14},
the operation $\double{-}{-}$ 
satisfies the following ``quasi-Jacobi'' identity 
in $\Z[\pi]^{\otimes 3}$:
\begin{eqnarray} \label{eq:quasi-Jacobi}
\quad  \forall a,b,c \in \Z[\pi], 
&& \double{a}{\double{b}{c}^\ell} \otimes \double{b}{c}^r 
-  \double{a}{c}^l \otimes \double{b}{\double{a}{c}^r}\\
\notag && -  \double{\double{a}{b}^\ell}{c}^\ell \otimes  \double{a}{b}^r
\otimes \double{\double{a}{b}^\ell}{c}^r  \\
\notag &=&     \double{a}{c}^\ell  \otimes  1 \otimes b\double{a}{c}^r  
- \double{a}{c}^\ell  \otimes b\otimes \double{a}{c}^r.
\end{eqnarray}

\begin{remark}
A slight modification of the operation $\double{-}{-}$ translates
the properties from \eqref{eq:outer} to \eqref{eq:quasi-Jacobi}
into the axioms of ``quasi-Poisson double bracket''
in the sense of Van den Bergh \cite{VdB}.
See \cite{MT14}.
\end{remark}

\subsection{The homotopy intersection form of a handlebody}
\label{subsec:hif_handlebody}

We now view $\Sigma$ as the boundary of a handlebody $V$, 
with the interior of disk $D$ removed. 
Thus we have $\partial V = \Sigma \cup D$.

Let $\varpi: \pi \to F$ be the canonical map onto $F:=\pi_1(V,\star)$.
Set  $\Alpha:=\ker \varpi$ and $\AAlpha:=\Alpha_{\operatorname{ab}}$.
Let $\AA^r$ denote $\AA$ with the right $\Z[F]$-action induced by the right conjugation of $\pi$ on $\A$.
Let $I_F$ denote the augmentation ideal of $\ZF$,
 which we regard as a (left) $\Z[F]$-module.

\begin{proposition} \label{prop:intersection}
The homotopy intersection form $\eta$ of $\Sigma$ induces 
a map $\langle -,-\rangle :  \Z[F] \times \AAlpha \to \Z[F]$,
which restricts to a non-singular $\Z[F]$-bilinear map
\begin{equation} \label{eq:<-,->}
   \langle -,-\rangle :  I_F \times \AAlpha^r \longrightarrow \Z[F].
\end{equation}
The latter corresponds (via canonical isomorphisms) to 
the homology intersection pairing
\begin{equation} \label{eq:intersection_form}
H_1(V,D;\Z[F])  \times 
H_2(V, \Sigma;\Z[F]) \longrightarrow  \Z[F]
\end{equation}
of $V$
with coefficients twisted by the canonical homomorphism 
$F=\pi_1(V,\star) \to \Z[F]$.
\end{proposition}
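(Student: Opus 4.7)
The plan is to define the pairing, establish its algebraic properties, and then identify it with a twisted homological intersection. More specifically, I would first set $\langle x,a\rangle := \varpi\big(\eta(\tilde x,\tilde a)\big)\in\Z[F]$ for lifts $\tilde x\in\Z[\pi]$ of $x\in\Z[F]$ and $\tilde a\in\Alpha$ of $a\in\AAlpha$. Independence of the lift of $a$ will be checked on commutators $[b,c]\in[\Alpha,\Alpha]$ via the right-Fox-derivative property \eqref{eq:Fox_right}: after expanding $\eta(\tilde x,bcb^{-1}c^{-1})$ and applying $\varpi$, all terms carry weights in $\varpi(\Alpha)=\{1\}$ and telescope using $\varpi\big(\eta(\tilde x,b^{-1})\big)=-\varpi\big(\eta(\tilde x,b)\big)$. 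Independence of the lift of $x$ reduces, by \eqref{eq:Fox_left} and the biderivation property, to the single check that $\varpi\big(\eta(\alpha_i,\alpha_j)\big)=0$ for all $i,j\in\{1,\dots,g\}$, which is immediate from the matrix $E$ in \eqref{eq:E} since each relevant entry lies in $(\Alpha-1)\cdot\Z[\pi]$.

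Next, I would verify bilinearity on the restricted domain $I_F\times\AAlpha^r$. Left $\Z[F]$-linearity in the first argument is a direct consequence of \eqref{eq:Fox_left}, the ``defect'' term $\eta(\tilde f,\tilde a)\,\varepsilon(\tilde x)$ vanishing precisely because $\varepsilon(x)=0$ when $x\in I_F$; this is exactly the reason why one must restrict from $\Z[F]$ to $I_F$. Right $\Z[F]$-linearity in the second argument, where $F$ acts on $\AAlpha^r$ by $a\cdot f=\varpi(b^{-1})\,a\,\varpi(b)$ for $b\in\pi$ lifting $f$, will follow from \eqref{eq:Fox_right} and $\eta(\tilde x,b^{-1})=-\eta(\tilde x,b)b^{-1}$: the cross-term $\eta(\tilde x,b)(1-b^{-1}\tilde ab)$ collapses after applying $\varpi$, leaving $\varpi\big(\eta(\tilde x,\tilde a)\big)\cdot f$.

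For non-singularity, I would compute the pairing matrix in the $\Z[F]$-bases $(x_i-1)_i$ of $I_F$ (free by a standard argument) and $(a_j)_j$ of $\AAlpha^r$ (free by Lemma \ref{lem:freeness}). Reading off from \eqref{eq:E}, $\langle x_i-1,a_j\rangle=\varpi\big(\eta(\beta_i,\alpha_j)\big)=-\delta_{ij}$, so the matrix is $-I_g$; this simultaneously proves non-singularity and gives the explicit values that will be matched with the homological pairing. For the final identification with the pairing \eqref{eq:intersection_form}, I would use that $V$ is a $K(F,1)$, whose universal cover $\tilde V$ is contractible; the long exact sequences of the pairs $(\tilde V,\tilde D)$ and $(\tilde V,\tilde\Sigma)$ then yield canonical $\Z[F]$-module isomorphisms $H_1(V,D;\Z[F])\simeq I_F$ and $H_2(V,\Sigma;\Z[F])\simeq H_1(\tilde\Sigma)\simeq\AAlpha^r$. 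Under these isomorphisms, $x_i-1$ is represented by a lift of the arc $\beta_i$ joining $\tilde\star$ to $x_i\cdot\tilde\star$, and $a_j$ is represented by the lift of the meridian disk in $V$ bounded by $\alpha_j$; transversality in $V$ gives $\pm\delta_{ij}$ for the Poincar\'e--Lefschetz pairing of these representatives. Since both pairings are $\Z[F]$-bilinear, agreement on this basis will propagate to all of $I_F\times\AAlpha^r$.

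\textbf{Main obstacle.} The algebraic parts are all routine Fox-calculus manipulations, already implicit in \S A.1. The genuinely delicate step will be Paragraph 4: the sign of the geometric intersection depends on the orientations of $V$, $\Sigma$ and $D$ and on the chosen lifts to $\tilde V$, while the sign of $\eta$ is fixed by the convention in \eqref{eq:eta}; reconciling both to yield the same $-\delta_{ij}$ on the preferred bases will require tracking each convention through the Poincar\'e--Lefschetz isomorphism. A clean way to finesse this would be to define the homological pairing \emph{by} the cellular Alexander--Whitney-type formula suggested by \eqref{eq:eta} (i.e., sum over intersection points weighted by connecting arcs) and then deduce it agrees with the standard intersection pairing on one basis element, thereby bypassing most of the sign bookkeeping.
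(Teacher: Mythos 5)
Your proposal is correct and follows essentially the same route as the paper: Fox-calculus arguments (\eqref{eq:Fox_left}, \eqref{eq:Fox_right}, the conjugation formulas) to show $\varpi\circ\eta$ descends and is $\Z[F]$-bilinear on $I_F\times\AAlpha^r$, the computation $\langle x_i-1,a_j\rangle=\varpi\eta(\beta_i,\alpha_j)=-\delta_{ij}$ for non-singularity, and identifications $H_1(V,D;\Z[F])\simeq I_F$, $H_2(V,\Sigma;\Z[F])\simeq\AAlpha^r$ via connecting maps (your universal-cover phrasing is equivalent to the paper's use of the pair $(V,D)$ and the triple $(V,\Sigma,\star)$ with local coefficients). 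Even the final step you flag as delicate is treated the same way in the paper, which simply deduces the correspondence of the two pairings from the basis computation \eqref{eq:in_bases}.
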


\begin{proof}
We denote by the same letter $\varpi:\Z[\pi] \to \Z[F]$
the ring homomorphism induced by $\varpi: \pi \to F$.
It follows from \eqref{eq:Fox_right} that
the restriction of $\varpi\circ  \eta$ to $\Z[\pi] \times \Alpha$ 
is additive in its second argument,
so that it induces a $\Z$-bilinear map 
$\tilde\eta: \Z[\pi] \times \AAlpha \to \Z[F]$.
Another application of \eqref{eq:Fox_right}  shows that
\begin{equation} \label{eq:r_conj}
\forall x\in \pi, \ \forall {a} \in \Alpha, \quad
\varpi\eta \big(-,  x^{-1} ax\big) =  \varpi\eta(-,a)\cdot \varpi(x),
\end{equation}
hence $\tilde\eta: \Z[\pi] \times \AAlpha^r \to \Z[F]$ is $\Z[F]$-linear in its second argument.

Let $J$ be the kernel of $\varpi:\Z[\pi] \to \Z[F]$.
To see that $\tilde\eta$
factors through a  map $\langle -,-\rangle :  \Z[F] \times \AAlpha \to \Z[F]$,
it suffices to show that 
\begin{equation} \label{eq:Ia_A} 
\tilde\eta(J, \AAlpha ) =0.
\end{equation}
Moreover, since we have $J=\Z[\pi]\, I_{\Alpha}$ where $I_\A=\ker(\epsilon:\Z[\A]\to \Z)$, it is enough to prove that
$\tilde\eta(I_{\Alpha}, \AAlpha ) =0$.
The group~$\sf{A}$ being normally generated by $\alpha_1, \dots,\alpha_g$ in~$\pi$,
the subset $I_{\sf{A}}$ is $\Z$-spanned by $x(\alpha_i-1)x^{-1}$ 
for all $x\in \pi$ and $i=1,\dots,g$.
But, similarly to \eqref{eq:r_conj}, we have 
\begin{equation} \label{eq:l_conj}
\forall x\in \pi, \ \forall {a} \in \Alpha, \quad
\varpi \eta (xax^{-1}, - ) = \varpi(x) \cdot \varpi\eta(a,-).
\end{equation}
Thus, \eqref{eq:Ia_A} follows since   
$ \varpi  \eta(\alpha_i, \alpha_j)=0$
for any  $i,j \in \{1,\dots,g\}$.

The restricted  map $\langle -,-\rangle : I_F \times \AAlpha^r \to \Z[F]$
is also $\Z[F]$-linear in its first argument because of \eqref{eq:Fox_left}. 
To justify its non-singularity, it suffices to compute it
in the basis $(x_i-1)_i$ of $I_F$ and in the basis $([\alpha_j])_j$ of $\AAlpha$:
\begin{equation}\label{eq:in_bases}
\langle x_i-1, [\alpha_j] \rangle = \varpi \eta(\beta_i,\alpha_j) =- \delta_{ij}.
\end{equation}
Thus the matrix of $\langle -,-\rangle$ 
in the above bases is  $-I_g$, which is invertible.

We prove the second statement,
which gives a 3-dimensional interpretation to the pairing $\langle -,-\rangle$.
The connecting map in the long exact sequence of the pair $(V,D)$ 
gives an injection
$
\partial_*: H_1(V,D;\Z[F]) \to H_0(D;\Z[F]) \simeq \Z[F]
$
with image $I_F$.
Hence we get a canonical isomorphism 
\begin{equation} \label{eq:iso_1}
H_1(V,D;\Z[F])\simeq I_F.
\end{equation}
Besides, the connecting map
of the triple $\big(V, \Sigma,\star\big)$ gives an injective homomorphism
$
\partial_*: H_2\big(V, \Sigma;\Z[F]\big)
\to H_1\big( \Sigma,\star;\Z[F]\big)
$
whose image coincides with
$$
\ker\Big(\hbox{incl}_*: 
\underbrace{H_1\big( \Sigma,\star;\Z[F]\big)}_{\simeq\, I_{\pi} \otimes_{\Z[\pi]} \Z[F]  } 
\longrightarrow \underbrace{H_1\big( V,\star;\Z[F]\big)}_{\simeq\, I_F}  \Big).
$$
It is easily checked that the map $I_{\sf{A}} \to I_{\pi} \otimes_{\Z[\pi]}  \Z[F]  $ defined by $x\mapsto  x\otimes 1$
induces an isomorphism between 
$\AAlpha \simeq I_{\sf{A}}/ I^2_{\sf{A}}$ and this kernel.
Hence we get 
a canonical isomorphism 
\begin{equation} \label{eq:iso_2}
H_2(V, \Sigma;\Z[F])\simeq \AAlpha^r.
\end{equation}
That the pairings \eqref{eq:<-,->} and \eqref{eq:intersection_form}
correspond to each other through the isomorphisms 
\eqref{eq:iso_1} and \eqref{eq:iso_2} follows 
immediately from the computation \eqref{eq:in_bases}. 
\end{proof}

Since  $J=\ker(\varpi: \Z[\pi]\to \Z[F])$ is an ideal,
we deduce from \eqref{eq:Fox_to_double} and \eqref{eq:Ia_A} 
that the composition  $(\varpi \otimes \varpi)\double{-}{-}$
induces a linear map
\begin{equation} \label{eq:Theta}
\Theta: \Z[F]\otimes \AAlpha \longrightarrow \Z[F]\otimes \Z[F]
\end{equation}
which is equivalent to  the pairing
$\langle-,-\rangle:\Z[F] \times \AAlpha \to \Z[F]$.
Indeed, we have
\begin{eqnarray}
\label{eq:<>_to_Theta}
\forall x\in \Z[F], \forall a \in \AAlpha, \quad \Theta(x,a) &=& 
\overline{\langle x'', a\rangle'}\, x'  \otimes \langle x'', a\rangle''\\
\notag \hbox{ and, conversely, } \qquad \langle x,a \rangle &=& 
\varepsilon\left(\Theta(x,a)^\ell\right) \otimes \Theta(x,a)^r.
\end{eqnarray}

\begin{proposition}
 The map $\Theta$ has the following properties:
 \begin{equation}
\label{eq:left_Theta}
\forall x,y\!\in \Z[F], \, \forall a \in \AAlpha, \ 
\Theta(xy,a) =  \Theta(y,a)^\ell \otimes x \Theta(y,a)^r 
 + \Theta(x,a)^\ell y  \otimes \Theta(x,a)^r,
\end{equation} 
 \begin{equation}
 \label{eq:right_Theta}
 \forall f \in F, \ \forall x\in \Z[F],  \ \forall a \in \AAlpha, \quad
 \Theta\big(x,{}^fa\big)= f \Theta(x,a)^\ell\!  \otimes \Theta(x,a)^r f^{-1},
\end{equation}
\begin{equation}
\label{eq:inversion}
\forall x\in \Z[F], \, \forall a \in \AAlpha, \quad 
\Theta(\overline{x},a)=-\overline{\Theta(x,a)^r}\otimes \overline{\Theta(x,a)^\ell}.
\end{equation}
\end{proposition}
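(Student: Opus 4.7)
The plan is to lift each of the three identities to corresponding identities for $\double{-}{-}$ on $\Z[\pi] \otimes \Z[\pi]$, and then apply $\varpi \otimes \varpi$. Since $\Theta$ is defined from $\langle-,-\rangle$ by the coproduct formula \eqref{eq:<>_to_Theta}, and $\langle-,-\rangle$ comes from $\eta$ after passing to $\AAlpha$, everything reduces to properties of the biderivation $\double{-}{-}$, its interaction with the coproduct of $\Z[\pi]$, and the fact that $\AAlpha$ sits inside the abelianization of $\Alpha$.

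First, for \eqref{eq:left_Theta}, I would start from the biderivation formula \eqref{eq:inner}: for $x,y\in\Z[\pi]$ and $a\in\Alpha$,
\[\double{xy}{a} = \double{y}{a}^\ell \otimes x\double{y}{a}^r + \double{x}{a}^\ell y \otimes \double{x}{a}^r.\]
Applying $\varpi\ot\varpi$ and using that $\Theta$ is $(\varpi\ot\varpi)\double{-}{-}$ after tracking through the Sweedler-coproduct identification \eqref{eq:<>_to_Theta}, one obtains \eqref{eq:left_Theta}. The routine point to verify is that the coproduct of $\Z[F]$ is compatible with~$\varpi$ (which is immediate since $\varpi$ is a Hopf algebra map) so that the Sweedler manipulations in \eqref{eq:Fox_to_double} pass to the quotient cleanly.

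Next, for \eqref{eq:right_Theta}, I would use the conjugation formulas \eqref{eq:r_conj} and \eqref{eq:l_conj} satisfied by $\eta$. These imply that for any lift $\tilde f \in \pi$ of $f\in F$ and any $a\in\Alpha$,
\[\double{x}{\tilde f a \tilde f^{-1}} = \tilde f\, \double{x}{a}^\ell \otimes \double{x}{a}^r \tilde f^{-1} + (\text{correction terms})\]
where the correction terms involve $\double{x}{\tilde f}$ and $\double{x}{\tilde f^{-1}}$ via \eqref{eq:outer}. After applying $\varpi\ot\varpi$, the correction terms live in $J\otimes\Z[F]+\Z[F]\otimes J$ and hence vanish on $\AAlpha$, exactly as in the proof of \eqref{eq:Ia_A}. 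The desired identity \eqref{eq:right_Theta} then follows directly.

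Finally, for \eqref{eq:inversion}, the key tool is the almost-skew-symmetry \eqref{eq:ask_bis}. Applied with $a,b$ replaced by lifts of $x\in\Z[F]$ and of a representative of $a\in\AAlpha$, and composed with the antipode via $\overline{x} = S(x)$, one obtains a formula for $\double{\overline{x}}{a}$ in terms of $\double{a}{x}$ modulo terms supported in $J\otimes\Z[\pi] + \Z[\pi]\otimes J$. The extra summands $-ba\otimes 1 - 1\otimes ab + b\otimes a + a\otimes b$ in \eqref{eq:ask_bis} all become trivial on passing to $\Theta$ because one of their tensor factors lies in $J$ after applying $\varpi$ (since $a$ projects to zero in $F$), using the same argument as in \eqref{eq:Ia_A}. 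What remains after applying $\varpi\ot\varpi$ is precisely $-\overline{\Theta(x,a)^r}\otimes\overline{\Theta(x,a)^\ell}$.

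The main obstacle will be the bookkeeping for \eqref{eq:inversion}: Sweedler notation for the coproduct of $\Z[\pi]$ interacts nontrivially with the antipode, and one must be careful that after quotienting by $J$ and projecting to $\AAlpha$, the surviving terms from \eqref{eq:ask_bis} give exactly the sign and swap of tensor factors predicted by the statement. The other two items are essentially formal consequences of \eqref{eq:inner} and the conjugation formulas.
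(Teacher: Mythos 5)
Your treatment of \eqref{eq:left_Theta} is fine and is exactly the paper's argument (a direct application of \eqref{eq:inner} followed by $\varpi\ot\varpi$). However, the vanishing arguments you use for \eqref{eq:right_Theta} and \eqref{eq:inversion} rest on a misconception: a representative $\tilde a\in\Alpha$ of $a\in\AAlpha$ satisfies $\varpi(\tilde a)=1$, not $0$. Consequently, neither the two correction terms produced by \eqref{eq:outer} in your proof of \eqref{eq:right_Theta} (namely $\tilde f\tilde a\,\double{\tilde x}{\tilde f^{-1}}^\ell\ot\double{\tilde x}{\tilde f^{-1}}^r$ and $\double{\tilde x}{\tilde f}^\ell\ot\double{\tilde x}{\tilde f}^r\tilde a\tilde f^{-1}$) nor the four extra summands of \eqref{eq:ask_bis} lie individually in the kernel of $\varpi\ot\varpi$; this is not ``as in the proof of \eqref{eq:Ia_A}'', where the relevant argument lies in the augmentation ideal of $\Alpha$. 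What is true is that these terms cancel \emph{in pairs} after applying $\varpi\ot\varpi$, using $\double{\tilde x}{\tilde f\tilde f^{-1}}=0$ together with $\varpi(\tilde a)=1$; so \eqref{eq:right_Theta} can be salvaged along your lines with a short extra computation. (The paper avoids this altogether: it deduces \eqref{eq:right_Theta} in two lines from \eqref{eq:<>_to_Theta} and the right $\Z[F]$-linearity of $\langle-,-\rangle$ in its second argument, i.e.\ $\langle x,{}^fa\rangle=\langle x,a\rangle f^{-1}$.)

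The genuine gap is \eqref{eq:inversion}. Even after the extra summands of \eqref{eq:ask_bis} are disposed of, that identity only converts $\double{\overline{\tilde x}}{\tilde a}$ into the flip of $\double{\tilde a}{\overline{\tilde x}}$, an operation with the arguments in the opposite order, and you give no mechanism for turning this back into $\Theta(x,a)$ with the antipode applied to each factor and the factors exchanged. If one pushes on with only your tools (a second use of \eqref{eq:ask_bis} plus the inversion formulas for each argument coming from \eqref{eq:outer} and \eqref{eq:inner}), one arrives, for group-like $x$, at $\Theta(\overline x,a)=-\Theta(x,a)^\ell\,\overline x\ot\overline x\,\Theta(x,a)^r$, which is not the stated formula; identifying it with $-\overline{\Theta(x,a)^r}\ot\overline{\Theta(x,a)^\ell}$ requires a further input, namely cocommutativity of the coproduct of $\langle x,a\rangle$ fed through \eqref{eq:<>_to_Theta}, or, as the paper does, the antipode identity $\double{\overline u}{\overline v}=\overline{\double{u}{v}^r}\ot\overline{\double{u}{v}^\ell}$ for $u,v\in\Z[\pi]$, checked directly from the geometric formula \eqref{eq:double} (or formally from \eqref{eq:Fox_to_double}, the Fox rules and cocommutativity of $\Z[\pi]$). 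Your proposal simply asserts the final form (``what remains \ldots is precisely\ldots''), and the ``bookkeeping'' you defer is exactly where the content of \eqref{eq:inversion} lies.
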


\begin{proof}
The identity \eqref{eq:left_Theta} is a direct application of \eqref{eq:inner}, while \eqref{eq:right_Theta} 
follows from \eqref{eq:<>_to_Theta} and
the $\Z[F]$-linearity of $\langle-,-\rangle$ in its second argument:
\begin{eqnarray*}
 \Theta\big(x,{}^fa\big)&=&
 \overline{\langle x'', a^{f^{-1}}\rangle'}x'  
 \otimes \langle x'', a^{f^{-1}}\rangle''\\
 &=&  \overline{\big(\langle x'', a\rangle f^{-1}\big)'}x'  
 \otimes \big(\langle x'', a\rangle f^{-1}\big)''
 \ = \ f \Theta(x,a)^\ell  \otimes \Theta(x,a)^r f^{-1}.
\end{eqnarray*}
Property \eqref{eq:inversion} 
follows from the identity
$\double{\overline{u}}{\overline{v}}= 
\overline{\double{u}{v}^r} \otimes \overline{\double{u}{v}^\ell}$ 
($u,v\in \Z[\pi]$),
which can be checked using \eqref{eq:double}.
\end{proof}

\subsection{The intersection operation $\Psi$} \label{subsec:Psi}

We now derive another operation $\Psi$ from the homotopy intersection form
$\eta:\Z[\pi]\times \Z[\pi] \to \Z[\pi]$.

Recall that $J=\ker(\varpi: \Z[\pi]\to \Z[F])$. 
The following lemma refines the isomorphism \eqref{eq:Quillen} 
in degree 1 to integral coefficients.

\begin{lemma}
There is an isomorphism  of abelian groups
$$
\gamma: \Z[F]\otimes \AAlpha \lto J/J^2
$$ 
defined by 
$\gamma(\varpi(x)\otimes [a])= [x\, (a-1)]$ for any $x\in \Z[\pi]$ and $ a\in \Alpha$.
\end{lemma}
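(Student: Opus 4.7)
The plan is to first construct $\gamma$ and verify it is well-defined, and then to establish bijectivity by using the semidirect product decomposition of $\pi$ afforded by the free-pair hypothesis.

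For well-definedness, I will introduce the auxiliary $\Z$-bilinear map
\[
\phi: \Z[\pi] \times \Alpha \longrightarrow J/J^2, \qquad (x,a) \longmapsto [x(a-1)].
\]
Additivity in $a$ passes from $\Alpha$ to $\AAlpha = \Alpha/[\Alpha,\Alpha]$ thanks to the identity $x(ab-1) = x(a-1) + x(b-1) + x(a-1)(b-1)$ (whose last summand lies in $J^2$) together with the commutator identity $aba^{-1}b^{-1} - 1 \in I_\Alpha^2 \subset J^2$. Since $a-1 \in J$, we have $J\cdot(a-1) \subset J^2$, so $\phi(x,a)$ depends on $x$ only through $\varpi(x) \in \Z[F]$; this produces $\gamma$. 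Surjectivity is then automatic, since $\Alpha$ normal in $\pi$ implies $J = \Z[\pi]\cdot I_\Alpha$, so $J/J^2$ is spanned by classes $[x(a-1)]$.

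The crux is injectivity, for which I will exploit the free-pair structure. By the remark following the definition of a free pair, $\pi$ admits a basis $\{\alpha_i\}_{i\in I} \sqcup \{\beta_j\}_{j\in J}$ with $\Alpha = \lala\alpha_i\rara$ and with $B := \la\beta_j\ra$ mapped isomorphically onto $F$ by $\varpi$. Inverting $\varpi|_B$ therefore yields a group-theoretic section $s: F \to \pi$ of $\varpi$. The relations $\Alpha\cap s(F)=\{1\}$ and $\pi = \Alpha\cdot s(F)$ (verified by factoring any $g\in\pi$ as $g\,s(\varpi(g))^{-1}\cdot s(\varpi(g))$) give the semidirect product decomposition $\pi = \Alpha \rtimes F$. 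Consequently there is a $\Z$-linear isomorphism $\Z[\pi] \simeq \Z[\Alpha] \otimes_\Z \Z[F]$ under which $J$ corresponds to $I_\Alpha \otimes \Z[F]$; since the conjugation action of $F$ on $\Z[\Alpha]$ preserves $I_\Alpha$, one computes further that $J^2$ corresponds to $I_\Alpha^2 \otimes \Z[F]$, hence $J/J^2 \simeq \AAlpha \otimes_\Z \Z[F]$ as abelian groups. A direct calculation in these coordinates---writing $x = \alpha\cdot s(f)$ and reducing modulo $I_\Alpha^2 \otimes \Z[F]$---identifies $\gamma$ with the bijection $f \otimes [a] \mapsto (f\cdot[a]) \otimes f$, where $f\cdot[a]$ denotes the canonical left $\Z[F]$-action on $\AAlpha$.

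The main obstacle will be the description of $J^2$: for a general normal subgroup this is subtle and would require spectral-sequence or filtration arguments, but the free-pair hypothesis supplies the splitting $s: F \to \pi$, which makes the decomposition of $\Z[\pi]$ and its ideals completely transparent and reduces the problem to the elementary identification $I_\Alpha/I_\Alpha^2 \simeq \AAlpha$.
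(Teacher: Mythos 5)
Your proof is correct and follows essentially the same route as the paper: both exploit the splitting of $\varpi$ provided by the free-pair structure (the subgroup $B\le\pi$, i.e.\ your section $s:F\to\pi$) to expand elements of $J$ uniquely over coset representatives with coefficients in $I_\Alpha$, with conjugation-stability of $I_\Alpha$ taking care of $J^2$. The only difference is presentational: the paper writes down the resulting inverse homomorphism $\rho(j)=\sum_{b}\varpi(b)\otimes[a_b(j)]$ directly, whereas you identify $J/J^2$ with $\AAlpha\otimes\Z[F]$ in coordinates and recognize $\gamma$ there as a (conjugation-twisted) bijection.
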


\begin{proof}
It is easily verified that the map $\gamma$ is well-defined.
Let $B$ be a subgroup of~$\pi$ that maps isomorphically onto $F$ by $\varpi$.
Then, we can write any $j\in J$ uniquely as a finite sum
$
j= \sum_{b \in B} b \, a_b(j)
$
with $a_b(j)\in I_\Alpha$. Thus, we have a homomorphism
$$
\rho: J \lto \Z[F]\otimes (I_\Alpha/I_\Alpha^2), 
\ j \longmapsto  \sum_{b\in B}\varpi(b) \otimes [a_b(j)]
$$
and we easily check that $\rho(J^2)$ is trivial.
Clearly, the resulting homomorphism 
$
\rho: J/J^2 \to \Z[F]\otimes (I_\Alpha/I_\Alpha^2)  \simeq  \Z[F]\otimes \AAlpha 
$
is the inverse to $\gamma$.
\end{proof}

To define our last intersection operation $\Psi$,
recall from \eqref{eq:Ia_A} that $\eta(\Alpha,\Alpha)\subset J$.

\begin{proposition} \label{prop:Psi}
There is a unique $\Z$-bilinear map $\Psi$
that fits into the following commutative diagram:
\begin{equation} \label{eq:Psi}
 \xymatrix{
\Alpha \times \Alpha \ar[r]^-{\eta} \ar@{->>}[d]&  J \ar@{->>}[r]& J/J^2 \\
\AAlpha \times \AAlpha \ar[rr]^-\Psi &&
\Z[F]\otimes \AAlpha \ar[u]_-\gamma^-\simeq &  \\
}
\end{equation}
Furthermore, $\Psi$ has the following properties:
\begin{equation} \label{eq:Psi_zeta}
    \forall a \in \AAlpha, \quad \Psi(a,[\zeta])=1 \otimes a ,
    \quad \hbox{where $\zeta \in \Alpha$ is the homotopy class of $\partial \Sigma$}.
\end{equation}
\begin{equation} \label{eq:conj_left}
    \forall x\in F, \ \forall a,b\in \AAlpha, \quad 
    \Psi({}^xa,b) =  x \Psi(a,b)^\ell \otimes \Psi(a,b)^r- 
    \Theta(x,b)^r \otimes {}^{\Theta(x,b)^\ell} a.
\end{equation} 
\begin{equation} \label{eq:conj_right}
     \forall y\in F, \ \forall a,b\in \AAlpha, \quad 
    \Psi(a,{}^yb) =  \Psi(a,b)^\ell  y^{-1}\otimes {}^y ( \Psi(a,b)^r) 
    - \overline{\langle y,a\rangle}\otimes {}^y b .
\end{equation} 
\begin{equation} \label{eq:kind_of_sym}
\forall a,b\in \AAlpha, \qquad
\Psi(b,a)= \overline{\Psi(a,b)^{\ell'}}
\otimes\, {}^{\Psi(a,b)^{\ell''}\!} \big(\Psi(a,b)^r\big).
\end{equation}
\begin{eqnarray} 
\notag  \qquad \quad \forall a,b,c\in \AAlpha, &&
\Psi(b,a)^\ell \otimes \Psi(\Psi(b,a)^r,c)^\ell
\otimes  \Psi(\Psi(b,a)^r,c)^r \\
\label{eq:pseudo-Jacobi}    &=& \Psi(b,\Psi(a,c)^r)^\ell\, \overline{\Psi(a,c)^{\ell '}} 
\otimes \Psi(a,c)^{\ell''} \otimes \Psi(b,\Psi(a,c)^r)^r \\*
\notag &&-\Theta(\Psi(b,c)^\ell,a)^r \otimes \Theta(\Psi(b,c)^\ell,a)^\ell 
\otimes \Psi(b,c)^r  \\*
\notag && - \overline{\langle \Psi(a,c)^{\ell '}, b \rangle} 
\otimes  \Psi(a,c)^{\ell ''} \otimes \Psi(a,c)^r.
\end{eqnarray}
\begin{eqnarray} 
\notag  \qquad \quad \forall a,b,c\in \AAlpha, &&
\Psi(a,c)^\ell \otimes \Psi(b,\Psi(a,c)^r)^\ell \otimes \Psi(b,\Psi(a,c)^r)^r \\
\label{eq:pseudo-Jacobi_bis}  &=& \Psi(\Psi(b,a)^r,c)^{\ell '} \otimes \Psi(b,a)^\ell\,
\Psi(\Psi(b,a)^r,c)^{\ell ''} \otimes \Psi(\Psi(b,a)^r,c)^{r}\\
 \notag && + \overline{\langle \Psi(b,c)^{\ell '},a\rangle}\,
\Psi(b,c)^{\ell''} \otimes \Psi(b,c)^{\ell'''} \otimes \Psi(b,c)^{r}  \\
\notag && + \Psi(a,c)^{\ell'} \otimes 
\overline{\langle \Psi(a,c)^{\ell''} ,b \rangle}\, \Psi(a,c)^{\ell'''} 
\otimes \Psi(a,c)^{r}.
\end{eqnarray}
\end{proposition}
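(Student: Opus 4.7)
The goal is to first establish well-definedness of $\Psi$ via the diagram~\eqref{eq:Psi}, and then derive the six listed properties from the known properties of $\eta$ and the double bracket $\double{-}{-}$. For well-definedness I would check that $\eta$ descends through the quotient $\A/[\A,\A]$ in each of its two arguments, that is, $\eta([\A,\A]\cdot\A,\A)\subset J^2$ and $\eta(\A,\A\cdot[\A,\A])\subset J^2$. For $a,b\in\A$, applying the Fox-derivation rule~\eqref{eq:Fox_left} repeatedly to $\eta(aba^{-1}b^{-1},c)$, together with the identity $\eta(x^{-1},c)=-x^{-1}\eta(x,c)$, yields an expression of the form $(a-aba^{-1}b^{-1})\eta(b,c)+(1-aba^{-1})\eta(a,c)$. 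Every coefficient lies in $I_\A$ because $a,b\in\A$ and $\A$ is normal, while each $\eta(\cdot,c)$-factor lies in $J$ by~\eqref{eq:Ia_A}; hence the sum lies in $I_\A\cdot J\subset J^2$. A symmetric argument using~\eqref{eq:Fox_right} handles the second slot.

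For the ``first-order'' properties \eqref{eq:conj_left} and \eqref{eq:conj_right}, I would expand $\eta(xax^{-1},b)$ via~\eqref{eq:Fox_left} into $(1-xax^{-1})\eta(x,b)+x\,\eta(a,b)$, and reduce modulo $J^2$ via $\gamma^{-1}$. The term $x\,\eta(a,b)$ produces the expected $x\Psi(a,b)^\ell\otimes\Psi(a,b)^r$, while $(1-xax^{-1})\eta(x,b)$ projects, via the identification~\eqref{eq:<>_to_Theta} of $\Theta$ with the pairing $\langle-,-\rangle$, to the correction $-\Theta(x,b)^r\otimes{}^{\Theta(x,b)^\ell}a$. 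Identity~\eqref{eq:conj_right} is derived analogously from~\eqref{eq:Fox_right}. For~\eqref{eq:Psi_zeta}, I would substitute the explicit formula~\eqref{eq:zeta} for $\zeta$ and use the matrix~\eqref{eq:E} of $\eta$ on the standard basis $(\alpha,\beta)$ to show that $\eta(\alpha_j,\zeta)\equiv\alpha_j-1\pmod{J^2}$; together with the abelianization expansion~\eqref{eq:zeta_ab}, this is equivalent to $\Psi(a,[\zeta])=1\otimes a$ for every $a\in\AA$. Property~\eqref{eq:kind_of_sym} is a direct consequence of the almost skew-symmetry~\eqref{eq:ask}: the defect $(a-1)(b-1)$ lies in $I_\A^2\subset J^2$, so only $-a\,\overline{\eta(b,a)}\,b$ survives mod $J^2$, and the conjugation by $a$ and $b$ translates into the $F$-action appearing on the right-hand side.

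The main obstacle is the two quasi-Jacobi identities \eqref{eq:pseudo-Jacobi} and \eqref{eq:pseudo-Jacobi_bis}. My strategy is to start from the quasi-Jacobi identity~\eqref{eq:quasi-Jacobi} for the double bracket $\double{-}{-}$, specialize it to group elements $a,b,c\in\A$ (for which the Sweedler expansion of the coproduct is trivial), then project the resulting identity in $\Z[\pi]^{\otimes 3}$ down to a mixed tensor product of $\Z[F]$ and $\AA$ by applying $\varpi$ in the appropriate slots and reducing mod $J^2$ in others. The three iterated double-brackets on the left of~\eqref{eq:quasi-Jacobi} should produce the three iterated compositions of $\Psi$ appearing in~\eqref{eq:pseudo-Jacobi}, while the correction terms $\double{a}{c}^\ell\otimes(1-b)\otimes\double{a}{c}^r$ on the right should project to the $\langle-,-\rangle$-correction on the last line of~\eqref{eq:pseudo-Jacobi}, and intermediate $\Theta$-corrections will emerge from the interaction between $\varpi$ and the bead structure in the second factor. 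Identity~\eqref{eq:pseudo-Jacobi_bis} can then be obtained from~\eqref{eq:pseudo-Jacobi} by invoking the symmetry~\eqref{eq:kind_of_sym}, or via a symmetric manipulation of~\eqref{eq:quasi-Jacobi}. The bookkeeping is what is delicate here: many Sweedler indices have to be tracked in parallel, and they interact nontrivially with the $\Z[F]$-action, the antipode, and the conjugation built into the definition of $\Theta$, so matching each correction term with the correct sign and the right placement of beads is where most of the technical work lies.
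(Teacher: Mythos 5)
Your overall route coincides with the paper's: well-definedness via bilinearity of $q\eta$ modulo $J^2$, the Fox expansions of $\eta({}^xa,b)$ and $\eta(a,{}^yb)$ together with \eqref{eq:ask} for \eqref{eq:conj_left}, \eqref{eq:conj_right} and \eqref{eq:kind_of_sym}, and reduction of the quasi-Jacobi identity \eqref{eq:quasi-Jacobi} for the two hard identities. Still, two of your steps have concrete soft spots. For \eqref{eq:Psi_zeta}, checking $\eta(\alpha_j,\zeta)\equiv\alpha_j-1$ on the $a_j$ alone does not suffice: $\Psi$ is only $\Z$-bilinear and $\AAlpha$ is generated by the $a_j$ over $\Z[F]$, not over $\Z$, so you must also control $\Psi({}^xa_j,[\zeta])$ for $x\in F$; via \eqref{eq:conj_left} this requires knowing $\langle x,[\zeta]\rangle$ for all $x$, i.e., essentially the identity $\eta(x,\zeta)=x-1$ — which is what the paper uses directly ($\eta(a,\zeta)=a-1$ for $a\in\Alpha$ gives \eqref{eq:Psi_zeta} in one line). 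Invoking \eqref{eq:zeta_ab} does not bridge this. Relatedly, in \eqref{eq:kind_of_sym} the $F$-action on the right-hand side does not come from ``conjugation by $a$ and $b$'' (these lie in $\Alpha$, so left/right multiplication by them is invisible modulo $J^2$); it comes from identifying, through $\gamma$, the antipode-induced involution of $J/J^2$ with $x\otimes c\mapsto-\overline{x'}\otimes{}^{x''}c$, and that computation is the actual content of the step.

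For the quasi-Jacobi identities, your plan omits the ingredient that makes ``projecting in the appropriate slots'' legitimate: one needs $\eta(J,J)\subset J$ and $\double{J}{J}\subset\Z[\pi]\otimes J+J\otimes\Z[\pi]$, so that $q\eta$ and $(\varpi\otimes q)\circ\double{-}{-}$ induce operations on $J/J^2$, together with explicit formulas expressing these induced operations in terms of $\Psi$ and $\Theta$ (the paper's \eqref{eq:obs1} and \eqref{eq:obs2}); the paper also first contracts \eqref{eq:quasi-Jacobi} with $\varepsilon\otimes\id\otimes\id$ to get the two-tensor identity \eqref{eq:reduced_quasi-Jacobi} before projecting, which is what keeps the Sweedler bookkeeping manageable. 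Finally, your fallback of deducing \eqref{eq:pseudo-Jacobi_bis} from \eqref{eq:pseudo-Jacobi} ``by invoking \eqref{eq:kind_of_sym}'' is doubtful: in the paper both identities are extracted from one and the same reduced identity in $\Z[F]\otimes J/J^2$ by applying two different coproduct-twisting automorphisms of $\Z[F]^{\otimes 2}\otimes\AAlpha$, and the passage between them is not simply the symmetry of $\Psi$. Your alternative suggestion (a symmetric manipulation of \eqref{eq:quasi-Jacobi}) is the one that actually works, so the argument is repairable, but as written the heaviest part of the proposition is still only a plan.
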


\begin{proof}
Let $q:J \to J/J^2$ be the projection.
For any $a,b,c\in \Alpha$, we have
\begin{gather*}
\eta(ab,c)= \eta(b,c)+(a-1)\eta(b,c)+ \eta(a,c)\equiv  \eta(b,c) + \eta(a,c) \mod J^2,\\
\eta(a,bc)= \eta(a,b)+ \eta(a,b)(c-1)+ \eta(a,c) \equiv\eta(a,b) + \eta(a,c) \mod J^2,
\end{gather*}
which shows that the map $q\eta: \Alpha \times \Alpha \to J/J^2$ 
factors through the canonical projection 
$\Alpha \times \Alpha  \to \AAlpha \times \AAlpha$ to give a $\Z$-bilinear pairing.
This shows the existence (and uniqueness) of the pairing $\Psi$.

We have $\eta(a,\zeta)=a-1$ for any $a\in \Alpha$,
which proves \eqref{eq:Psi_zeta}.
Let $x\in \pi$ and $a,b\in \Alpha$;
then \eqref{eq:conj_left} is proved as follows:
\begin{eqnarray*}
\eta({}^xa,b)\ = \ \eta(xax^{-1},b)
&= &xa\eta(x^{-1},b)+x\eta(a,b)+\eta(x,b)\\
&=& (1-xax^{-1}) \eta(x,b)+x\eta(a,b) \\
&=& -\eta(x,b)' \,\overline{ \eta(x,b)''}x (a-1) x^{-1}\eta(x,b)'''+x\eta(a,b) .
\end{eqnarray*}
Let $y\in \pi$ and $a,b\in \Alpha$; 
then \eqref{eq:conj_right} is proved as follows:
\begin{eqnarray*}
\eta(a,{}^y b)\ = \ \eta(a,yby^{-1})&=&\eta(a,yb) y^{-1} + \eta(a,y^{-1})\\
&=&  \eta(a,y) (b-1)y^{-1} + \eta(a,b) y^{-1}  \\
&\stackrel{\eqref{eq:ask}}{\equiv} &  - \overline{\eta(y,a)}y (b-1)y^{-1} + \eta(a,b) y^{-1} \mod J^2.  
\end{eqnarray*}
Identity \eqref{eq:kind_of_sym} follows from
$$
q\eta(b,a) \stackrel{\eqref{eq:ask}}{=}
-\overline{q \eta (a,b)} \mod J^2\quad (a,b\in\Alpha)
$$
and the following observation:
through the isomorphism $\gamma$,
the   involution of $J/J^2$ induced by the antipode of $\Z[\pi]$ 
corresponds to the involution $(x \otimes a \mapsto - \overline{x'} \otimes\, {}^{x''}\!a)$ 
of $\Z[F]\otimes \AAlpha$.

It now remains to prove \eqref{eq:pseudo-Jacobi} and \eqref{eq:pseudo-Jacobi_bis}.
By applying the map $\varepsilon\otimes \id_{\Z[\pi]}\otimes \id_{\Z[\pi]}$ 
to \eqref{eq:quasi-Jacobi},
we  obtain the following identity in $\Z[\pi]^{\otimes 2}$:
\begin{eqnarray} \label{eq:reduced_quasi-Jacobi}
\qquad \  \forall a,b,c \in \Z[\pi], 
&& \eta\big({a},{\double{b}{c}^\ell}\big) \otimes \double{b}{c}^r
-    \double{b}{\eta({a},{c})}\\
\notag && -   \double{a}{b}^r
\otimes \eta\big(\double{a}{b}^\ell,{c}\big)  \ = \     
1 \otimes b\, \eta({a},{c}) -   b\otimes \eta({a},{c}).
\end{eqnarray}
Since we have  
$J= \Z[\pi]\, I_{\A}$ and  $\eta(\Alpha,\Alpha)\subset J$, we have $\eta(J,J)\subset J$.
Thus $q\eta$ induces a $\Z$-bilinear map
$\eta_J: (J/J^2)\times (J/J^2) \to J/J^2$.
It also follows that
$\double{J}{J} \subset \Z[\pi] \otimes J + J \otimes \Z[\pi]$.
Therefore, the composition of $\double{-}{-}$ with
$\varpi \otimes q$ induces a $\Z$-linear  map
$$
\double{-}{-}_J: J/J^2 \otimes J/J^2 \lto \Z[F] \otimes J/J^2.
$$

We now take $a,b,c$ in $\Alpha$ and
let $\underline a :=[a-1],\underline b := [b-1],\underline c := [c-1]$ in $J/J^2$.
Besides, the corresponding
elements $[a],[b],[c]$ in $\AAlpha$ are simply denoted by $a,b,c$, respectively.
Using the ``skew-symmetry'' \eqref{eq:ask} and its consequence \eqref{eq:ask_bis},
 we apply 
$\varpi \otimes q$ to \eqref{eq:reduced_quasi-Jacobi}
to get the following identity in $\Z[F] \otimes J/J^2$:
$$
- \overline{\left\langle \double{\underline b}{\underline c}^{\ell'}_J, 
 a \right\rangle} \double{\underline b}{\underline c}^{\ell''}_J  
 \otimes \double{\underline b}{\underline c}^r_J
 -   \double{\underline b}{\eta_J(\underline{a},\underline{c})}_J
 + \double{\underline b}{\underline a}^\ell _J 
 \otimes \eta_J(\double{\underline b}{\underline a}^r_J, \underline c)
\ = \ 0.
$$
Note that the first term in this identity is
\begin{eqnarray*}
\overline{\left\langle \double{\underline b}{\underline c}^{\ell'}_J, 
 a \right\rangle} \double{\underline b}{\underline c}^{\ell''}_J  
 \otimes \double{\underline b}{\underline c}^r_J 
 &=& \overline{\left\langle \overline{\double{\underline b}{\underline c}^{\ell''}_J }  \double{\underline b}{\underline c}^{\ell'}_J, 
 a \right\rangle}   \otimes \double{\underline b}{\underline c}^r_J \\
 && - \varepsilon(\double{\underline b}{\underline c}^{\ell'}_J )\overline{ 
 \left\langle \overline{\double{\underline b}{\underline c}^{\ell''}_J }  , 
 a \right\rangle}   \otimes \double{\underline b}{\underline c}^r_J  \\
 &=& -\overline{ 
 \left\langle \overline{\double{\underline b}{\underline c}^\ell_J }  , 
 a \right\rangle}   \otimes \double{\underline b}{\underline c}^r_J.
\end{eqnarray*}
Therefore, we get
\begin{equation} \label{eq:ouf}
 \overline{\left\langle \overline{\double{\underline b}{\underline  c}^\ell_J }  
 ,  a \right\rangle}   \otimes \double{\underline  b}{\underline c}^r_J
 -   \double{\underline b}{\eta_J(\underline{a},\underline{c})}_J
 + \double{\underline b}{\underline a}^\ell _J 
 \otimes \eta_J(\double{\underline b}{\underline a}^r_J, \underline c)
\ = \ 0 .   
\end{equation}
It also follows from the definitions that
\begin{equation} \label{eq:obs1}
\forall u,v\in \Alpha, \quad
\double{[u-1]}{[v-1]}_J= 
\overline{\Psi(u,v)^{\ell '}} 
\otimes \gamma\big( \Psi(u,v)^{\ell ''}  \otimes \Psi(u,v)^r\big),
\end{equation}
which, using \eqref{eq:outer} and \eqref{eq:ask_bis}, 
implies the following more general formula:
\begin{eqnarray} \label{eq:obs2}
\forall u,v\in \Alpha, \forall x\in \pi, \quad
&&\double{[u-1]}{[x(v-1)]}_J \\
\notag &=& \varpi(x) \overline{\Psi(u,v)^{\ell '}} 
\otimes \gamma\big( \Psi(u,v)^{\ell ''}  \otimes \Psi(u,v)^r\big), \\
\notag && - \Theta(\varpi(x),u)^r \otimes 
\gamma \left( \Theta(\varpi(x),u )^\ell\otimes  (v-1) \right)
\end{eqnarray}
Then, using \eqref{eq:obs1} and \eqref{eq:obs2}, we can rewrite the  identity \eqref{eq:ouf} as 
follows:
\begin{eqnarray*}
\overline{\left\langle {\Psi(b,c)^{\ell'} }  , 
 a \right\rangle} \otimes  \Psi({b},{c})^{\ell''}   \otimes \Psi({b},{c})^r &&\\
-  \Psi(a,c)^\ell\,  \overline{\Psi(b,\Psi(a,c)^r)^{\ell'}} \otimes \Psi(b,\Psi(a,c)^r)^{\ell''} \otimes \Psi(b,\Psi(a,c)^{r})^r && \\
+ \Theta(\Psi(a,c)^\ell,b)^r \otimes  \Theta(\Psi(a,c)^\ell,b)^l \otimes \Psi(a,c)^r \\
+ \overline{\Psi(b,a)^{\ell'}} \otimes \Psi(b,a)^{\ell''} \Psi(\Psi(b,a)^r,c)^\ell 
\otimes \Psi (\Psi(b,a)^r,c)^r &=&0.
\end{eqnarray*}
From this identity, we can derive \eqref{eq:pseudo-Jacobi} 
and  \eqref{eq:pseudo-Jacobi_bis} by applying the
automorphisms
$(u \otimes v \otimes w \mapsto \overline{u'} \otimes u''v \otimes w)$
and $(u \otimes v \otimes w \mapsto uv' \otimes v'' \otimes w)$  of $\Z[F]^{\ot2} \otimes \AAlpha$, respectively.
\end{proof}

\begin{remark}
Let $(\alpha,\beta)$ be a basis of $\pi$ of  type \eqref{eq:(a,b)}, and set $a_i=[\alpha_i] \in \AAlpha$.
It follows from \eqref{eq:E} that 
\begin{equation} \label{eq:values}
    \Psi(a_i,a_j)= \delta_{ij}  \otimes a_i 
\quad \hbox{for any $i,j\in\{1,\dots,g\}$}.
\end{equation}
Since $\AAlpha$ is freely generated by $(a_1,\dots,a_g)$ as a $\Z[F]$-module,
one can also define the map
$\Psi: \AAlpha \times \AAlpha \to \Z[F] \otimes \AAlpha$ 
 as the unique $\Z$-bilinear pairing satisfying 
\eqref{eq:conj_left}, \eqref{eq:conj_right} and \eqref{eq:values}.
\end{remark}

\end{document}